\documentclass[11pt]{amsart}
\usepackage[utf8]{inputenc}
\usepackage{amsmath,amssymb,amsthm,colonequals,mathrsfs,mathtools}
\usepackage{array,enumitem,yfonts}
\usepackage[alphabetic]{amsrefs}
\usepackage[all,cmtip]{xy}
\usepackage[colorlinks,anchorcolor=blue,citecolor=blue,linkcolor=blue,urlcolor=blue,bookmarksdepth=2]{hyperref}
\usepackage{multirow}
\allowdisplaybreaks

\usepackage{comment}
\usepackage{makecell}

\usepackage[margin=1in]{geometry}

\usepackage{tikz}
\usetikzlibrary{cd,arrows}
\tikzset{>=stealth}
\tikzcdset{arrow style=tikz}
\tikzset{link/.style={column sep=1.8cm,row sep=0.16cm}}

\AtBeginDocument{%
	\def\MR#1{}
}
\usepackage{fancyhdr}
\usepackage{amsmath}
\usepackage{mathdots}
\usepackage{amssymb}
\usepackage{amsthm}
\usepackage{here}
\usepackage{amscd} 
\usepackage{mathrsfs}
\usepackage{mathtools}
\usepackage{scalefnt}
\usepackage{url}
\theoremstyle{plain}
\newtheorem{thm}{Theorem}[section]
\newtheorem{lem}[thm]{Lemma}

\newtheorem{cor}[thm]{Corollary}
\newtheorem{prop}[thm]{Proposition}

\theoremstyle{definition}

\newtheorem{rem}[thm]{Remark}
\newtheorem{claim}[thm]{Claim}
\newtheorem{defn}[thm]{Definition}

\newtheorem{ex}[thm]{Example}

\numberwithin{equation}{section}

\def\Q{{\mathbb Q}}
\def\R{{\mathbb R}}

\def\P{{\mathbb P}}

\def\SL{\mathop{\mathrm{SL}}\nolimits}

\def\K{\mathrm{K}}

\def\L{\mathscr{L}}

\newcommand{\G}{\mathbb G}

\def\PGL{\mathop{\mathrm{PGL}}\nolimits}

\allowdisplaybreaks[4]

\usepackage{comment}

\newcommand{\marg}[1]{\normalsize{{\color{red}\footnote{#1}}}{\marginpar[\vskip -.3cm {\color{red}\hfill\tiny{\thefootnote}\normalsize$\implies$}]{\vskip -.3cm{ \color{red}$\impliedby$\tiny\thefootnote}}}}

\newcommand{\Yota}[1]{\marg{\color{blue}(Yota): #1}}

\AtBeginDocument{%
	\def\MR#1{}
}
\usepackage{fancyhdr}
\fancypagestyle{titlepage}
{
	\fancyhf{}

\fancyfoot[l]{
\href{https://mathscinet.ams.org/mathscinet/msc/msc2020.html}
	{\emph{2020 Mathematics Subject Classification}}
	Primary 14D20; Secondary 14E30, 14D23, 14H10, 14L24.
		\\
		\emph{Keywords}: K-stability; K-moduli; quasimaps; weighted stable quasimaps; wall crossing.
	}
    }

\begin{document}
\title[K-moduli wall crossing for quasimaps to a projective variety]{K-moduli wall crossing for quasimaps to a projective variety}
\author{Masafumi Hattori$^{1}$\and Yota Maeda$^{2,3}$}
\email{hattori.masafumi.8n@kyoto-u.ac.jp,\and y.maeda.math@gmail.com}

\def\l@subsection{\@tocline{2}{0pt}{2.3pc}{5pc}{}}

\maketitle
\thispagestyle{titlepage}
\vspace{-1em}
\begin{center}
  \begin{minipage}{0.9\textwidth}
    \centering
    {\small
    $^{1}$ Department of Mathematics, Kyoto University, Japan\\
    $^{2}$ Fachbereich Mathematik, Technische Universität Darmstadt, Germany\\
    $^{3}$ Mathematical Institute, Tohoku University, Japan\\
    }
  \end{minipage}
\end{center}

\vspace{1em}

\begin{abstract}
We develop a modular wall crossing theory for quasimaps
to a projective variety, allowing independent variation
of the boundary coefficients and the quasimap weight.
Building on the K-stability of quasimaps introduced by
Hashizume and the first author, we construct projective
moduli spaces in the stable, Calabi--Yau, and log Fano
regimes, together with wall crossing morphisms.
A central construction is the moduli theory of boundary
polarized Calabi--Yau quasimaps, which retains an ample
polarization at the numerically trivial locus and allows
comparison with suitable perturbations toward the stable
and log Fano regions.
The stable theory applies in arbitrary genus, while
the comparisons through the Calabi--Yau locus concern
genus zero.
For degree-one boundary divisors, the resulting framework
relates weighted stable maps and quasimaps to Hassett
spaces and GIT quotients of weighted points on
$\mathbb P^1$.
In a companion paper, we apply this framework to give
a modular interpolation between Miranda's GIT
compactification of rational elliptic surfaces and
the Baily--Borel compactification of an eight-dimensional
ball quotient.
\end{abstract}

\section{Introduction} 
\subsection{Moduli of curves, maps, and quasimaps}
Moduli spaces of curves and maps admit several natural compactifications, depending on the stability conditions imposed on degenerations.
For moduli problems associated with curves, basic examples include the Deligne--Mumford compactification, which stabilizes the underlying curve \cite{DM}, and the moduli spaces of Hassett, which retain weighted marked points \cite{Hassett}. For moduli spaces of maps, the Kontsevich compactification retains a morphism by allowing the source curve to acquire additional components \cite{kontsevich1995enumeration,abramovich2002compactifying}. Compactifications using \emph{quasimaps to a projective space}, viewed as maps $q\colon C\rightarrow [\mathbb A^{N+1}/\mathbb G_m]$, provide another possibility: instead of retaining every component carrying positive mapping degree, one allows base points and contracts certain unstable components \cite{givental1998mirror,marian2011moduli,Toda}. 

Varying the stability conditions provides a way to compare these compactifications.
Weighted stable maps admit reduction morphisms as the
marking weights decrease. When the resulting weighted-curve
data are admissible, forgetting the map and stabilizing
the source also gives a morphism to Hassett's moduli spaces \cite{Hassett,Al-Guy,bayer2009stability}. On the quasimap side, variation of the quasimap weight interpolates between stable maps and compactifications allowing base points \cite{Toda}. Meanwhile, for rational curves, variation of weighted pointed curves is closely related to variation of GIT (VGIT) quotients of point configurations on $\P^1$; see, for example, \cite{Hassett,thaddeus1996geometric,DolgachevHu1998VGIT}. Parker \cite{parker2007elementary} also constructed GIT quotients of marked points together with polynomial data and related them to moduli spaces of genus-zero stable maps to projective space.

These comparisons lead to the problem of varying the
boundary coefficients and the quasimap weight while
giving a modular interpretation to the resulting
compactifications and their wall crossings.
We construct such a theory using K-stability of quasimaps.
In genus zero, it relates stable maps and quasimaps to
GIT quotients of weighted points on $\P^1$.

In the companion work \cite{HM2}, this theory gives a
modular interpolation between Miranda's GIT
compactification of rational elliptic surfaces and
the Baily--Borel compactification of an associated
eight-dimensional ball quotient.

\subsection{K-stability of quasimaps and modular interpolation}
K-stability, originally motivated by K\"ahler geometry
\cite{tian1997kahler,donaldson2002scalar}, is an algebraic
stability condition for polarized varieties expressed
through numerical invariants of test configurations,
that is, $\mathbb G_m$-equivariant one-parameter degenerations.
Hashizume and the first author introduced K-stability
for quasimaps in \cite{HH2}, incorporating boundary
and quasimap data into these invariants.
Building on this notion, we develop a moduli theory
with several independently weighted boundary divisors
and compare the resulting spaces across the stable,
Calabi--Yau, and log Fano regions.

Fix a closed immersion $\iota\colon X\hookrightarrow\mathbb P^N$
of a projective variety, and consider a weighted quasimap
$q\colon(C,\vec a\cdot\vec D)\to
[\operatorname{Cone}(X)/\mathbb G_m]$.
Here, $C$ is a proper connected nodal curve, $\operatorname{Cone}(X)$ is the affine cone associated
with $\iota$, $\vec D=(D_1,\ldots,D_r)$ is a tuple of
effective divisors,
and $\vec a=(a_1,\ldots,a_r)$ is the vector of coefficients.
In families, we allow effective relative boundary divisors
of prescribed degrees, not only marked sections.
Let $\mathscr L_q$ be the line bundle associated with $q$.
For a quasimap weight $w$, consider the adjoint divisor
\[
A_{\vec a,w}:=K_C+\vec a\cdot\vec D+w\mathscr L_q.
\]
For a K-semistable quasimap, this divisor is numerically
proportional to the polarization used to define K-stability
(Lemma~\ref{lem:well-balanced}).

In the log Fano region, where $-A_{\vec a,w}$ is ample,
we consider K-semistability with respect to $-A_{\vec a,w}$.
We construct the corresponding moduli stacks and their
projective good moduli spaces, called K-moduli spaces,
together with wall-crossing morphisms as $\vec a$ and $w$ vary.

In the stable region, where $A_{\vec a,w}$ is ample,
weighted stable quasimaps are K-stable with respect to
$A_{\vec a,w}$ (Proposition~\ref{prop:chara--K-stability--qmaps}).
We also construct their projective coarse moduli spaces
and reduction morphisms in arbitrary genus.
When $w>1$, the semi-log-canonical condition excludes base
points; with marked sections as boundary data, this recovers
weighted stable maps.

To compare these two regions in genus zero, we introduce
boundary polarized Calabi--Yau (CY) quasimaps at the locus
$A_{\vec a,w}\equiv0$, following the theory of boundary
polarized CY pairs \cite{babwild,BL24}.
Since the total adjoint divisor is numerically trivial,
we require a specified part of the boundary and quasimap
contributions to give an ample polarization.
These quasimaps are K-semistable with respect to this
polarization (Proposition~\ref{prop:chara--K-stability--qmaps}).
We construct their projective good moduli spaces.
For suitable small perturbations toward the stable and
log Fano regions, the corresponding moduli stacks admit
canonical open immersions into the moduli stacks of boundary polarized
CY quasimaps. These comparisons yield morphisms from the
corresponding stable moduli spaces to the log Fano K-moduli
spaces, in general after seminormalization.

Finally, on the face $w=0$, we forget the quasimap structure
and obtain moduli problems for weighted curves.
For degree-one boundary divisors, this recovers Hassett's
moduli spaces in the stable region and the GIT quotients
of weighted point configurations on $\mathbb P^1$ in the
log Fano region. In the latter case, K-stability agrees
with GIT stability, and varying the boundary coefficients
recovers VGIT (Theorem~\ref{thm:VGIT-K-stability}).

\subsection{Main results} 

We first state the moduli and wall-crossing results in the
stable and log Fano regions, and then compare the two theories
using boundary polarized CY quasimaps.

We take $d,d_i\in\mathbb Z_{>0}$,
$\vec a\in[0,1]^r$, and $w\in\mathbb R_{\geq0}$.
Fix the embedding $\iota\colon X\hookrightarrow\mathbb P^N$,
the quasimap degree $d$, weight $w$, the arithmetic genus $p_a$, and the boundary degrees
$\vec d=(d_1,\ldots,d_r)$ of $\vec{D}$.
As before, the quasimap structure is forgotten when $w=0$,
and a boundary divisor is forgotten when its coefficient is zero.

\begin{thm}[Weighted stable quasimaps and wall crossing] \label{mainthm:stable-region}
Suppose $2p_a-2+\vec a\cdot\vec d+wd>0$.
There exists a proper Deligne--Mumford stack $\mathcal M^{\mathrm{stqmaps}}_{p_a,d,\vec a,\vec d,w,\iota}$ parameterizing weighted stable quasimaps with the fixed numerical data. It admits a projective coarse moduli space  $M^{\mathrm{stqmaps}}_{p_a,d,\vec a,\vec d,w,\iota}$. 
Moreover, if $\vec a\leq\vec b$ and $ w\leq w'$, then there is a canonical reduction morphism $\mathcal M^{\mathrm{stqmaps}}_{p_a,d,\vec b,\vec d,w',\iota} \to \mathcal M^{\mathrm{stqmaps}}_{p_a,d,\vec a,\vec d,w,\iota}$.
\end{thm}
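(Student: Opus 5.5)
We construct $\mathcal M^{\mathrm{stqmaps}}_{p_a,d,\vec a,\vec d,w,\iota}$ as an open substack of a bounded algebraic stack of all prestable quasimap data. We then verify that it is Deligne--Mumford and proper via the valuative criterion. Projectivity of the coarse space comes from a Kollár-type ampleness argument applied to the CM / adjoint line bundle, and the reduction morphism comes from a relative log canonical model construction.

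\emph{Step 1 (the ambient stack and openness).} Let $\mathfrak M$ be the stack of tuples $(C,\vec D,\mathscr L,\sigma)$. Here $C$ is a proper connected nodal curve of arithmetic genus $p_a$, each $D_i$ is a relative effective divisor of degree $d_i$, $\mathscr L$ is a line bundle of degree $d$, and $\sigma\in H^0(C,\mathscr L^{\oplus N+1})$ lands generically in $\operatorname{Cone}(X)$ and satisfies the ideal equations of $\iota$. This is an algebraic stack locally of finite type: relative Hilbert schemes of divisors and sections over the Picard stack of the universal curve. Weighted stability is the requirement that $(C,\vec a\cdot\vec D+w\cdot(\text{base locus}))$ be slc with $A_{\vec a,w}$ ample. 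Ampleness of $A_{\vec a,w}$ is open. The slc condition on curves is numerical: nodes are not in $\operatorname{Supp}\vec D$, and local multiplicities satisfy $\sum_i a_i\operatorname{mult}_p D_i+w\,\ell_p\le1$, where $\ell_p$ is the base-point length. Since multiplicities are upper semicontinuous, this condition is open. Automorphisms are finite because $A_{\vec a,w}$ is ample on every component: a positive-dimensional automorphism group would force a rational component with at most two special points and no quasimap degree, which contradicts ampleness. The automorphism groups are also unramified in characteristic $0$, so the stack is Deligne--Mumford. Boundedness follows because some fixed multiple $mA_{\vec a,w}$ (with $m$ depending only on the numerical data) is very ample with bounded Hilbert polynomial. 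We could alternatively use a bound on the number of components via $\deg A_{\vec a,w}>0$ on each component.

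\emph{Step 2 (properness).} Separatedness and existence of limits are handled as in log MMP for families of curves. Given a family over a punctured DVR, after a finite base change we extend to some prestable model $(\mathcal C,\vec{\mathcal D},\mathscr L,\sigma)$ over the DVR. This uses semistable reduction for the pointed curve, with base points of the quasimap treated as extra markings. We extend $\mathscr L$ and $\sigma$ by taking the saturation, so that $\sigma$ has only isolated zeros on the central fiber. Next we run the relative log MMP for the surface pair $(\mathcal C,\vec a\cdot\vec{\mathcal D}+w\cdot\mathscr L+\mathcal C_0)$ to its relative log canonical model $\operatorname{Proj}\bigoplus_m\pi_*\mathcal O(mA)$. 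Contracted components carry the quasimap data only through base points of controlled length. The lc condition in the limit follows from inversion of adjunction, and uniqueness follows from uniqueness of lc models. \textbf{I expect this to be the main obstacle.} The difficulty is that the quasimap section must descend through the contraction: when a rational tail with positive degree is contracted (possible when $w\le1$), $\sigma$ must become a base point of the right length, and we need $\mathscr L$ to descend as a line bundle, not merely as a reflexive sheaf. I would handle this by pushing forward $\mathscr L$ and checking that it is invertible on the smooth locus of $\mathcal C$. Near nodes no contraction of positive degree occurs because of the slc inequality at the node.

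\emph{Step 3 (projectivity and reduction).} For the coarse moduli space, which exists by Keel--Mori, we show that $\lambda=\det\pi_*\mathcal O(mA_{\vec a,w})$, or equivalently the CM line bundle, descends to an ample line bundle. This uses Kollár's ampleness lemma, in the form used for stable pairs, together with nefness from semipositivity of $\pi_*\omega^{m}$-type sheaves. Alternatively, by Proposition~\ref{prop:chara--K-stability--qmaps} these objects are K-stable with respect to $A_{\vec a,w}$, so positivity of the CM line bundle on K-stable loci gives ampleness. For the reduction morphism with $\vec a\le\vec b$ and $w\le w'$, take the universal family over $\mathcal M_{\vec b,w'}$. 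Lowering coefficients preserves slc, so $(\mathcal C,\vec a\cdot\vec{\mathcal D}+w\mathscr L)$ is still slc, and its relative lc model $\operatorname{Proj}\bigoplus\pi_*\mathcal O(mA_{\vec a,w})$ exists and commutes with base change. The commutation holds because on curves the vanishing of $R^1\pi_*$ for $m\gg0$ holds uniformly in families: $A_{\vec a,w}$ is nef, and big on components that are not contracted. The contraction produces a weighted stable quasimap, in the same manner as in Step 2. This gives the morphism, and its canonicity follows from uniqueness of lc models.
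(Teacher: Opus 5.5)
Your Steps 1 and 2 are a workable alternative to the paper's route. The paper builds the stack explicitly as a quotient of a Hilbert--Quot locus, using Alexeev--Guy and Hacon--McKernan--Xu boundedness. It obtains the Deligne--Mumford property from properness of the affine diagonal, and gets limits from Toda's properness followed by Hassett's reduction. Your reduction step, however, has two genuine gaps, and your projectivity step is incomplete.

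\textbf{Nefness of $A_{\vec a,w}$.} Your claim that $A_{\vec a,w}$ is nef on the universal family over $\mathcal M^{\mathrm{stqmaps}}_{p_a,d,\vec b,\vec d,w',\iota}$ is false when $(\vec a,w)$ is far from $(\vec b,w')$. Take a rational tail $T$ that meets the rest of $C$ in one node, carries exactly two points of $D_1$ and no other boundary, and on which $q$ is constant without base points.
\begin{itemize}
\item For $b_1=1$, one has $\deg A_{\vec b,w'}|_T=1>0$.
\item For $a_1<1/2$, one has $\deg A_{\vec a,w}|_T=-1+2a_1<0$.
\item Serre duality then gives $H^1(C,\mathcal O(mA_{\vec a,w}))\neq0$ for all sufficiently large and divisible $m$.
\end{itemize}
So the vanishing of $R^1\pi_*$ fails, and with it the base-change property of $\operatorname{Proj}\bigoplus_m\pi_*\mathcal O(mA_{\vec a,w})$ that you need over arbitrary, possibly non-reduced, bases. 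The missing idea is to reduce wall by wall. Using the chamber decomposition of Proposition~\ref{prop:chmber--st}, move along the segment from $(\vec b,w')$ to $(\vec a,w)$; it suffices to treat two points in the closure of one face. There $A_{\vec a,w}$ is relatively nef, with degree zero exactly on the tails and bridges to be contracted. \cite[Proposition~3.3]{Hassett} then makes a fixed multiple relatively globally generated with vanishing $H^1$. The reduction morphism is the composite of these one-wall contractions, as in Theorem~\ref{thm:st-qmaps-wall-crossing}.

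\textbf{Descent of the quasimap in families.} Saying ``the contraction produces a weighted stable quasimap, in the same manner as in Step~2'' does not give a morphism of stacks. You must construct the line bundle and the $N+1$ sections on $\mathcal C'$ over an arbitrary base $S$, compatibly with base change. Your Step~2 recipe does not do this. If $h$ contracts a tail $T$ with $e=\deg\mathscr L|_T>0$ to a point $p$, then $h_*\mathscr L$ is not invertible at $p$; when the total space is regular along $T$, it equals $\mathscr L'\otimes\mathfrak m_p^{e}$ there. Its reflexive hull is the right object over a DVR, but reflexive hulls do not commute with base change.

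This is exactly what Propositions~\ref{prop:pushfoward}, \ref{prop:push--linear-equivalence} and~\ref{prop:contraction-thm} supply:
\begin{itemize}
\item zero divisors of sections are pushed forward via Koll\'ar's divisorial support, which is compatible with base change;
\item ratios of sections are transported by $h_\sharp$, with base-change compatibility proved by deforming to a smooth base;
\item $\mathscr L'$ is defined locally as $\mathcal O(h_*\mathrm{div}\,\varphi)$ with sections $h_\sharp(\varphi_i/\varphi)\cdot s$, independent of the choices, so these local constructions glue.
\end{itemize}
Over a DVR the paper does the same concretely: it sets $\mathscr L'=\mathcal O(D_f+\psi_*\overline{\mathrm{Bs}})$ and $g'=(g/f)f'$, which is regular by $S_2$.

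\textbf{Projectivity.} Your projectivity step leaves $w>1$ unaddressed. For $w>1$ the pair $(C,\Delta+w\,\mathrm{div}f)$ is not slc. Ampleness of the CM line bundle on arbitrary K-stable loci is not an available theorem. The paper re-embeds $X$ by the $\lceil w\rceil$-th Veronese map, which turns the object into a quasimap of weight $w/\lceil w\rceil\le1$ with numerically the same adjoint bundle. It then treats $w\le1$ as in \cite[Proposition~6.15]{CM26}.
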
 
Theorem \ref{mainthm:stable-region} follows from Theorems \ref{thm:constksbaqmaps}, \ref{thm:projectivity-stqmaps}, and \ref{thm:st-qmaps-wall-crossing}.

\begin{thm}[K-moduli of log Fano quasimaps and wall crossing] \label{mainthm:log-fano-region}
Suppose that $p_a=0$ and $\vec a\cdot\vec d+wd<2$.
Then, there exists an Artin stack $\mathcal M^{\mathrm{Kssqmaps}}_{d,\vec a,\vec d,w,\iota}$ of finite type parameterizing K-semistable log Fano quasimaps with the fixed numerical data. It admits a projective good moduli space  $M^{\mathrm{Kpsqmaps}}_{d,\vec a,\vec d,w,\iota}$. Moreover, the $(\vec a,w)$-parameter space $[0,1]^r\times\R_{\geq0}$ admits a finite rational polyhedral decomposition.
For any face $F$ of this decomposition, let $F^\circ$
denote its relative interior.
If $(\vec a,w)\in F^\circ$ and $(\vec a',w')\in \overline{F}$
satisfy $\vec a\cdot\vec d+wd<2$ and
$\vec a'\cdot\vec d+w'd<2$, then there is a canonical
wall crossing morphism
$\mathcal M^{\mathrm{Kssqmaps}}_{d,\vec a,\vec d,w,\iota}\to \mathcal M^{\mathrm{Kssqmaps}}_{d,\vec a',\vec d,w',\iota}$. If further $\vec{a},\vec{a}'\in (0,1]^r$ and $ww'\ne0$, then the canonical morphism is an open immersion. 
\end{thm}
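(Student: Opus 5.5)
Our plan is to reduce to the K-moduli theory of log Fano pairs, packaged for quasimaps through the K-stability notion of \cite{HH2}. First we construct the stack. In genus zero with $\vec a\cdot\vec d+wd<2$, the curve $C$ is a tree of rational curves. We rigidify by choosing a basis of $H^0(C,\mathscr L_q^{\otimes m}\otimes\mathcal O(-mA_{\vec a,w}))$ for a fixed sufficiently divisible $m$. This basis embeds $C$ into a fixed projective space, and the sections defining $q$ together with the divisors $D_i$ become points of a Hilbert scheme or Quot scheme. K-semistability is open in families. We would prove this by the standard argument: express it through the stability threshold $\delta$ of the associated log Fano data, which is constructible and lower semicontinuous. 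By Lemma~\ref{lem:well-balanced}, K-semistable objects are well balanced, so all of them land in one bounded locally closed subscheme $Z$ of the parameter space. The stack is then $[Z/\PGL]$, an Artin stack of finite type. Boundedness is easy in our setting: $p_a=0$, the degrees are fixed, and the number of components is bounded because each component must carry positive $-A$-degree.

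Next we establish the good moduli space and its projectivity, following the framework of Alper--Halpern-Leistner--Heinloth. We need $\Theta$-reductivity and S-completeness. For these we would transport the arguments of Blum--Halpern-Leistner--Liu--Xu: any two K-semistable fillings over a DVR are related by a $\mathbb G_m$-equivariant degeneration to a common K-polystable object. This uses uniqueness of minimizers of the $\beta$ or $\delta$ invariant, adapted to include the quasimap term $w\mathscr L_q$ as an extra boundary-type contribution. A technically cheaper route for curves is also available. We expect K-semistability to be equivalent to an explicit numerical condition, namely that no point carries total multiplicity of $\vec a\cdot\vec D+w\cdot(\text{base locus})$ exceeding half the degree of $-A$. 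With this, the stack is a GIT quotient of $(\P^1)$-configurations together with polynomial data under $\SL_2$, in the spirit of Parker and Theorem~\ref{thm:VGIT-K-stability}. This route gives projectivity of the good moduli space directly. Properness would also follow from K-moduli properness, via existence of K-semistable limits.

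Finally we build the wall structure. The numerical K-semistability criterion is a finite collection of linear inequalities in $(\vec a,w)$, indexed by the possible multiplicity vectors at a point. Finitely many such vectors occur by boundedness. These hyperplanes give a finite rational polyhedral decomposition. On $F^\circ$ the semistable locus is constant. Moving to $(\vec a',w')\in\overline F$ only relaxes strict inequalities to non-strict ones, so semistability is preserved and we get an inclusion of semistable loci. For $\vec a,\vec a'\in(0,1]^r$ and $ww'\ne0$ no data is forgotten, so this inclusion is an open immersion of stacks. When some coefficients or $w$ vanish, we compose with the forgetful morphism. Canonicity comes from the construction via functorial families.

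The main obstacle is the equivalence between K-semistability in the sense of \cite{HH2} and the pointwise numerical criterion. For reducible or nodal $C$ and for base points this requires computing $\beta$-invariants for all divisorial valuations. We would first reduce to valuations centered at points of a smooth $\P^1$, arguing that a K-semistable log Fano curve of genus zero must be irreducible. The reason is that a node forces a destabilizing degeneration, unless the configuration is strictly semistable and degenerates back to $\P^1$.
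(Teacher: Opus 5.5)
\textbf{Comparison with the paper.} Your chamber structure and wall-crossing argument is essentially the paper's (Proposition~\ref{prop:chmber--Kss}, Theorem~\ref{thm:wallcrossingmorphKss}). Finitely many linear forms are indexed by multiplicity vectors, each has constant sign on $F^\circ$, and the non-strict inequalities survive on $\overline F$.

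For existence and projectivity you take a different route. The paper first moves to rational weights inside $F^\circ$ (Lemma~\ref{lem-wall-same}). It then maps the multi-boundary stack to the single-boundary stack of \cite{HH2} by merging $\vec a\cdot\vec D$ into one divisor. That map is representable, quasi-finite and proper, hence finite. So \cite{AHLH} gives a good moduli space finite over the projective one of \cite[Theorem~3.45]{HH2}, and the CM line bundle pulls back to an ample one.

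Your route (b) realizes the stack as $[Y^{\mathrm{ss}}/\PGL(2)]$. Here $Y=\prod_i\P(H^0(\mathcal O_{\P^1}(d_i)))\times Z$, and $Z\subset\P(H^0(\mathcal O_{\P^1}(d))^{\oplus N+1})$ is the closed invariant locus of quasimaps to $X$. For a rational linearization proportional to $(\vec a,w)$, the Hilbert--Mumford weight at a fixed point $p$ of a one-parameter subgroup is $\sum_ia_i(d_i-2\,\mathrm{mult}_pD_i)+w(d-2\,\mathrm{mult}_p\mathrm{Bs}(q))$. So this route does work, and it extends Theorem~\ref{thm:VGIT-K-stability} and Parker to arbitrary $d_i$ and $\iota$.

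What it buys: projectivity without $\mathsf S$-completeness, $\Theta$-reductivity or \cite{HH2}, and the open immersions become inclusions of VGIT semistable loci inside one $Y$.

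What it costs: it gives no ampleness of the CM line bundle, and it relies on the source always being $\P^1$. It also needs rational weights, for GIT and for your rigidification by $\mathscr L_q^{m}(-mA_{\vec a,w})$; get these from the chamber reduction first. Route (a) by itself would not give projectivity without CM positivity.

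\textbf{An error in your criterion.} The criterion everything rests on is misstated. K-semistability means
\[
2\,\mathrm{mult}_p\bigl(\vec a\cdot\vec D+w\,\mathrm{Bs}(q)\bigr)\le\vec a\cdot\vec d+wd\qquad\text{for all }p\in\P^1,
\]
that is, multiplicity at most half the degree of $\vec a\cdot\vec D+w\mathscr L_q$. Equivalently $1-\mathrm{mult}_p\ge\frac12\deg(-A_{\vec a,w})$, which is $\delta\ge1$ (Example~\ref{ex:one-dim-K-st}, Proposition~\ref{prop:chara--K-stability--qmaps}(3)).

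Your ``at most half the degree of $-A$'' confuses multiplicity with log discrepancy. It agrees with the correct condition only when $\vec a\cdot\vec d+wd=1$. It is vacuous when $\vec a\cdot\vec d+wd\le\frac23$, so the whole boundary could sit at one point. It also does not match the Hilbert--Mumford weight above, so the GIT identification fails as written.

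\textbf{Your main obstacle is smaller than you expect.} Log Fano quasimaps are klt by definition, and Proposition~\ref{prop:chara--K-stability--qmaps}(3) shows that K-semistability with respect to $-A$ forces klt. So nodal sources never occur. There is no strictly semistable nodal exception: such an object would itself be a point of the stack, and a nodal curve cannot specialize to a smooth $\P^1$ in any case.

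\textbf{Handling the quasimap term on $\P^1$.} Do not treat the quasimap as a fixed boundary $w\,\mathrm{div}(f)$. For example, with $d=1$ and $\Delta=0$, $(\P^1,w\,\mathrm{div}(f))$ is never K-semistable, yet the quasimap is. Instead handle it as a twist (Lemma~\ref{lem;K-stqmaps and log-twisted}), or by averaging $l$ general sections (Lemma~\ref{lem:HH2-modified}).
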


Theorem \ref{mainthm:log-fano-region} follows from Theorems \ref{thm:K-moduli--weighted--log-Fano-qmaps} and \ref{thm:wallcrossingmorphKss}.

As in the case of log pairs (cf.~\cite{fujiki1990moduli,tian1997kahler}), the CM line bundle for
quasimaps is closely related to K-stability \cite{HH2,CM26}.
For rational boundary coefficients and quasimap weight,
it is defined functorially for families of weighted quasimaps.
We prove that it descends to an ample $\Q$-line bundle
on each of the moduli spaces in Theorems~\ref{mainthm:stable-region} and~\ref{mainthm:log-fano-region}
(Theorems~\ref{thm:K-moduli--weighted--log-Fano-qmaps} and~\ref{thm:projectivity-stqmaps}).

We next construct the moduli spaces used to compare the
stable and log Fano theories.
 When $p_a=0$, to describe the remaining wall $\vec a\cdot\vec d+wd-2=0$, split the boundary and quasimap contributions into two parts $(\vec a_1,\vec{d}_1,\vec D_1,w_1)$ and  $(\vec a_2,\vec{d}_2,\vec D_2,w_2)$ of $(\vec{a},\vec{d},\vec{D},w)$,  and impose \[ K_C +\vec a_1\cdot\vec D_1 +\vec a_2\cdot\vec D_2 +(w_1+w_2)\L_q \equiv0, \] while requiring $\vec a_2\cdot\vec D_2+w_2\L_q$ to be ample. The latter is the polarization retained on the Calabi--Yau wall. 

\begin{thm}[Moduli of boundary polarized CY quasimaps] \label{mainthm:bpcy-region} Fix boundary and quasimap degree data and weights $(\vec a_1,\vec a_2,w_1,w_2)$ satisfying the above Calabi--Yau and polarization conditions. Then, there exists an Artin stack $\mathcal M^{\mathrm{bpcyqmaps}}_ {d,\vec a_1,\vec d_1,\vec a_2,\vec d_2,w_1,w_2,\iota}$ of finite type parameterizing boundary polarized CY quasimaps with fixed numerical data. Furthermore, it admits a projective good moduli space  $M^{\mathrm{bpcyqmaps}}_ {d,\vec a_1,\vec d_1,\vec a_2,\vec d_2,w_1,w_2,\iota}$.  \end{thm}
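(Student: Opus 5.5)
We follow the strategy of Blum--Liu and Ascher--Bejleri--Blum--DeVleming--Inchiostro--Liu--Wang for boundary polarized CY pairs, adapted to quasimaps, and exploit that in genus zero everything is quite explicit.

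\emph{Algebraic stack of finite type.} Since $\vec a_2\cdot\vec D_2+w_2\L_q$ is ample of fixed degree on a connected nodal curve $C$ of arithmetic genus $0$, every component of $C$ carries positive $\vec D_2$-degree or positive quasimap degree. This bounds the number of components by $\vec d_2$ and $d$, so the curves form a bounded family. The triples $(C,\vec D,q)$ are then parameterized by a locally closed substack of the stack of prestable genus-zero curves with relative effective divisors of degrees $\vec d$ and quasimaps of degree $d$ to $[\operatorname{Cone}(X)/\mathbb G_m]$. The latter stack is algebraic and locally of finite type, using the known algebraicity of quasimap stacks together with relative Hilbert schemes of divisors. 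Next we impose three conditions:
\begin{itemize}
\item the slc-type condition on $(C,\vec a\cdot\vec D+w\L_q)$ (base points counted with weight $w$, as in the definition of K-stability of quasimaps in \cite{HH2});
\item numerical triviality of the total adjoint divisor;
\item ampleness of the polarization part.
\end{itemize}
The slc condition is open and constructible in families of nodal curves, numerical triviality is open and closed, and ampleness is open. Together these cut out an open substack $\mathcal M^{\mathrm{bpcyqmaps}}$, which is of finite type by the boundedness above.

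\emph{Good moduli space.} We verify the criteria of Alper--Halpern-Leistner--Heinloth: $\Theta$-reductivity and S-completeness, plus affine diagonal. S-completeness and $\Theta$-reductivity reduce to uniqueness statements over a DVR. Given two boundary polarized CY quasimap families agreeing on the generic fiber, the two total spaces are crepant birational, since both are numerically trivial for $K+\Delta$. The argument then runs as follows.
\begin{itemize}
\item Because both total spaces are dlt/lc models of the same CY pair, the standard argument of Blum--Liu applies: take a common resolution, glue along $\mathbb A^2\setminus 0$ (respectively $\Theta$), and extend by running a relative MMP or taking an ample model of the polarization part.
\item This extension stays within boundary polarized CY quasimaps by using the existence of lc extensions for crepant structures.
\item The quasimap $q$ extends over codimension-two points because $\operatorname{Cone}(X)$ is affine, so sections of $\L_q^{\oplus N+1}$ extend by Hartogs on the normal surface.
\end{itemize}
The affine diagonal comes from the affine automorphism data of polarized quasimaps. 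These criteria yield a good moduli space $M^{\mathrm{bpcyqmaps}}$, which is separated. Properness follows from an existential valuative criterion: fill in a punctured family using stable reduction for curves with divisors, run an MMP to the lc CY model, and take the ample model of $\vec a_2\cdot\vec D_2+w_2\L_q$.

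\emph{Projectivity.} We expect this to be the main obstacle, since the CM line bundle is trivial on the CY locus. We intend to imitate the boundary polarized CY approach of \cite{BL24}: use instead the Hodge-type line bundle $\lambda$ attached to the polarization part. Concretely, take $-\lambda_{\mathrm{CM}}$ for the perturbation $(\vec a_1,\vec a_2-\epsilon,w_1,w_2-\epsilon')$ toward the log Fano side, and prove it descends and is ample on $M^{\mathrm{bpcyqmaps}}$. The argument has two steps.
\begin{itemize}
\item Via the perturbation, nearby log Fano K-semistable quasimaps map to $\mathcal M^{\mathrm{bpcyqmaps}}$, which gives a comparison with the ample CM bundle of Theorem \ref{mainthm:log-fano-region}.
\item Alternatively, to reduce more directly to known cases, note that in genus zero with degree-one-type data the space is closely related to GIT quotients of point configurations and polynomial data (Parker-type constructions). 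There we would realize $M^{\mathrm{bpcyqmaps}}$ as finite over a GIT quotient, and conclude projectivity by Nakai--Moishezon plus positivity via Kollár's ampleness lemma.
\end{itemize}
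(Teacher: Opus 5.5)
Your finite-type and good-moduli-space steps are broadly compatible with the paper, but the projectivity step has a genuine gap. The bundle you concretely propose is the CM line bundle of the perturbation $(\vec a_1,\vec a_2-\epsilon,w_1,w_2-\epsilon')$, and it does not descend to $M^{\mathrm{bpcyqmaps}}$. A line bundle descends to a good moduli space only if stabilizers of closed points act trivially on its fibres. The weight of a one-parameter subgroup of the stabilizer on $\lambda_{\mathrm{CM}}$ is, up to a positive factor, the Futaki invariant of the perturbed pair. Strictly bpcy polystable points stop being klt after your perturbation, and their Futaki invariants are nonzero. For instance, take $w_1=w_2=0$ and $(\mathbb{P}^1,\{0\}+\{\infty\})$ with $\{0\}$ a degree-one component of $\vec D_1$ and $\{\infty\}$ one of $\vec D_2$, both with coefficient one. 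It becomes $(\mathbb{P}^1,\{0\}+(1-\epsilon)\{\infty\})$, on which the stabilizer $\mathbb{G}_m$ has Futaki invariant a nonzero multiple of $\epsilon$. The reducible polystable object made of two copies of $(\mathbb{P}^1,\{0\}+\{\infty\})$ glued at $0$ gives a nonzero weight even for the proportional perturbation. In the first example the perturbed log Fano stack is empty while $M^{\mathrm{bpcyqmaps}}$ is a point. So comparison with Theorem \ref{mainthm:log-fano-region} through your perturbation cannot detect ampleness, and the Parker/GIT alternative only covers special data.

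The missing idea is the Hodge line bundle $\lambda_{\mathrm{Hodge}}=\frac{1}{\mu}\pi_*\mathcal{O}_{\mathcal{C}}(\mu(K_{\mathcal{C}/S}+\vec a_1\cdot\vec{\mathcal D}_1+\vec a_2\cdot\vec{\mathcal D}_2+(w_1+w_2)\mathscr{L}))$. It is attached to the total, numerically trivial adjoint divisor, not to the polarization part. Stabilizers act trivially on it, so it descends (Lemma \ref{lem:CM-descent}). Theorem \ref{thm:proj--CM--bpCY} proves it ample by Kollár's criterion, in two pieces:
\begin{itemize}
\item on the constant-quasimap locus, through a finite map to $X\times M^{\mathrm{bpCY}}_{r,s,v}$, where it is numerically the pullback of $(w_1+w_2)\mathcal{O}_X(1)$ plus the ample Hodge bundle of \cite{babwild};
\item on subvarieties meeting the non-constant locus, by bigness from maximal variation of an auxiliary klt-trivial fibration built from divisors of several general sections, via \cite[Theorem 14.4 and Proposition 14.7]{babwild}.
\end{itemize}
Your comparison idea can only be salvaged with the proportional perturbation $(1-\epsilon)(\vec a_1,\vec a_2,w_1+w_2)$. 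There the map of good moduli spaces is a homeomorphism (Theorem \ref{thm:wall-crossingbpcytoK-moduli}) and $\lambda_{\mathrm{CM}}$ is a positive multiple of $\lambda_{\mathrm{Hodge}}$ over normal bases. Even then, the bundle that lives on $M^{\mathrm{bpcyqmaps}}$ is $\lambda_{\mathrm{Hodge}}$.

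A smaller point concerns your S-completeness step. The family over $\overline{\mathrm{ST}}(R)$ is three-dimensional and need not be normal, and Hartogs presupposes that $\mathscr{L}$ has already been extended as a line bundle. The paper extends it as $\mathcal{O}_{\mathcal{C}}(\frac{1}{w_1+w_2}\mathcal{D}_1)$, where $\mathcal{D}_1$ is the closure of $(w_1+w_2)\mathrm{div}(f)$ for a general section $f$; this is Cartier because it lies in the smooth locus. It then extends the remaining sections using the $S_2$ property.
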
 
Theorem \ref{mainthm:bpcy-region} follows from Lemma \ref{lem:bpcyw>11}, Proposition \ref{prop:bpcy--moduli--qmaps--const}, Theorem \ref{thm:s-complete-bpcyqmaps}, and Corollary \ref{cor:proj:bpcy}.

In the boundary polarized CY case, the corresponding natural line bundle
is the Hodge line bundle, which plays the role of the
CM line bundle.
For rational weights with $w_1+w_2\leq1$, it is defined
functorially for families of boundary polarized CY quasimaps.
We prove that it descends to an ample $\Q$-line bundle
on the good moduli space in Theorem~\ref{mainthm:bpcy-region}
(Lemma~\ref{lem:CM-descent} and Theorem~\ref{thm:proj--CM--bpCY}).

The Calabi--Yau weight locus itself admits a finite rational polyhedral decomposition, and variation of the weights gives canonical reduction morphisms as in Theorem \ref{thm--wall-crossing-bpcy}.

 The three regions are connected by reduction morphisms.  \begin{thm}[Comparison of stable and log Fano K-moduli] \label{mainthm:CY-wall-crossing} Fix boundary polarized CY data $(\vec a_1,\vec a_2,w_1,w_2)$ as above. For every sufficiently small $\epsilon>0$, the perturbation $\bigl( (1-\epsilon)(\vec a_1,\vec a_2), (1-\epsilon)(w_1+w_2) \bigr)$ defines log Fano data and induces a canonical open immersion \[ \mathcal M^{\mathrm{Kssqmaps}}_{d,(1-\epsilon)(\vec a_1,\vec a_2),\vec d,(1-\epsilon)(w_1+w_2),\iota} \hookrightarrow \mathcal M^{\mathrm{bpcyqmaps}}_{d,\vec a_1,\vec d_1,\vec a_2,\vec d_2,w_1,w_2,\iota}, \] 
 which induces an isomorphism of the seminormalizations of good moduli spaces $$(M^{\mathrm{Kpsqmaps}}_{d,(1-\epsilon)(\vec a_1,\vec a_2),\vec d,(1-\epsilon)(w_1+w_2),\iota} )^{\mathrm{sn}}\cong(M^{\mathrm{bpcyqmaps}}_{d,\vec a_1,\vec d_1,\vec a_2,\vec d_2,w_1,w_2,\iota})^{\mathrm{sn}}.$$
On the other hand, there exists a canonical open immersion $$\mathcal{M}^{\mathrm{stqmaps}}_{0,d,(\vec{a}_1,(1+\epsilon)\vec{a}_2),\vec{d},w_1+(1+\epsilon)w_2,\iota}\hookrightarrow\mathcal M^{\mathrm{bpcyqmaps}}_{
  d,\vec a_1,\vec d_1,\vec a_2,\vec d_2,w_1,w_2,\iota}$$ for any sufficiently small $\epsilon>0$. 
Consequently, after seminormalization, there is a canonical
morphism $({M}^{\mathrm{stqmaps}}_{0,d,(\vec{a}_1,(1+\epsilon)\vec{a}_2),\vec{d},w_1+(1+\epsilon)w_2,\iota})^{\mathrm{sn}}\to (M^{\mathrm{Kpsqmaps}}_{d,(1-\epsilon)(\vec a_1,\vec a_2),\vec d,(1-\epsilon)(w_1+w_2),\iota} )^{\mathrm{sn}}$.
  \end{thm}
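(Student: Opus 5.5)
The plan has three parts: build the two open immersions objectwise, check openness via boundedness of the relevant stacks, and then pass to good moduli spaces. This follows the strategy for boundary polarized CY pairs in \cite{babwild,BL24}, adapted to quasimaps.

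\textbf{Log Fano side.} Let $q\colon(C,(1-\epsilon)\vec a\cdot\vec D)\to[\operatorname{Cone}(X)/\mathbb G_m]$ be a genus-zero K-semistable log Fano quasimap with weight $(1-\epsilon)(w_1+w_2)$.
\begin{enumerate}
\item Since $p_a=0$ and $\vec a\cdot\vec d+(w_1+w_2)d=2$, the divisor $K_C+\vec a_1\cdot\vec D_1+\vec a_2\cdot\vec D_2+(w_1+w_2)\L_q$ has degree zero on $C$. Once we know $C$ is irreducible (or that the degree condition holds componentwise), it is numerically trivial.
\item I expect K-semistability to force irreducibility for small $\epsilon$. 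Using the characterization in Proposition~\ref{prop:chara--K-stability--qmaps}, the intended statement is that K-semistable log Fano quasimaps on a nodal rational curve degenerate only in a controlled way.
\item Ampleness of $\vec a_2\cdot\vec D_2+w_2\L_q$ on a $\P^1$ is just positivity of its degree, which holds by hypothesis.
\item It remains to check the singularity condition, namely that the boundary polarized CY quasimap is slc. I would derive it from K-semistability via the $\beta$-invariant estimate: multiplicities of the boundary at a point are bounded by $1$ in the limit $\epsilon\to0$. This is the standard Fujita--Li valuative argument restricted to curves.
\end{enumerate}

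\textbf{Openness.} To see that $\mathcal M^{\mathrm{Kss}}\to\mathcal M^{\mathrm{bpcy}}$ is an open immersion, I would use two facts.
\begin{itemize}
\item Both stacks are of finite type: Theorem~\ref{mainthm:log-fano-region} and Theorem~\ref{mainthm:bpcy-region}.
\item K-semistability is an open condition in families.
\end{itemize}
The key input is a uniform $\epsilon_0$. Over the bounded family $\mathcal M^{\mathrm{bpcy}}$, the set of $(1-\epsilon)$-K-semistable objects stabilizes for $\epsilon<\epsilon_0$; this is the finiteness of walls coming from the rational polyhedral decomposition. Because $\mathcal M^{\mathrm{bpcy}}$ is bounded, only finitely many $\beta$-values or destabilizing divisors occur.

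\textbf{Passing to good moduli spaces.}
\begin{enumerate}
\item Show the open substack is saturated with respect to the good moduli map. Closed points of $\mathcal M^{\mathrm{bpcy}}$ are polystable, and by S-completeness (Theorem~\ref{thm:s-complete-bpcyqmaps}) an object degenerating to a K-semistable one is itself K-semistable.
\item Show the induced map is surjective on closed points. Every bpcy quasimap should admit a $\Theta$-degeneration, from a test configuration, to one that is K-polystable for the perturbed weights. I expect to get this by running the $(1-\epsilon)$ K-moduli: by properness of $M^{\mathrm{Kps}}$, the map on good moduli spaces is closed.
\item A bijective, proper, birational morphism of good moduli spaces is an isomorphism after seminormalization, which gives the first assertion.
\end{enumerate}

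\textbf{Stable side.} The open immersion for weighted stable quasimaps with weights $(\vec a_1,(1+\epsilon)\vec a_2)$ and $w_1+(1+\epsilon)w_2$ is easier. Stability means $K_C+\vec a_1\vec D_1+(1+\epsilon)\vec a_2\vec D_2+(w_1+(1+\epsilon)w_2)\L_q$ is ample and the quasimap is slc. On each component of $C$, the total degree at $\epsilon=0$ is $\ge0$ and becomes positive after adding $\epsilon(\vec a_2\vec D_2+w_2\L_q)$. The total degree at $\epsilon=0$ is $0$, so each component has degree exactly $0$. Hence the CY divisor is numerically trivial, and $\vec a_2\vec D_2+w_2\L_q$ is positive on every component, hence ample. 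The slc condition is shared, and it stays valid at $\epsilon=0$ by the lc threshold being closed. Openness follows because ampleness and the $\epsilon$-perturbed slc condition are open, again with uniform $\epsilon$ by boundedness.

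\textbf{The composite morphism.} The coarse space maps to the good moduli space of $\mathcal M^{\mathrm{bpcy}}$ by the universal property. Composing with the inverse of the seminormalized isomorphism from the log Fano side gives the morphism in the statement.

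The main obstacle is the log Fano comparison: the uniform choice of $\epsilon$, and surjectivity onto polystable bpcy points. For surjectivity I would first try the argument of \cite{BL24}, constructing a special degeneration via the minimal lc center and Kollár's component-style analysis on the curve.
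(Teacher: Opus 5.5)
\textbf{The gap is in passing to good moduli spaces.} Your stable side and your construction of the open immersion $\mathcal M^{\mathrm{Kss}}_\epsilon\hookrightarrow\mathcal M^{\mathrm{bpcy}}$ are essentially the paper's: reweight the same families, use finiteness of walls for a uniform $\epsilon$, and use openness of K-semistability, resp.\ of the perturbed slc condition. (Irreducibility of $C$ needs no degeneration analysis: K-semistable log Fano quasimaps are klt, hence live on $\mathbb P^1$.)

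Your step~1 there is false in general: $\mathcal M^{\mathrm{Kss}}_\epsilon$ is \emph{not} saturated for $\pi\colon\mathcal M^{\mathrm{bpcy}}\to M^{\mathrm{bpcy}}$. If it were, $\pi(\mathcal M^{\mathrm{Kss}}_\epsilon)$ would be open with $\pi^{-1}\pi(\mathcal M^{\mathrm{Kss}}_\epsilon)=\mathcal M^{\mathrm{Kss}}_\epsilon$. Together with surjectivity (your step~2, which the theorem requires), this forces $\mathcal M^{\mathrm{Kss}}_\epsilon=\mathcal M^{\mathrm{bpcy}}$. That fails whenever $w_1+w_2\le1$ allows bpcy quasimaps on two-component curves.

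A concrete case: take $w_1=w_2=0$ and two points of coefficient $1$, both in the polarization part. Then $(\mathbb P^1,\{0\}+\{\infty\})$ lies in $\mathcal M^{\mathrm{Kss}}_\epsilon$. Blowing up $(0,0)\in\mathbb P^1\times\mathbb A^1$ gives a ws test configuration whose central fibre is the reducible polystable pair with $\mathbb G_m^2$-stabilizer, which is not in $\mathcal M^{\mathrm{Kss}}_\epsilon$.

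Your S-completeness justification only shows that generizations of points of the open substack stay in it, which is just openness. Saturation needs the closed point in the closure of each point of $\mathcal M^{\mathrm{Kss}}_\epsilon$ to stay in it, and that is exactly what fails. This non-saturation is why only a homeomorphism, i.e.\ an isomorphism after seminormalization, is obtained.

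So injectivity of $M^{\mathrm{Kps}}_\epsilon\to M^{\mathrm{bpcy}}$ is unproved and needs a direct argument, as in Theorem~\ref{thm:wall-crossingbpcytoK-moduli}:
\begin{itemize}
\item Over the klt locus, $\phi^{-1}$ of it is the K-stable locus, and klt bpcy quasimaps are bpcy stable (Lemma~\ref{lem:stable-is-klt-bpcy}).
\item Over the strictly lc locus, polystable bpcy quasimaps are constant (Lemma~\ref{lem:constant-qmaps}, Proposition~\ref{prop--polyst-is-const}). A K-polystable but not K-stable log Fano quasimap has boundary and base locus supported at two points, carries a $\mathbb G_m$-action, and is determined by its bpcy polystable degeneration.
\end{itemize}

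\textbf{Surjectivity is argued in the wrong direction.} A polystable bpcy quasimap on a reducible curve is a closed point outside $\mathcal M^{\mathrm{Kss}}_\epsilon$, so it has no degeneration into that locus. What you need is the reverse: each such point is the special fibre of a ws test configuration whose general fibre lies on $\mathbb P^1$. The paper obtains this by contracting one component via Proposition~\ref{prop:contraction-thm} and using the test configuration induced by the image of the node, as in Lemma~\ref{lem:constant-qmaps}.

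This generization statement is also the density your ``proper, hence closed image'' argument silently requires. Birationality in your step~3 is not needed: properness and bijectivity on closed points suffice to identify the seminormalizations.
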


  Theorem \ref{mainthm:CY-wall-crossing} follows from Theorems \ref{thm:wall-crossingbpcytoK-moduli} and \ref{thm:final-wall-crossing-st-kst}. When $\iota=\operatorname{id}_{\mathbb P^N}$, the comparison
morphism is defined without taking seminormalizations (see Remark \ref{rem:final}).
  
\subsection{Construction of the wall crossing morphisms} 

In the stable region, we construct reduction morphisms by
moving through successive walls and contracting the rational
components on which the adjoint divisor $A_{\vec a,w}$ on the wall has
degree zero. After reducing to rational weights, these
contractions are defined over the base by relative linear
systems. 

When the quasimap weight remains positive, the quasimap
degree on a contracted rational tail is retained as
base-point multiplicity, as in Toda's wall crossing for
stable quotients \cite[Proposition~2.21]{Toda}.
The rational bridges contracted in this case have quasimap
degree zero. To obtain morphisms of moduli stacks, the
resulting boundary divisors and quasimap data must be
constructed in families, compatibly with base change.

For a contraction $h\colon C\to C'$ over $S$ arising in
these reductions, Proposition~\ref{prop:pushfoward} uses Koll\'ar's divisorial
support \cite{kollar-moduli} to define the transforms of the retained
boundary divisors as relative Mumford divisors on $C'$.
Proposition~\ref{prop:push--linear-equivalence} shows that this operation is compatible
with the rational functions defining linear equivalences.
We apply this to the zero divisors of suitable sections
defining the quasimap family and use their ratios to construct
the new line bundle and sections on $C'$.
For the positive quasimap weight $w$, Proposition~\ref{prop:contraction-thm} gives a
locally stable family of quasimaps of the same degree.
The construction is independent of the auxiliary choices
and commutes with base change, including over nonreduced
bases. Together with the contractions above, this gives
the reduction morphisms in Theorem~\ref{thm:st-qmaps-wall-crossing}.

In the log Fano region, no contraction of the source is
needed. For fixed degrees, the numerical K-semistability
criterion involves only finitely many linear inequalities
in the weights (Proposition~\ref{prop:chmber--Kss}). These inequalities allow
us to construct the morphisms of Theorem~\ref{thm:wallcrossingmorphKss} under
specialization to a face by changing the weights and,
when necessary, forgetting data whose weights become zero.

Both constructions are used for wall crossing within the
boundary polarized CY locus (Theorem~\ref{thm--wall-crossing-bpcy}).
When the set of zero boundary coefficients and quasimap
weights is unchanged, the morphism is an open immersion
obtained by changing the weights on the same families.
In general, we may also need to contract components on which
the polarization part
$\vec a'_2\cdot\vec D_2+w'_2L_q$
for the new weights $\vec{a}_1',\vec a'_2,w'_1$ and $w'_2$ has degree zero.
If $w'_1+w'_2>0$, the quasimap on the contracted family
is constructed by the method described above; otherwise,
the quasimap data are forgotten.
The open immersions into the moduli stacks of boundary polarized CY quasimaps in Theorem~\ref{mainthm:CY-wall-crossing}
are obtained by changing the weights without contracting
the source.

  \subsection{Relation with existing moduli theories} 

  Weighted stable maps and their reduction morphisms were
developed by Alexeev--Guy and Bayer--Manin \cite{Al-Guy,bayer2009stability},
building on Hassett's theory of weighted pointed curves
\cite{Hassett}.
Marian--Oprea--Pandharipande introduced stable quotients, maps to quotient stacks
associated with Grassmannian varieties \cite{marian2011moduli}.
Toda generalized their stability condition by introducing
$\epsilon$-stable quotients and studied their wall crossing 
\cite{Toda}.

Givental and Morrison--Plesser introduced compactifications
of spaces of rational maps from $\P^1$ to toric varieties
to study Gromov--Witten invariants \cite{givental1998mirror,morrison1995summing}.
Ciocan-Fontanine--Kim--Maulik generalized this approach,
together with the stable-quotient construction, to a broad
class of GIT quotients, using maps to the associated quotient
stacks to formulate the quasimap moduli problem \cite{ciocan2014stable}.
For such targets, Ciocan-Fontanine--Kim further incorporated
independently weighted markings and constructed the
corresponding moduli stacks of stable quasimaps
\cite[Section~2.2]{ciocan2013big}.
For wall-crossing formulas for virtual classes of these moduli stacks of quasimaps
and their relation to mirror symmetry, see \cite{ciocan2020quasimap}.
Our Theorem \ref{mainthm:stable-region} concerns effective relative
boundary divisors of prescribed degrees, projectivity of the moduli spaces, and the reduction
morphisms obtained by decreasing both their coefficients
and the quasimap weight.

The foundations of KSBA moduli theory were laid in the
pioneering work of Koll\'ar--Shepherd-Barron on moduli of
surfaces of general type and of Alexeev on boundedness
of log surfaces \cite{kollar1988threefolds,alexeev1994boundedness}.
Through subsequent contributions by many authors, this
developed into a theory of higher-dimensional projective
slc pairs with ample log canonical divisor, with projective
coarse moduli spaces for fixed numerical data;
see \cite{kollar-moduli}.
More recently, Ascher--Bejleri--Inchiostro--Patakfalvi
established a wall-crossing theory for higher-dimensional
stable pairs under an assumption that the generic pair is klt, constructing
reduction and wall-crossing morphisms as the boundary
coefficients vary \cite{ascher2023ksba}.
Meng--Zhuang subsequently extended this theory to families
with log canonical general fibers \cite{meng2023mmp}.

Alexeev also extended Kontsevich's moduli theory of stable maps
to maps from surfaces \cite{V}.
Dervan--Ross introduced K-stability for maps between
polarized varieties and constructed projective moduli
spaces of canonically polarized stable maps in arbitrary
dimension, extending Alexeev's surface case \cite{dervan2019stable}.
  
Through contributions by many authors in algebraic and
differential geometry, K-moduli theory, the moduli theory of K-semistable
log Fano pairs has been established, with projective good
moduli spaces for fixed rational boundary coefficients
and numerical data; see \cite{xu2025k}.
Liu--Zhou constructed proper good moduli spaces for K-semistable
log Fano pairs with real boundary coefficients
\cite{liu2024k}.
Ascher--DeVleming--Liu established a wall-crossing framework
for K-moduli of $\Q$-Gorenstein smoothable log Fano pairs whose boundary
is proportional to the anticanonical divisor \cite{ascher2024wall},
and applied it to moduli of plane curves and quartic K3
surfaces \cite{ascher2024wall,AscherDeVlemingLiu2023QuarticK3}.
This wall-crossing theory was subsequently generalized
in the proportional case \cite{zhou2023wall,zhou2026shape} and extended
to the non-proportional case under additional hypotheses
\cite{liu2026nonproportional,zhou2025finiteness}.
  
Zhou \cite{zhou2024log} established a general comparison theorem
between K-moduli and GIT for divisors in $|-mK_X|$ on a
fixed K-polystable Fano variety $X$.
Under a log canonicity assumption, the corresponding
GIT quotient is identified with the K-moduli space for
sufficiently small positive boundary coefficients,
that is, in the first chamber in the K-moduli wall crossing. This theory is also generalized to the non-proportional case \cite{liu2026nonproportional}.

Hashizume--Hattori introduced K-stability for quasimaps
and constructed projective K-moduli spaces in the log
Fano quasimap case \cite{HH2}.
Theorem~1.2 extends this moduli theory to several
independently weighted real boundary divisors and
constructs wall-crossing morphisms as their coefficients
and the quasimap weight vary.
Parker \cite{parker2007elementary} constructed GIT quotients
of weighted point configurations on $\P^1$ together with
polynomial data defining quasimaps to projective space.
For projective-space targets and degree-one boundary
divisors, his GIT (semi)stability criterion agrees with
our K-(semi)stability criterion after matching the weights.
Our construction therefore recovers these GIT quotients
as K-moduli spaces of log Fano quasimaps.

In the setting of plane curve pairs, Ascher--DeVleming--Liu
conjectured a moduli space at the Calabi--Yau wall relating
their K-moduli spaces to Hacking's compactification of moduli of plane curves via KSBA theory \cite{Hac04}
and giving a common ample model of the Hodge line bundle
\cite[Conjecture~1.8]{ascher2024wall}.
Ascher--Bejleri--Blum--DeVleming--Inchiostro--Liu--Wang
introduced boundary polarized CY pairs
\cite{babwild}, established valuative properties of their
moduli stacks, and constructed projective moduli spaces
for degenerations of plane curve pairs.
Blum--Liu constructed projective asymptotically good
moduli spaces for boundary polarized CY surface
pairs \cite{BL24}.
In the present paper, the objects also retain the quasimap
data and its base points. We construct projective good
moduli spaces of boundary polarized CY quasimaps from
curves, study their variation with the weights, and
compare them with the moduli spaces of suitable stable
and log Fano perturbations.

  \subsection{An application to the moduli space of rational elliptic surfaces} A particularly rich application of this framework comes from the moduli space of rational Weierstrass fibrations. A rational Weierstrass fibration can be represented by a Weierstrass form over $\P^1$, and its discriminant is a divisor of degree $12$ on the base. Miranda constructed a projective compactification by applying GIT to Weierstrass fibrations \cite{Miranda1981}. The discriminant also determines a configuration of twelve points on $\P^1$ and hence a point of the corresponding GIT quotient. By the work of Deligne--Mostow and Mostow, this quotient is closely related to a nine-dimensional complex ball quotient \cite{DeligneMostow1986,Mostow1986generalized}.
  The discriminants arising from rational Weierstrass fibrations, however, form a distinguished eight-dimensional locus inside this nine-dimensional moduli space. 
Heckman--Looijenga proved that the normalization of its closure
is the Baily--Borel compactification of an eight-dimensional
ball quotient \cite[Theorem~9.2]{HeckmanLooijenga2002}.
The natural birational correspondence between this
compactification and Miranda's compactification does not
extend to a morphism in either direction.

A rational elliptic surface with reduced discriminant $D$
also determines a degree-$12$ map $J\colon\P^1\to\P^1$.
At a point of the base with smooth fiber, its value is
the normalized $j$-invariant of that fiber.
Its pole divisor is precisely $D$, so that
$D=J^{-1}(\infty)$ consists of twelve distinct points.
Heckman--Looijenga used stable limits of these maps,
with the points of $D$ retained as twelve unordered
markings, to construct a compactification admitting
birational morphisms to both Miranda's and the
Baily--Borel compactifications
\cite[Section~3.2]{HeckmanLooijenga2002}.
They also describe these comparison morphisms
explicitly in terms of the degrees of $J$ on the components
of the source and the distribution of the marked points
\cite[Section~3.2]{HeckmanLooijenga2002}.
Using the K-moduli interpretations of the target
compactifications described below, we recover these
morphisms by composing the reduction morphisms for weighted
stable quasimaps with the comparison to log Fano K-moduli
through the boundary polarized CY locus
(Theorems~\ref{mainthm:stable-region} and \ref{mainthm:CY-wall-crossing}).
Thus, their geometric constructions admit an interpretation
as moduli wall crossing from stable quasimaps to log Fano
quasimaps.


In the companion work \cite{HM2}, we apply K-moduli wall crossing
for log Fano quasimaps to the degree-$12$ quasimaps encoding the
modular $J$-map and the discriminant divisor $D$.
We vary the boundary coefficient and quasimap weight as
$(1-t)/12$ and $t/12$, respectively, for $0\leq t\leq1$.
Let $\mathcal M_t$ denote the normalization of the closure
of the rational elliptic surface locus in the corresponding
K-moduli space.
For $0<t<1$, the moduli problem retains both the
discriminant divisor $D$, with coefficient $(1-t)/12$,
and the degree-$12$ quasimap $q$, with weight $t/12$.
Thus each intermediate model has a modular interpretation
in terms of K-semistable log Fano quasimaps, with stability
varying with $t$.
At end points, $\mathcal M_0$ (resp. $\mathcal M_1$) recovers the Baily--Borel
compactification (resp. Miranda's GIT compactification).
The rational map defined by the discriminant is thereby
described through the following sequence of modular
wall crossings, shown schematically for
$0<\epsilon\ll1$:
\[
\begin{tikzcd}[column sep=3em]
\mathcal M_1
&
\mathcal M_{1-\epsilon}
  \arrow[l, "\text{forget }D"']
  \arrow[r, dashed, "\text{wall crossings}"]
&[2em]
\mathcal M_{\epsilon}
  \arrow[r, "\text{forget }q"]
&
\mathcal M_0 .
\end{tikzcd}
\]
The present paper supplies the general moduli theory and wall crossing morphisms used in this construction.

\subsection{Organization of the paper}
Section~\ref{section:prelimI} collects the necessary background on stacks,
locally stable families, K-stability, boundary polarized
CY pairs, quasimaps, and CM line bundles.
It also establishes the results used to construct boundary
divisors and quasimap data on the families contracting rational tails and bridges.
In Section~\ref{sec:Kssqmapswallcrossing}, we construct projective K-moduli spaces of
log Fano quasimaps with several weighted boundary divisors
and study their chamber decomposition and wall-crossing
morphisms.
Section~\ref{sec:bpcyqmapswallcrossing} develops the moduli theory of boundary polarized
CY quasimaps. We construct their projective good moduli
spaces, prove ampleness of the Hodge line bundle, and
study wall crossing within the CY locus and comparisons
with K-moduli stacks of log Fano quasimaps.
We also compare the Hodge and CM line bundles.
Finally, Section~\ref{sec:KSBAqmapswallcrossing} constructs the moduli spaces of weighted
stable quasimaps in arbitrary genus and proves their
projectivity. We construct reduction
morphisms and, in genus zero, compare the moduli
spaces of stable quasimaps with the moduli spaces of boundary polarized CY quasimaps and K-moduli of log Fano quasimaps.

\subsection*{Acknowledgment} The authors thank Kenta Hashizume and Yuki Koto for valuable discussions.
The authors used ChatGPT (OpenAI) during the preparation of this manuscript for language and expository editing. The final constructions, computations, and mathematical arguments were independently checked by the authors, who take full responsibility for the contents of the paper.
M.H.~was supported by Royal Society International Collaboration Award ICA\textbackslash1\textbackslash231019. Y.M.~is partially supported by the Alexander von Humboldt Foundation through a Humboldt Research Fellowship, by Deutsche Forschungsgemeinschaft (DFG, German Research Foundation) through the Collaborative Research Centre TRR 326 \emph{Geometry and Arithmetic of Uniformized Structures}, project number 444845124, and by Waseda University Grant for Special Research Projects (Project number: 2026R-022).

\section{Preliminaries}\label{section:prelimI}

\subsection*{Notations and Conventions}
We work over an algebraically closed field $\mathbb{K}$ of characteristic zero. 
We collect the following terminologies, which we will use in this paper.

\begin{enumerate}
    \item We say that $V$ is a {\it variety} over $\mathbb{K}$ if $V$ is a reduced and irreducible scheme quasi-projective over $\mathbb{K}$.
    If we say that $C$ is a nodal curve, then $C$ is a reduced and separated one dimensional scheme of finite type over $\mathbb{K}$ with at worst nodal singularities. 
    \item Let $X$ denote a locally Noetherian scheme and $\mathscr{K}_X$ denote the sheaf of total quotient rings (see \cite[II, Section 6]{Ha}).  
    \item Let $K$ be a subset of $\mathbb{R}$.
    For any $\vec{x}=(x_1,\ldots,x_n)\in K^n$ and $\vec{y}=(y_1,\ldots,y_m)\in K^m$, we set $(\vec{x},\vec{y}):=(x_1,\ldots,x_n,y_1,\ldots,y_m)\in K^{n+m}$.
    If $n=m$, then let $\vec{x}\cdot\vec{y}$ denote $\sum_{i=1}^nx_iy_i$.
    We regard $K^0=\emptyset$ and $\vec{x}\in K^0$ means that $\vec{x}=0$.
    \item Let $X$ be a scheme or a stack.
    Let $\Omega$ be an algebraically closed field over $\mathbb{K}$.
    A morphism $\bar{x}\colon\mathrm{Spec}\,\Omega\to X$ is called a {\it geometric point} and we will write it as $\bar{x}\in X$.
    \item Let $f\colon \mathcal{X}\to \mathcal{Y}$ be a morphism of Artin stacks over an Artin stack $S$.
    For any morphism $g\colon T\to S$ from an Artin stack, let $f_T\colon \mathcal{X}_T\to \mathcal{Y}_T$ denote the canonical morphism $\mathcal{X}\times_ST\to \mathcal{Y}\times_ST$.
    If $\mathcal{F}$ is a quasi-coherent sheaf on $\mathcal{X}$, then let $\mathcal{F}_T$ denote the pullback of $\mathcal{F}$ to $\mathcal{X}_T$.
    If $T=\mathrm{Spec}\,A$, then we will write it as $f_A$ for example.
    \item We will write $\mathfrak{X}\times_{\mathbb{K}}\mathfrak{Y}:=\mathfrak{X}\times\mathfrak{Y}$ for simplicity.
    \item Let $S$ be a scheme and $\mathcal{Q}:=\oplus_{d\ge0}\mathcal{Q}_d$ a graded finitely generated $\mathcal{O}_S$-algebra such that $\mathcal{Q}_d$ is coherent.
    Then we can consider $\mathcal{P}:=\mathbf{Proj}_S(\mathcal{Q})$.
    If $\mathcal{Q}_1$ generates $\mathcal{Q}$ and $\mathcal{Q}_0\cong\mathcal{O}_S$, then one can consider $\mathcal{O}_{\mathcal{P}}(1)$. 
    If $X\subset \mathcal{P}$ is a closed subscheme, then we set $\mathcal{O}_X(1):=\mathcal{O}_{\mathcal{P}}(1)|_X$.
    If there is no fear of confusion, we will write it as $\mathcal{O}(1)=\mathcal{O}_X(1)$ for simplicity.
    \item Let $X$ be a normal scheme proper over $S$, where $S$ is a variety over $\mathbb{K}$. We say that a $\mathbb{R}$-line bundle $L$ is {\it semiample} over $S$ if $L=\sum_{i=1}^r\alpha_i L_i$, where $\alpha_i>0$ and $L_i$ is a relatively globally generated line bundle on $X$ over $S$.
    If $L$ is a $\mathbb{Q}$-line bundle, this is nothing but that there exists $m\in\mathbb{Z}_{>0}$ such that $mL$ is relatively globally generated.
    \item Let $X$ be a scheme and $x\in X$.
    Then $\mathcal{O}_{X,x}$ denotes the stalk of $\mathcal{O}_X$ at $x$ and $\kappa(x)$ denotes the residue field.
    \item Let $f\colon X\to S$ be a proper flat morphism with a coherent flat sheaf $\mathcal{F}$ and $A$ a $f$-very ample line bundle.
    We say that $\mathcal{F}$ {\it has the Hilbert polynomial $P$ with respect to} $A$ if for any $s\in S$, $P(m)=\chi(\mathcal{F}_s\otimes A_s^{m})$ for any $m\in\mathbb{Z}$.
    We set $\mathrm{Hilb}_{X/S}$ as the Hilbert scheme parameterizing all flat closed subschemes of $X$ and $\mathrm{Quot}^{\mathcal{F}}_{X/S}$ as the Quot scheme parameterizing flat quotient sheaves of $\mathcal{F}$.
    As discussed in \cite[Sections 5 and 9]{fantechi2005fundamental}, we know the existence of such schemes.
    Furthermore, as \cite[Theorem 5.8]{fantechi2005fundamental}, take $l\in\mathbb{Z}_{>0}$ and consider the functor $\mathfrak{Hom}(\mathcal{O}_X^{\oplus l},\mathcal{F})$ such that for any scheme $T$ over $S$, we attain the set $\mathfrak{Hom}(\mathcal{O}_X^{\oplus l},\mathcal{F})(T):=\mathrm{Hom}_{\mathcal{O}_{X_T}}(\mathcal{O}_{X_T}^{\oplus l},\mathcal{F}_T)$.
    Then there exists a quasi-projective scheme $\mathbf{Hom}_S(\mathcal{O}_X^{\oplus l},\mathcal{F})$ corepresenting $\mathfrak{Hom}(\mathcal{O}_X^{\oplus l},\mathcal{F})$.
\end{enumerate}

\subsection{Stack theory}

We need stack theory (see \cite{olsson2016algebraic,stacksproject}). As in \cite[Remark 2.18]{hashizume2025geometry}, we consider the category of schemes as the category of locally Noetherian schemes over $\mathbb{K}$ but it does not affect our theory.
\begin{defn}
Let $\mathfrak{X}$ be a stack defined over the category of schemes over $\mathbb{K}$ endowed with the \'etale topology (cf.~\cite[Definition 4.6.1]{olsson2016algebraic}).   
We say that
$\mathfrak{X}$ is a {\it Deligne-Mumford stack} if 
\begin{enumerate}
\item the diagonal morphism $\Delta\colon\mathfrak{X}\to\mathfrak{X}\times \mathfrak{X}$ is representable by schemes, and
    \item there exists an \'etale surjective morphism $\pi\colon X\to \mathfrak{X}$ from a scheme over $\mathbb{K}$.
\end{enumerate}
If a Deligne-Mumford stack $\mathfrak{X}$ is an \'etale sheaf, then we say that $\mathfrak{X}$ is an {\it algebraic space}.
We say that $\mathfrak{X}$ is an {\it Artin stack} if 
\begin{enumerate}
\item the diagonal morphism $\Delta\colon\mathfrak{X}\to\mathfrak{X}\times \mathfrak{X}$ is representable by algebraic spaces, and
    \item there exists a smooth surjective morphism $\pi\colon X\to \mathfrak{X}$ from a scheme over $\mathbb{K}$.
\end{enumerate}
\end{defn}
The above definition of a Deligne-Mumford stack coincides with \cite[Definition 8.3.1]{olsson2016algebraic} thanks to \cite[Theorem 7.2.10]{olsson2016algebraic}.

\begin{defn}[{\cite[Definition 4.1]{Alper}}]
    Let $\mathfrak{X}$ be an Artin stack of finite type.
    Then a morphism $\pi\colon \mathfrak{X}\to X$ is called the morphism of the {\it good moduli space} if \begin{enumerate}
        \item $\mathcal{O}_{X}\cong \pi_*\mathcal{O}_{\mathfrak{X}}$, and
        \item $\pi_*\colon \mathrm{QCoh}_{\mathfrak{X}}\to \mathrm{QCoh}_{X}$ is an exact functor.
    \end{enumerate}
    We say that $X$ is the {\it good moduli space}, which is uniquely determined up to isomorphism by \cite[Theorem 6.6]{Alper}.
\end{defn}

From now on, we will recall the definition of the $\mathsf{S}$-completeness and $\Theta$-reductivity.

\begin{defn}[{\cite[Definitions 3.10 and 3.38]{AHLH}}]
    Let $\pi\colon \mathfrak{X}\to \mathfrak{Y}$ be a morphism of Artin stacks.
    For a discrete valuation ring (DVR for short) $R$, we set \[\overline{\mathrm{ST}}(R):=[\mathrm{Spec}(R[s,t]/(st-\varpi))/\mathbb{G}_{m,R}],\] where $\varpi$ is a generator of the maximal ideal of $R$, $\mathbb{G}_{m,R}$ acts on $s$ and $t$ with weight $1$ and $-1$, respectively.
    Set $\overline{\mathrm{ST}}^\circ(R)$ as the complement of the image of the point defined by the ideal $(s,t)$.
    Let $j^{\mathrm{ST}}\colon \overline{\mathrm{ST}}^\circ(R)\hookrightarrow\overline{\mathrm{ST}}(R)$ be the canonical open immersion.
    We set $\Theta_A:=[\mathbb{A}^1_A/\mathbb{G}_{m,A}]$ for any $\mathbb{K}$-algebra $A$.
    For $\Theta_R$, set $\Theta^\circ_R$ as the complement of the image of the zero point of $\mathbb{A}^1_{R/\varpi}$.
    Let $j^{\Theta}\colon \Theta^\circ_R\to\Theta_R$ be the canonical open immersion.
    Then we say that 
    \begin{enumerate}
        \item $\pi$ is {\it $\mathsf{S}$-complete} if for any DVR $R$ and morphism $\mu^\circ\colon \overline{\mathrm{ST}}^\circ(R)\to \mathcal{X}$ with morphism $\xi\colon \overline{\mathrm{ST}}(R)\to \mathcal{Y}$ such that $\xi\circ j^{\mathrm{ST}}=\pi\circ\mu^\circ$, there exists a unique morphism $\mu\colon \overline{\mathrm{ST}}(R)\to \mathcal{X}$ such that $\mu\circ j^{\mathrm{ST}}=\mu^\circ$ and  $\pi\circ\mu=\xi$, and 
        \item $\pi$ is {\it $\Theta$-reductive} if for any DVR $R$ and morphism $\mu^\circ\colon \Theta^\circ_R\to \mathcal{X}$ with morphism $\xi\colon \Theta_R\to \mathcal{Y}$ such that $\xi\circ j^{\Theta}=\pi\circ\mu^\circ$, there exists a unique morphism $\mu\colon \Theta_R\to \mathcal{X}$ such that $\mu\circ j^{\Theta}=\mu^\circ$ and  $\pi\circ\mu=\xi$.
    \end{enumerate}
\end{defn}

For the properties of the good moduli space of Artin stacks of finite type, see \cite{Alper,AHLH}.
For the definition of $\mathsf{S}$-completeness and $\Theta$-reductivity of an Artin stack, see \cite[Definitions 3.10 and 3.38]{AHLH} for more detail.
We say that an Artin stack $\mathfrak{X}$ of finite type over $\mathbb{K}$ with affine diagonal is $\mathsf{S}$-complete and $\Theta$-reductive if $\mathfrak{X}$ is $\mathsf{S}$-complete and $\Theta$-reductive over $\mathbb{K}$.

We say that $\mathfrak{X}$ satisfies the existence part of the valuative criterion if for any DVR $R$ over $\mathbb{K}$ with the fractional field $K$ and morphism $\mathrm{Spec}\,K\to \mathfrak{X}$, there exists an extension $R\subset R'$ of DVR such that the induced morphism $\mathrm{Spec}\,K'\to \mathfrak{X}$ can be extended to $\mathrm{Spec}\,R'\to \mathfrak{X}$, where $K'$ is the fractional field of $R'$.

\begin{thm}[{\cite[Theorem A]{AHLH}}]
  Let $\mathfrak{X}$ be an Artin stack of finite type over $\mathbb{K}$ with affine diagonal.
  Then $\mathfrak{X}$ has a separated good moduli space $X$ if and only if $\mathfrak{X}$ is $\mathsf{S}$-complete and $\Theta$-reductive.
  Furthermore, $\mathfrak{X}$ satisfies the existence part of the valuative criterion if and only if $X$ is proper over $\mathbb{K}$.
\end{thm}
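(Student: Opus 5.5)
This is \cite[Theorem A]{AHLH}; we would follow the strategy of Alpher--Halpern-Leistner--Heinloth and do not reprove it in the paper. The proof has three parts: necessity, sufficiency (the hard part), and the properness statement.

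\emph{Necessity.} Suppose $\pi\colon\mathfrak{X}\to X$ is a separated good moduli space. By the \'etale local structure theorem of Alper--Hall--Rydh, $\mathfrak{X}$ is, \'etale locally over $X$, of the form $[\mathrm{Spec}\,A/G]$ with $G$ linearly reductive. Moreover, the good moduli space of such a chart is $\mathrm{Spec}\,A^G$. Both $\mathsf{S}$-completeness and $\Theta$-reductivity can be checked \'etale locally over $X$, and the unique-filling conditions descend, so we reduce to a quotient $[\mathrm{Spec}\,A/G]$.
\begin{itemize}
\item A morphism $\overline{\mathrm{ST}}^\circ(R)\to[\mathrm{Spec}\,A/G]$ amounts to a $G$-torsor with an equivariant map to $\mathrm{Spec}\,A$.
\item Since $\overline{\mathrm{ST}}(R)$ and $\Theta_R$ are regular of dimension two and the removed locus has codimension two, torsors and equivariant functions extend (Hartogs).
\item The good moduli spaces of $\overline{\mathrm{ST}}(R)$ and $\Theta_R$ are both $\mathrm{Spec}\,R$, and $X$ is separated, so the extension is compatible with $X$.
\end{itemize}
This gives both filling properties.

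\emph{Sufficiency.} Assume $\mathfrak{X}$ is $\mathsf{S}$-complete and $\Theta$-reductive.
\begin{enumerate}
\item Show that every closed point $x$ has linearly reductive stabilizer $G_x$. The idea is that $\mathsf{S}$-completeness forces $BG_x$ to be $\mathsf{S}$-complete. Then $G_x$ is reductive, since a non-reductive group has a nontrivial unipotent radical, which yields non-unique or non-existent fillings over $\overline{\mathrm{ST}}(R)$. In characteristic zero, reductive means linearly reductive.
\item By the local structure theorem, pick \'etale neighborhoods $f\colon[\mathrm{Spec}\,A/G_x]\to\mathfrak{X}$ of each closed point.
\item The key step is to shrink these charts until they are \emph{strongly \'etale}: $f$ should be stabilizer preserving and send closed points to closed points. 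We would use $\Theta$-reductivity to show that closed points map to closed points: a failure would produce a $\Theta$-degeneration that cannot be lifted. We would use $\mathsf{S}$-completeness to control stabilizers.
\item With strongly \'etale charts, Alper's descent criterion glues the affine good moduli spaces $\mathrm{Spec}\,A^{G_x}$ into an algebraic space $X$ with $\pi$ a good moduli space.
\item Separatedness of $X$ follows from $\mathsf{S}$-completeness, applied to two points of $\mathfrak{X}$ over the generic point of $\mathrm{Spec}\,R$ with the same image in $X$.
\end{enumerate}

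\emph{Properness.} Good moduli space morphisms are universally closed and surjective. Hence the existence part of the valuative criterion for $\mathfrak{X}$ (allowing DVR extensions) transfers to $X$, giving properness. Conversely, if $X$ is proper, lift $R$-points of $X$ to $\mathfrak{X}$ after an extension of DVRs, using surjectivity and universal closedness of $\pi$.

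The main obstacle is the strongly \'etale shrinking in step (3) of sufficiency. This is where both conditions are used in an essential way, and we would invoke the corresponding results of \cite{AHLH} there.
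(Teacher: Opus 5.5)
Your proposal agrees with the paper: the paper gives no proof of its own and simply quotes \cite[Theorem~A]{AHLH}, and your outline is a faithful summary of the argument there. That outline consists of linearly reductive stabilizers from $\mathsf{S}$-completeness, local structure charts shrunk using $\Theta$-reductivity and $\mathsf{S}$-completeness until they preserve stabilizers and closed points, gluing of the affine good moduli spaces, and properness via universal closedness of $\pi$. One detail to fix if you wrote it out is the separatedness step: two lifts of maps $\mathrm{Spec}\,R\rightrightarrows X$ whose generic points merely have the same image in $X$ need not glue, so you must first choose both lifts (after extending $R$) with generic point the unique closed point of the fibre of $\pi$, which then defines a map from $\overline{\mathrm{ST}}^\circ(R')$ to which $\mathsf{S}$-completeness applies.
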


\subsection{Log pairs and locally stable families}

We follow the fundamental notions of log pairs and the minimal model program in \cite{komo}.

Let $X$ be a quasi-projective normal variety with a $\mathbb{R}$-divisor $\Delta$.
If $K_X+\Delta$ is $\mathbb{R}$-Cartier, we say that $(X,\Delta)$ is a {\it sublog pair}.
If $\Delta$ is further effective, we say that $(X,\Delta)$ is a {\it log pair}.

Suppose that $(X,\Delta)$ is a sublog pair.
For any prime divisor $E$ over $X$, we set the following numerical invariant
\[
A_{X,\Delta}(E):=1+\mathrm{Coeff}_E(K_Y-\pi^*(K_X+\Delta)),
\]
where $\pi\colon Y\to X$ is a projective birational morphism from a normal variety such that $E$ is a prime divisor on $Y$. We call this number the {\it log discrepancy} of $E$ with respect to $(X,\Delta)$. Note that $A_{X,\Delta}(E)$ is independent of the choice of $\pi$.
We say that a sublog pair $(X,\Delta)$ is {\it sublc} (resp.~{\it subklt}) if $A_{X,\Delta}(E)\ge0$ (resp.~$>0$) for any prime divisor $E$ over $X$.
If $\Delta$ is further effective, we say that $(X,\Delta)$ is {\it log canonical (lc)} (resp.~{\it Kawamata log terminal (klt)}) if $(X,\Delta)$ is sublc (resp.~subklt).

\begin{defn}[Deminormal pairs {\cite[Section 5]{kollar2013singularities}}]
\begin{enumerate}
    \item 
    Let $X$ be a quasi-projective equidimensional reduced scheme with Serre's $S_2$-condition.
    Suppose that $X$ has at worst nodal singularities in codimension one and there exists a $\mathbb{R}$-divisor $\Delta$ whose support does not contain any singularity of codimension one.
    In this case, we say that $(X,\Delta)$ is {\it a deminormal subpair} if $K_X+\Delta$ is $\mathbb{R}$-Cartier.
    If $\Delta$ is further effective, then $(X,\Delta)$ is {\it a deminormal pair}.

    \item Suppose that $(X,\Delta)$ is deminormal.
    Let $\nu\colon Y\to X$ be the morphism of the normalization.
    Then $\mathcal{H}om_{\mathcal{O}_X}(\nu_*\mathcal{O}_{Y},\mathcal{O}_X)$ can be regarded as an ideal sheaf of both of $\mathcal{O}_X$ and $\mathcal{O}_Y$.
    We say that the subscheme defined by $\mathcal{H}om_{\mathcal{O}_X}(\nu_*\mathcal{O}_{Y},\mathcal{O}_X)$ is the {\it conductor}.
    Let $D$ be the conductor of $Y$ and note that $D$ is a reduced Weil divisor.
    We see that
    \[
    K_Y+D+\nu^{-1}_*\Delta=\nu^*(K_X+\Delta).
    \]
    If $(Y,D+\nu^{-1}_*\Delta)$ is lc, we say that $(X,\Delta)$ is {\it semi log canonical (slc)}.
    \end{enumerate}
\end{defn}

We will consider the family of slc pairs in the following sense.

\begin{defn}
Let $f\colon X\to S$ be a flat projective morphism of finite type and of locally Noetherian schemes. Suppose that any fiber is a deminormal scheme of pure dimension $n$.
Let $D$ be a closed subscheme of $X$.
We say that $D$ is a {\it relative Mumford divisor} if
\begin{enumerate}
    \item For any point $s\in S$, $X_s$ is smooth around the generic point of $D_{s}$,
    \item Let $U\subset X$ be the maximal open subset such that $f|_U$ is smooth, $D|_U$ is Cartier and flat over $S$,
    \item $\mathrm{codim}_{X_s}(D_s\setminus U)\ge2$ for any $s\in S$, and
    \item $D$ is the Zariski closure of $D|_U$.
\end{enumerate}
If the above $D$ is Cartier and flat over $S$, then we say that $D$ is a {\it relative Cartier divisor}.

We say that an $R$-linear combination $\Delta$ of relative Mumford divisors is a {\it relative Mumford $R$-divisor}, where $R=\mathbb{Z}, \mathbb{Q}$ or $\mathbb{R}$.
For any $T\to S$, we set $\Delta_T\subset X_T$ as the closure of the pullback of $\Delta|_U$ to $U_T$.
Then $\Delta_T$ is a relative Mumford $R$-divisor again.
\end{defn}

 Let $f\colon X\to S$ be a flat projective morphism of finite type and of locally Noetherian schemes of relative dimension $n$.
 Let $F$ be a coherent sheaf on $X$.
 Suppose that there exists an open subset $U\subset X$ such that $F|_U$ is flat over $S$ and Cohen-Macaulay and for any $s\in S$, $F_s|_U$ is of pure dimension $n-1$ and $X_s\setminus U$ contains only at most $(n-2)$-dimensional points of $\mathrm{Supp}(F_s)$.
 Suppose further that $f$ is smooth around the generic points of $\mathrm{Supp}(F_s)$ of codimension one in $X_s$.
 In this case, we set $\mathrm{DSupp}(F|_U)=\mathbf{Spec}_U(\mathcal{O}_U/I)$, where $I$ is the $0$-th Fitting ideal of $F|_U$ (see \cite[7.21]{kollar-moduli} for more details). As shown in \cite[Lemma 7.24]{kollar-moduli}, $\mathrm{DSupp}_S(F|_U)$ is a relative Cartier divisor over $S$.
 We also set the Zariski closure of $\mathrm{DSupp}_S(F|_U)$ in $X$ as $\mathrm{DSupp}_S(F)$. 
 Note that for any morphism $g\colon T\to S$, $g_X^*\mathrm{DSupp}_S(F)=\mathrm{DSupp}_T(F_T)$.

 If $S$ is the spectrum of a local ring, we say that an effective relative Mumford divisor $D\subset X$ is {\it K-flat} if for any finite morphism $\rho\colon \mathrm{Supp}(D)\to \mathbb{P}_S^{n}$, $\mathrm{DSupp}_S(\rho_*\mathcal{O}_D)$ is relative Cartier.
 In the general case, we say that $D$ is {\it K-flat} if $D_T$ is K-flat, where $T=\mathrm{Spec}\,\mathcal{O}_{S,s}$ for any $s\in S$.
 Note that if $f$ is a family of nodal curves, $D$ is automatically relative Cartier and K-flat.

Take $d\in\mathbb{Z}_{>0}$.
If $f$ is a family of nodal curves, we can consider a scheme $\mathrm{KDiv}^d_{X/S}$ separated and of finite type over $S$ that can be regarded as the fine moduli scheme parameterizing the family of K-flat divisors of degree $d$ (see \cite[Theorem 7.3]{kollar-moduli}).

\begin{defn}
    Let $f\colon X\to S$ be a flat projective morphism of finite type and of locally Noetherian schemes. Suppose that any fiber is a deminormal scheme of pure dimension $n$.
    Let $\mathscr{F}$ be a coherent sheaf such that there exists an open subset $U\subset X$ such that $\mathscr{F}|_{U}$ is a line bundle and for any $s\in S$, $\mathrm{codim}_{X_s}(X_{s}\setminus U)\ge2$.
    Assume that there exists a coherent sheaf $\mathscr{G}$ on $X$ such that 
    \begin{enumerate}
        \item $\mathscr{G}$ is flat over $S$, and
        \item there exists a morphism $\iota\colon\mathscr{F}\to \mathscr{G}$ such that for any $s\in S$, $\iota$ is isomorphic to the canonical morphism $\mathscr{F}_s\to(\mathscr{F}_s)^{[**]}$.
    \end{enumerate}
    Then we say that $\mathscr{G}$ is the {\it universal hull} of $\mathscr{F}$ and it is easy to see that $\iota$ is unique up to isomorphism.
\end{defn}

\begin{defn}
Let $f\colon X\to S$ be a flat projective morphism of finite type and of locally Noetherian schemes. Suppose that any fiber is a deminormal scheme of pure dimension $n$.
    Let $D_1,\ldots, D_r$ be K-flat relative Mumford divisors on $X$ over $S$ and $a_1,\ldots,a_r\in (0,1]$.
Let $V$ be the smallest rational linear subspace of $\mathbb{R}^{r+1}$  containing $(1,0,\ldots,0)$ and $(0,\ldots,0,a_i,0,\ldots,0)$.
We say that $f\colon (X,\sum_{i=1}^ra_i D_i)\to S$ is a {\it locally stable family} if 
\begin{enumerate}
\item for any $m_0,\ldots,m_r\in\mathbb{Z}_{\ge0}$ such that $(m_0,m_1,\ldots,m_r)\in V$ and $J\subset \{1,\ldots, r\}$ such that $a_j=1$ for any $j\in J$, there exists the universal hull $\omega^{[m_0]}_{X/S}(\sum_{i=1}^rm_iD_i-B)$ of $\omega_{X/S}^{\otimes m_0}\otimes\mathcal{O}_X(\sum_{i=1}^rm_iD_i-B)$, where $B=\sum_{j\in J}D_j$,
\item $K_{X/S}+\sum_{i=1}^ra_i D_i$ is $\mathbb{R}$-Cartier, and
\item $(X_s,\sum_{i=1}^ra_i D_{i,s})$ is slc for any $s\in S$. 
\end{enumerate}
Here, $K_{X/S}+\sum_{i=1}^ra_i D_i$ is defined as a relative Mumford $\mathbb{R}$-divisor by item (1).
\end{defn}

\begin{rem}
    We note that if $f$ is a family of nodal curves, then $f\colon (X,\sum_{i=1}^ra_i D_i)\to S$ is a locally stable family if and only if conditions (2) and (3) above are satisfied and $D_i$ is just a relative Mumford divisor for each $i$ because any relative Mumford divisor is relative Cartier.
    Indeed, if $f$ is a family of nodal curves, we can always define $K_{X/S}$ because nodal curves are Gorenstein and $\omega_{X/S}^{\otimes m_0}\otimes\mathcal{O}_X(\sum_{i=1}^rm_iD_i-B)$ is Cartier. 
    In this case, we do not have to care about condition (1).
    \end{rem}

\begin{defn}
    Let $C$ be a proper connected nodal curve.
    Let $C'\subset C$ be an irreducible component of $C$ such that $C'$ is isomorphic to $\mathbb{P}^1$ and $C'$ intersects with $\overline{(C\setminus C')}$ at one point transversally.
    We say that $C'$ is a {\it rational tail}.
    
    Let $C''\subset C$ be an irreducible component of $C$ such that $C''$ is isomorphic to $\mathbb{P}^1$ and $C''$ intersects with $\overline{(C\setminus C'')}$ at exactly two points transversally.
    We say that $C''$ is a {\it rational bridge}.
    
    Then we can consider a morphism $\varphi\colon C\to \overline{(C\setminus C'')}$ and we call $\varphi$ the {\it contraction of a rational tail $C'$}.
    Furthermore, we can glue $\overline{(C\setminus C')}$ along the two points of $C''\cap \overline{(C\setminus C')}$ together and construct a new proper nodal connected curve $D$.
    Then we can consider a morphism $\psi\colon C\to D$ and we call $\psi$ the {\it contraction of a rational bridge $C''$}.
    We also say that a surjective morphism of nodal proper connected curves given by a sequence of contractions of rational tails or rational bridges successively is a {\it contraction}.
\end{defn}
   We note that for any proper nodal connected but reducible curve $C$ with a boundary $\Delta$ such that $(C,\Delta)$ is lc and
    $K_C+\Delta$ is not nef, then there exists a rational tail $C'$ such that $\mathrm{deg}(-(K_C+\Delta)|_{C'})>0$.
    Furthermore, if $K_C+\Delta$ is not ample, then there exists a rational tail $C'$ such that $\mathrm{deg}(-(K_C+\Delta)|_{C'})\ge0$ or a rational bridge $C''$ such that $\mathrm{Supp}\,\Delta\cap C''=\emptyset$.

\begin{prop}\label{prop:pushfoward}
  Let $f\colon C\to S$ and $g\colon C'\to S$ be flat and projective morphisms with each fiber connected nodal curves.
  Suppose that there exists a surjective morphism $h\colon C\to C'$ over $S$ and for any geometric point $\bar{s}\in S$, $C_{\bar{s}}\to C'_{\bar{s}}$ consists of successive contractions $\varphi_i\colon C_i\to C_{i+1}$ of rational tails and rational bridges.
  If $D$ is an effective relative Mumford divisor on $C$ such that the image of $D$ in $C_i$ is disjoint from the curve contracted by $\varphi_i$ or $\varphi_i$ is a contraction of a rational tail for any $i$, then $\mathrm{DSupp}_S(h_*\mathcal{O}_D)$ is a relative Mumford and Cartier divisor such that for any $\bar{s}\in S$, $\mathrm{DSupp}_S(h_*\mathcal{O}_D)_{\bar{s}}=h_{\bar{s}*}D_{\bar{s}}$.
\end{prop}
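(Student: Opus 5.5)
The plan is to show that $h|_D$ is finite and that $h_*\mathcal{O}_D$ is an $S$-flat sheaf supported inside the smooth locus of $g$. Kollár's divisorial support \cite[Lemma 7.24]{kollar-moduli} then produces a relative Cartier divisor, and its fibers are computed by a length count.

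\textbf{Step 1: $h|_D$ is finite and $h_*\mathcal{O}_D$ is flat and base-change compatible.} Since $D$ is an effective relative Mumford divisor on a family of nodal curves, it is relative Cartier. Hence $D\to S$ is flat and quasi-finite, and it is proper because $D$ is closed in the projective $C$. So $D\to S$ is finite, and therefore $h|_D\colon D\to C'$ is finite as well. It follows that $\mathcal{F}:=h_*\mathcal{O}_D$ is coherent on $C'$. It is flat over $S$, because $g\circ h|_D=f|_D$ is finite flat: locally over $S$, $\mathcal{F}$ corresponds to the same $\mathcal{O}_S$-module as $\mathcal{O}_D$. Since $h|_D$ is finite (hence affine), pushforward commutes with arbitrary base change. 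In particular, for every geometric point $\bar s$ we get $\mathcal{F}_{\bar s}=h_{\bar s*}\mathcal{O}_{D_{\bar s}}$.

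\textbf{Step 2: the support lies in the smooth locus of $g$.} This is the only place the hypothesis enters, and I expect it to be the main (if elementary) point. I would argue by induction along the successive contractions $\varphi_i\colon C_i\to C_{i+1}$ of a geometric fiber, showing that the image of $D_{\bar s}$ in each $C_i$ avoids the nodes of $C_i$. For $i=0$ this holds because $D$ is Mumford, so $C_{\bar s}$ is smooth near $D_{\bar s}$.
\begin{itemize}
\item If $\varphi_i$ contracts a rational bridge, the image of $D$ is disjoint from that bridge by hypothesis. Then $\varphi_i$ is an isomorphism near the image, and the only new node is the image of the bridge, so the image of $D$ still avoids the nodes.
\item If $\varphi_i$ contracts a rational tail, points of $D$ off the tail are untouched. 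Points on the tail map to the attaching point, which was a node of $C_i$ but is a smooth point of $C_{i+1}$, since the tail met the rest of the curve transversally at one point.
\end{itemize}
Hence $\mathrm{Supp}\,\mathcal{F}_{\bar s}$ lies in the smooth locus of $C'_{\bar s}$. Let $U\subset C'$ be the smooth locus of $g$. Then $\mathrm{Supp}\,\mathcal{F}\subset U$, and this support is closed in $C'$.

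\textbf{Step 3: $\mathrm{DSupp}$ is relative Cartier and Mumford, with the right fibers.} On $U$, $\mathcal{F}$ is $S$-flat and fiberwise of finite length, hence Cohen--Macaulay of pure dimension $0=n-1$ on the fibers. So \cite[Lemma 7.24]{kollar-moduli} shows that $\mathrm{DSupp}_S(\mathcal{F}|_U)$ is a relative Cartier divisor on $U$. Its support is closed in $C'$ and contained in $U$. Therefore its Zariski closure is itself, it is Cartier on all of $C'$, and it is a relative Mumford divisor.

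Fitting ideals commute with base change, so $\mathrm{DSupp}_S(\mathcal{F})_{\bar s}$ is the Fitting divisor of $h_{\bar s*}\mathcal{O}_{D_{\bar s}}$ on the smooth curve $U_{\bar s}$. At a smooth point $p$, the local ring is a DVR, and the $0$-th Fitting ideal of a finite-length module $M$ is $\mathfrak{m}_p^{\mathrm{length}(M)}$. Moreover,
\[
\mathrm{length}\bigl((h_{\bar s*}\mathcal{O}_{D_{\bar s}})_p\bigr)=\sum_{x\in h^{-1}(p)}\mathrm{length}\,\mathcal{O}_{D_{\bar s},x},
\]
since all residue fields are algebraically closed. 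This sum is exactly the coefficient of $p$ in the cycle-theoretic pushforward $h_{\bar s*}D_{\bar s}$, which gives the claimed equality on geometric fibers.
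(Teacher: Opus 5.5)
Your proposal is correct and follows the paper's proof. Relative Cartierness of $\mathrm{DSupp}_S(h_*\mathcal{O}_D)$ comes from Koll\'ar's divisorial-support lemma applied to the $S$-flat, Cohen--Macaulay sheaf $h_*\mathcal{O}_D$. The fiber identity then reduces, via base change of Fitting ideals, to the fact that at a smooth point the $0$-th Fitting ideal of a finite-length module is $\mathfrak{m}_p^{\mathrm{length}}$. You also make explicit two points the paper leaves implicit (finiteness of $h|_D$, and containment of $\mathrm{Supp}\,h_*\mathcal{O}_D$ in the smooth locus of $g$), and you sum lengths over $h_{\bar s}^{-1}(p)$ directly rather than inducting on the number of contracted curves.
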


\begin{proof}
Note that $D$ is a relative Cartier divisor by \cite[Corollary 7.53]{kollar-moduli}.  
Since $h_*\mathcal{O}_D$ is Cohen-Macaulay and flat, we see that $\mathrm{DSupp}_S(h_*\mathcal{O}_D)$ is relative Cartier by \cite[7.21.5]{kollar-moduli}.
Therefore, we may assume that $S$ is the spectrum of an algebraically closed field.
By induction on the number of contracting curves, we may assume that $h$ is a contraction of a rational tail $F$.
Let $p'\in C'$ be the image of $F$ and note that $p'$ is the smooth point.
The localization $(h_*\mathcal{O}_D)_{p'}$ at $p'$ has the same length as $l(\mathcal{O}_D|_F)$, where $l$ denotes the length.
It is easy to see that the $0$-th Fitting ideal of $(h_*\mathcal{O}_D)_{p'}$ is isomorphic to $\mathfrak{m}_{p'}^{l(\mathcal{O}_D|_F)}$, where $\mathfrak{m}_{p'}$ is the maximal ideal of $\mathcal{O}_{C',p'}$.
This is nothing but the ideal defining the localization of $h_*D$ at $p'$.
We complete the proof.
\end{proof}

\begin{defn}
 In Proposition \ref{prop:pushfoward}, we set $h_*D:=\mathrm{DSupp}_S(h_*\mathcal{O}_D)$.   
 We call this {\it the strict transform} of $D$ under $h$.
 If $\Delta$ is a relative Mumford $\R$-divisor such that $\Delta=\sum a_iD_i$, where $D_i$'s are effective relative Mumford divisors on $C$, then we set $h_*\Delta:=\sum a_ih_*D_i$.
 It is easy to see that $h_*$ is well-defined.
 By the compatibility of the Fitting ideal and the base change (cf.~\cite[Tag 07ZA]{stacksproject}), we see that for any morphism $g\colon T\to S$, $h_{T*}\Delta_T=(h_*\Delta)_T$.
\end{defn}

We show the following further.
See \cite[Tag 02AR]{stacksproject} for the definition of the sheaf of total quotient rings for any Noetherian scheme.

\begin{prop}\label{prop:push--linear-equivalence}

 In Proposition \ref{prop:pushfoward}, suppose further that $S$ is of finite type over $\mathbb{K}$. Let $D'$ be another relative Mumford divisor such that $D\sim D'$ on $C$.
Let $\mathscr{K}$ (resp.~$\mathscr{K}'$) be the sheaf of total quotient rings of $C$ (resp.~$C'$) and $\phi\in \mathscr{K}(C)$ such that $\mathrm{div}(\phi)=D-D'$.
 Then there exists a canonical morphism $h_\sharp\colon h_*\mathscr{K}\to \mathscr{K}'$ and $\mathrm{div}(h_\sharp(\phi))=h_*D-h_*D'$. 
 Furthermore, for any morphism $\mu\colon S'\to S$ from a scheme and $\phi$ as above, $\mu_{C'}^* h_\sharp(\phi)=(h_{S'})_\sharp(\mu_C^*\phi)$ , where $\mu_{C}\colon C_{S'}\to C$ and $\mu_{C'}\colon C'_{S'}\to C'$ are the canonical morphisms if $\phi\in \mathscr{K}(C)$ defines $\mathrm{div}(\phi)$ the summation of relative Cartier divisors.
\end{prop}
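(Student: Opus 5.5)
Let $E\subset C$ be the exceptional locus of $h$, the union of the fibrewise contracted rational tails and bridges. Set $Z:=h(E)\subset C'$. Fibrewise $Z$ is a finite set of points, so $Z$ is quasi-finite over $S$ and $h$ restricts to an isomorphism $C\setminus E\cong C'\setminus Z$. We may assume $S$ is reduced, since $S$ is of finite type and the total quotient ring only sees associated points. Moreover, fibrewise the contracted locus avoids the generic points of components of $C'_{\bar s}$, so $C'\setminus Z$ is fibrewise dense in $C'$.

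The plan is to define $h_\sharp$ by restriction. By flatness of $g$, every associated point of $C'$ lies over an associated point of $S$ and is a generic point of a fibre component, hence lies in $C'\setminus Z$. Therefore $\mathscr K'(V)\to \mathscr K'(V\setminus Z)$ is an isomorphism for every open $V\subset C'$. Using this, define
\[
h_\sharp\colon h_*\mathscr K(V)=\mathscr K(h^{-1}V)\to \mathscr K(h^{-1}V\setminus E)\cong \mathscr K'(V\setminus Z)\cong\mathscr K'(V).
\]
This map is canonical and a ring homomorphism.

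The key step is to show $\mathrm{div}(h_\sharp\phi)=h_*D-h_*D'$. On $C'\setminus Z$ this is clear, since $h$ is an isomorphism there and $h_*D$ agrees with $D$ by Proposition~\ref{prop:pushfoward}. Both $h_*D$ and $h_*D'$ are relative Cartier by Proposition~\ref{prop:pushfoward}, so $\psi:=h_\sharp(\phi)\cdot s_{h_*D'}/s_{h_*D}$ is a rational section of $\mathcal O_{C'}$ that is a unit off $Z$. We must show $\psi$ is a unit near $Z$.

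First consider the case where $S$ is a point. Near a point $p'\in Z$ that is the image of a tail $F$ (a smooth point of $C'$), $\psi$ is a unit on a punctured neighbourhood, so it suffices to check $\mathrm{ord}_{p'}\psi=0$. Since $\mathcal O_C(D-D')\cong\mathcal O_C$, we get $\deg(D|_F)=\deg(D'|_F)$, and in fact the same holds for the whole connected tree contracted to $p'$. The multiplicity of $h_*D$ at $p'$ is $l(\mathcal O_D|_F)$ by the proof of Proposition~\ref{prop:pushfoward}, and $\mathrm{ord}_{p'}h_\sharp\phi$ can be computed on the component of $C$ through which the tail attaches. Combining these gives order zero.

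For bridges, the hypothesis makes $D$ and $D'$ disjoint from the bridge, so $\phi$ restricted to the bridge is a unit. Since $\phi$ then has constant values at the two attaching points, $h_\sharp\phi$ is a regular unit at the resulting node. Hence $\psi$ is a unit.

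For general reduced $S$, $\psi$ is regular and a unit away from $Z$, and $Z$ has fibrewise codimension one in $C'$. This is the obstacle: Hartogs does not apply directly. Instead I would use that $\psi$ and $\psi^{-1}$ are regular over a dense open of $S$ and on each fibre, together with $S_2$-type or flatness arguments (valuative check over DVRs mapping to $S$), to conclude regularity.

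Base change compatibility follows from the construction by restriction, together with $(h_*D)_{T}=h_{T*}D_T$ from the strict transform definition. Both $\mu^*_{C'}h_\sharp\phi$ and $(h_{S'})_\sharp\mu_C^*\phi$ agree on $C'_{S'}\setminus Z_{S'}$. Both are rational functions whose divisors are sums of relative Cartier divisors, hence not zero divisors and determined on the fibrewise dense open, so they coincide. The main obstacle I expect is the unit check of $\psi$ near $Z$ over a non-reduced or higher-dimensional base, handled via the fibrewise order computation above plus flatness.
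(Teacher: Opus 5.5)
\textbf{There is a genuine gap: the argument breaks down over a non-reduced base $S$.}

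Your construction of $h_\sharp$ is the paper's construction. You restrict to $C'\setminus Z$, using that every associated point of $C'$ is the generic point of a fibre component over an associated point of $S$. Your computation over a point is also correct. For a tail, $\phi$ is a unit at the attaching node, and $\deg D|_T=\deg D'|_T$ on the contracted tree. For a bridge, $\phi$ is constant on the bridge.

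The failure is your opening reduction, ``we may assume $S$ is reduced, since the total quotient ring only sees associated points''. This is not valid. Over a non-reduced $S$, the identity $\mathrm{div}(h_\sharp\phi)=h_*D-h_*D'$ carries nilpotent information that is invisible over $S_{\mathrm{red}}$. Knowing that $\psi$ is an invertible regular function off $Z$ and on every fibre does not make it a unit.

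Here is a concrete example. Take $A=\mathbb{K}[\epsilon]/(\epsilon^2)$, let $C'$ be locally $\mathrm{Spec}\,A[x]$, and let $Z=\{x=0\}$. The element $\psi=1+\epsilon/x\in\mathscr{K}'$ has these properties:
\begin{enumerate}
\item it is an invertible regular function on $C'\setminus Z=\mathrm{Spec}\,A[x,x^{-1}]$, with inverse $1-\epsilon/x$;
\item it restricts to $1$ on the closed fibre;
\item it does not lie in $\mathcal{O}_{C',p'}$.
\end{enumerate}
So ``the fibrewise order computation plus flatness'', which is all your last paragraph offers for this case, cannot suffice. Some argument must use how $h_*D=\mathrm{DSupp}_S(h_*\mathcal{O}_D)$ and $\phi$ interact in the nilpotent directions.

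The paper supplies this as follows.
\begin{enumerate}
\item By \cite[Theorem 9.3.7]{fantechi2005fundamental}, the locus of $T\to S$ over which the identity holds is a closed subscheme $S'$. It contains $S_{\mathrm{red}}$, and it satisfies a universal property. So it suffices to treat Artinian local $S$.
\item There one reduces to a single tail contraction.
\item After adding a general relatively ample divisor so that the relevant $H^1$ vanishes, one uses unobstructedness of Hassett's weighted pointed curves. This realizes the whole datum $(C,C',h,D,D',\phi)$, up to that added divisor, as the pullback of a family over a smooth base $T$.
\item The reduced case applies over $T$, and the identity then pulls back to $S$.
\end{enumerate}
Nothing in your proposal plays this role.

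The reduced, positive-dimensional case is also only gestured at. A ``valuative check over DVRs'' detects integrality, not membership in $\mathcal{O}_{C'}$. Here $C'$ is typically not normal: it has nodal fibres, and $S$ itself may be non-normal. So this check alone does not show that $\psi$ is regular along the horizontal, codimension-one part of $Z$, where Hartogs-type arguments fail.

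The paper instead uses, again via \cite[Theorem 9.3.7]{fantechi2005fundamental}, that over a reduced base the identity follows from its fibrewise version. Your base-change argument is fine once the divisor identity is known. Then both sides are quotients of local equations of relative Cartier divisors, and they are determined on the fibrewise dense open set $C'_{S'}\setminus Z_{S'}$.
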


\begin{proof}
Note that $\mathscr{K}$ and $\mathscr{K}'$ are quasi-coherent and so is $h_*\mathscr{K}$ since $h$ is a projective morphism.
Let $W$ be an open affine subset such that $g(W)$ is open and for any $s\in g(W)$, the generic points of $g^{-1}(s)$ are contained in $W$, where we note that $g\colon C'\to S$ is the canonical morphism.
Indeed, we can take such $W$ by the flatness and projectivity of $g$.
Take the largest open subset $W'\subset h^{-1}(W)$ such that $h|_{W'}\colon W'\to W$ is an open immersion.
Note that there exists a canonical homomorphism $\mathscr{K}(h^{-1}(W))\to \mathscr{K}(W')$.
By \cite[Theorem 12]{matsumura1980commutative}, we see that $W\setminus h(W')$ contains no embedded point.
Therefore, we see that $\mathscr{K}'(W)\to \mathscr{K}'(h(W'))\cong \mathscr{K}(W')$ is an isomorphism.
Composing the inverse with $\mathscr{K}(h^{-1}(W))\to \mathscr{K}(W')$, we construct $h_\sharp\colon h_*\mathscr{K}(W)\to \mathscr{K}'(W)$.
We can cover $C'$ by such $W$ and the homomorphism can be patched together.
Therefore, we can construct $h_\sharp\colon h_*\mathscr{K}\to \mathscr{K}'$.
Note that for any morphism $\mu\colon S'\to S$ from a scheme, $\mu_{C'}^*h_\sharp(\phi)=(h_{S'})_\sharp(\mu_C^*\phi)$.
Indeed, if $\mathrm{div}(\phi)$ is the summation of relative Cartier divisors, then $h_\sharp(\phi)$ has no zero or pole containing any fiber over $S$ and we can define $\mu_{C}^*\phi$ and $\mu_{C'}^*h_\sharp(\phi)$.

 Let $\phi$ be a global section of the sheaf of total quotient rings $\mathscr{K}$ of $C$ such that $\mathrm{div}(\phi)=D-D'$ as the statement.
 Then we can take $\phi':=h_\sharp \phi\in \mathscr{K}'(C')$.
 By $\mathrm{div}(\phi')$ on $C'$, we see that $\mathrm{div}(\phi'_s)=h_{s*}D_s-h_{s*}D'_s$ for any $s\in S$.
 Therefore, $\mathrm{div}(\phi')=h_*D-h_*D'$ if $S$ is reduced by \cite[Theorem 9.3.7]{fantechi2005fundamental}.
 In the general case, by \cite[Theorem 9.3.7]{fantechi2005fundamental}, we see that there exists a closed subscheme $S'\to S$ that contains the reduced structure of $S$ such that for any morphism $\theta\colon T\to S$ from a scheme of finite type over $\mathbb{K}$, $\theta$ factors through $S'$ if and only if $\mathrm{div}(\phi')_T=(h_*D)_T- (h_*D')_T$. 
 Therefore, we may replace $S$ with its Artinian subscheme because it suffices to show that all Artinian subschemes of $S$ are contained in $S'$.
 Furthermore, we may assume that $S$ is a spectrum of a local ring.
 Note that $h_{\bar{s}}$ is obtained as a sequence of contractions of one rational tail or rational bridge.
 The first contraction of one curve induced by some globally generated line bundle of the form $K_{C_{\bar{s}}}+D''$ for some effective divisor $D''$ and this line bundle is extendable to $C$ by the condition that $H^2(C,\mathcal{O}_C)=0$.  
  Then, using \cite[Proposition 3.3]{Hassett}, we can extend this morphism to entire $C$. Thus, we may assume that $h$ is a contraction of one rational tail. 
  Indeed, if we assume that $h$ is a contraction of a rational bridge, then the supports of $D$ and $D'$ are disjoint from the bridge.
  Let $D'''$ be a sufficiently general relative ample divisor on $C$ such that $H^1(C_{\bar{s}},\mathcal{O}_{C_{\bar{s}}}(D'_{\bar{s}}+D'''_{\bar{s}}))=0$ for any geometric point $\bar{s}\in S$.
  Take a rational number $a<1$ and an effective relative Mumford $\Q$-divisor $\Delta$ such that for any $\epsilon>0$, $(C,(1+\epsilon)\Delta+a(D'+D'''))$ is a stable family but $(C,\Delta+a(D'+D'''))$ is not stable and the reduction functor in \cite[Theorem 4.1]{Hassett} induces $h$ because $S$ is Artinian.
  Here, we can take $\Delta$ such that the support of $\Delta$ is a summation of sections.
  By the argument of \cite[Proposition 3.8, Section 2.1.3]{Hassett}, we see that $(C,\Delta)$ has unobstructed deformation.
  By this and the fact that $H^1(C_{\bar{s}},\mathcal{O}_{C_{\bar{s}}}(D'_{\bar{s}}+D'''_{\bar{s}}))=0$, we may assume that there exists a smooth scheme $T$ of finite type over $\mathbb{K}$ with a morphism $\alpha\colon S\to T$ and  there exist flat and projective morphisms $f_T\colon C_T\to T$ and $g_T\colon C'_T\to T$ with each fiber connected nodal curves satisfying the following:
\begin{itemize}
    \item There exists a projective morphism $h_T\colon C_T\to C'_{T}$ such that for any $t\in T$, $(h_T)_{t}$ is an isomorphism or a contraction of a smooth rational curve $F$ such that $F\cap\mathrm{Sing}((C_T)_t)$ is one point,
    \item the pullbacks of $C_T$, $C'_T$, $h_{T}$, and $D'_T$ by $\alpha$ are $C$, $C'$, $h$, and $D'+D'''$, respectively,
    \item there exists a $\mathbb{Q}$-divisor $D_T$ and $D'_T$ with $\varphi\in \mathscr{K}_{C_T}(C_T)$ such that $\mathrm{div}(\varphi)=D_T-D'_T$ and $\varphi_S$ coincides with $\phi$ if we regard $\varphi\in H^0(C_T,\mathcal{O}_{C_T}(D'_T))$ and $\phi\in H^0(C,\mathcal{O}_C(D'+D'''))$.
\end{itemize}
In this case, we see that $(D_T)_S =D+D'''$.
As we have shown the assertion for the reduced base, we see that the assertion also holds for $S$.
We complete the proof.
\end{proof}

\subsection{K-stability}

We recall the definition of K-stability.
We say that $(X,\Delta;L)$ is a {\it polarized deminormal subpair} if $(X,\Delta)$ is a deminormal subpair, $X$ is connected and $L$ is an ample $\mathbb{R}$-line bundle.
If $\Delta$ is further effective, then we say that $(X,\Delta;L)$ is a {\it polarized deminormal pair}.

\begin{defn}\label{defn:DF-invariant}
Let $(X,\Delta;L)$ be a polarized deminormal subpair.
    We say that $(\mathcal{X},\mathcal{L})$ is a semiample test configuration for $(X,\Delta;L)$ if the following conditions hold:
    \begin{enumerate}
        \item There exists a projective flat morphism $\pi\colon \mathcal{X}\to\mathbb{A}^1$,
        \item $\mathcal{L}$ is a semiample $\mathbb{G}_m$-linearlized $\mathbb{R}$-line bundle on $\mathcal{X}$, that is, $\mathcal{L}$ can be decomposed into a nonnegative $\mathbb{R}$-linear combination of globally generated $\mathbb{G}_m$-linearlized line bundles on $\mathcal{X}$,
        \item $\mathbb{G}_m$ acts on $(\mathcal{X},\mathcal{L})$ equivariantly over $\mathbb{A}^1$, which $\mathbb{G}_m$ acts on $\mathbb{A}^1$ by natural multiplication, and
        \item $(\mathcal{X}_1,\mathcal{L}_1)\cong (X,L)$.
    \end{enumerate}
    If $\mathcal{X}$ is deminormal, we say that $(\mathcal{X},\mathcal{L})$ is a {\it semiample deminormal test configuration}.
    Suppose that $\mathcal{X}$ is deminormal.
    Let $(\overline{\mathcal{X}},\overline{\mathcal{L}})$ be the pair of deminormal schemes and a $\mathbb{R}$-line bundle on $\overline{\mathcal{X}}$ that is constructed by patching $(X\times(\mathbb{P}^1\setminus\{0\}),L\times(\mathbb{P}^1\setminus\{0\}))$ with the trivial $\mathbb{G}_m$-action on the first factor and $(\mathcal{X},\mathcal{L})$ together.
    Set $\Delta_{\mathcal{X}}$ as the closure of $\Delta\times\mathbb{G}_{m}\subset \overline{\mathcal{X}}$.
Then, we set
\[
\mathrm{DF}_{\Delta}(\mathcal{X},\mathcal{L}):=\frac{1}{L^n}\left((K_{\overline{\mathcal{X}}/\mathbb{P}^1}+\Delta_{\mathcal{X}})\cdot \overline{\mathcal{L}}^n-\frac{n(K_X+\Delta)\cdot L^{n-1}}{(n+1)L^n}\overline{\mathcal{L}}^{n+1}\right),
\]
where $\mathrm{dim}\,X=n$.
We call this numerical invariant the {\it Donaldson-Futaki invariant}.
We note that $K_{\overline{\mathcal{X}}/\mathbb{P}^1}+\Delta_{\mathcal{X}}$ might not be $\mathbb{R}$-Cartier in general but we can define the intersection number $(K_{\overline{\mathcal{X}}/\mathbb{P}^1}+\Delta_{\mathcal{X}})\cdot \overline{\mathcal{L}}^n$ since $\overline{\mathcal{L}}$ is a $\mathbb{R}$-line bundle.

On the other hand, take a $\mathbb{G}_{m}$-equivariant resolution of indeterminacy $\mathcal{Y}$ of the rational map from the normalization of $\overline{\mathcal{X}}$ to $X\times\mathbb{P}^1$.
Let $H$ be a semiample $\mathbb{R}$-line bundle on $X$ and set $\rho\colon \mathcal{Y}\to X\times\mathbb{P}^1$ and $\mu\colon \mathcal{Y}\to\overline{\mathcal{X}}$ as the canonical proper morphisms.
Then we set 
\[
\mathcal{J}^{H,\mathrm{NA}}(\mathcal{X},\mathcal{L}):=\frac{1}{L^n}\left(\rho^*(H\times\P^1)\cdot\mu^* \overline{\mathcal{L}}^n-\frac{nH\cdot L^{n-1}}{(n+1)L^n}\overline{\mathcal{L}}^{n+1}\right)
\]
and call this the {\it non-Archimedean} $\mathrm{J}^H$-functional.
Note that this is independent of the choice of $\mathcal{Y}$.
\end{defn}

On the non-Archimedean J-functional, we note the following.

\begin{lem}\label{lem:J-functional--computation}
Let $(X,\Delta;L)$ be a polarized deminormal subpair of dimension $n$ with a semiample $\mathbb{R}$-line bundle $H$.
    In the same situation as Definition \ref{defn:DF-invariant},
\[
\mathcal{J}^{H,\mathrm{NA}}(\mathcal{X},\mathcal{L}):=\sup_{D}\frac{1}{L^n}\left(\rho_*^{-1}(D\times\mathbb{P}^1)\cdot\mu^* \overline{\mathcal{L}}^n-\frac{nH\cdot L^{n-1}}{(n+1)L^n}\overline{\mathcal{L}}^{n+1}\right),
\]
where $D$ runs over all effective $\mathbb{R}$-divisor $\mathbb{R}$-linearly equivalent to $H$.
Moreover, the maximum is attained by any sufficiently general effective $\mathbb{R}$-divisor $D$.
\end{lem}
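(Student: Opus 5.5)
The second term in both expressions is the same, so the lemma amounts to comparing the single intersection number $\rho^*(H\times\P^1)\cdot\mu^*\overline{\mathcal L}^n$ with $\rho_*^{-1}(D\times\P^1)\cdot\mu^*\overline{\mathcal L}^n$ for effective $D\sim_{\R}H$. The plan is to prove
\[
\rho^*(D\times\P^1)\cdot\mu^*\overline{\mathcal L}^n\ \ge\ \rho_*^{-1}(D\times\P^1)\cdot\mu^*\overline{\mathcal L}^n,
\]
with equality when $D$ is general, and then note that the left side equals $\rho^*(H\times\P^1)\cdot\mu^*\overline{\mathcal L}^n$ because intersection numbers depend only on $\R$-linear equivalence classes. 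Here $\rho^*(D\times\P^1)$ makes sense since $H$ is $\R$-Cartier and $D\sim_\R H$.

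\textbf{Decomposition.} Write
\[
\rho^*(D\times\P^1)=\rho_*^{-1}(D\times\P^1)+E_D,
\]
where $E_D$ is supported on $\rho$-exceptional divisors. Because $D\times\P^1$ is effective, $E_D$ is an effective $\R$-divisor. The divisor $\mu^*\overline{\mathcal L}$ is nef: $\overline{\mathcal L}$ is semiample over $\P^1$, it is a trivial pullback near $\infty$, and the glued $\overline{\mathcal L}$ is semiample on $\overline{\mathcal X}$ since $L$ is ample. Hence $E_D\cdot\mu^*\overline{\mathcal L}^n\ge0$, which proves the inequality for every $D$. If $D$ is only $\R$-Cartier on the deminormal $X$, I would work on the normalization component by component; the intersection numbers are additive over components, and $\mu,\rho$ factor through the normalization anyway.

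\textbf{Equality for general $D$.} It remains to show $E_D=0$ for $D$ sufficiently general. Write $H=\sum\alpha_iH_i$ with $\alpha_i>0$ and each $H_i$ globally generated. Take $D=\sum\alpha_iD_i$ with $D_i\in|H_i|$ general. For each $i$, the pullback $\rho^*(H_i\times\P^1)$ is globally generated on $\mathcal Y$. By Bertini, a general member of $|H_i|$ pulls back to a divisor containing no given finite set of prime divisors, in particular none of the finitely many $\rho$-exceptional prime divisors, which do not dominate $X$ under the projection to $X$. Therefore $\rho^*(D_i\times\P^1)=\rho_*^{-1}(D_i\times\P^1)$, and so $E_D=0$.

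\textbf{Main obstacle.} The technical point I expect to require the most care is justifying that exceptional divisors of $\rho$ lying over the central fibre $X\times\{0\}$ are not contained in the pullback of a general $D_i\times\P^1$. This holds because such a divisor maps to a proper closed subset of $X\times\{0\}$, hence its image in $X$ is avoided by a general member of a base-point-free system. The other detail is the independence of $\mathcal Y$, which follows from the projection formula.
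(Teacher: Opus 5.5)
Your proposal is correct and follows the same route as the paper. The paper likewise argues that $\rho^*(H\times\mathbb{P}^1)-\rho_*^{-1}(D\times\mathbb{P}^1)$ is effective up to $\mathbb{R}$-linear equivalence, which gives the inequality for every $D$, and that it is $\mathbb{R}$-linearly trivial over $\mathbb{P}^1$ for general $D$, which gives equality; your Bertini argument for $E_D=0$ simply makes the second step explicit.

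One small correction: $\mu^*\overline{\mathcal{L}}$ need not be globally nef. Twisting the $\mathbb{G}_m$-linearization by a character replaces $\overline{\mathcal{L}}$ by $\overline{\mathcal{L}}+c\,\mathcal{X}_0$, which can have negative degree on horizontal curves. You only need $E_D\cdot\mu^*\overline{\mathcal{L}}^n\ge 0$, and this still holds: choose $\mathcal{Y}$ isomorphic away from the central fibre, so every component of $E_D$ lies over $0\in\mathbb{P}^1$, where $\mu^*\overline{\mathcal{L}}$ restricts to the pullback of the semiample $\mathcal{L}|_{\mathcal{X}_0}$.
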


\begin{proof}
 Since $\rho^*(H\times\P^1)-\rho_*^{-1}(D\times\mathbb{P}^1)$ is effective for any effective $\mathbb{R}$-divisor $\mathbb{R}$-linearly equivalent to $H$, we see that 
 \[
\mathcal{J}^{H,\mathrm{NA}}(\mathcal{X},\mathcal{L})\ge\frac{1}{L^n}\left(\rho_*^{-1}(D\times\mathbb{P}^1)\cdot\mu^* \overline{\mathcal{L}}^n-\frac{nH\cdot L^{n-1}}{(n+1)L^n}\overline{\mathcal{L}}^{n+1}\right).
\]
For any sufficiently general $D$, $\rho^*(H\times\P^1)-\rho_*^{-1}(D\times\mathbb{P}^1)\sim_{\mathbb{R},\mathbb{P}^1}0$ and hence
\[
\mathcal{J}^{H,\mathrm{NA}}(\mathcal{X},\mathcal{L})=\frac{1}{L^n}\left(\rho_*^{-1}(D\times\mathbb{P}^1)\cdot\mu^* \overline{\mathcal{L}}^n-\frac{nH\cdot L^{n-1}}{(n+1)L^n}\overline{\mathcal{L}}^{n+1}\right).
\]
We complete the proof.
\end{proof}

\begin{defn}
    Let $(X,\Delta;L)$ be a polarized deminormal pair.
    We say that $(X,\Delta;L)$ is {\it K-semistable} if $\mathrm{DF}_{\Delta}(\mathcal{X},\mathcal{L})\ge 0$ for any semiample deminormal test configuration $(\mathcal{X},\mathcal{L})$ for $(X,\Delta,L)$.

    We say that $(X,\Delta;L)$ is {\it K-stable} if $(X,\Delta;L)$ is K-semistable and $\mathrm{DF}_{\Delta}(\mathcal{X},\mathcal{L})>0$ for any semiample deminormal test configuration $(\mathcal{X},\mathcal{L})$ for $(X,\Delta;L)$ such that $\mathcal{L}$ is ample and there exists no $\mathbb{G}_m$-equivariant isomorphism $\sigma\colon (\mathcal{X},\Delta_{\mathcal{X}})\cong (X\times\mathbb{A}^1,\Delta\times\mathbb{A}^1)$, where the latter one admits the standard $\mathbb{G}_m$-action.
\end{defn}

\begin{rem}\label{remark:pos-L,NA}
    This definition is equivalent to the original definition of K-stability when $L$ and $\Delta$ are $\Q$-divisors as in \cite{boucksom2017uniform}.
    See \cite[Section 3]{odaka2013generalization} and \cite[Section 3]{fujita2019openness} for more details.

    Furthermore, note that $\mathcal{J}^{L,\mathrm{NA}}(\mathcal{X},\mathcal{L})\ge0$ for any semiample deminormal test configuration for $(X,\Delta;L)$ and if $\mathcal{L}$ is ample, then $\mathcal{J}^{L,\mathrm{NA}}(\mathcal{X},\mathcal{L})=0$ if and only if there exists a $\mathbb{G}_m$-equivariant isomorphism $\sigma\colon (\mathcal{X},\Delta_{\mathcal{X}})\cong (X\times\mathbb{A}^1,\Delta\times\mathbb{A}^1)$, where the latter one admits the standard $\mathbb{G}_m$-action.
    This fact is well-known where $\Delta$ and $L$ are $\Q$-divisors and we explain how to show this for the case where $\Delta$ and $L$ are $\R$-divisors for the reader's convenience. If there exists such $\sigma$, then $\mathcal{J}^{L,\mathrm{NA}}(\mathcal{X},\mathcal{L})=0$ by the proof of \cite[Lemma 2.7 and Corollary 3.4]{hattori2026decomposition}.
    On the other hand, the proof of \cite[Lemma 2.7 and Corollary 3.4]{hattori2026decomposition} and the $S_2$-condition of $\mathcal{X}$ show that we may assume that $X$ and $\mathcal{X}$ are normal.
    Then we can take $\mathbb{G}_m$-equivariant projective birational morphisms $\mu\colon \mathcal{Y}\to\mathcal{X}$ and $\rho\colon \mathcal{Y}\to X\times\mathbb{A}^1$ from a normal variety.
    Note that there exists a $\R$-Cartier divisor $D$ supported in $\mathcal{Y}_0$ such that $\mu^*\mathcal{L}=\rho^*(L\times\mathbb{A}^1)+D$.
    Since $\mathcal{L}$ is ample, if there exists no $\sigma$ as above, $D\not\sim_\R0$.
    It is enough to show that 
    \[
\frac{d}{dt}\bigg|_{t=t_0}\left(\rho^*(L\times\P^1)\cdot(\rho^*L\times\mathbb{P}^1+tD)^n-\frac{n}{(n+1)}(\rho^*L\times\mathbb{P}^1+tD)^{n+1}\right)>0
\]
for any $t_0\in (0,1)$.
The above value coincides with $-nt_0D^2\cdot (\rho^*(L\times\P^1)+t_0D)^{n-1}$ and this is positive by \cite[Lemma 1]{li2014special}.
\end{rem}

We collect the following facts.

\begin{thm}[\cite{odaka2012calabi,odaka2013git,odaka2015testing,boucksom2017uniform}]\label{thm:K-stability--for--log-pairs--fundaments}
Let $(X,\Delta;L)$ be a polarized deminormal scheme.
Then the following hold.
\begin{enumerate}
    \item If $(X,\Delta;L)$ is K-semistable, then $(X,\Delta)$ is slc.
    \item If $L=K_X+\Delta$, then $(X,\Delta;L)$ is K-stable if and only if $(X,\Delta)$ is slc.
    \item If $K_X+\Delta\sim_{\mathbb{R}}0$, then $(X,\Delta;L)$ is K-stable if and only if $(X,\Delta)$ is klt, and $(X,\Delta;L)$ is K-semistable if and only if $(X,\Delta)$ is slc.
    \item If $L=-(K_X+\Delta)$ and $(X,\Delta;L)$ is K-semistable, then $(X,\Delta)$ is klt.
\end{enumerate}
\end{thm}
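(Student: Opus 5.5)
The plan is to reduce each item to the cited literature, checking only that the arguments there survive in our setting: deminormal rather than normal $X$, $\mathbb{R}$-coefficients in $\Delta$ and $L$, and semiample deminormal test configurations in place of ample normal ones. The common tool is the intersection-theoretic formula for $\mathrm{DF}_\Delta$ in Definition~\ref{defn:DF-invariant}. On a $\mathbb{G}_m$-equivariant model $\mathcal{Y}$ dominating both $\overline{\mathcal{X}}$ and $X\times\mathbb{P}^1$, it splits as a $\mathcal{J}$-type term plus a discrepancy term $\sum_E A_{X,\Delta}(E)$-weighted intersections, following Odaka's formula. Two reductions come first. By the argument already invoked in Remark~\ref{remark:pos-L,NA} (via \cite{hattori2026decomposition} and the $S_2$-condition), we pass to the normalization $(Y,D+\nu^{-1}_*\Delta)$, where $D$ is the conductor. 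Since $\mathrm{DF}$ is linear in $(\Delta,L)$ after normalizing by $L^n$, and every test configuration condition is open, we approximate real coefficients by rational ones when needed. Item (1) is proved first, and items (2)–(4) use it.

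For (1), argue by contradiction. Suppose $(X,\Delta)$ is not slc, so there is a divisor $E$ over the normalization with $A_{Y,D+\nu^{-1}_*\Delta}(E)<0$. Following \cite{odaka2013git}, take the deformation to the normal cone of a suitable ideal (a semi-log-canonical modification, or the filtration given by $E$). This yields a deminormal semiample test configuration $(\mathcal{X},L_{\mathbb{A}^1}-\epsilon F)$, with $F$ the exceptional divisor. Its Donaldson--Futaki invariant has the expansion
\[
\mathrm{DF}=c\,\epsilon^{n}A(E)+O(\epsilon^{n+1}),\qquad c>0 .
\]
This is negative for small $\epsilon$, contradicting K-semistability. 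The same computation over $\mathbb{R}$ works because it is polynomial in $\epsilon$ with coefficients linear in $\Delta$.

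For (2), when $L=K_X+\Delta$ the formula gives
\[
\mathrm{DF}=\frac{1}{L^n}\Bigl(\textstyle\sum_E A(E)\,(\text{nonnegative intersection numbers})\Bigr)+\frac{n}{n+1}\,\mathcal{J}^{L,\mathrm{NA}}(\mathcal{X},\mathcal{L}).
\]
For slc pairs the discrepancy term is nonnegative. By Remark~\ref{remark:pos-L,NA}, the term $\mathcal{J}^{L,\mathrm{NA}}$ is $\ge0$ and vanishes exactly on product configurations, so slc implies K-stable; the converse is (1). For (3), $K_X+\Delta\equiv0$ kills the $\mathcal{J}$-term, so $\mathrm{DF}$ reduces to the log discrepancy term. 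Nonnegativity for all test configurations is equivalent to slc, using (1) for one direction. Strict positivity for nonproduct configurations is equivalent to every divisor $E$ centered on $\mathcal{X}_0$ having $A>0$, which amounts to klt; this follows \cite{odaka2012calabi}. For (4), I would use the valuative (Fujita--Li) criterion applied through the $\beta$-invariant: K-semistability gives $A_{X,\Delta}(E)\ge \frac{1}{L^n}\int_0^\infty \mathrm{vol}(L-tE)\,dt>0$ for every $E$. Together with (1) this excludes lc centers, and also excludes a nonnormal $X$, since a double locus would give $A=0$; hence $(X,\Delta)$ is klt, as in \cite{odaka2015testing}.

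I expect the main obstacle to be (1) in the deminormal, real-coefficient, semiample generality. One has to check that the test configuration built from a bad divisor is deminormal, which may require taking an $S_2$-hull of the central fiber. One also has to check that the sign of the leading coefficient is unaffected by the conductor. I would first try to do everything on the normalization, and then descend via the gluing involution, as in Kollár's theory of slc pairs.
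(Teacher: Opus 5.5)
Your treatment of (2) and of the stability halves of (3) is the paper's route: the discrepancy formula of \cite[Section 9]{boucksom2017uniform}, Remark~\ref{remark:pos-L,NA}, and normalization of a given test configuration. The genuine gap is in the directions where you must \emph{produce} a destabilizing test configuration, namely item (4) and ``K-stable $\Rightarrow$ klt'' in (3).

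K-(semi)stability is tested only against semiample deminormal test configurations of $X$ itself. So your upfront passage to the normalization $(Y,D+\nu^{-1}_*\Delta)$ is legitimate only in the stability directions, where one normalizes a \emph{given} test configuration of $X$ as in \cite[Remark 3.19]{boucksom2017uniform`}. A test configuration constructed on $Y$ need not descend to $X$.

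In (4) you invoke the valuative criterion $A_{X,\Delta}(E)\ge S_{X,\Delta}(E)$. That criterion is established only for log Fano pairs, which are klt (hence normal) by definition, so using it to prove klt-ness is circular. For normal $X$ this can be repaired by the direct computation of \cite[Section 9]{boucksom2017uniform} with a test configuration built from an lc place, which is what the paper cites. The same computation gives the non-product test configuration with $\mathrm{DF}=0$ needed for ``K-stable $\Rightarrow$ klt'' in (3); your asserted equivalence with ``$A>0$ for all $E$'' does not construct it.

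For non-normal $X$, ``a double locus would give $A=0$'' produces nothing. The conductor lives on $Y$, and neither a test configuration built on $Y$ nor the one attached to the filtration by $\mathrm{ord}_D$ is known to be a deminormal test configuration of $X$. The paper treats this case separately, following \cite[Theorem 5.15]{hattori2024fibration}. An ideal sheaf on $X$ cosupported in the non-normal locus yields a semiample deminormal test configuration with $\mathrm{DF}<0$ when $L=-(K_X+\Delta)$. It also yields one with $\mathcal{L}$ ample and $\mathrm{DF}\le 0$ when $K_X+\Delta\sim_{\mathbb{R}}0$ and $(X,\Delta)$ is slc but not normal. Your plan has no substitute for this step.

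For (1) you follow the paper's route (Odaka's argument from \cite{odaka2013git}), but the key computation is misstated in a way that hides where the real input is needed. The expansion $\mathrm{DF}=c\,\epsilon^{n}A(E)+O(\epsilon^{n+1})$ with a single $E$ is not correct. For the deformation to the normal cone of an ideal $I$, the lowest-order term has order $\epsilon^{c}$, with $c$ the codimension of the cosupport of $I$, not $\epsilon^{n}$ in general. Its coefficient is a positive combination of the normalized log discrepancies of \emph{all} Rees valuations of $I$, and these can have either sign. This is also why ``the filtration given by $E$'' is not an alternative.

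The sign is forced only when $I$ comes from an lc modification $\pi\colon (Y',\Delta_{Y'}+E_{Y'})\to (X,\Delta)$. There $K_{Y'}+\Delta_{Y'}+E_{Y'}=\pi^*(K_X+\Delta)+\sum_j A_{X,\Delta}(E_j)E_j$ is $\pi$-ample, so the negativity lemma gives $A_{X,\Delta}(E_j)<0$ for every exceptional $E_j$. For deminormal $X$ you cannot in general take such a modification on the normalization and descend it through the gluing involution. So the step you postpone to the end is exactly the missing ingredient. The paper supplies it by \cite[Theorem 4.4]{fujino2023existence}, checked for $\mathbb{R}$-divisors, before running Odaka's argument.
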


\begin{proof}
    This is well-known in the case where $L$ and $\Delta$ are $\mathbb{Q}$-divisors.
    We only explain how the same argument in the case where $L$ and $\Delta$ are $\mathbb{Q}$-divisors works for the general case.

    It is not hard to check that \cite[Theorem 4.4]{fujino2023existence} also works for the $\R$-divisor case. Then the argument to show \cite[Theorem 1.2]{odaka2013git} equally works (cf.~the argument to show \cite[Theorem 5.13]{hattori2024fibration}. It also works for slc pairs with non-zero boundaries).
    Thus, we obtain (1).
    Furthermore, we can deduce (2), (3), and (4) as the argument of \cite[Section 9]{boucksom2017uniform} combining what we have stated in Remark \ref{remark:pos-L,NA} when $X$ is normal.
    When $X$ is not normal, we can also deduce (2) by normalize $X$ (cf.~\cite[Remark 3.19]{boucksom2017uniform}).
    Similarly, in (3), we obtain that if $K_X+\Delta\sim_{\mathbb{R}}0$ and $(X,\Delta)$ is slc, then $(X,\Delta;L)$ is K-semistable. 
    For (4), as the proof of \cite[Theorem 5.15]{hattori2024fibration}, we see that some ideal sheaf whose cosupport is contained in the non-normal locus induces a semiample deminormal test configuration with negative Donaldson-Futaki invariant and hence $(X,\Delta;L)$ is never K-semistable if $X$ is not normal.
    Similarly to this argument, we see that if $K_X+\Delta\sim_{\mathbb{R}}0$ and $(X,\Delta)$ is slc but not normal, then some ideal sheaf whose cosupport is contained in the non-normal locus induces a semiample deminormal test configuration $(\mathcal{X},\mathcal{L})$ with $\mathcal{L}$ ample and non-positive Donaldson-Futaki invariant and hence $(X,\Delta;L)$ is never K-stable.
    We complete the proof.
\end{proof}

For the Fano case, we have to introduce the $\delta$-invariant, which was firstly defined by Fujita-Odaka \cite{fujita2018k}.

\begin{defn}
    Let $(X,\Delta)$ be a {\it log Fano pair}, that is, $(X,\Delta)$ is a projective klt pair with $-(K_X+\Delta)$ ample.
Suppose that $\mathrm{dim}\,X=n$.
Then we define $$\delta(X,\Delta):=\inf_{E}\frac{A_{X,\Delta}(E)}{S_{X,\Delta}(E)},$$
where $E$ runs over all prime divisors over $X$. Here, we set 
\[
S_{X,\Delta}(E)=\frac{1}{(-K_X-\Delta)^n}\int_{0}^{+\infty}\mathrm{vol}_Y(-\pi^*(K_X+\Delta)-xE)dx,
\]
where $\pi\colon Y\to X$ is a projective birational morphism from a smooth variety $Y$ such that $E$ is a prime Cartier divisor on $X$.
We call $\delta(X,\Delta)$ the {\it delta invariant}.
\end{defn}

For a log Fano pair $(X,\Delta)$, if we choose $L=-(K_X+\Delta)$, we say that $(X,\Delta)$ is {\it K-(semi)stable} if $(X,\Delta;L)$ is K-(semi)stable for simplicity.

On the delta invariant, the following holds.

\begin{thm}[{\cite{blum2020thresholds,fujita2019valuative,li2017k,liu2022finite}}]\label{thm:delta--log--pairs}
   Let $(X,\Delta)$ be a log Fano pair. Then
    $(X,\Delta)$ is K-stable if and only if $\delta(X,\Delta)>1$ and K-semistable if and only if $\delta(X,\Delta)\ge1$. 
\end{thm}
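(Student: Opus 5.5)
For $\mathbb{Q}$-divisors $\Delta$ the statement is exactly the combination of \cite{fujita2019valuative,li2017k,blum2020thresholds} (semistability) and \cite{liu2022finite} (stability, via finite generation for minimizers of $\delta$). So the plan is to check that each step of these arguments survives when $\Delta$ has real coefficients and $L=-(K_X+\Delta)$ is an $\mathbb{R}$-line bundle. Everything should be phrased through non-Archimedean functionals, which are defined here for $\mathbb{R}$-line bundles. These are the Donaldson--Futaki invariant of Definition~\ref{defn:DF-invariant}, the Ding invariant $\mathrm{D}^{\mathrm{NA}}_\Delta(\mathcal{X},\mathcal{L})$ (defined via the log canonical threshold of the pair $(\mathcal{Y},\Delta_{\mathcal{Y}}+\mathcal{X}_0)$ and $\overline{\mathcal{L}}^{n+1}$), and the functional $\mathcal{J}^{L,\mathrm{NA}}$. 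By Theorem~\ref{thm:K-stability--for--log-pairs--fundaments}(4) we may assume $(X,\Delta)$ is klt throughout. As in Remark~\ref{remark:pos-L,NA}, we may then reduce to normal test configurations.

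\emph{Step 1: $\delta\ge 1$ implies K-semistability, and $\delta>1$ implies K-stability.} First I would prove $\mathrm{DF}_\Delta\ge \mathrm{D}^{\mathrm{NA}}_\Delta$ for normal semiample test configurations. This uses only the intersection-theoretic computation of \cite{boucksom2017uniform} together with inversion of adjunction, and both are linear in $(\Delta,\mathcal{L})$, so they apply verbatim to $\mathbb{R}$-divisors. Next I would prove the estimate
\[
\mathrm{D}^{\mathrm{NA}}_\Delta(\mathcal{X},\mathcal{L})\ \ge\ \Bigl(1-\tfrac{1}{\delta(X,\Delta)}\Bigr)\,\mathcal{J}^{L,\mathrm{NA}}(\mathcal{X},\mathcal{L})
\]
as in \cite{blum2020thresholds,fujita2019valuative}. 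The key input is $A_{X,\Delta}(v)\ge \delta\, S(v)$, applied to the valuation computing the log canonical threshold on $\mathcal{X}_0$. Here $S$ is computed through volumes of $\mathbb{R}$-divisors, which are continuous, so the filtration argument goes through. Combining the two inequalities with $\mathcal{J}^{L,\mathrm{NA}}\ge0$, and with its strict positivity on nontrivial ample test configurations (Remark~\ref{remark:pos-L,NA}), gives both implications.

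\emph{Step 2: K-semistability implies $\delta\ge 1$.} Fix a prime divisor $E$ over $X$ and consider the filtration of the section rings of the approximating rational polarizations induced by $\mathrm{ord}_E$. I would approximate it by finitely generated filtrations, i.e.\ test configurations, and show that $\mathrm{DF}_\Delta$ converges to a multiple of $\beta(E)=A_{X,\Delta}(E)-S_{X,\Delta}(E)$, as in Fujita's and Li's argument. Since the terms are volumes and intersection numbers, they vary continuously in the $\mathbb{R}$-coefficients. Hence I would first perturb $L$ to nearby $\mathbb{Q}$-polarizations, then pass to the limit. The outcome is $\beta(E)\ge0$ for all $E$, that is, $\delta\ge1$.

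\emph{Step 3: K-stability implies $\delta>1$; this is the main obstacle.} By Step~2 we have $\delta\ge1$, so suppose $\delta=1$. For $\mathbb{Q}$-coefficients, \cite{liu2022finite} produces a quasi-monomial valuation computing $\delta$ whose associated graded ring is finitely generated. That valuation yields a nontrivial special test configuration with $\mathrm{DF}_\Delta=0$ and ample polarization, contradicting K-stability. For real coefficients the finite generation is the delicate point. I would try first to invoke its extension to $\mathbb{R}$-boundaries (cf.~\cite{liu2024k}). Failing that, I would write $\Delta$ as a limit of $\mathbb{Q}$-boundaries $\Delta_k$ lying in the rational polytope of $\Delta$, and use continuity of $\delta$ in the coefficients together with the finiteness of the degenerations produced along this polytope. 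This should give a single special test configuration that computes $\delta$ for all nearby $\Delta_k$, and hence also for $\Delta$ by linearity of $\mathrm{DF}$ in the coefficients.
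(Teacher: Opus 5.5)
\textbf{Gap in Step~2.} The direction ``K-semistable $\Rightarrow \delta\ge1$'' does not go through as you sketch it, for two reasons.

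First, you approximate the $\mathrm{ord}_E$-filtration of the section ring of a rational $L'=-(K_X+\Delta')$. The test configurations this produces are test configurations for $(X,\Delta';L')$, not for $(X,\Delta;L)$. K-semistability of $(X,\Delta)$ therefore says nothing about their Donaldson--Futaki invariants. Continuity in the coefficients is only available on a \emph{fixed} test configuration; this is exactly how the paper uses it, in the other direction. Here the test configurations change with $L'$ and with $m$. To reuse one of them you would need a semiample $\mathbb{G}_m$-linearized $\mathbb{R}$-line bundle on it (or on a model of it) restricting to $L$. You would also need a bound on the resulting change of $\mathrm{DF}$ that is uniform as $L'\to L$. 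Your sketch provides neither, so there is no limit to pass to.

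Second, even for rational coefficients, Fujita's argument does not show that $\mathrm{DF}$ of the approximants of a non-dreamy $E$ converges to a multiple of $\beta(E)$. What it controls is the Ding invariant. One then needs K-semistability $\Rightarrow$ Ding-semistability, i.e.\ the Li--Xu MMP to special test configurations, and for $\mathbb{R}$-boundaries this is itself part of what must be established. The paper does not reprove this direction. It quotes \cite[Theorem 3.2]{liu2024k} for ``$\delta(X,\Delta)<1$ implies not K-semistable''. You need that input, or a real-coefficient special-degeneration theory, here as well.

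\textbf{Steps~1 and~3.} These are fine in outline.

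For Step~3, your first option is exactly what the paper does: it cites \cite[Theorem 4.5 and the second proof of Theorem 3.2]{liu2024k}. Your fallback would not work as stated. Nearby rational $\Delta_k$ need not satisfy $\delta(X,\Delta_k)\le1$, and nothing forces the valuations computing $\delta(X,\Delta_k)$ to stabilize.

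Step~1 is a genuinely different route. You redo $\mathrm{DF}\ge\mathrm{D}^{\mathrm{NA}}\ge(1-\delta^{-1})\mathcal{J}^{L,\mathrm{NA}}$ directly with $\mathbb{R}$-coefficients. The paper instead proceeds as follows:
\begin{enumerate}
\item It fixes a normal test configuration and passes to its ample model.
\item It writes $\Delta=\sum t_i\Delta_i$ and $\mathcal{L}=\sum t_i\mathcal{L}_i$, where each $(\mathcal{X},\mathcal{L}_i)$ is a test configuration for the rational log Fano pair $(X,\Delta_i)$.
\item It applies the $\mathbb{Q}$-inequality $\mathrm{DF}\ge(\delta-1)\mathcal{J}$ to rational approximants on that same $\mathcal{X}$.
\item It takes the limit using continuity of intersection numbers and of $\delta$ \cite[Proposition 2.10]{liu2024k}, and concludes with Remark~\ref{remark:pos-L,NA}.
\end{enumerate}
The paper's route uses the $\mathbb{Q}$-theory as a black box. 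Yours would be self-contained, but it must rebuild the filtration estimates of \cite{blum2020thresholds} for an $\mathbb{R}$-line bundle, which has no section ring. So ``the filtration argument goes through'' needs more than continuity of volumes.
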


\begin{proof}
The case where $\Delta$ is a $\mathbb{Q}$-divisor was solved by combining the results of \cite{blum2020thresholds,fujita2019valuative,li2017k,liu2022finite}.
    For the case where $\Delta$ is an $\mathbb{R}$-divisor, if $\delta(X,\Delta)<1$, then $(X,\Delta)$ is not K-semistable by \cite[Theorem 3.2]{liu2024k}.
    If $\delta(X,\Delta)=1$, then we can also check that $(X,\Delta)$ is not K-stable by \cite[Theorem 4.5 and the second proof of Theorem 3.2]{liu2024k}.

    For the converse direction, consider an arbitrary semiample normal test configuration $(\mathcal{X},\mathcal{L})$ for $(X,\Delta;-K_X-\Delta)$.
    Taking the ample model, we may assume that $\mathcal{L}$ is ample.
    We can take $\{t_i\}_{i=1}^l\subset\mathbb{R}_{>0}$, effective $\mathbb{Q}$-divisors $\Delta_i$ on $X$ and ample $\mathbb{G}_m$-linearlized $\mathbb{Q}$-line bundles on $\mathcal{X}$ such that $\sum_{i=1}^l t_i=1$, $(X,\Delta_i)$ is a log Fano pair for each $i$, $(\mathcal{X},\mathcal{L}_i)$ is a test configuration for $(X,\Delta_i;-K_X-\Delta_i)$, and $\mathcal{L}=\sum_{i=1}^lt_i\mathcal{L}_i$.
    Thus, we can take sequences of effective $\mathbb{Q}$-divisors $\{\Delta^{(j)}\}_{j=1}^\infty$ on $X$ and ample $\mathbb{G}_m$-linearlized $\Q$-line bundles $\{\mathcal{L}^{(j)}\}_{j=1}^\infty$ on $\mathcal{X}$ such that $(\mathcal{X},\mathcal{L}^{(j)})$ is a test configuration for each $j\in\mathbb{N}$ and the coefficients of  $\Delta^{(j)}$ and $\mathcal{L}^{(j)}$ converge to those of $\Delta$ and $\mathcal{L}$ in $j\to\infty$.
    By \cite[Proposition 2.10]{liu2024k}, we note that $\lim_{j\to\infty}\delta(X,\Delta^{(j)})=\delta(X,\Delta)$.
    By \cite[Theorem A.6]{hattori2025k}, \cite[Corollary 3.4]{hattori2026decomposition}, and \cite[Lemma 7.25]{boucksom2017uniform} (see also \cite[Definition 2.8]{hattori2025k} for the notation), we note that
    \[
    \mathrm{DF}_{\Delta^{(j)}}(\mathcal{X},\mathcal{L}^{(j)})\ge (\delta(X,\Delta^{(j)})-1)\mathcal{J}^{-K_X-\Delta^{(j)},\mathrm{NA}}(\mathcal{X},\mathcal{L}^{(j)}).
    \]
    By the continuity of the intersection numbers, we see that
    \begin{align*}
        \mathrm{DF}_{\Delta}(\mathcal{X},\mathcal{L})&=\lim_{j\to\infty}\mathrm{DF}_{\Delta^{(j)}}(\mathcal{X},\mathcal{L}^{(j)})\\
        &\ge \lim_{j\to\infty}(\delta(X,\Delta^{(j)})-1)\mathcal{J}^{-K_X-\Delta^{(j)},\mathrm{NA}}(\mathcal{X},\mathcal{L}^{(j)})\\
        &=(\delta(X,\Delta)-1)\mathcal{J}^{-K_X-\Delta,\mathrm{NA}}(\mathcal{X},\mathcal{L}).
    \end{align*}
    By what we mentioned in Remark \ref{remark:pos-L,NA}, we obtain the assertion.
\end{proof}

\begin{ex}\label{ex:one-dim-K-st}
   As mentioned in \cite{li2015remarks}, if $(X,\Delta)$ is log Fano and $\mathrm{dim}\,X=1$, then $X\cong\P^1$ and $(X,\Delta)$ is K-semistable (resp.~K-stable) if and only if $\mathrm{ord}_p(\Delta)\le\frac{1}{2}\mathrm{deg}(\Delta)$ (resp.~$\mathrm{ord}_p(\Delta)<\frac{1}{2}\mathrm{deg}(\Delta)$) for any closed point $p\in X$.
   Indeed, it is easy to check that $\delta(X,\Delta)=\inf_{p\in X}\frac{2(1-\mathrm{ord}_p(\Delta))}{\mathrm{deg}(-(K_{X}+\Delta))}$.
\end{ex}

From now on, we collect the moduli theory.

\begin{defn}
Fix $d\in\mathbb{Z}_{>0}$ and $v,c\in\mathbb{Q}_{>0}$.
Define $\mathcal{M}^{\mathrm{KSBA}}_{d,v,c}$ as the following category fibered in groupoids.
For any scheme $S$, the objects of $\mathcal{M}^{\mathrm{KSBA}}_{d,v,c}(S)$ are $f\colon (X,\Delta)\to S$ such that $f$ is locally stable, $\Delta=cD$ for some K-flat relative Mumford divisor on $X$, and $K_{X/S}+\Delta$ is $f$-ample with $\mathrm{dim}\,X_s=d$ and $\mathrm{vol}_{X_s}(K_{X_s}+\Delta_s)=v$ for any $s\in S$.
For any two objects $f\colon (X,\Delta)\to S$ and $f'\colon (X',\Delta')\to S'$, we define an arrow $\varphi$ from $f$ to $f'$ such that $\varphi\colon X\to X'$ is a morphism that is compatible with the natural morphism $S\to S'$ such that $(X,\Delta)\cong (X'\times_{S'}S,\Delta'_{S})$.
\end{defn}

\begin{thm}\label{thm-KSBA}
   $\mathcal{M}^{\mathrm{KSBA}}_{d,v,c}$ is a Deligne-Mumford stack proper over $\mathbb{K}$ with the projective coarse moduli space $M^{\mathrm{KSBA}}_{d,v,c}$.
\end{thm}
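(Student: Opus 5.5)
This statement is essentially a citation of the established KSBA moduli theory, so the plan is to assemble it from the literature rather than reprove it. The key source is Kollár's book \cite{kollar-moduli}. There, for fixed dimension $d$, volume $v$ and coefficient $c$, the moduli functor of stable pairs $(X,cD)$ with $D$ a K-flat relative Mumford divisor is shown to be representable by a separated Deligne--Mumford stack of finite type. Concretely:
\begin{itemize}
\item Local stability and K-flatness of the divisor are locally closed conditions.
\item The condition on $K_{X/S}+\Delta$ is expressed through the universal hulls $\omega^{[m]}_{X/S}(mcD)$ for divisible $m$.
\item $f$-ampleness is an open condition.
\end{itemize}

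First, I would invoke boundedness of stable pairs with fixed $d$, $v$ and coefficient $c$; this is due to Hacon--McKernan--Xu in the coefficient-set DCC setting, and a single coefficient $c$ suffices. From boundedness, fix $m$ such that $m(K_{X_s}+\Delta_s)$ is Cartier and very ample with vanishing higher cohomology. This embeds every object into a fixed projective space $\mathbb P^{M}$.

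Second, take the locally closed subscheme $H$ of a product of a Hilbert scheme and $\mathrm{KDiv}$ parametrizing embedded locally stable pairs. On $H$ we also require that the embedding is given by the complete linear system of $\omega^{[m]}(mcD)$; these universal hull conditions are locally closed by Kollár's theory of hulls and husks. The stack is then the quotient $[H/\PGL_{M+1}]$. The automorphism groups of stable pairs are finite, since $\mathrm{Aut}$ preserves the ample log canonical class and is discrete because there are no infinitesimal automorphisms of an slc pair of log general type in characteristic zero. Together with separatedness, this makes the quotient a Deligne--Mumford stack.

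Separatedness and the existence part of the valuative criterion come from the MMP. Separatedness follows because the limit is the log canonical model of a dlt modification, which is unique. Existence follows from semistable reduction plus the existence of relative lc models, which is known for lc pairs by Birkar and Hacon--Xu. Hence the stack is proper. The coarse moduli space exists by Keel--Mori and is proper.

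Projectivity is the step I expect to be the main obstacle, and here I would cite Kollár's ampleness lemma together with Fujino's semipositivity results, as in Kovács--Patakfalvi and \cite[Chapter 8]{kollar-moduli}. The line bundle $\det f_*\omega^{[m]}_{X/S}(mcD)$ descends, after passing to a power, to the coarse space and is ample there; hence the coarse space is projective. All these ingredients are stated in \cite{kollar-moduli} in this generality, so the proof reduces to citing them and checking that our definition of $\mathcal M^{\mathrm{KSBA}}_{d,v,c}$ matches the functor there.
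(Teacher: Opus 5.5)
Your proposal takes the same route as the paper. The paper gives no argument for Theorem~\ref{thm-KSBA} beyond attributing it to \cite{kollar1988threefolds,alexeev1994boundedness,kollar-proj} for surfaces and to the theory collected in \cite{kollar-moduli} in general, and your outline (HMX boundedness, a Hilbert/$\mathrm{KDiv}$ quotient cut out by hull conditions, properness via the MMP, Keel--Mori, and projectivity via Koll\'ar's ampleness lemma with Fujino/Kov\'acs--Patakfalvi semipositivity) is a faithful unpacking of that citation, except that for non-normal generic fibers the existence of stable limits also needs Koll\'ar's gluing theory (normalize, take lc limits, glue back), which is likewise in \cite{kollar-moduli}.
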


The above theorem is first shown by \cite{kollar1988threefolds,alexeev1994boundedness,kollar-proj} for the surface case with zero boundary.
For the higher-dimensional case, Theorem \ref{thm-KSBA} was shown by dedications of many algebro-geometers. See \cite{kollar-moduli} for more details.
The corresponding wall crossing theory was established by \cite{ascher2023ksba,meng2023mmp} in the higher-dimensional case.
We remark that this moduli can be regarded as one example of K-moduli, that is, the moduli space parameterizing K-(poly)stable polarized varieties.

\begin{defn}
Fix $d\in\mathbb{Z}_{>0}$ and $v,c\in\mathbb{Q}_{>0}$.
Define $\mathcal{M}^{\mathrm{Kss}}_{d,v,c}$ as the following category fibered in groupoids.
For any scheme $S$, the objects of $\mathcal{M}^{\mathrm{Kss}}_{d,v,c}(S)$ are $f\colon (X,\Delta)\to S$ such that $f$ is locally stable, $\Delta=cD$ for some K-flat relative Mumford divisor on $X$, and $(X_s,\Delta_s)$ is a K-semistable log Fano pair with $\mathrm{dim}\,X_s=d$ and $\mathrm{vol}_{X_s}(-(K_{X_s}+\Delta_s))=v$ for any $s\in S$.
For any two objects $f\colon (X,\Delta)\to S$ and $f'\colon (X',\Delta')\to S'$, we define an arrow $\varphi$ from $f$ to $f'$ such that $\varphi\colon X\to X'$ is a morphism that is compatible with the natural morphism $S\to S'$ such that $(X,\Delta)\cong (X'\times_{S'}S,\Delta'_{S})$.
\end{defn}

\begin{thm}\label{thm-kss}
   $\mathcal{M}^{\mathrm{Kss}}_{d,v,c}$ is an Artin stack of finite type over $\mathbb{K}$ with the projective good moduli space $M^{\mathrm{Kps}}_{d,v,c}$.
\end{thm}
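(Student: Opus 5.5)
The plan is to assemble Theorem \ref{thm-kss} from the established K-moduli theory for log Fano pairs, following the structure surveyed in \cite{xu2025k}, rather than to argue from scratch. Since $c\in\mathbb{Q}_{>0}$ is fixed and rational, all the cited results apply directly. The argument has four steps: algebraicity and finite type, existence of a separated good moduli space, properness, and projectivity.

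First, I would show that $\mathcal{M}^{\mathrm{Kss}}_{d,v,c}$ is an Artin stack of finite type. The key input is boundedness: K-semistable log Fano pairs $(X,cD)$ of dimension $d$ and volume $v$ form a bounded family. For this I would invoke Jiang's boundedness, in the form extended to pairs with coefficients in a finite set via the lower bound $\delta\ge1$ and the results of Xu--Zhuang. Boundedness lets me fix $r$ such that $-r(K_X+cD)$ is very ample with vanishing higher cohomology. This realizes all objects inside a locally closed subscheme $Z$ of a product of a Hilbert scheme and $\mathrm{KDiv}$, parametrizing locally stable families with $\omega^{[-r]}_{X/S}(-rcD)$ relatively very ample. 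The stack of locally stable families is then $[Z/\PGL_{N+1}]$, using Koll\'ar's theory of universal hulls and K-flat divisors \cite{kollar-moduli}. The K-semistable locus is open by openness of K-semistability in families, proved by Blum--Liu--Xu and Xu via constructibility and lower semicontinuity of $\delta$, combined with Theorem \ref{thm:delta--log--pairs}. This gives a quotient stack of finite type with affine diagonal.

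Second, I would produce the good moduli space using the criterion of \cite[Theorem A]{AHLH}, which requires checking $\Theta$-reductivity and $\mathsf{S}$-completeness. Both are the results of Alper--Blum--Halpern-Leistner--Xu. They rest on the uniqueness of K-semistable degenerations: the special fiber of a test configuration degenerating to a K-semistable pair is determined by the minimizer of $\delta$ (Li--Wang--Xu, Blum--Xu). This gives a separated good moduli space $M^{\mathrm{Kps}}_{d,v,c}$ whose points correspond to K-polystable pairs.

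Third, I would establish properness through the existence part of the valuative criterion, which amounts to the existence of K-semistable limits over a DVR after finite base change. This follows from Liu--Xu--Zhuang's finite generation theorem, giving existence of optimal degenerations and hence the Higher Rank Finite Generation statement, together with Blum--Halpern-Leistner--Liu--Xu. I expect this to be the main obstacle; in the present setting I would simply cite it.

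Finally, for projectivity I would show that the CM $\mathbb{Q}$-line bundle descends to $M^{\mathrm{Kps}}_{d,v,c}$ and is ample there. This uses Codogni--Patakfalvi for bigness and nefness and Xu--Zhuang for ampleness on the good moduli space; the latter relies on reductivity of automorphism groups and the properness just established.
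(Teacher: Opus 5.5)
Your outline is correct and matches the paper's approach. The paper gives no independent proof of Theorem~\ref{thm-kss}; it cites the established K-moduli theory surveyed in \cite{xu2025k}, which proceeds through the same steps you list: boundedness, openness of K-semistability, $\Theta$-reductivity and $\mathsf{S}$-completeness, properness via finite generation, and ampleness of the CM line bundle.

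There are two small attribution slips. First, $\mathsf{S}$-completeness rests on the Blum--Xu uniqueness of K-polystable degenerations, not on a $\delta$-minimizer. Second, the Liu--Xu--Zhuang theorem is itself the higher rank finite generation statement, which then yields optimal degenerations and properness via Blum--Halpern-Leistner--Liu--Xu, not the other way round.
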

$M^{\mathrm{Kps}}_{d,v,c}$ is known as the K-moduli space of log Fano pairs.
For more details on Theorem \ref{thm-kss}, see \cite{xu2025k}.
Theorem \ref{thm-kss} was generalized by \cite{liu2024k} to the case with real coefficients and the corresponding moduli wall crossing theory was established by \cite{liu2026nonproportional,zhou2025finiteness} assuming certain boundedness where the anticanonical volume is infinitely small.  

As \cite[Conjecture 5.2]{odaka2010git}, Odaka predicted that the {\it CM line bundle}, which is observed by \cite{fujiki1990moduli} and named by \cite{tian1997kahler}, would be ample on the K-moduli space if it existed.
For $M^{\mathrm{KSBA}}_{d,v,c}$ and $M^{\mathrm{Kps}}_{d,v,c}$, the ampleness of the CM line bundle was confirmed by \cite{PX} and \cite{codogni2021positivity,posva2022positivity,xu2020positivity}, respectively.
We define the CM line bundle for the moduli of weighted quasimaps in Definition \ref{defn:CM-line-for-qmaps} later.

In the one-dimensional case, we can simplify the definition of locally stable families of K-semistable objects.

\begin{defn}
 Let $f\colon X\to S$ be a flat projective family of connected nodal curves.
   Let $\Delta$ be an effective relative $\mathbb{R}$-divisor on $X$ over $S$ such that $(X,\Delta)\to S$ is locally stable.
\begin{enumerate}
    \item    If $K_{X/S}+\Delta$ is $f$-ample, we say that $f\colon(X,\Delta)\to S$ is a {\it stable family}.
    \item    If $-(K_{X/S}+\Delta)$ is $f$-ample and $(X_{\bar{s}},\Delta_{\bar{s}})$ is K-semistable for any geometric point $\bar{s}\in S$, then we say that $f\colon(X,\Delta)\to S$ is a {\it K-semistable family of log Fano pairs}.
\end{enumerate}
\end{defn}

\subsection{Boundary polarized CY pairs}

For the Calabi-Yau case, we may consider the moduli stack of K-semistable polarized Calabi-Yau pairs naively, but this stack is not of finite type and does not yield any good moduli space in the usual sense in general.
For the lower-dimensional case, \cite{babwild} introduced the following object.

\begin{defn}[{\cite[Definition 2.2]{babwild}}]
Let $(X,\Delta)$ be a projective slc pair such that $-K_X-\Delta$ is ample.
We say that $(X,\Delta+D)$ is a {\it boundary polarized CY pair} if $(X,\Delta+D)$ is slc and $D$ is an effective $\mathbb{R}$-divisor such that $K_X+\Delta+D\sim_{\mathbb{R}}0$. 
We say that $D$ is the {\it polarization part}. If there is no fear of confusion, we will omit the polarization and write $(X,\Delta+D)$ as $(X,\Delta')$ for simplicity, where $\Delta':=\Delta+D$. 
\end{defn}

\cite{babwild,BL24} mainly considered boundary polarized CY pairs of dimension two with rational coefficients.
We mainly deal with the case where $\mathrm{dim}\,X=1$ but do not assume that the coefficients are rational.
In this case, if $(X,\Delta+D)$ is not klt, then $(X,\Delta+D)$ has regularity $0$ in the sense of \cite[Def.~8.5]{babwild}.
Therefore, we see the following.
\begin{lem}[{\cite[Lem.~8.9 and Prop.~8.10]{babwild}}]\label{lem--babwild--chain--two--comps}
Let $(X,\Delta+D)$ be a connected boundary polarized CY pair of dimension one.
Suppose that $(X,\Delta+D)$ is not normal.
Then the normalization $\overline{X}$ of $X$ has at most two irreducible components, where $\overline{G}$ is the conductor and $\overline{\Delta}$ is the strict transform of $\Delta$.
Furthermore, each irreducible component of $(\overline{X},\overline{\Delta}+\overline{G})$ has a unique lc center.
\end{lem}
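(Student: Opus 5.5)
The plan is to prove the lemma by direct one-dimensional geometry rather than by appealing to the general regularity theory. This is possible because every ingredient is explicit on nodal curves.

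Let $(X,\Delta+D)$ be connected, one-dimensional, slc, not normal, with $K_X+\Delta+D\sim_{\mathbb{R}}0$ and $-(K_X+\Delta)$ ample. Let $\nu\colon\overline{X}=\bigsqcup_{j}\overline{X}_j\to X$ be the normalization and $\overline{G}$ the conductor, i.e.\ the preimages of the nodes. Then
\[
K_{\overline X}+\overline G+\overline\Delta+\overline D=\nu^*(K_X+\Delta+D)\sim_{\mathbb R}0 .
\]
Restricting to each component gives
\[
\deg\big(K_{\overline X_j}+\overline G|_{\overline X_j}+\overline\Delta|_{\overline X_j}+\overline D|_{\overline X_j}\big)=0 .
\]
All coefficients are at most $1$ by log canonicity, and each point of $\overline G$ has coefficient exactly $1$.

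The key numerical step uses ampleness of $-(K_X+\Delta)$. It gives
\[
\deg\big(K_{\overline X_j}+\overline G_j+\overline\Delta_j\big)<0
\]
on every component, so each $\overline X_j\cong\mathbb P^1$. Since $\deg K_{\mathbb P^1}=-2$, this forces $\deg\overline G_j+\deg\overline\Delta_j<2$, hence $\#\overline G_j\le 1$.

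Next I use connectedness. Since $X$ is not normal, at least one node exists. The dual graph of $X$ is connected, has at least one edge, and every vertex has degree $\le1$ counted with half-edges; a self-node would contribute $2$ to $\#\overline G_j$ and is therefore excluded. So the dual graph is a single edge joining two vertices, and $\overline X$ has exactly two irreducible components, each $\mathbb P^1$ meeting the conductor in exactly one point.

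For the lc centers, work on each component $(\mathbb P^1,\overline\Delta_j+\overline G_j)$. The conductor point $p_j$ has coefficient $1$, so it is an lc center. Suppose another point $q$ were an lc center. Then $q$ would have coefficient $1$ in $\overline\Delta_j$, because $\overline G_j$ is the single point $p_j$ (note $\overline D$ is not part of this pair). That would give $\deg(\overline G_j+\overline\Delta_j)\ge2$, contradicting the strict inequality above. Hence $p_j$ is the unique lc center. If the intended pair includes the polarization part, the same count applies: $\deg(\overline G_j+\overline\Delta_j+\overline D_j)=2$ with coefficient $1$ at $p_j$. A second coefficient-one point would then force the boundary to be exactly $p_j+q$, so $\overline\Delta_j$ would have degree $\le 1$ supported at $q$. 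I would check that this case contradicts the strict negativity in the definition.

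The main obstacle is only the bookkeeping: which divisor ($\overline\Delta$ alone or with $\overline D$) is meant in ``unique lc center'', and making sure ampleness of $-(K_X+\Delta)$ is applied componentwise after pulling back to the normalization. Alternatively, one can cite \cite[Lem.~8.9, Prop.~8.10]{babwild} after noting that the coefficient-one condition makes the dual complex a point, i.e.\ regularity $0$.
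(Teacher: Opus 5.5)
Your proof is correct, and it takes a genuinely different route from the paper. The paper gives no independent argument. It observes that a non-klt one-dimensional boundary polarized CY pair has regularity $0$, cites \cite[Lem.~8.9 and Prop.~8.10]{babwild}, and remarks that irrational coefficients are handled by perturbing to rational ones.

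You instead work directly on the normalization:
\begin{itemize}
\item Pulling back the ample divisor $-(K_X+\Delta)$ along the finite map $\nu$ gives $\deg(K_{\overline X_j}+\overline G_j+\overline\Delta_j)<0$ on every component.
\item This forces $\overline X_j\cong\mathbb P^1$ and $\#\overline G_j\le 1$, which rules out self-nodes and double intersections.
\item Connectedness of the dual graph then leaves exactly one edge.
\item The same strict inequality forbids a second coefficient-one point in $\overline\Delta_j$.
\end{itemize}

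What your route buys:
\begin{itemize}
\item It is self-contained and works with $\mathbb R$-coefficients directly, so no perturbation step is needed.
\item It gives a sharper conclusion: $\overline X$ has exactly two components, each a $\mathbb P^1$ meeting the conductor in one point, and the unique lc center is the conductor point itself.
\item This sharper form is what the later applications actually invoke. The proof of Theorem~\ref{thm:bpcY-one-dimensional-moduli} uses that $X$ is two smooth rational curves meeting transversally at one point. The proof of Proposition~\ref{prop:bpcy--moduli--qmaps--const} uses $p_a(C)=0$.
\end{itemize}
The citation, by contrast, places the statement inside the general regularity framework of \cite{babwild}, which is more than one needs for curves.

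Two of your side remarks should be corrected.

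First, drop the hedge about including $\overline D$. In that reading the claim is false, and the check you propose would fail. Take $\overline\Delta_j=0$ and $\overline D_j=q$ with coefficient $1$. Then $\deg(K_{\overline X_j}+\overline G_j+\overline\Delta_j)=-1<0$ and $\overline D_j$ is ample, yet $(\overline X_j,\overline G_j+\overline D_j)=(\mathbb P^1,p_j+q)$ has two lc centers. This is exactly the reducible polystable pair in the proof of Theorem~\ref{thm:bpcY-one-dimensional-moduli}: two copies of $(\mathbb P^1,\{0\}+\{\infty\})$ glued at $0$. The uniqueness holds only for $(\overline X,\overline\Delta+\overline G)$, as stated and as you proved it.

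Second, regularity $0$ means the dual complex is $0$-dimensional, not that it is a single point; for $(\mathbb P^1,p+q)$ it consists of two points.
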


Note that the same argument as \cite[Lem.~8.9 and Prop.~8.10]{babwild} works for the case where $\Delta$ and $D$ have irrational coefficients by perturbing the coefficients. 

\subsubsection{Klt-trivial fibrations}

Let $f\colon X\to S$ be a morphism of projective normal varieties such that $f_*\mathcal{O}_X\cong\mathcal{O}_S$.
Let $\Delta$ be an effective $\mathbb{Q}$-divisor on $X$.
We say that $f$ is a {\it klt-trivial fibration} if $(F,\Delta|_F)$ is klt, where $F$ is the generic fiber of $f$ and $b(K_X+\Delta)\sim_{f}0$ for some $b\in\mathbb{Z}_{>0}$.
Set $\varphi$ as a nonzero rational function of $X$ such that $K_X+\Delta+\frac{1}{b}\mathrm{div}(\varphi)=f^*L$ for some $\mathbb{Q}$-divisor $L$ on $S$.
For any projective birational morphism $\mu\colon S'\to S$ from a normal variety, we can take $f'\colon X'\to S'$ and $g\colon X'\to X$ from a projective normal variety such that $g$ is birational.
Set $\Delta'$ on $X'$ as a $\mathbb{Q}$-divisor such that $(X',\Delta')$ is log crepant to $(X,\Delta)$.
Set $B_{S'}$ as the $\mathbb{Q}$-divisor on $S'$ such that for any prime divisor $E'$ on $S'$, $\mathrm{coeff}_{E'}(B_{S'})=1-\mathrm{lct}(X'_{\eta_{E'}},\Delta'_{\eta_{E'}};f'^*E')$, where $X'_{\eta_{E'}}$ is a sufficiently small neighborhood of $f'^{-1}(\eta_{E'})$ and $\eta_{E'}$ is the generic point of $E'$. 
Set $M_{S'}$ as the $\mathbb{Q}$-divisor uniquely defined such that
\[
K_{X'}+\Delta'+\frac{1}{b}\mathrm{div}(\varphi)=f'^*(K_{S'}+B_{S'}+M_{S'}).
\]
We say that $B_{S'}$ and $M_{S'}$ are the {\it discriminant} and the {\it moduli} $\mathbb{Q}$-divisors on $S'$, respectively.
We see that $B_{S'}$ and $M_{S'}$ form $\mathbb{Q}$-$b$-divisors and it is known that $\{M_{S'}\}_{S'}$ is a nef $\mathbb{Q}$-$b$-Cartier $b$-divisor by \cite[Theorem 0.2]{ambro2004shokurov}, i.e. there exists a projective birational morphism $g\colon S'\to S$ such that $M_{S'}$ is a nef $\mathbb{Q}$-Cartier $\mathbb{Q}$-divisor and for any projective birational morphism $h\colon S''\to S'$, $M_{S''}=h^*M_{S'}$.

\subsection{Moduli of boundary polarized CY pairs of dimension one with rational coefficients}

We will recall the definitions of the family of one-dimensional boundary polarized CY pairs in \cite{babwild}.
In \cite{babwild}, a {\it family of boundary polarized CY pairs} is defined as $f\colon (X,\Delta+D)\to S$ of dimension $n$ over a locally Noetherian scheme $S$ with index $N\in\mathbb{Z}_{>0}$ and coefficients $(r,s)\in\mathbb{Q}^2_{>0}$ consists of 
    \begin{enumerate}
        \item $f\colon X\to S$ is a flat proper morphism of relative dimension $n$,
        \item relative Cartier divisors $\Delta^{\mathrm{Div}}$ and $D^{\mathrm{Div}}$, where $\Delta=r\Delta^{\mathrm{Div}}$ and $D=sD^{\mathrm{Div}}$
    \end{enumerate}
    with $(X_b,\Delta_b+D_b)$ is a boundary polarized CY pair for any $b\in S$, $\omega_{X/S}^{[N]}(N\Delta+ND)\sim_S\mathcal{O}_X$ and $\omega_{X/S}^{[m]}(m\Delta+nD)$ commutes with arbitrary base change for all $m,n\in\mathbb{Z}_{>0}$ such that $mr,ns\in\mathbb{Z}$.
    We call this a {\it family of boundary polarized CY pairs in the sense of} \cite{babwild}.

The above definition is complicated because if the relative dimension of $f$ is more than one, then there are many subtlety in handling relative Mumford divisors.
We are mainly interested in the case where the relative dimension one and each geometric fiber is a nodal curve and we can modify the definition to the simpler one as follows. 
    Note that for any $N\in\mathbb{Z}_{>0}$ such that $Nr\in\mathbb{Z}$ and $Ns\in\mathbb{Z}$, we see that $\omega_{X/S}^{[N]}(N\Delta+ND)\sim_S\mathcal{O}_X$ by \cite[Corollary 2.65]{kollar-moduli}.
    Therefore, in the above definition, we can remove the condition on $N$.
    Furthermore, $\omega_{X/S}^{[m]}(m\Delta+nD)$ commutes with arbitrary base change for all $m,n\in\mathbb{Z}_{>0}$ such that $m\Delta$ and $nD$ are relative Mumford $\mathbb{Z}$-divisors since we assume the relative dimension of $f$ is one and all fibers are nodal curves.

    Thus, we may define the following.
    \begin{defn}
        A {\it family of one-dimensional boundary polarized CY pairs} $f\colon(X,\Delta+D)\to S$ over a locally Noetherian scheme $S$ consists of the following data:
        \begin{enumerate}
        \item $f\colon X\to S$ is a flat proper morphism of relative dimension one and each geometric fiber is a connected nodal curve,
        \item $\Delta$ and $D$ are relative Mumford $\mathbb{R}$-divisor such that $K_{X/S}+\Delta$ is $\mathbb{R}$-Cartier, $D$ is $f$-ample, $K_{X/S}+\Delta+D\sim_{\mathbb{R},S}0$ and $(X_{\bar{s}},\Delta_{\bar{s}}+D_{\bar{s}})$ is a connected boundary polarized CY pair for any geometric points $\bar{s}\in S$.
    \end{enumerate}
    \end{defn}

On the other hand, we need the following moduli, which will be used to deduce the projectivity of our moduli space of boundary polarized CY quasimaps later.

\begin{defn}\label{defn:usual-ABB+}
    We set $v,r,s$ as positive rational numbers. Then, we set the following categories fibered in groupoids $\mathcal{M}^{\mathrm{bpCY}}_{r,s,v}$:
    \begin{itemize}
        \item[Objects] The objects of $\mathcal{M}^{\mathrm{bpCY}}_{r,s,v}(S)$ is a family of boundary polarized CY pairs $(X,\Delta+D)\to S$ of dimension one over a locally Noetherian scheme $S$ with some index $N_0\in\mathbb{Z}_{>0}$ and coefficients $(r,s)\in\mathbb{Q}^2_{>0}$ such that $\mathrm{deg}_{X_b}(D_b)=v$ for any $b\in S$ in the sense of \cite{babwild},
        \item[Morphisms] A morphism $\varphi$ from an object $(X,\Delta+D)\to B$ to another object $(X',\Delta'+D')\to B'$ is a morphism $\varphi\colon X\to X'$ compatible with the natural morphism $B\to B'$ that induces an isomorphism $(X,\Delta+D)\to (X'\times_{B'}B,\Delta'_{B}+D'_B)$ over $B$. 
    \end{itemize}
    \end{defn}
    Here, we do not need to fix $N_0$. Indeed, if you put $N_0$ as the lowest common multiple of the denominators of $s$ and $r$, then $\omega_{X/S}^{[N_0]}(N_0\Delta+N_0D)\sim_S\mathcal{O}_X$ automatically holds by \cite[Proposition 3.3]{Hassett}.
    Thus, by \cite[Theorem 3.7]{babwild}, $\mathcal{M}^{\mathrm{bpCY}}_{r,s,v}$ is an Artin stack of locally of finite type with affine diagonal.
    Furthermore, the Cartier indices of $D$ for all members of $\mathcal{M}^{\mathrm{bpCY}}_{r,s,v}$ are bounded by the denominator of $s$.
    By \cite[Proposition 3.12]{babwild}, $\mathcal{M}^{\mathrm{bpCY}}_{r,s,v}$ is of finite type.

    On the other hand, for any family of one-dimensional boundary polarized CY pairs $f\colon (X,\Delta+D)\to B$ with index $N$ and coefficients $(r,s)$, we set $\lambda_{\mathrm{Hodge},f,k}:=f_*\omega^{[k]}_{X/B}(k(\Delta+D))$ for any $k\in N\cdot \mathbb{Z}_{>0}$.
    Here, we can take $N$ as the lowest common multiple of the denominators of $r$ and $s$ for an arbitrary object of $\mathcal{M}^{\mathrm{bpCY}}_{r,s,v}$.
    This line bundle is functorial (cf.~\cite[Lemma 14.7]{babwild}) and defines a unique line bundle $\lambda_{\mathrm{Hodge},k}$ on $\mathcal{M}^{\mathrm{bpCY}}_{r,s,v}$ (cf.~\cite[Theorem 10.3]{Alper}).

\begin{thm}\label{thm:bpcY-one-dimensional-moduli}
 $\mathcal{M}^{\mathrm{bpCY}}_{r,s,v}$ admits a projective good moduli space $M^{\mathrm{bpCY}}_{r,s,v}$.   
 Furthermore, $\lambda_{\mathrm{Hodge},k}$ descends to a unique ample line bundle $\Lambda_{\mathrm{Hodge},k}$ on $M^{\mathrm{bpCY}}_{r,s,v}$ for any sufficiently divisible $k\in\mathbb{Z}_{>0}$ up to linear equivalence. 
\end{thm}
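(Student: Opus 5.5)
The approach is to verify the criterion of \cite[Theorem A]{AHLH} for $\mathcal{M}^{\mathrm{bpCY}}_{r,s,v}$, which we already know is an Artin stack of finite type with affine diagonal by \cite[Theorem 3.7 and Proposition 3.12]{babwild}. After that, we prove ampleness of the Hodge line bundle directly, exploiting the fact that everything is one-dimensional.

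\emph{$\mathsf{S}$-completeness and $\Theta$-reductivity.} These are proved in \cite{babwild} for families of boundary polarized CY pairs in arbitrary dimension, via the log canonical modification and a K-semistability-type argument for the polarization $D$. I would invoke \cite[Theorems 5.2 and 5.4]{babwild} (or their analogues) directly. For the one-dimensional case, an independent check is also available. A boundary polarized CY curve pair $(X,\Delta+D)$ is K-semistable with respect to $D$ by Theorem~\ref{thm:K-stability--for--log-pairs--fundaments}(3). A family over $\overline{\mathrm{ST}}^\circ(R)$ or $\Theta^\circ_R$ is filled in by taking the Hassett-type limit: $K+\Delta+(1+\epsilon)D$ is ample, so the stable family exists. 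We then contract via the section ring of $K+\Delta+D\sim_{\mathbb{R}}0$ plus a small multiple of $D$, as in \cite[Proposition 3.3]{Hassett}. This gives a separated good moduli space $M^{\mathrm{bpCY}}_{r,s,v}$.

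\emph{Properness.} We check the existence part of the valuative criterion. Given a pair over $\mathrm{Spec}\,K$, after a finite base change we take the stable limit of $(X,\Delta+(1+\epsilon)D)$, which exists by Hassett's theory \cite{Hassett} since the coefficients lie in $(0,1]$. We then run the relative MMP, i.e.\ contract the rational tails and bridges on which $D$ has degree zero. Since $K+\Delta+D\sim_{\mathbb{R},R}0$ on the generic fiber and the central fiber stays slc, the resulting family has $D$ relatively ample. This gives a boundary polarized CY limit.

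\emph{Ampleness of $\Lambda_{\mathrm{Hodge},k}$.} Descent follows from \cite[Theorem 10.3]{Alper} once the stabilizers act trivially on the fibers of $\lambda_{\mathrm{Hodge},k}$. This holds because $\omega^{[k]}(k(\Delta+D))\cong\mathcal{O}$ on each fiber, and any automorphism preserving $(X,\Delta+D)$ acts on the generator of $H^0$ through a root of unity. That action is killed by passing to a sufficiently divisible $k$, which is possible since the stabilizers of closed points are reductive and the relevant characters are finite; here we use the index bound. For ampleness, I would use the moduli part of the klt-trivial fibration. Take a finite cover of $M^{\mathrm{bpCY}}_{r,s,v}$ by a family over a normal projective base $S$. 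The Hodge bundle is the moduli b-divisor $M_S$ (after adding the discriminant part), which is nef by \cite{ambro2004shokurov}. Bigness on every subvariety is then checked via the \emph{Nakai--Moishezon} criterion applied to maximal-variation families: a non-isotrivial family of one-dimensional pairs has a positive-degree moduli part, since the cross-ratio (Hassett/GIT) map is finite.

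The main obstacle I expect is this last step. The klt-trivial argument works directly only when the generic fiber is klt. For lc (regularity-zero) strata, described by Lemma~\ref{lem--babwild--chain--two--comps}, I would instead follow \cite[Section 14]{babwild} and \cite{BL24}: restrict to each stratum and use the fact that its good moduli space is controlled by the GIT of points on $\mathbb{P}^1$. On that stratum, the Hodge line bundle is compared with the CM/GIT polarization.
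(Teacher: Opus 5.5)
\textbf{Ampleness on the non-klt locus.} There is a genuine gap here, at the point you yourself flag. Your proposed fix, restricting to non-klt strata and comparing $\Lambda_{\mathrm{Hodge}}$ with a GIT/CM polarization, is not an argument, and it aims at the wrong object. The non-klt locus of the stack may be positive-dimensional, but its image in $M^{\mathrm{bpCY}}_{r,s,v}$ is a \emph{finite} set; this is the missing idea.

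A non-klt polystable one-dimensional pair is rigid:
\begin{itemize}
\item if $X$ is normal, then $(X,\Delta+D)\cong(\mathbb{P}^1,\{0\}+\{\infty\})$ by \cite[Proposition 8.17]{babwild};
\item if $X$ is not normal, Lemma \ref{lem--babwild--chain--two--comps} makes $X$ two smooth rational curves meeting transversally at one point. The $\mathbb{G}_m$-action on the components gives a weakly special test configuration, and polystability forces each component of the normalization, with its conductor point, to be $(\mathbb{P}^1,\{0\}+\{\infty\})$.
\end{itemize}
With $r,s,v$ fixed, there are only finitely many ways to distribute $\Delta=r\Delta^{\mathrm{Div}}$ and $D=sD^{\mathrm{Div}}$ over these points. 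Hence every positive-dimensional irreducible closed subvariety of $M^{\mathrm{bpCY}}_{r,s,v}$ has klt generic point, where your klt-trivial fibration reasoning (nefness, and bigness under maximal variation) applies; zero-dimensional subvarieties are trivial. The paper's proof is precisely this: it uses \cite[Theorem 14.13]{babwild} and \cite[Theorem 3.11]{kollar-proj} to reduce ampleness to this finiteness, and then proves the finiteness as above.

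\textbf{Properness, $\mathsf{S}$-completeness, and descent.} Your direct arguments for these fail as written, so you should cite instead.
\begin{itemize}
\item \emph{Properness.} The stable limit of $(X_K,\Delta_K+(1+\epsilon)D_K)$ need not exist. If $\mathrm{ord}_p(\Delta_K+D_K)=1$ at some $p\in\mathrm{Supp}\,D_K$, which always happens when $s=1$, then increasing the coefficient of $D$ destroys log canonicity and Hassett's weights exceed $1$. The paper cites \cite[Theorem 7.1]{babwild}. Compare Proposition \ref{prop:properness-bpcy}, which perturbs toward the log Fano side rather than the stable side and glues components.
\item \emph{$\mathsf{S}$-completeness.} The same objection undermines your ``independent check'', so rely on \cite[Theorems 5.4 and 6.3]{babwild}, as the paper does.
\item \emph{Descent.} At the strictly lc polystable points, the stabilizer contains $\mathbb{G}_m$ or $\mathbb{G}_m^2$. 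This acts on the fiber of $\lambda_{\mathrm{Hodge},k}$ through a character that is not a priori of finite order, so passing to divisible $k$ does not kill it. You must show the generator, locally $(dz/z)^{\otimes k}$, is torus-invariant, or cite \cite[Proposition 14.9]{babwild} as the paper does.
\end{itemize}
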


\begin{proof}
By \cite[Theorems 5.4 and 6.3]{babwild}, we see that $\mathcal{M}^{\mathrm{bpCY}}_{r,s,v}$ is $\mathsf{S}$-complete and $\Theta$-reductive.
Therefore, $\mathcal{M}^{\mathrm{bpCY}}_{r,s,v}$ admits a separated good moduli space $M^{\mathrm{bpCY}}_{r,s,v}$ of finite type by \cite[Theorem A]{AHLH}.
By \cite[Theorem 7.1]{babwild}, we see that $M^{\mathrm{bpCY}}_{r,s,v}$ is proper.

    By \cite[Proposition 14.9]{babwild}, $\lambda_{\mathrm{Hodge},k}$ descends to a unique line bundle $\Lambda_{\mathrm{Hodge},k}$ on $M^{\mathrm{bpCY}}_{r,s,v}$ for any sufficiently divisible $k\in\mathbb{Z}_{>0}$.
    We see by \cite[Theorem 14.13]{babwild} and \cite[Theorem 3.11]{kollar-proj} that to check the ampleness of $\Lambda_{\mathrm{Hodge},k}$, it suffices to show that the number of points of $M^{\mathrm{bpCY}}_{r,s,v}(\mathbb{K})$ corresponding to strictly lc or slc pairs are finite. 

    We say that a boundary polarized CY pair $(X,\Delta+D)$ of dimension one is {\it polystable} if $(X,\Delta+D)$ corresponds to a closed point of $\mathcal{M}^{\mathrm{bpCY}}_{r,s,v}(\mathbb{K})$.
    Fix a strict lc or slc polystable boundary polarized CY pair $(X,\Delta+D)$ of dimension one.
    Suppose that $X$ is normal.
    By \cite[Proposition 8.17]{babwild}, then $(X,\Delta+D)\cong (\mathbb{P}^1,\{0\}+\{\infty\})$. 
    Since $s$ and $r$ are fixed, there are only finitely many choices of $\Delta$ and $D$. This means that there are only finitely many isomorphism classes of normal polystable boundary polarized CY pairs that correspond to $\mathcal{M}^{\mathrm{bpCY}}_{r,s,v}(\mathbb{K})$.
    On the other hand, suppose that $X$ is not normal.
    By Lemma \ref{lem--babwild--chain--two--comps}, we see that $X$ is isomorphic to the union of two smooth rational curves intersecting transversally at one point.
    We note that $\mathbb{G}_{m}$ acts on $X$ such that on the normalization $\mathbb{P}^1\sqcup \mathbb{P}^1$ with the conductor supported on $\{0\}$, the action lifts as the natural action $\mathbb{G}_{m}$ on $\mathbb{P}^1$ by multiplication.
    This $\mathbb{G}_m$ induces a weakly special test configuration of $(X,\Delta+D)$.
    In this case, since $(X,\Delta+D)$ is polystable, we see that the normalization $(\overline{X},\overline{\Delta}+\overline{D}+G)$ of $(X,\Delta+D)$, where $G$ is the conductor divisor, is isomorphic to the copy of $(\mathbb{P}^1,\{0\}+\{\infty\})$. 
    There are only finitely many choices of $\Delta$ and $D$.
    Thus, the number of points of $M^{\mathrm{bpCY}}_{r,s,v}(\mathbb{K})$ corresponding to strictly lc or slc pairs are finite.
    By \cite[Theorem 14.13]{babwild} and \cite[Theorem 3.11]{kollar-proj}, $\Lambda_{\mathrm{Hodge},k}$ is ample and hence $M^{\mathrm{bpCY}}_{r,s,v}$ is projective (cf.~\cite[Tag 0D32]{stacksproject}).
\end{proof}

We call $\Lambda_{\mathrm{Hodge}}:=\frac{1}{k}\Lambda_{\mathrm{Hodge},k}$ the {\it Hodge line bundle} on $M^{\mathrm{bpCY}}_{r,s,v}$.
This $\Q$-line bundle is also ample and functorial.

\begin{defn}[Weighted log pairs]\label{defn:weightedpairs-isom}
Let $f\colon X\to S$ be a flat projective family of nodal connected curves.
    Let $D_1,\ldots,D_r$ be effective relative Mumford $\mathbb{R}$-divisors.
We denote it by $\vec{D}:=(D_1,\ldots,D_r)$ for $r\in\mathbb{Z}_{\ge0}$.
Consider $\vec{a}:=(a_1,\ldots,a_r)$, where $a_i\in\mathbb{R}_{\ge0}$.
We say that $f\colon (X,\vec{\Delta})\to S$ is a {\it weighted log family of pairs}, where $\vec{\Delta}:=(a_iD_i)_{i=1}^r$.
If $(X,\sum_{i=1}^ra_iD_i)$ is locally stable over $S$, then we say that $(X,\vec{\Delta})$ is {\it locally stable} over $S$, where $\vec{a}\cdot \vec{D}:=\sum_{i=1}^ra_iD_i$.
Throughout this paper, we consider $r=0$ to be the case where $\vec{\Delta}=0$.
We say that a morphism $g\colon X\to X'$ defines an {\it isomorphism} of $f\colon (X,\vec{\Delta})\to S$ and $f'\colon (X',\vec{\Delta}')\to S$ if $g$ is an isomorphism of $S$-schemes such that $a_i=a'_i$ and if $a_i\ne0$, then $g_*D_i=D'_i$ for any $1\le i\le r$, where $\vec{\Delta}':=(a'_iD'_i)_{i=1}^r$.

\begin{enumerate}
    \item 
If $(X,\sum_{i=1}^ra_iD_i)$ is locally stable over $S$ and $K_{X/S}+\sum_{i=1}^ra_iD_i$ is $f$-ample, then we say that $f\colon (X,\vec{\Delta})\to S$ is a {\it stable family of (weighted) log pairs}.

\item If $(X,\sum_{i=1}^ra_iD_i)$ is a family of K-semistable log Fano pairs, then we say that $f\colon (X,\vec{\Delta})\to S$ is a {\it family of K-semistable (weighted) log Fano pairs}.

\item Take $r'\in [0,r-1]\cap\mathbb{Z}$ and let $\vec{a}':=(a_1,\ldots,a_{r'})$, $\vec{a}'':=(a_{r'+1},\ldots,a_{r})$, $\vec{D}':=(D_1,\ldots,D_{r'})$ and $\vec{D}'':=(D_{r'+1},\ldots,D_{r})$. In this notion, if $r'=0$, we understand $\vec{a}'=0$ and $\vec{D}'=\emptyset$. 
If $(X,\sum_{i=1}^{r'}a_iD_i+\sum_{i=r'+1}^{r}a_iD_i)$ is a family of one-dimensional boundary polarized CY pairs, where $\mathcal{O}_{X}(\sum_{i=r'+1}^{r}a_iD_i)$ is $f$-ample, then we say that $f\colon (X,\vec{\Delta}_1+\vec{\Delta}_2)\to S$ is a {\it family of one-dimensional boundary polarized CY pairs, (where $\vec{a}''\cdot\vec{D}'')$ is $f$-ample)}. Here, $\vec{\Delta}_1:=(a_iD_i)_{i=1}^{r'}$ and $\vec{\Delta}_2:=(a_iD_i)_{i=r'+1}^{r}$.
\end{enumerate}
\end{defn}

\subsection{Log quasimaps}
We recall the definition of the quasimap.

\begin{defn}[Quasimaps]\label{def:qmaps}
Fix a closed immersion $X\subset \mathbb{P}^N$ of a projective variety and let $\mathrm{Cone}(X)\subset\mathbb{A}^{N+1}$ be the affine cone of $X$.
Let $C$ be a nodal proper curve.
We say that $q\colon C\to [\mathrm{Cone}(X)/\mathbb{G}_m]$ is a {\it quasimap} if $q$ maps any generic point of $C$ into $X\cong [(\mathrm{Cone}(X)\setminus\{0\})/\mathbb{G}_m]$.  
\end{defn}
If $X=\mathbb{P}^N$, then the information of $q$ corresponds to a homomorphism
\[
q_{\mathrm{hom}}\colon \mathcal{O}_{C}^{\oplus N+1}\to \mathscr{L}
\]
for some line bundle $\mathscr{L}$ such that $q_{\mathrm{hom}}$ is not a zero map around any generic point of any irreducible component of $C$ by the argument of \cite[Definition 3.3]{HH2}.
Note that $\mathscr{L}$ and $q_{\mathrm{hom}}$ as above are unique up to isomorphism.
In the general case, considering the composition of $q$ and the canonical $\mathbb{G}_m$-equivariant closed immersion $\mathrm{Cone}(X)\hookrightarrow \mathbb{A}^{N+1}$, we can yield $q_{\mathrm{hom}}$ and $\mathscr{L}$ as above.
Let $\mathrm{Bs}(q):=\sum_{p\in C}l(\mathrm{Coker}(q_{\mathrm{hom}})\otimes\mathcal{O}_{C,p})[p]$ and we say that this is the fixed part of $q$.
We set $\mathrm{deg}\,q:=\mathrm{deg}_C\mathscr{L}$.

\begin{defn}[Log quasimaps]\label{def:log_qmaps}
In the situation of Definition \ref{def:qmaps}, we say that $q$ is  {\it deminormal} if the support of $\mathrm{Bs}(q)$ is disjoint from the singular locus of $C$.
Moreover, let $\Delta$ be a $\mathbb{R}$-divisor on $C$ whose support is disjoint from the singular locus of $C$.
Then, we say that $q\colon (C,\Delta)\to [\mathrm{Cone}(X)/\mathbb{G}_m]$ is a {\it log quasimap}.
If $C$ has no nodal singularities, we say that $q$ is {\it normal}.
In this case, we also have a homomorphism
\[
q_{\mathrm{hom}}\colon \mathcal{O}_{C}^{\oplus N+1}\to \mathscr{L}
\]
that corresponds to $q$ as above.
We say $\mathscr{L}$ is the {\it line bundle associated with} $q$.

Fix $w\in\mathbb{R}_{>0}$. 
We say that a log quasimap $q$ is {\it slc (resp.~klt) with respect to the weight $w$} if $\mathrm{Bs}(q)$ is not supported at the singular locus of $C$ and $(C,w\mathrm{Bs}(q)+\Delta)$ is slc (resp.~klt). 
We say that $q$ is {\it lc with respect to the weight $w$} if $q$ is slc and normal.
For any slc quasimap $q$ with respect to $w$, we say that $q$ is 
\begin{enumerate}
    \item a {\it stable log quasimap} with respect to the weight $w$ if $K_C+\Delta+w\mathscr{L}$ is ample, where $\mathscr{L}$ is the line bundle associated with $q$;
    \item a {\it log Fano quasimap} with respect to the weight $w$ if $-(K_C+\Delta+w\mathscr{L})$ is ample and $q$ is klt, where $\mathscr{L}$ is the line bundle associated with $q$; 
    \item a {\it boundary polarized Calabi--Yau (CY) quasimap} with respect to the weight $(w'',w')$ if $(K_C+\Delta+(w''+w')\mathscr{L})\sim_{\R}0$ and $w'\mathscr{L}+D$ is ample for some effective $\mathbb{R}$-divisor $D\le \Delta$ with $w',w''\in\mathbb{R}_{\ge0}$ such that $w'+w''=w$; 
\end{enumerate}
where $\mathscr{L}$ is the line bundle associated with $q$. Here, we say that $w'\mathscr{L}+D$ is the {\it polarization part} in (3).

We will omit the weight if there is no fear of confusion.
Furthermore, we set $\mathrm{Sec}_q(\mathscr{L})$ as the set of sections of $\Gamma(\mathscr{L})$.
\end{defn}

By the definition, we see that $(C,\Delta+\frac{w}{l}\sum_{i=1}^l\mathrm{div}(f_i))$ is a boundary polarized CY pair if $q$ is a boundary polarized CY quasimap as (3), $f_i$'s are general sections corresponding to $e_i\in \mathbb{K}^{N+1}$, and $l:=\lceil w+w'\rceil$.

Note that there exists an open locus $U\subset C$ such that $U$ contains all the singular points of $C$ and $q|_U$ factors through $X$ if $q$ is slc.
By the valuative criterion for the properness, we can extend $q|_U$ to the morphism $\overline{q|_U}\colon C\to X$. If $\overline{q|_U}$ is a constant morphism, then we say that $q$ is {\it constant}. 
We remark that $q_{\mathrm{hom}}$ is surjective if and only if $q$ factors through the open substack $X\subset [\mathrm{Cone}(X)/\mathbb{G}_m]$. In this case, we say that $q$ is a {\it map} to $X$.

\begin{rem}
\begin{enumerate}
    \item If $w>1$ and $q$ is stable, then $q$ is a {\it stable log map}, which and whose moduli theory and wall crossing theory are discussed in \cite{bayer2009stability,Al-Guy}.
    \item The concept of quasimaps is first introduced by Givental \cite{givental1998mirror} and also studied in \cite{morrison1995summing}. As in \cite{marian2011moduli,ciocan2014stable,Toda}, the concept of quasimaps is enlarged to a map to some Artin stacks.
\end{enumerate}
\end{rem}

Let $S$ be a scheme. Let $\pi\colon \mathcal{C}\to S$ be a flat proper family of nodal curves with a morphism $q\colon \mathcal{C}\to [\mathrm{Cone}(X)_S/\mathbb{G}_{m,S}]$.  
Let $\Delta$ be a K-flat relative Mumford $\mathbb{R}$-divisor on $\mathcal{C}$.
We say that $q\colon(\mathcal{C},\Delta)\to [\mathrm{Cone}(X)_S/\mathbb{G}_{m,S}]$ is a {\it family of log quasimaps} if $q_{\bar{s}}$ is a deminormal log quasimap for any geometric point $\bar{s}\in S$.
Then, the information of $q$ corresponds to a homomorphism 
\[
q_{\mathrm{hom}}\colon \mathcal{O}_{\mathcal{C}}^{\oplus N+1}\to \mathscr{L}
\]
for some line bundle $\mathscr{L}$ on $\mathcal{C}$ uniquely up to isomorphism as above.
We call $q_{\mathrm{hom}}$ the {\it induced homomorphism} and $\mathscr{L}$ the {\it line bundle associated with} $q$.

For any two families of log quasimaps $q^{(1)}\colon(\mathcal{C}^{(1)},\Delta^{(1)})\to [\mathrm{Cone}(X)_S/\mathbb{G}_{m,S}]$ and $q^{(2)}\colon(\mathcal{C}^{(2)},\Delta^{(2)})\to [\mathrm{Cone}(X)_S/\mathbb{G}_{m,S}]$, we say that $\varphi\colon q^{(1)}\to q^{(2)}$ is an {\it isomorphism of families of log quasimaps} if $\varphi\colon \mathcal{C}^{(1)}\to\mathcal{C}^{(2)}$ is an isomorphism over $S$ such that $\varphi_*\Delta^{(1)}=\Delta^{(2)}$ and $q^{(2)}\circ\varphi$ and $q^{(1)}$ are isomorphic.

Furthermore, if $\vec{\Delta}=(a_i D_i)_{i=1}^r$, where each $D_i$ is an effective relative Mumford divisor and $a_i\in(0,1]$, then we say that $q\colon (\mathcal{C},\vec{\Delta})\to [\mathrm{Cone}(X)_S/\mathbb{G}_{m,S}]$ is a {\it family of weighted log quasimaps} over $S$ if $q_{\bar{s}}\colon (\mathcal{C}_{\bar{s}},\Delta_{\bar{s}})\to[\mathrm{Cone}(X)_{\overline{\kappa(s)}}/\mathbb{G}_{m,\overline{\kappa(s)}}]$ is deminormal for any geometric point $\bar{s}\in S$ given by $\overline{\kappa(s)}$. Here, we set $\vec{a}:=(a_1,\ldots,a_r)$, $\vec{D}:=(D_1,\ldots,D_r)$ and $\vec{a}\cdot\vec{D}:=\Delta$.
To simplify notations, let $q\colon (\mathcal{C},\vec{a}\cdot\vec{D})\to [\mathrm{Cone}(X)_S/\mathbb{G}_{m,S}]$ denote $q\colon (\mathcal{C},\vec{\Delta})\to [\mathrm{Cone}(X)_S/\mathbb{G}_{m,S}]$ abusively if there is no fear of confusion.

For any two families of weighted quasimaps $q^{(1)}\colon(\mathcal{C}^{(1)},\vec{a}^{(1)}\cdot\vec{\mathcal{D}}^{(1)})\to [\mathrm{Cone}(X)_S/\mathbb{G}_{m,S}]$ and $q^{(2)}\colon(\mathcal{C}^{(2)},\vec{a}^{(2)}\cdot\vec{\mathcal{D}}^{(2)})\to [\mathrm{Cone}(X)_S/\mathbb{G}_{m,S}]$, we say that $\varphi\colon q^{(1)}\to q^{(2)}$ is an {\it isomorphism of families of weighted quasimaps} if $\varphi$ is an isomorphism of $q^{(1)}$ and $q^{(2)}$ as families of log quasimaps such that $\varphi_*D_i^{(1)}=D^{(2)}_i$ and $a^{(1)}_i=a^{(2)}_i$. 

Fix $w\in\mathbb{R}_{>0}$.
We say that the above $q$ is {\it locally stable with respect to} $w$ or {\it a locally stable family of quasimaps} if $q_{\bar{s}}\colon (\mathcal{C}_{\bar{s}},\vec{a}\cdot\vec{D}_{\bar{s}})\to[\mathrm{Cone}(X)_{\overline{\kappa(s)}}/\mathbb{G}_{m,\overline{\kappa(s)}}]$ is slc with respect to the weight $w$ for any geometric point $\bar{s}\in S$.
If $w>1$, it is easy to see that $q$ factors through $X_S$.


If there is no fear of confusion, we will omit the numerical data including the weight.

\begin{defn}[K-stability of quasimaps]
Let $q\colon (C,\Delta)\to [\mathrm{Cone}(X)/\mathbb{G}_m]$ be a log quasimap with weight $w$ of degree $d$.    
Let $L$ be an ample $\R$-line bundle.
Consider $(\mathcal{C},\mathcal{L})$ an arbitrary semiample deminormal test configuration for $(C,\Delta;L)$.
Then we set the {\it Donaldson-Futaki invariant} with respect to $q$ as $$\mathrm{DF}_q(\mathcal{C},\mathcal{L}):=\mathrm{DF}_{\Delta+w\mathrm{div}(f)}(\mathcal{C},\mathcal{L}),$$
where $f\in \mathrm{Sec}_q(\mathscr{L})$ is a section corresponding to a general element of $\mathbb{K}^{N+1}$ and $\mathscr{L}$ is the associated line bundle.  
It is easy to check that $\mathrm{DF}_q(\mathcal{C},\mathcal{L})$ is independent of the choice of $f$.

We say that $q$ is {\it K-semistable with respect to $L$} if $\mathrm{DF}_q(\mathcal{C},\mathcal{L})\ge0$ for any semiample deminormal test configuration $(\mathcal{C},\mathcal{L})$ for $(C,\Delta;L)$.
If $q$ is K-semistable and $\mathrm{DF}_q(\mathcal{C},\mathcal{L})>0$ for any semiample deminormal test configuration $(\mathcal{C},\mathcal{L})$ for $(C,\Delta;L)$ such that $\mathcal{L}$ is ample and there exists no $\mathbb{G}_m$-equivariant isomorphism $\sigma\colon(\mathcal{C},\overline{\Delta\times\mathbb{G}_m})\cong(C\times\mathbb{A}^1,\Delta\times\mathbb{A}^1)$ over $\mathbb{A}^1$, then we say that $q$ is {\it K-stable with respect to $L$}.
\end{defn}

Before stating the properties of K-stability of quasimaps, we recall the following notion, which is a generalization of the twisted K-stability of Dervan \cite{dervan2016uniform}.

\begin{defn}[Log Twisted K-stability]
 Let $(X,\Delta;L)$ be a polarized deminormal subpair with a semiample $\mathbb{R}$-line bundle $H$.
 Then we call $(X,\Delta;H;L)$ a {\it log-twisted subpair} and if $\Delta$ is further effective, then we say that $(X,\Delta;H;L)$ is a {\it log-twisted pair}.
 We say that $(X,\Delta;H;L)$ is {\it log-twisted K-semistable} if $\mathrm{DF}_{\Delta}(\mathcal{X},\mathcal{L})+\mathcal{J}^{H,{\mathrm{NA}}}(\mathcal{X},\mathcal{L})\ge 0$ for any semiample deminormal test configuration $(\mathcal{X},\mathcal{L})$ for $(X,\Delta,L)$.

    We say that $(X,\Delta;H;L)$ is {\it log-twisted K-stable} if $(X,\Delta;H;L)$ is log-twisted K-semistable and $\mathrm{DF}_{\Delta}(\mathcal{X},\mathcal{L})+\mathcal{J}^{H,{\mathrm{NA}}}(\mathcal{X},\mathcal{L})> 0$ for any semiample deminormal test configuration $(\mathcal{X},\mathcal{L})$ for $(X,\Delta;L)$ such that $\mathcal{L}$ is ample and there exists no $\mathbb{G}_m$-equivariant isomorphism $\sigma\colon (\mathcal{X},\Delta_{\mathcal{X}})\cong (X\times\mathbb{A}^1,\Delta\times\mathbb{A}^1)$.
\end{defn}

The log twisted K-stability is also conjectured to be equivalent to the existence of special K\"ahler currents as the Yau-Tian-Donaldson conjecture and an important notion (cf.~\cite{ross2021twisted}).
Lemma \ref{lem;K-stqmaps and log-twisted} below shows that the K-stability of quasimaps is related to the log twisted K-stability in the following sense.

\begin{lem}\label{lem;K-stqmaps and log-twisted}
    Let $q\colon (C,\Delta)\to [\mathrm{Cone}(X)/\mathbb{G}_m]$ be a log quasimap with weight $w$ of degree $d$.
    Let $L$ be an ample $\mathbb{R}$-line bundle on $C$ and $\mathscr{L}$ the line bundle associated to $q$.
    Let $\mathrm{Bs}(q)$ be the fixed part of $q$ and $H:=\mathscr{L}-\mathrm{Bs}(q)$.
    Then $q$ is K-(semi)stable with respect to $L$ if and only if $(C,\Delta+w\mathrm{Bs}(q);wH;L)$ is log-twisted K-(semi)stable.
\end{lem}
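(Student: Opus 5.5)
The plan is to compare the two invariants test configuration by test configuration, by showing that for every semiample deminormal test configuration $(\mathcal{C},\mathcal{L})$ of $(C,\Delta;L)$
\[
\mathrm{DF}_q(\mathcal{C},\mathcal{L})=\mathrm{DF}_{\Delta+w\mathrm{Bs}(q)}(\mathcal{C},\mathcal{L})+\mathcal{J}^{wH,\mathrm{NA}}(\mathcal{C},\mathcal{L}).
\]
Once this holds, the semistable case follows at once. For the stable case, note that the extra condition on $\mathbb{G}_m$-equivariant isomorphisms refers to $(\mathcal{C},\overline{\Delta\times\mathbb{G}_m})$ in the quasimap definition and to $(\mathcal{C},\Delta_{\mathcal{C}})$ with boundary $\Delta+w\mathrm{Bs}(q)$ in the twisted definition. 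These two families of excluded test configurations must be matched. Product-type test configurations have $\mathrm{DF}_q=0$ and twisted invariant $0$ in both settings. A configuration trivial for $\Delta$ but not for $\Delta+w\mathrm{Bs}(q)$ still has $\mathcal{L}$ ample, so it falls under the strict-positivity requirement in both definitions once the identity is known. I will only flag this bookkeeping and check it on product configurations using Remark \ref{remark:pos-L,NA}.

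First, decompose the section. Take $f\in\mathrm{Sec}_q(\mathscr{L})$ corresponding to a general vector of $\mathbb{K}^{N+1}$. Then $\mathrm{div}(f)=\mathrm{Bs}(q)+D_f$, where $D_f$ is the divisor of a general member of the base-point-free linear system spanned by the images of the coordinate sections in $H=\mathscr{L}-\mathrm{Bs}(q)$. Note that $H$ is globally generated, hence semiample, and $D_f\sim H$. Since $f$ is general, $D_f$ is a general member of a basepoint-free system, so it is reduced and avoids the nodes and $\mathrm{Supp}(\Delta)$.

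Second, expand $\mathrm{DF}_{\Delta+w\mathrm{Bs}(q)+wD_f}$ using the definition. The boundary enters linearly in two places: in $(K_{\overline{\mathcal{C}}/\mathbb{P}^1}+\Delta_{\mathcal{C}})\cdot\overline{\mathcal{L}}$, and in the constant $\deg(K_C+\Delta)$ multiplying $\overline{\mathcal{L}}^2/(2L)$ (here $n=1$). The difference between $\mathrm{DF}_{\Delta+w\mathrm{div} f}$ and $\mathrm{DF}_{\Delta+w\mathrm{Bs}(q)}$ is therefore
\[
\frac{1}{\deg L}\left(\overline{wD_f\times\mathbb{G}_m}\cdot\overline{\mathcal{L}}-\frac{w\deg H}{2\deg L}\overline{\mathcal{L}}^2\right).
\]
This should be computed on the resolution $\mathcal{Y}$ as $\rho_*^{-1}(wD_f\times\mathbb{P}^1)\cdot\mu^*\overline{\mathcal{L}}$. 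The closure of $D_f\times\mathbb{G}_m$ in $\overline{\mathcal{C}}$ is the pushforward of the strict transform on $\mathcal{Y}$, and the projection formula applies since $\overline{\mathcal{L}}$ is an $\mathbb{R}$-line bundle.

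Third, invoke Lemma \ref{lem:J-functional--computation} with $H$ replaced by $wH$. For sufficiently general effective $D\sim_{\mathbb{R}} wH$ the supremum is attained, giving exactly the expression above with $D=wD_f$.

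The main obstacle I expect is justifying that $wD_f$ is ``sufficiently general'' in the sense of that lemma. The lemma's generality is over all effective $\mathbb{R}$-divisors $\mathbb{R}$-linearly equivalent to $wH$, whereas $D_f$ ranges only over the linear system spanned by the $N+1$ sections. What the proof of the lemma actually uses is that $\rho^*(wH\times\mathbb{P}^1)-\rho^{-1}_*(wD_f\times\mathbb{P}^1)$ has no component supported in $\mathcal{Y}_0$. That is, $D_f\times\mathbb{P}^1$ must not contain the centers on $C\times\mathbb{P}^1$ of the finitely many exceptional divisors of $\rho$ over $0$. Since this linear system is basepoint free, a general member avoids any finite set of points, and the condition holds. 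The same generality argument also gives the independence of $\mathrm{DF}_q$ from the choice of $f$ stated in the definition.
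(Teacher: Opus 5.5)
Your proposal is correct and follows the paper's proof. Both prove $\mathrm{DF}_q=\mathrm{DF}_{\Delta+w\mathrm{Bs}(q)}+\mathcal{J}^{wH,\mathrm{NA}}$ one test configuration at a time, by writing a general $\mathrm{div}(f)$ as $\mathrm{Bs}(q)$ plus a general member of the base-point-free subsystem of $|H|$ and then applying Lemma~\ref{lem:J-functional--computation}. The only difference is that the paper first passes to the normalization of $C$ and a normal model dominating $C\times\mathbb{A}^1$, whereas you work directly on $\mathcal{Y}$ via the projection formula.

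Your bookkeeping worry in the stable case does not arise. Compose an equivariant isomorphism to the product (with trivial action on $C$) with an automorphism of $C$ so that it is the identity over $\mathbb{G}_m$. It then carries the closure of $B\times\mathbb{G}_m$ to $B\times\mathbb{A}^1$ for every divisor $B$, so the two classes of excluded test configurations coincide.
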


\begin{proof}
Let $(\mathcal{C},\mathcal{L})$ be a semiample deminormal test configuration for $(C,\Delta+w\mathrm{Bs}(q);L)$.
Then it suffices to show that $\mathrm{DF}_{\Delta+w\mathrm{Bs}(q)}(\mathcal{C},\mathcal{L})+\mathcal{J}^{wH,\mathrm{NA}}(\mathcal{C},\mathcal{L})\ge0$ (resp.~$>0$) if and only if $\mathrm{DF}_{q}(\mathcal{C},\mathcal{L})\ge0$ (resp.~$>0$).
Let $\widehat{\mathcal{C}}$ be the normalization of $\mathcal{C}$.
To show this, we may replace $C$ with its normalization and $\mathcal{C}$ with a blow up of $\widehat{\mathcal{C}}$ and assume that $\rho\colon\widehat{\mathcal{C}}\to C\times\mathbb{A}^1$ by \cite[Remark 3.19]{boucksom2017uniform}, \cite[Proposition 3.2]{hattori2026decomposition} and \cite[Proposition 3.3]{fujita2019openness}.
Note that the linear system generated by sections of $\mathscr{L}$ corresponding to $\mathbb{K}^{N+1}$ is a globally generated linear system in $|H|$.
Thus, we can conclude the assertion by what we noted in Definition  \ref{defn:DF-invariant}.
\end{proof}

\begin{lem}\label{lem:well-balanced}
The situation as Lemma \ref{lem;K-stqmaps and log-twisted}.
If $q$ is K-semistable with respect to $L$, then $q$ is slc and 
\[
\frac{\mathrm{deg}_C(K_C+w\mathscr{L}+\Delta)}{\mathrm{deg}_{C}(L)}=\frac{\mathrm{deg}_{C'}((K_C+w\mathscr{L}+\Delta)|_{C'})}{\mathrm{deg}_{C'}(L|_{C'})}
\]
for any irreducible component $C'\subset C$, i.e.~there exists $\alpha\in\mathbb{R}$ such that $\alpha L$ is numerically equivalent to $K_C+w\mathscr{L}+\Delta$.    
\end{lem}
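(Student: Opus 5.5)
We reduce both assertions to statements about log pairs on curves, using Lemma~\ref{lem;K-stqmaps and log-twisted} and a suitable choice of the section in the definition of $\mathrm{DF}_q$.

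\emph{Slc property.} Let $f\in \mathrm{Sec}_q(\mathscr L)$ be a general section. Then $\mathrm{div}(f)=\mathrm{Bs}(q)+G$, where $G$ is a general member of the base point free linear system $|H|$. Such a $G$ is reduced and avoids the nodes and $\mathrm{Supp}(\Delta+\mathrm{Bs}(q))$.

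By definition, K-semistability of $q$ means that $\mathrm{DF}_{\Delta+w\,\mathrm{div}(f)}(\mathcal C,\mathcal L)\ge0$ for every semiample deminormal test configuration. Since this invariant does not depend on $f$, the pair $(C,\Delta+w\,\mathrm{div}(f);L)$ is K-semistable as a polarized deminormal pair, at least once $w\,\mathrm{div}(f)$ is allowed (its coefficient $w$ may exceed $1$, but we only test against test configurations of $(C,\Delta;L)$, which is the same class).

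Theorem~\ref{thm:K-stability--for--log-pairs--fundaments}(1) then says that $(C,\Delta+w\,\mathrm{div}(f))$ is slc. Since $\Delta+w\mathrm{Bs}(q)\le \Delta+w\,\mathrm{div}(f)$, the smaller pair $(C,\Delta+w\mathrm{Bs}(q))$ is slc as well. In particular $\mathrm{Bs}(q)$ misses the nodes, because slc forbids boundary at the double locus. This is exactly the statement that $q$ is slc with respect to $w$.

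One subtlety needs checking: Theorem~\ref{thm:K-stability--for--log-pairs--fundaments}(1) should apply even when $w>1$ makes $G$ carry a coefficient larger than $1$. If that is a problem, the fallback is to run the argument of \cite[Theorem~1.2]{odaka2013git} directly on the twisted expression from Lemma~\ref{lem;K-stqmaps and log-twisted}. Near a point of $\mathrm{Bs}(q)$ or a node, the $\mathcal J^{wH,\mathrm{NA}}$-term contributes nonnegatively but is of lower order for deformation to the normal cone. The resulting destabilization then only sees $\Delta+w\mathrm{Bs}(q)$.

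\emph{Proportionality on components.} We use the test configurations coming from a single irreducible component $C'$. Take the deformation to the normal cone of $C'$ (equivalently, the blow-up of $C'\times\{0\}$ in $C\times\mathbb A^1$, which is deminormal). Take $\mathcal L_t=\pi^*L-tE$ for $t$ of either small sign, or more simply the product test configuration twisted by the $\mathbb R$-line bundle $-t[C'\times\{0\}]$. These are semiample after adding a multiple of the central fiber, which does not change $\mathrm{DF}$.

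Computing $\mathrm{DF}$ on curves gives an expression affine in $t$ at first order, equal up to a positive constant to
\[
t\Big(\deg_{C'}(K_C+\Delta+w\mathscr L)\,\deg_C L-\deg_C(K_C+\Delta+w\mathscr L)\,\deg_{C'}L\Big).
\]
Here the $\mathrm{div}(f)$-term contributes $w\deg\mathscr L|_{C'}$. Since both signs of $t$ are allowed for small $|t|$, K-semistability forces this linear coefficient to vanish, which gives the displayed equality.

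The main obstacle is justifying that both signs of $t$ yield semiample deminormal test configurations for $(C,\Delta;L)$ with $\mathbb R$-coefficients. For $t<0$ we add $|t|$ times the central fiber to obtain effective twisting. The second issue is verifying that the quadratic term is $O(t^2)$, so the first-order term dominates. We would compute explicitly on $C\times\mathbb P^1$, using $[C'\times 0]^2=-(C'\cdot\overline{C\setminus C'})$ plus correction terms from the nodes.
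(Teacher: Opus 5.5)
\textbf{Gap in the slc part.} Your slc argument rests on a false step. From the fact that $\mathrm{DF}_q$ does not depend on $f$, you conclude that $(C,\Delta+w\,\mathrm{div}(f);L)$ is K-semistable for one fixed general $f$. But in the definition of $\mathrm{DF}_q$, the section $f$ is general \emph{with respect to the given test configuration}. By Lemmas~\ref{lem;K-stqmaps and log-twisted} and~\ref{lem:J-functional--computation}, for every fixed $f$,
$\mathrm{DF}_q(\mathcal C,\mathcal L)=\mathrm{DF}_{\Delta+w\mathrm{Bs}(q)}(\mathcal C,\mathcal L)+\mathcal J^{wH,\mathrm{NA}}(\mathcal C,\mathcal L)\ge\mathrm{DF}_{\Delta+w\,\mathrm{div}(f)}(\mathcal C,\mathcal L)$,
with equality for $f$ general relative to $(\mathcal C,\mathcal L)$. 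So K-semistability of a fixed pair implies that of $q$ (the direction used in Lemma~\ref{lem:HH2-modified}), not conversely.

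Concretely, if $w>1$, blowing up $(p,0)$ at a point $p$ of the moving part $G$ destabilizes $(C,\Delta+w\,\mathrm{div}(f))$ but not $q$. Your argument would therefore show that no quasimap with $w>1$ is K-semistable, contradicting Proposition~\ref{prop:chara--K-stability--qmaps}(1). It fails for $w\le1$ too: take $X=\mathbb P^N$, $C=\mathbb P^1$, $\Delta=0$, $d=1$. Then $q$ is K-stable by Proposition~\ref{prop:chara--K-stability--qmaps}(3), while $(\mathbb P^1,w\,\mathrm{div}(f))$ is K-unstable by Example~\ref{ex:one-dim-K-st}. So the issue is not whether Theorem~\ref{thm:K-stability--for--log-pairs--fundaments}(1) tolerates coefficients above $1$.

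Your fallback is the correct argument, and it is exactly the paper's. Since $\Delta$ and $\mathrm{Bs}(q)$ avoid the nodes, a non-slc point $p$ is a smooth point with $m:=\mathrm{mult}_p(\Delta+w\mathrm{Bs}(q))>1$. For $\mu^*(L\times\mathbb A^1)-\epsilon E$ on $\mathrm{Bl}_{(p,0)}(C\times\mathbb A^1)$ one gets $\mathrm{DF}_{\Delta+w\mathrm{Bs}(q)}=\epsilon(1-m)/\deg_C L+O(\epsilon^2)$. Meanwhile $0\le\mathcal J^{wH,\mathrm{NA}}=O(\epsilon^2)$, computed with a member of $|wH|$ avoiding $p$. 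Make this the proof rather than a fallback.

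\textbf{The proportionality part.} Here the paper just invokes \cite[Theorem~3.6]{hattori2026decomposition}, transported to the log-twisted setting. Your deformation to the normal cone of a component is a legitimate and more elementary alternative. Since you work with $\mathrm{DF}_q$ and a general $f$ directly, it also needs no twisted version of that theorem. However, two of your claims are wrong as stated.

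First, the twisted product configuration is unavailable: $C'\times\{0\}$ is not $\mathbb Q$-Cartier on $C\times\mathbb A^1$. Its traces on the double curve $\{x\}\times\mathbb A^1$ over a node $x\in C'\cap\overline{C\setminus C'}$ differ on the two branches (multiplicity $1$ versus $0$).

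Second, on $\mathrm{Bl}_{C'\times\{0\}}(C\times\mathbb A^1)$ a rational bridge $B_x$ is inserted at each such node. The exceptional Cartier divisor is $E=\widetilde{C'}+\sum_x B_x$, with $E\cdot B_x=-1$. Hence $\mu^*L-tE$ has degree $t$ on $B_x$, so only $t>0$ is semiample. Adding multiples of the central fiber does not help, since it changes no degree on fiber components.

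One sign suffices. The blow-up is crepant and $E^2=-\#(C'\cap\overline{C\setminus C'})$, so for small $t>0$
\[
\mathrm{DF}_q=\frac{t}{\deg_C L}\Bigl(s\,\deg_{C'}L-\deg_{C'}(K_C+\Delta+w\mathscr{L})\Bigr)+O(t^2),\qquad s:=\frac{\deg_C(K_C+\Delta+w\mathscr{L})}{\deg_C L}.
\]
This gives slope at most $s$ on every component $C'$. Since $s$ is the $\deg_{C'}L$-weighted average of these slopes, they all equal $s$.
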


\begin{proof}
Suppose that $q$ is K-semistable with respect to $L$.
First, assume that $q$ is not slc at a closed point $p\in C$.
If $q$ is not lc but normal, then \cite[Proposition 2.24]{hattori2025k} and Lemma \ref{lem;K-stqmaps and log-twisted} show that $q$ is not K-semistable.
If $q$ is not normal, then consider a semiample deminormal test configuration $(\mathrm{Bl}_{(p,0)}(C\times\mathbb{A}^1),\mu^*(L\times\mathbb{A}^1)-\epsilon E)$ for any sufficiently small $\epsilon>0$, where $\mu\colon \mathrm{Bl}_{(p,0)}(C\times\mathbb{A}^1)\to C\times\mathbb{A}^1$ is the one point blow-up and $E$ is the exceptional divisor.
Note that since $q$ is deminormal, $p$ is not a nodal point.
In this case, it is easy to see that $\mathrm{DF}_{\Delta+w\mathrm{Bs}(q)}(\mathrm{Bl}_{(p,0)}(C\times\mathbb{A}^1),\mu^*(L\times\mathbb{A}^1)-\epsilon E)+\mathcal{J}^{wH,\mathrm{NA}}(\mathrm{Bl}_{(p,0)}(C\times\mathbb{A}^1),\mu^*(L\times\mathbb{A}^1)-\epsilon E)<0$ for any sufficiently small $\epsilon>0$ by the same argument as \cite[Propositions 4.11 and 9.12]{boucksom2017uniform}.
Therefore, Lemma \ref{lem;K-stqmaps and log-twisted} shows that $q$ is not K-semistable again.
The latter statement follows from Lemma \ref{lem;K-stqmaps and log-twisted} and \cite[Theorem 3.6]{hattori2026decomposition} (which was shown only for polarized deminormal pairs but the same argument works in the same way for the log-twisted case).
\end{proof}

The above lemma says that for the K-stability of quasimaps, it suffices to consider only the cases where $q$ is log general type, log Calabi-Yau, or the log Fano numerically.

\begin{prop}\label{prop:chara--K-stability--qmaps}
Let $q\colon (C,\Delta)\to [\mathrm{Cone}(X)/\mathbb{G}_m]$ be a slc quasimap with weight $w$ of degree $d$.
    Let $L$ be an ample $\mathbb{R}$-line bundle on $C$ and $\mathscr{L}$ the line bundle associated to $q$.
    Let $\mathrm{Bs}(q)$ be the fixed part of $q$ and $H:=\mathscr{L}-\mathrm{Bs}(q)$.
    Suppose that there exists $\alpha\in\R$ such that $\alpha L$ is numerically equivalent to $K_C+w\mathscr{L}+\Delta$.
    Then the following hold:
    \begin{enumerate}
        \item If $\alpha>0$, then $q$ is K-stable with respect to $L$,
        \item if $\alpha=0$  then $q$ is K-semistable with respect to $L$. Furthermore, $q$ is K-stable with respect to $L$ if and only if $q$ is klt, and 
        \item if $\alpha<0$ and $q$ is K-semistable with respect to $L$, then $q$ is klt. Furthermore, when $q$ is klt, $q$ is K-semistable (resp.~K-stable)  with respect to $L$ if and only if $\inf_{p\in C}\frac{2(1-\mathrm{ord}_p(\Delta+w\mathrm{Bs}(q)))}{\mathrm{deg}(-(K_C+w\mathscr{L}+\Delta))}\ge1$ (resp.~$>1$).
    \end{enumerate}
\end{prop}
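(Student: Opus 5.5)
The plan is to reduce everything to log-twisted K-stability via Lemma~\ref{lem;K-stqmaps and log-twisted}. Set $\Delta':=\Delta+w\mathrm{Bs}(q)$ and $H:=\mathscr{L}-\mathrm{Bs}(q)$. Then $q$ is K-(semi)stable with respect to $L$ if and only if $(C,\Delta';wH;L)$ is log-twisted K-(semi)stable. Since $q$ is slc, the pair $(C,\Delta')$ is slc, and $H$ is globally generated (it is the moving part of the linear system spanned by the coordinate sections). By Lemma~\ref{lem:J-functional--computation}, for a sufficiently general $D\in|H|$ and any semiample deminormal test configuration $(\mathcal{C},\mathcal{L})$ we have
\[
\mathrm{DF}_{\Delta'}(\mathcal{C},\mathcal{L})+\mathcal{J}^{wH,\mathrm{NA}}(\mathcal{C},\mathcal{L})=\mathrm{DF}_{\Delta'+wD}(\mathcal{C},\mathcal{L}).
\]
The invariant on the right is that of the pair $(C,\Delta'+wD)$, with $K_C+\Delta'+wD\equiv K_C+\Delta+w\mathscr{L}\equiv\alpha L$. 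Because $D$ is general, $(C,\Delta'+wD)$ is slc whenever $w\le 1$ (and klt whenever $(C,\Delta')$ is klt), since $D$ is reduced, avoids the nodes and avoids $\mathrm{Supp}\,\Delta'$. For $w>1$ I would instead use $\frac{w}{l}\sum_{i=1}^l\mathrm{div}(f_i)$ with $l\ge w$ general sections, as in the remark after Definition~\ref{def:log_qmaps}, which keeps all coefficients at most one. So the problem becomes a K-stability question for an honest polarized slc pair whose log canonical class is numerically $\alpha L$.

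Since $C$ is a curve, numerical and $\R$-linear proportionality differ only by a degree-zero class. The Donaldson--Futaki invariant depends only on intersection numbers on $\overline{\mathcal{C}}$, and these change by zero when we twist by a degree-zero class pulled back from $C$. So we may replace $L$ by $\alpha^{-1}(K_C+\Delta'+wD)$ when $\alpha\neq0$. Then we apply Theorem~\ref{thm:K-stability--for--log-pairs--fundaments}:
\begin{itemize}
\item[(1)] For $\alpha>0$, item (2) of that theorem together with the invariance of K-stability under positive rescaling of $L$ gives K-stability.
\item[(2)] For $\alpha=0$, item (3) gives K-semistability from slc. It also gives that K-stability is equivalent to $(C,\Delta'+wD)$ being klt, which is equivalent to $q$ being klt because $D$ is general.
\end{itemize}

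For (3), the first claim follows from item (4) of Theorem~\ref{thm:K-stability--for--log-pairs--fundaments}: K-semistability forces $(C,\Delta'+wD)$, hence $(C,\Delta')$, to be klt. In particular $C$ is normal, so $C\cong\P^1$. For the numerical criterion I would use Theorem~\ref{thm:delta--log--pairs} and Example~\ref{ex:one-dim-K-st}. The twisted invariant is not the plain $\delta$ of $(C,\Delta'+wD)$, because $D$ varies with the test configuration. This is what Lemma~\ref{lem:J-functional--computation} handles. For the test configurations coming from a point $p$ (degenerations to the normal cone of $p$), the optimal $D$ avoids $p$, and the computation gives
\[
\frac{A_{C,\Delta'}(p)}{S(p)}=\frac{2(1-\mathrm{ord}_p\Delta')}{\deg(-(K_C+\Delta+w\mathscr{L}))}.
\]
The point is that $\mathrm{ord}_p(wD)=0$ for general $D$. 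Hence
\[
\delta(C,\Delta'+wD)=\inf_p\frac{2(1-\mathrm{ord}_p\Delta')}{\deg(-(K_C+\Delta+w\mathscr{L}))}
\]
for general $D$, and this infimum is independent of $D$. Combining with the inequality $\mathrm{DF}\ge(\delta-1)\mathcal{J}^{\mathrm{NA}}$ from the proof of Theorem~\ref{thm:delta--log--pairs} gives the "if" direction. The "only if" direction uses the destabilizing test configuration given by the point $p$.

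The main obstacle is the Fano case (3). The twisted term makes the effective boundary depend on the test configuration, so I must check that the supremum in Lemma~\ref{lem:J-functional--computation} is attained uniformly by a single general $D$ for all test configurations at once. Alternatively, I can argue test configuration by test configuration: for each fixed $(\mathcal{C},\mathcal{L})$ choose a general $D$, and note that the bound $(\delta(C,\Delta'+wD)-1)\mathcal{J}^{\mathrm{NA}}$ involves a $\delta$ that is the same for all general $D$. That second route is the one I would try first.
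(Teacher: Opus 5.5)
Your reduction to a pair, by adding general members of $|wH|$, is the right mechanism. It is essentially Lemma~\ref{lem:HH2-modified}, and it does give (1) and the semistability half of (2), since $\mathrm{DF}_q\ge\mathrm{DF}_{\Delta'+wD}$ holds for every effective $D\sim_{\R}wH$. The rest has two genuine gaps.

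\textbf{First gap: the $\delta$ formula in (3) is wrong.} Write $v:=\deg(-(K_C+\Delta+w\mathscr{L}))$. You claim $\delta(C,\Delta'+wD)=\inf_p 2(1-\mathrm{ord}_p\Delta')/v$. A general $D$ avoids any \emph{given} point, but the infimum runs over all $p$, including $p\in\mathrm{Supp}\,D$, where $\mathrm{ord}_p(wD)=w$. In fact, for every general $D$,
$\delta(C,\Delta'+wD)=\min\{\inf_p 2(1-\mathrm{ord}_p\Delta')/v,\ 2(1-w)/v\}$,
and this can be $<1$ while $q$ is K-stable.

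For example, take $C=\P^1$, $\Delta=0$, $q$ base point free of degree one, and $w=0.9$. Then $v=1.1$ and the criterion gives $2/1.1>1$, but $(\P^1,0.9\,p_0)$ is not K-semistable.

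So both of your routes (one uniform $D$, or $D$ chosen per test configuration) only give $\mathrm{DF}_q\ge(\delta-1)v\,\mathcal{J}^{L,\mathrm{NA}}$ with a $\delta$ possibly below $1$, and the sufficiency direction fails. Splitting $wD$ into $\frac{w}{l}\sum_{i=1}^l\mathrm{div}(f_i)$ with $l\ge w$ only controls coefficients. You need $l$ large enough that $w/l\le 1-v/2$, which is possible since $v<2$.

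The paper avoids the pair altogether. It approximates $(\Delta,w)$ by rational data and applies the log-twisted inequality \cite[Theorem A.6]{hattori2025k}, where $wH$ does not enter the log discrepancy. The constant there is exactly $\inf_p 2(1-\mathrm{ord}_p(\Delta+w\mathrm{Bs}(q)))/v$, and the paper then passes to the limit and uses $\mathcal{J}^{L,\mathrm{NA}}>0$.

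\textbf{Second gap: the klt statements do not follow from the pair theorem.} The implications ($q$ K-stable $\Rightarrow$ klt) in (2) and ($q$ K-semistable $\Rightarrow$ klt) in (3) cannot be read off from items (3) and (4) of Theorem~\ref{thm:K-stability--for--log-pairs--fundaments} applied to $(C,\Delta'+wD)$. Lemma~\ref{lem:J-functional--computation} gives equality $\mathrm{DF}_q=\mathrm{DF}_{\Delta'+wD}$ only when $D$ is general \emph{relative to the given test configuration}. Hence:
\begin{itemize}
\item K-semistability of $q$ does not imply K-semistability of the pair;
\item a test configuration destabilizing the pair, which a priori depends on $D$, need not destabilize $q$.
\end{itemize}
Your step \emph{in particular $C$ is normal} in (3) is therefore unsupported.

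What is needed is an explicit test configuration centered at a non-klt point of $(C,\Delta')$ whose center a general $D$ misses:
\begin{itemize}
\item at a smooth point $p$, the blow-up of $(p,0)$ with $\mu^*L-\epsilon E$;
\item at a node, the blow-up of $(p_1,0)$ on the partial normalization, reglued into a deminormal test configuration with ample polarization (the paper uses \cite[Theorem 11.38]{kollar-moduli} for this).
\end{itemize}
For these, $\mathcal{J}^{wH,\mathrm{NA}}$ is attained by a general $D$, so $\mathrm{DF}_q$ equals the pair invariant. That invariant is $0$ when $\alpha=0$ and negative when $\alpha<0$ (at a smooth point it is $-\epsilon^2/(2v)$). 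You do exactly this for the numerical criterion in (3), but not for the klt statements.
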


\begin{proof}
Note that $q$ is klt (resp.~lc, slc) if and only if $(C,\Delta+w\mathrm{Bs}(q))$ is klt (resp.~lc, slc).
For the first assertion, we may replace $C$ with its normalization and we can conclude it by \cite[Proposition 2.24]{hattori2025k} and Lemmas \ref{lem;K-stqmaps and log-twisted} and \ref{lem:well-balanced}. 
For the second assertion, by \cite[Proposition 2.24]{hattori2025k}, it suffices to show that if $q$ is not klt, then $q$ is not K-stable with respect to $L$.
Suppose that $q$ is not klt around $p\in C$.
If $p$ is not nodal, then $\mathrm{DF}_{\Delta+w\mathrm{Bs}(q)}(\mathrm{Bl}_{(p,0)}(C\times\mathbb{A}^1),\mu^*(L\times\mathbb{A}^1)-\epsilon E)+\mathcal{J}^{wH,\mathrm{NA}}(\mathrm{Bl}_{(p,0)}(C\times\mathbb{A}^1),\mu^*(L\times\mathbb{A}^1)-\epsilon E)=0$ for any sufficiently small $\epsilon>0$ by the same argument as \cite[Propositions 4.11 and 9.12 and Lemma 7.25]{boucksom2017uniform}.
If $p$ is nodal, we can take the partial normalization $\nu\colon \tilde{C}\to C$ at $p$ and let $D$ be the conductor in $\tilde{C}$.
Note that $D=\{p_1,p_2\}$ and using $p_1$, we can construct a semiample deminormal test configuration $(\mathrm{Bl}_{(p_1,0)}(\tilde{C}\times\mathbb{A}^1),\mu^*(\nu^*L\times\mathbb{A}^1)-\epsilon E)$ for any sufficiently small $\epsilon>0$, where $\mu\colon \mathrm{Bl}_{(p_1,0)}(\tilde{C}\times\mathbb{A}^1)\to \tilde{C}\times\mathbb{A}^1$ is the canonical morphism and $E$ the exceptional curve.
Using the semiampleness of $H$, regard $H$ as an effective general $\mathbb{Q}$-divisor such that $\mathrm{ord}_p(H)=0$ for any $p\in \mathrm{Supp}\,\mathrm{Bs}(q)$. 
Suppose that $L=\sum_{i=1}^k \alpha_i L_i$ for some $\alpha_i\in\mathbb{R}_{>0}$ such that $\sum\alpha_i=1$ and ample $\Q$-line bundles $L_i$.
Since $\mu^*(\nu^*L_i\times\mathbb{A}^1)-\epsilon E$ is relatively ample over $\mathbb{A}^1$ for any sufficiently small $\epsilon>0$ and each $i$, we can take an effective general $\mathbb{Q}$-Cartier $\mathbb{Q}$-divisor $A_i\sim_\Q\mu^*(\nu^*L_i\times\mathbb{A}^1)-\epsilon E$ such that $$(\mathrm{Bl}_{(p_1,0)}(\tilde{C}\times\mathbb{A}^1),\overline{(D+\nu^*(wH+w\mathrm{Bs}(q)+\Delta))\times\mathbb{G}_m}+\eta\sum \alpha_iA_i+\mathrm{Bl}_{(p_1,0)}(\tilde{C}\times\mathbb{A}^1)_0)$$ is slc around $\mathrm{Bl}_{(p_1,0)}(\tilde{C}\times\mathbb{A}^1)_0$ for any sufficiently small $\eta>0$.
Then we can apply \cite[Theorem 11.38]{kollar-moduli} locally around $\mathrm{Bl}_{(p_1,0)}(\tilde{C}\times\mathbb{A}^1)_0$ to glue $\mathrm{Bl}_{(p_1,0)}(\tilde{C}\times\mathbb{A}^1)$ by the canonical involution of $\overline{(D\times\mathbb{G}_m)}$ and obtain a slc pair
\[
(\mathcal{C}_U,\Delta_{\eta}+\mathcal{C}_0)
\]
flat and projective over some open neighborhood $U\subset \mathbb{A}^1$ of $0$ that corresponds to 
$$(\mathrm{Bl}_{(p_1,0)}(\tilde{C}\times\mathbb{A}^1)|_U,(\overline{(D+\nu^*(wH+w\mathrm{Bs}(q)+\Delta))\times\mathbb{G}_m}+\eta\sum \alpha_iA_i+\mathrm{Bl}_{(p_1,0)}(\tilde{C}\times\mathbb{A}^1)_0)|_{\mathrm{Bl}_{(p_1,0)}(\tilde{C}\times\mathbb{A}^1)|_U}).$$
Note that 
\begin{align*}
    K_{\mathrm{Bl}_{(p_1,0)}(\tilde{C}\times\mathbb{A}^1)}+\overline{(D+\nu^*(wH+w\mathrm{Bs}(q)+\Delta))\times\mathbb{G}_m}+\eta\sum \alpha_iA_i\sim_{\mathbb{R}}\eta(\mu^*(\nu^*L\times\mathbb{A}^1)-\epsilon E).
\end{align*}
Therefore, patching $(C\times\mathbb{G}_m,L\times\mathbb{G}_m)$ and $(\mathcal{C}_U,\eta^{-1}(K_{\mathcal{C}_U}+\Delta_{\eta}))$ together, we obtain a semiample deminormal test configuration $(\mathcal{C},\mathcal{L})$ such that $\mathcal{L}$ is ample and $\mathcal{C}\not\cong C\times\mathbb{A}^1$.
We see by \cite[Remark 3.19]{boucksom2017uniform} and the definition of the non-Archimedean $J$-functional that
\begin{align*}
\mathrm{DF}_{\Delta+w\mathrm{Bs}(q)}(\mathcal{C},\mathcal{L})+\mathcal{J}^{wH,\mathrm{NA}}(\mathcal{C},\mathcal{L})&=\mathrm{DF}_{\nu^{-1}_*\Delta+D+w\nu^*\mathrm{Bs}(q)}(\mathrm{Bl}_{(p_1,0)}(\tilde{C}\times\mathbb{A}^1),\mu^*(\nu^*L\times\mathbb{A}^1)-\epsilon E) \\
&+\mathcal{J}^{w\nu^*H,\mathrm{NA}}(\mathrm{Bl}_{(p_1,0)}(\tilde{C}\times\mathbb{A}^1),\mu^*(\nu^*L\times\mathbb{A}^1)-\epsilon E)=0.
\end{align*}
Thus, we obtain (2).

Finally, we deal with (3). If $\alpha<0$ and $q$ is K-semistable with respect to $L$, then $q$ is klt by the same argument as the proof of \cite[Theorem 2.23]{hattori2025k}.
Therefore, $C\cong\mathbb{P}^1$.
The last assertion follows from \cite[Theorem A.16]{hattori2025k} when $\Delta$ has rational coefficients and $w\in\mathbb{Q}$.
For the general case, one can see this by hand since we are dealing with only the case $C=\mathbb{P}^1$.
Indeed, if there exists $p\in C$ such that $\frac{2(1-\mathrm{ord}_p(\Delta+w\mathrm{Bs}(q)))}{\mathrm{deg}(-(K_C+w\mathscr{L}+\Delta))}<1$ (resp.~$\le1$), then we see that a normal ample test configuration $\mathcal{C}:=\mathrm{Proj}_{\mathbb{A}^1}(\oplus_{m,\lambda\in\mathbb{Z}_{>0}}\mathscr{F}^\lambda_{\mathrm{ord}_p}H^0(\mathbb{P}^1,\mathcal{O}(m)))$ with its $\mathcal{O}(1)$ satisfies 
\[
\mathrm{DF}_{\Delta+w\mathrm{Bs}(q)}(\mathcal{C},\mathcal{O}(1))+\mathcal{J}^{wH,\mathrm{NA}}(\mathcal{C},\mathcal{O}(1))<0,\qquad (\text{resp. }\le0)
\]
as the calculation of the proof of \cite[Theorem A.15]{hattori2025k}, where
\[
\mathscr{F}^\lambda_{\mathrm{ord}_p}H^0(\mathbb{P}^1,\mathcal{O}(m)):=\{f\in H^0(\mathbb{P}^1,\mathcal{O}(m))|\mathrm{ord}_p(f)\ge\lambda\}.
\]
Conversely, set $\delta:=\inf_{p\in C}\frac{2(1-\mathrm{ord}_p(\Delta+w\mathrm{Bs}(q)))}{\mathrm{deg}(-(K_C+w\mathscr{L}+\Delta))}$.
Normalize $L$ and assume that $L=\mathcal{O}_{\mathbb{P}^1}(1)$ without loss of generality.
Then one can find a sequence of effective  $\mathbb{Q}$-divisors $\{\Delta_i\}_{i=1}^\infty$ with the same supports and $\{w_i\}_{i=1}^\infty\subset\mathbb{Q}_{>0}$ such that $\Delta_i$'s coefficients converge to $\Delta$'s coefficients in $i\to\infty$, $\lim_{i\to\infty}w_i=w$ and set $\delta_i:=\inf_{p\in C}\frac{2(1-\mathrm{ord}_p(\Delta_i+w_i\mathrm{Bs}(q)))}{\mathrm{deg}(-(K_C+w_i\mathscr{L}+\Delta_i))}$. Note that $\lim_{i\to\infty}\delta_i=\delta$.
For any deminormal and semiample test configuration $(\mathcal{C},\mathcal{L})$ for $(\mathbb{P}^1,\mathcal{O}(1))$, we see that
\begin{align*}
    &\mathrm{DF}_{\Delta+w\mathrm{Bs}(q)}(\mathcal{C},\mathcal{L})+\mathcal{J}^{wH,\mathrm{NA}}(\mathcal{C},\mathcal{L})\\
    &=\lim_{i\to\infty}(\mathrm{DF}_{\Delta_i+w_i\mathrm{Bs}(q)}(\mathcal{C},\mathcal{L})+\mathcal{J}^{w_iH,\mathrm{NA}}(\mathcal{C},\mathcal{L})).
\end{align*}
By \cite[Theorem A.6]{hattori2025k} (see also \cite[Definition 2.8]{hattori2025k} for the terminologies), we see that
\[
\mathrm{DF}_{\Delta_i+w_i\mathrm{Bs}(q)}(\mathcal{C},\mathcal{L})+\mathcal{J}^{w_iH,\mathrm{NA}}(\mathcal{C},\mathcal{L})\ge(\delta_i-1)\mathrm{deg}(-(K_C+w_i\mathscr{L}+\Delta_i))\mathcal{J}^{L,\mathrm{NA}}(\mathcal{C},\mathcal{L}).
\]
Thus,
\[
\mathrm{DF}_{\Delta+w\mathrm{Bs}(q)}(\mathcal{C},\mathcal{L})+\mathcal{J}^{wH,\mathrm{NA}}(\mathcal{C},\mathcal{L})\ge(\delta-1)\mathrm{deg}(-(K_C+w\mathscr{L}+\Delta))\mathcal{J}^{L,\mathrm{NA}}(\mathcal{C},\mathcal{L})
\]
and we obtain the assertion by the positivity of $\mathcal{J}^{L,\mathrm{NA}}(\mathcal{C},\mathcal{L})$ by \cite[Lemmas 2.7 and 2.12 and Corollary 3.4]{hattori2026decomposition}.
\end{proof}

By Proposition \ref{prop:chara--K-stability--qmaps}, we note that for any $\mathbb{K}\subset \mathbb{K}'$ field extension of algebraically closed fields, $q$ is K-semistable with respect to $L$ if and only if $q_{\mathbb{K}'}$ is K-semistable with respect to $L_{\mathbb{K}'}$.
Proposition \ref{prop:chara--K-stability--qmaps} shows that
stable quasimaps are K-stable with respect to their adjoint
polarization, whereas boundary polarized CY quasimaps are
K-semistable with respect to their polarization part.
In the Calabi--Yau case, K-stability holds precisely
when the quasimap is klt.
In this respect, we can regard Proposition \ref{prop:chara--K-stability--qmaps} as a quasimap analog of Theorem \ref{thm:K-stability--for--log-pairs--fundaments}.
Therefore, the moduli space of Theorem \ref{mainthm:stable-region} can be regarded as an example of K-moduli of quasimaps.
Furthermore, Proposition \ref{prop:chara--K-stability--qmaps} (3) gives a numerical criterion for the K-stability of the log Fano quasimaps as in Theorem \ref{thm:delta--log--pairs}.

\begin{defn}
Let $q\colon (C,\Delta)\to [\mathrm{Cone}(X)/\mathbb{G}_m]$ be a log Fano quasimap with weight $w$ of degree $d$, that is, $q$ is klt, $C\cong\mathbb{P}^1$ and $L:=-(K_{\P^1}+\Delta+w\mathcal{O}_{\mathbb{P}^1}(d))$ is ample.
We say that $q$ is {\it K-(semi)stable} if $q$ is K-(semi)stable with respect to $L$.

    Set $\vec{\Delta}=(a_i D_i)_{i=1}^r$, where each $D_i$ is an effective relative Mumford divisor and $a_i\in(0,1]$, then we say that a locally stable family $q\colon (\mathcal{C},\vec{\Delta})\to [\mathrm{Cone}(X)_S/\mathbb{G}_{m,S}]$ with weight $w$ is a {\it family of K-(semi)stable log Fano quasimaps} over $S$ with respect to the weight $w$ if $q_{\bar{s}}\colon (\mathcal{C}_{\bar{s}},\vec{a}\cdot\vec{D}_{\bar{s}})\to[\mathrm{Cone}(X)_{\overline{\kappa(s)}}/\mathbb{G}_{m,\overline{\kappa(s)}}]$ with weight $w$ is a K-(semi)stable log Fano quasimap for any geometric point $\bar{s}\in S$ given by $\overline{\kappa(s)}$.
\end{defn}

We put the following $\mathbb{R}$-coefficient version of \cite[Lemma 3.12]{HH2}.

\begin{lem}\label{lem:HH2-modified}
    Let $q\colon (\mathbb{P}^1,\Delta)\to[\mathrm{Cone}(X)/\mathbb{G}_m]$ be a log Fano quasimap of degree $d$ with weight $w>0$.
    Let $\mathscr{L}$ be the associated line bundle.
    Put $v:=\mathrm{deg}\,(-(K_{\P^1}+\Delta))-wd$.
    For any $l\in\mathbb{Z}_{>0}$ such that $\frac{w}{l}<1-\frac{v}{2}$, the following are equivalent.
    \begin{enumerate}
    \item $q$ is K-semistable.
        \item There exist elements $e_1,\ldots,e_l\in \mathbb{K}^{N+1}$ such that $(\mathbb{P}^1,\Delta+\frac{w}{l}\sum_{i=1}^l\mathrm{div}(f_i))$ is K-semistable, where $f_i\in\mathrm{Sec}_q(\mathscr{L})$ is the corresponding section to $e_i$.
        \item For any general elements $e_1,\ldots,e_l\in \mathbb{K}^{N+1}$, $(\mathbb{P}^1,\Delta+\frac{w}{l}\sum_{i=1}^l\mathrm{div}(f_i))$ is K-semistable, where $f_i\in\mathrm{Sec}_q(\mathscr{L})$ is the corresponding section to $e_i$.
    \end{enumerate}
\end{lem}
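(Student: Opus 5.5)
The plan is to reduce all three conditions to the explicit numerical criterion of Proposition~\ref{prop:chara--K-stability--qmaps}(3) and Example~\ref{ex:one-dim-K-st}, since everything lives on $\mathbb P^1$.

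Set $B:=\mathrm{Bs}(q)$ and write each section $f_i\in\mathrm{Sec}_q(\mathscr L)$ attached to $e_i$ as $\mathrm{div}(f_i)=B+H_i$, where $H_i$ is a member of the base-point-free linear system in $|\mathscr L-B|$ cut out by $\mathbb K^{N+1}$. The pair $(\mathbb P^1,\Delta':=\Delta+\frac{w}{l}\sum_i\mathrm{div}(f_i))$ has $\deg(-(K_{\mathbb P^1}+\Delta'))=2-\deg\Delta-wd=v$, the same value as $\deg(-(K+\Delta+w\mathscr L))$. I assume $v>0$, as the log Fano hypothesis gives. By Example~\ref{ex:one-dim-K-st} (for $\R$-coefficients, justified via Theorem~\ref{thm:delta--log--pairs}), $(\mathbb P^1,\Delta')$ is K-semistable if and only if it is klt and $\mathrm{ord}_p\Delta'\le 1-\frac v2$ for every $p$. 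In that case klt is automatic, because $1-\frac v2<1$.

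For $q$ itself, Proposition~\ref{prop:chara--K-stability--qmaps}(3) says $q$ is K-semistable if and only if $\mathrm{ord}_p(\Delta+wB)\le 1-\frac v2$ for all $p$; again klt follows, since $q$ is log Fano. On the other hand,
\[
\mathrm{ord}_p\Delta'=\mathrm{ord}_p(\Delta+wB)+\tfrac wl\sum_i\mathrm{ord}_p H_i .
\]
Since the $H_i$ are effective, (2) implies (1) immediately. Clearly (3) implies (2).

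For (1) implies (3), I choose $e_1,\dots,e_l$ general. Base-point-freeness of the moving part lets me make the $H_i$ reduced, with supports pairwise disjoint and disjoint from $\mathrm{Supp}(\Delta+B)$. This needs $\deg H_i>0$; if $\mathscr L-B$ is trivial the $H_i$ are empty and there is nothing to check. With this choice, at a point $p\in\mathrm{Supp}(\Delta+B)$ we get $\mathrm{ord}_p\Delta'=\mathrm{ord}_p(\Delta+wB)\le 1-\frac v2$. At a point $p\in\mathrm{Supp}H_i$ we get $\mathrm{ord}_p\Delta'=\frac wl<1-\frac v2$ by the hypothesis on $l$. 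At every other point the order is $0$. Hence $(\mathbb P^1,\Delta')$ is K-semistable.

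The only genuine step is the genericity of the $H_i$. I would handle it with Bertini on $\mathbb P^1$: for a base-point-free linear system, the members passing through a given finite set of points form a proper closed subset, and non-reduced members form a proper closed subset as well. The hypothesis $\frac wl<1-\frac v2$ is used exactly at the new points $\mathrm{Supp}H_i$.
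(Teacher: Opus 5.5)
Your argument is correct. Your (1)$\Rightarrow$(3) and (3)$\Rightarrow$(2) are exactly the paper's. For general $e_i$ the moving parts are reduced, mutually disjoint, and avoid $\mathrm{Supp}(\Delta+\mathrm{Bs}(q))$. Hence every multiplicity of $\Delta'$ is at most $\max\{\max_p\mathrm{ord}_p(\Delta+w\mathrm{Bs}(q)),\frac wl\}$, and Example~\ref{ex:one-dim-K-st} applies.

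For (2)$\Rightarrow$(1) you take a different route. The paper does not pass through the numerical criterion; it argues test configuration by test configuration. It regards $\frac1l\sum_i H_i$ as an effective divisor $\R$-linearly equivalent to $\mathscr L-\mathrm{Bs}(q)$ and applies the supremum formula of Lemma~\ref{lem:J-functional--computation} to bound $\mathcal J^{w(\mathscr L-\mathrm{Bs}(q)),\mathrm{NA}}$ from below by the term coming from this particular divisor. This yields
\[
\mathrm{DF}_q(\mathcal X,\mathcal L)=\mathrm{DF}_{\Delta+w\mathrm{Bs}(q)}(\mathcal X,\mathcal L)+\mathcal J^{w(\mathscr L-\mathrm{Bs}(q)),\mathrm{NA}}(\mathcal X,\mathcal L)\ge \mathrm{DF}_{\Delta'}(\mathcal X,\mathcal L)\ge 0
\]
for every semiample deminormal test configuration. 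You instead use the pointwise inequality $\mathrm{ord}_p(\Delta+w\mathrm{Bs}(q))\le\mathrm{ord}_p\Delta'\le 1-\frac v2$ together with the criterion of Proposition~\ref{prop:chara--K-stability--qmaps}(3). Its klt hypothesis is part of being a ``log Fano quasimap'', and in any case follows from $1-\frac v2<1$.

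The paper's comparison $\mathrm{DF}_q\ge\mathrm{DF}_{\Delta'}$ is dimension-free and does not depend on an explicit one-dimensional criterion. Your version is more elementary once Proposition~\ref{prop:chara--K-stability--qmaps}(3) and Example~\ref{ex:one-dim-K-st} are in hand. It reduces all three conditions to a single inequality on multiplicities, and makes visible that the hypothesis $\frac wl<1-\frac v2$ is used only in (1)$\Rightarrow$(3).
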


\begin{proof}
 In the situation of $(3)$, by the choice of $f_i$, we see that the support of $\frac{w}{l}\sum_{i=1}^l\mathrm{div}(f_i)-w\mathrm{Bs}(q)$ is disjoint from $\mathrm{Bs}(q)+\Delta$ and  $\mathrm{ord}_{p'}(\frac{w}{l}\sum_{i=1}^l\mathrm{div}(f_i)))\le\max\{\max_{p\in\mathbb{P}^1} \mathrm{ord}_p(\Delta+w\mathrm{Bs}(q)),\frac{w}{l}\}$ for any closed point $p'\in\P^1$.    
 Therefore, we see that $(1)\Rightarrow (3)$ by Example \ref{ex:one-dim-K-st} and Proposition \ref{prop:chara--K-stability--qmaps}.
 $(3)\Rightarrow (2)$ is trivial. In the situation of $(2)$, take an arbitrary semiample deminormal test configuration $(\mathcal{X},\mathcal{L})$ for $(\mathbb{P}^1,-(K_{\P^1}+\Delta+w\mathscr{L}))$.
 By the K-semistability of $(\mathbb{P}^1,\Delta+\frac{w}{l}\sum_{i=1}^l\mathrm{div}(f_i))$ and Lemma \ref{lem:J-functional--computation}, we see that 
 \[
 \mathrm{DF}_{q}(\mathcal{X},\mathcal{L})=\mathrm{DF}_{\Delta+w\mathrm{Bs}(q)}(\mathcal{X},\mathcal{L})+\mathcal{J}^{w(\mathscr{L}-\mathrm{Bs}(q)),\mathrm{NA}}(\mathcal{X},\mathcal{L})\ge \mathrm{DF}_{\Delta+\frac{w}{l}\sum_{i=1}^l\mathrm{div}(f_i)}(\mathcal{X},\mathcal{L})\ge0.
 \]
 Thus, we obtain $(2)\Rightarrow(1)$.
\end{proof}

\subsection{Deformation theory of log quasimaps}

We put the following useful proposition to detect the smoothness of the moduli stack parameterizing log quasimaps with arithmetic genus zero.

\begin{prop}\label{prop:smoothness-of-stacks}
Let $A$ be an artinian local ring finite over $\mathbb{K}$ with the maximal ideal $\mathfrak{m}$ and an ideal $I\subset A$ such that $I\cdot\mathfrak{m}=0$.
Let $q\colon (\mathcal{C},\Delta)\to [\mathbb{A}_{A/I}^{N+1}/\mathbb{G}_{m,A/I}]$ be a locally stable family of quasimaps over $\mathrm{Spec}\,A/I$.
Let $s$ be the closed point of $\mathrm{Spec}\,A/I$.
Suppose that $p_a(\mathcal{C}_{s})=0$.
Then there exists a locally stable family of quasimaps $\bar{q}\colon (\overline{\mathcal{C}},\overline{\Delta})\to [\mathbb{A}_{A}^{N+1}/\mathbb{G}_{m,A}]$ over $\mathrm{Spec}\,A$ that is an extension of $q$. 
\end{prop}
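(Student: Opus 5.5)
The plan is to build the extension $\bar q$ piece by piece: first the curve $\overline{\mathcal C}$, then the line bundle $\overline{\mathscr L}$, then the $N+1$ sections giving $\bar q_{\mathrm{hom}}$, and finally the boundary divisors. Each step is a small extension problem over $A\to A/I$ with $I\cdot\mathfrak m=0$, so every obstruction is linear over $\kappa(s)$. Every cohomology group that could carry an obstruction vanishes because $\mathcal C_s$ is a connected nodal curve of arithmetic genus zero. At the end we check that local stability is a condition on the closed fibre only, so it is inherited automatically.

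\emph{Step 1 (the curve).} Deformations of a proper nodal curve are unobstructed. Locally, a node $xy=t$ has unobstructed versal deformation and smooth points have no local deformations. The global obstruction space $\mathrm{Ext}^2(\Omega_{\mathcal C_s},\mathcal O_{\mathcal C_s})\otimes I$ vanishes: by the local-to-global spectral sequence it is built from $H^2(\mathcal{H}om(\Omega,\mathcal O))$, which is zero for dimension reasons, and $H^1(\mathcal{E}xt^1(\Omega,\mathcal O))$, which is zero because this sheaf is supported at the finitely many nodes. Hence there is a flat proper $\overline{\mathcal C}\to\mathrm{Spec}\,A$ extending $\mathcal C$; its closed fibre is still $\mathcal C_s$.

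\emph{Step 2 (the line bundle and sections).} The obstruction to extending $\mathscr L$ to a line bundle $\overline{\mathscr L}$ on $\overline{\mathcal C}$ lies in $H^2(\mathcal C_s,\mathcal O_{\mathcal C_s})\otimes I=0$. Fix any such extension. The obstruction to lifting each of the $N+1$ sections $q_{\mathrm{hom}}(e_i)\in H^0(\mathcal C,\mathscr L)$ to $H^0(\overline{\mathcal C},\overline{\mathscr L})$ lies in $H^1(\mathcal C_s,\mathscr L_s)\otimes I$. I expect this to be the only step with real content, and I will show $H^1(\mathcal C_s,\mathscr L_s)=0$ as follows.
\begin{itemize}
\item Since $q_s$ is a quasimap, $q_{\mathrm{hom}}$ is nonzero at the generic point of every component. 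Hence $\deg(\mathscr L_s|_{C'})\ge0$ for every irreducible component $C'$.
\item Since $p_a(\mathcal C_s)=0$, the curve $\mathcal C_s$ is a tree of $\mathbb P^1$'s.
\item Induct on the number of components. Pick a leaf $C'$ meeting the rest $C''$ at one point $p$, and use the exact sequence $0\to\mathscr L_s|_{C'}(-p)\to\mathscr L_s\to\mathscr L_s|_{C''}\to0$.
\item We have $H^1(\mathbb P^1,\mathcal O(e-1))=0$ for $e\ge0$, and $H^1(C'',\mathscr L_s|_{C''})=0$ by induction. Hence $H^1(\mathscr L_s)=0$.
\end{itemize}
The lifts define $\bar q_{\mathrm{hom}}\colon\mathcal O^{\oplus N+1}_{\overline{\mathcal C}}\to\overline{\mathscr L}$, that is, a morphism $\bar q$ to $[\mathbb A^{N+1}_A/\mathbb G_{m,A}]$ restricting to $q$.

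\emph{Step 3 (the divisors).} Write $\Delta=\sum a_iD_i$ with each $D_i$ a relative Cartier divisor supported in the smooth locus of $\mathcal C\to\mathrm{Spec}\,A/I$. Locally $D_i$ is cut out by one function $g$, a nonzerodivisor on each fibre, and any lift $\bar g$ of $g$ is again a nonzerodivisor with flat quotient by the local flatness criterion. The global obstruction to gluing these local lifts lies in $H^1(D_{i,s},\mathcal O_{D_{i,s}}(D_{i,s}))\otimes I$, which vanishes because $D_{i,s}$ is zero-dimensional. The extensions $\overline D_i$ have the same support as $D_{i,s}$, so they avoid the nodes, and they are relative Mumford (indeed Cartier and K-flat) by the remark in the preliminaries on families of nodal curves.

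\emph{Step 4 (local stability).} Since $\mathrm{Spec}\,A$ has a single point, the only geometric fibre of $\bar q$ is $q_{\bar s}$. Deminormality and the slc condition with respect to $w$ are therefore inherited from $q$. Finally, $K_{\overline{\mathcal C}/A}+\overline\Delta$ is $\mathbb R$-Cartier, because nodal curves are Gorenstein and the $\overline D_i$ are Cartier. Hence $\bar q\colon(\overline{\mathcal C},\overline\Delta)\to[\mathbb A^{N+1}_A/\mathbb G_{m,A}]$ is a locally stable family of quasimaps extending $q$.
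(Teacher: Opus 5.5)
Your proposal is correct and follows the paper's route: extend the curve by unobstructedness of nodal curves (the paper simply cites Deligne--Mumford), extend $\mathscr{L}$ using $H^2(\mathcal{O}_{\mathcal{C}_s})=0$, and lift the sections using $H^1(\mathcal{C}_s,\mathscr{L}_s)=0$, which follows from nefness on a genus-zero tree. The differences are cosmetic: you extend the $D_i$ by locally lifting their equations (obstruction in $H^1$ of a zero-dimensional scheme), where the paper extends $\mathcal{O}(D_i)$ together with its section via $H^1(\mathcal{O}_{\mathcal{C}_s}(D_{i,s}))=0$, and, like the paper's vanishing lemma, you tacitly use that $\mathcal{C}_s$ is connected when passing from $p_a=0$ to a tree of $\mathbb{P}^1$'s.
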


\begin{proof}
Before dealing with each case, we explain fundamental results.
By \cite[Lemma 1.3]{DM}, there exists a flat family of nodal curves $\overline{\mathcal{C}}\to \mathrm{Spec}\,A$ that is an extension of $\mathcal{C}$.
Moreover, suppose that $\Delta=\sum a_i D_i$ is the irreducible decomposition.
Each $D_i$ is a relative Cartier divisor and hence $(D_i)_s$ is nef.
By Lemma \ref{lem--vanish} below, we see that $H^1(\mathcal{C}_s,\mathcal{O}_{\mathcal{C}_s}((D_i)_s))=0$ for each $i$.
Using $H^2(\mathcal{C}_s,\mathcal{O}_{\mathcal{C}_s})=0$, we can take an extension $\overline{D}_i$ of $D_i$ on $\overline{\mathcal{C}}$. 
Therefore, it suffices to extend the structure of a family of quasimaps.
Let $\mathscr{L}$ be the line bundle associated with $q$.
Since $H^2(\mathcal{C}_s,\mathcal{O}_{\mathcal{C}_s})=0$, we can take a line bundle $\overline{\mathscr{L}}$ on $\overline{\mathcal{C}}$ that is an extension of $\mathscr{L}$.
 We assert that $H^1(\mathcal{C}_{s},\mathscr{L}_s)=0$.
 Indeed, note that $\mathscr{L}_s$ is nef since $q_{s}$ is a slc quasimap and hence we can apply Lemma \ref{lem--vanish} below.

 Let $f_0,\ldots,f_{N}$ be the global sections of $\mathscr{L}$ that correspond to a general basis $\{e_0,\ldots,e_N\}$ of $\mathbb{A}^{N+1}_{\mathbb{K}}$.
 Since $H^1(\mathcal{C}_{s},\mathscr{L}_s)=0$, $f_i$ can be extended to global sections $\overline{f}_{i}$ of $\overline{\mathscr{L}}$.
 Using $\overline{f}_{i}$, we can obtain a family of quasimaps $\overline{q}\colon \overline{\mathcal{C}}\to [\mathbb{A}^{N+1}_{A}/\mathbb{G}_{m,A}]$.
\end{proof}

\begin{lem}\label{lem--vanish}
    Let $C$ be a proper connected nodal curve with $p_a(C)=0$.
    Let $L$ be a nef line bundle and $p\in C$ an arbitrary smooth closed point.
    Then $H^1(C,L(-p))=0$ and $H^1(C,L)=0$.
\end{lem}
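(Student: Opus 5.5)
The plan is to exploit the tree structure of a genus zero nodal curve and argue by induction on the number of irreducible components of $C$. Since $p_a(C)=0$, every component is a smooth rational curve $\P^1$, and the dual graph is a tree. For a nef line bundle $L$, the restriction to each component $C_j\cong\P^1$ has degree $\ge0$, so $L|_{C_j}\cong\mathcal{O}_{\P^1}(e_j)$ with $e_j\ge0$.

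The base case is $C\cong\P^1$. Here $H^1(\P^1,\mathcal{O}(e))=0$ for $e\ge-1$. Since $e\ge0$, this gives both $H^1(C,L)=0$ and $H^1(C,L(-p))=0$.

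For the inductive step, assume $C$ is reducible. Choose a leaf $T$ of the dual tree, i.e.\ a rational tail, meeting $C':=\overline{C\setminus T}$ at a single node $x$. If $p\in C'$, choose instead a leaf not containing $p$; this is always possible when the tree has at least two leaves, and a tree with $\ge2$ vertices has at least two leaves. So we may assume $p\notin T$, or argue symmetrically. We then use the standard exact sequence for the partial normalization along $x$:
\[
0\to L(-p)|_{T}(-x)\to L(-p)\to L(-p)|_{C'}\to0 .
\]
Here the kernel is the subsheaf of sections vanishing on $C'$, which is $L|_T(-x)$ when $p\in C'$. Taking cohomology, $H^1(T,L|_T(-x))=H^1(\P^1,\mathcal{O}(e_T-1))=0$ since $e_T-1\ge-1$. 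Also, $H^1(C',L|_{C'}(-p))=0$ by induction, because $C'$ is a connected genus zero nodal curve with fewer components, $L|_{C'}$ is nef, and $p$ is a smooth point of $C'$. Hence $H^1(C,L(-p))=0$.

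The statement $H^1(C,L)=0$ follows by the same sequence without twisting by $p$, or directly from the surjection $H^1(C,L(-p))\to H^1(C,L)$. That surjection holds because the cokernel of $L(-p)\to L$ is a skyscraper at $p$, and skyscrapers have vanishing $H^1$.

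The only point requiring care is the choice of the leaf relative to $p$. If $p$ lies on the chosen tail $T$, the same argument works with the roles swapped: we use the kernel $L|_{C'}(-x)$ with $x$ a smooth point of $C'$, apply the inductive hypothesis to $(C',x)$, and handle the quotient $L(-p)|_T=\mathcal{O}(e_T-1)$, which also has vanishing $H^1$. Either way the induction closes. This case distinction, which keeps the twisted point a smooth point of the remaining curve, is the one place to be careful; everything else is routine.
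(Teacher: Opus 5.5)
Your proof is correct and is essentially the paper's argument: induction on the number of components, using the exact sequence obtained by splitting off a subcurve of the tree, the vanishing $H^1(\mathbb{P}^1,\mathcal{O}(e))=0$ for $e\ge -1$, and the surjection $H^1(C,L(-p))\to H^1(C,L)$. The only difference is that you peel off a leaf, so the remaining curve stays connected, whereas the paper removes the component $E\ni p$ and applies induction to each connected component of $\overline{C\setminus E}$ twisted by its attaching point; your case $p\in T$ is exactly the paper's step with a single branch (and the sentence ``If $p\in C'$, choose instead a leaf not containing $p$'' should read ``If $p\in T$'').
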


\begin{proof}
 The former assertion implies the latter assertion because we have a surjective morphism $H^1(C,L(-p))\to H^1(C,L)$.
 We show the former assertion by induction on the number of irreducible components of $C$.
 First, suppose that $C$ is irreducible. Then $\mathrm{deg}_C(L(-p))\ge-1$ and we obtain the assertion in this case.
 Suppose that $C$ is reducible. 
 Since $p_a(C)=0$, $C$ is a tree of smooth rational curves.
 Let $E$ be the irreducible component that contains $p$.
 Let $C'=\overline{C\setminus E}$.
 If $C'\cap E=\{q_1,\ldots,q_l\}$, then $C'$ consists of $l$ connected components and each connected component intersects with $E$ at exactly one point.
 Consider the following exact sequence
 \[
 H^1\left(C',L|_{C'}\left(-\sum_{k=1}^lq_k\right)\right)\to H^1(C,L(-p))\to H^1(E,L|_E(-p))\to0.
 \]
 By the induction hypothesis, we have $h^1(C',L|_{C'}(-\sum_{k=1}^lq_k))=h^1(E,L|_E(-p))=0$.
 Thus, we obtain $H^1(C,L(-p))=0$.
\end{proof}

Furthermore, to achieve the moduli wall crossing for stable families of log quasimaps, we put the following proposition.

\begin{prop}\label{prop:contraction-thm}
 In Proposition \ref{prop:push--linear-equivalence}, suppose that $-(K_{C/S}+\Delta+w\mathscr{L})$ is $h$-nef. Let $q\colon (C,\Delta)\to [\mathrm{Cone}(X)_S/\mathbb{G}_{m,S}]$ be a family of locally stable quasimaps of degree $d$ with weight $w>0$.    
 Then there exists a locally stable family $q'\colon (C',h_*\Delta)\to [\mathrm{Cone}(X)_S/\mathbb{G}_{m,S}]$ of log quasimaps of degree $d$ with weight $w$ uniquely up to isomorphisms that satisfies the following:
 \begin{itemize}
     \item[]  Let $U\subset S$ be a non-empty open subset and $\varphi_0,\ldots,\varphi_N \in \mathrm{Sec}_{q_U}(\mathscr{L}_U)$ the sections that correspond to a basis $e_0,\ldots,e_{N}\in\mathbb{A}^{N+1}_{\mathbb{K}}$ and $\varphi\in \mathrm{Sec}_{q_U}(\mathscr{L}_U)$ the section that corresponds to a general element $e\in\mathbb{A}^{N+1}_{\mathbb{K}}$ such that $\mathrm{div}(\varphi)$ and $\mathrm{div}(\varphi_i)$ are relative Mumford divisors of $C_U$ over $U$, where $\mathscr{L}$ is the line bundle associated with $q$.
 Then the line bundle $\mathscr{L}'$ associated with $q'$ satisfies that $\mathscr{L}'_U\sim \mathcal{O}_{C'_U}((h_{U})_*\mathrm{div}(\varphi|_{C_U}))$ and $q'_U$ is defined by $\varphi'_i:=((h_U)_\sharp(\frac{\varphi_i}{\varphi}))\cdot s$, where $s$ is the section of $\Gamma(C'_U,\mathscr{L}'_U)$ corresponding to $h_*\mathrm{div}(\varphi)|_{C'_U}$ and $(h_U)_\sharp$ is as defined in Proposition \ref{prop:push--linear-equivalence}. 
 \end{itemize}
\end{prop}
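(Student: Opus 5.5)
The plan is to build $q'$ locally on $S$ from the formula in the statement, check that the local constructions glue and are independent of choices, and then verify local stability and the degree.

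\emph{Local construction.} Fix an open $U\subset S$ and choose sections $\varphi_0,\ldots,\varphi_N,\varphi\in\mathrm{Sec}_{q_U}(\mathscr{L}_U)$ as in the statement. We may choose $\varphi$ general so that $\mathrm{div}(\varphi)$ is a relative Mumford divisor on every fiber over $U$. Put $D:=\mathrm{div}(\varphi)$ and $D_i:=\mathrm{div}(\varphi_i)$, so that $D_i\sim D$ via the rational function $\varphi_i/\varphi$.

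We must check that $D$ satisfies the hypothesis of Proposition \ref{prop:pushfoward}. The $h$-nefness of $-(K_{C/S}+\Delta+w\mathscr{L})$ says that every contracted rational bridge has $\deg\mathscr{L}=0$ (and no boundary). Then $\mathscr{L}$ restricted to such a bridge is trivial, and a general section does not vanish there, so $D$ is disjoint from contracted bridges. On rational tails no condition is needed.

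Proposition \ref{prop:pushfoward} then gives the relative Cartier divisor $h_*D$; set $\mathscr{L}'_U:=\mathcal{O}(h_*D)$ with canonical section $s$. Proposition \ref{prop:push--linear-equivalence} gives $\mathrm{div}(h_\sharp(\varphi_i/\varphi))=h_*D_i-h_*D$. Hence $\varphi'_i:=h_\sharp(\varphi_i/\varphi)\cdot s$ is a regular section of $\mathscr{L}'_U$ with divisor $h_*D_i$. For $\varphi_i$ that are not general, $D_i$ may meet a bridge; in that case I would instead write $\varphi_i$ as a linear combination of general sections, using linearity of $h_\sharp$.

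The $\varphi'_i$ define a morphism to $[\mathbb{A}^{N+1}/\mathbb{G}_m]$. It lands in $[\mathrm{Cone}(X)/\mathbb{G}_m]$ because the homogeneous equations of $X$ vanish on the $\varphi_i/\varphi$, and $h_\sharp$ is a ring homomorphism on total quotient rings, so they vanish on the images as well.

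\emph{Independence and gluing.} If $\psi$ is another general section, the two constructions differ by $h_\sharp(\varphi/\psi)$, which gives an isomorphism $\mathcal{O}(h_*\mathrm{div}\varphi)\cong\mathcal{O}(h_*\mathrm{div}\psi)$ carrying one set of $\varphi'_i$ to the other. This uses multiplicativity of $h_\sharp$. The same argument handles a change of basis, and uniqueness follows because $q'$ is determined on the dense open set where $h$ is an isomorphism together with the divisor data. Since $S$ is covered by such $U$, the local constructions glue. Base-change compatibility follows from the base-change clause of Proposition \ref{prop:push--linear-equivalence}.

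\emph{Degree and local stability.} The degree is preserved because $\deg h_*D=\deg D=d$. For local stability we check fiberwise. On a contracted tail $T$ meeting the rest at $p$, $h$-nefness gives $\deg(K+\Delta+w\mathscr{L})|_T\le 0$, that is, $\deg(\Delta|_T)+w\deg(\mathscr{L}|_T)\le1$. The new coefficient of $p'$ in $h_*\Delta+w\mathrm{Bs}(q')$ is at most $\deg\Delta|_T+w\deg\mathscr{L}|_T$, since the base point multiplicity at $p'$ is bounded by the degree of $\mathscr{L}$ on the tail plus the old base points there. This bound is $\le1$, so the pair is lc at the smooth point $p'$. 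Away from $p'$ nothing changes. Bridges carry no data, so their contraction preserves the slc property at the new node, and the base points stay off the nodes.

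\emph{Main obstacle.} I expect the main obstacle to be the nonreduced-base case: showing that the $\varphi'_i$ are genuinely regular sections that generate correctly in families, not only fiberwise. Proposition \ref{prop:push--linear-equivalence} already handles divisors over Artinian bases, and I would rely on it together with flatness of $h_*D_i$, arguing through Fitting ideals.
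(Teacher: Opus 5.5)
Your proposal is correct and follows essentially the same route as the paper. You set $\mathscr{L}'_U=\mathcal{O}(h_*\mathrm{div}\varphi)$ and $\varphi'_i=h_\sharp(\varphi_i/\varphi)\cdot s$ using Propositions \ref{prop:pushfoward} and \ref{prop:push--linear-equivalence}, prove independence of $\varphi$ via multiplicativity of $h_\sharp$ and glue, and check slc fiberwise from $\deg(\Delta+w\mathscr{L})\le 1$ on contracted tails and $=0$ on contracted bridges (the paper spells out the landing in $[\mathrm{Cone}(X)/\mathbb{G}_m]$ only later, in Theorem \ref{thm--wall-crossing-bpcy}, and your fallback for non-general $\varphi_i$ is unnecessary, since a relative Mumford $\mathrm{div}(\varphi_i)$ cannot meet a contracted bridge, where $\mathscr{L}$ is trivial).
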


\begin{proof}
First, we observe that for any point $p\in S$, there exists an open neighborhood $V$ such that for any general $e'\in \mathbb{K}^{N+1}$, the corresponding section $f\in \mathrm{Sec}_{q_V}(\mathscr{L}_V)$ such that $(h_{V})_*\mathrm{div}(f)$ is a relative Mumford divisor. To show this, we may assume that $S=\mathrm{Spec}\,\mathbb{K}$. 
Let $p\in C'$ be a closed point.
For any $f\in\mathrm{Sec}_q(\mathscr{L})$ corresponding to a general element $e\in \mathbb{K}^{N+1}$, note that $\mathrm{ord}_{p}(h_*\Delta+wh_*\mathrm{div}(f))\le1$ by the condition that $-(K_{C/S}+\Delta+w\mathscr{L})$ is $h$-nef.
Furthermore, it is easy to observe that if $p\in \mathrm{Sing}\,C'$, then $\mathrm{ord}_{p}(h_*\Delta+wh_*\mathrm{div}(f))=0$ for any $f\in\mathrm{Sec}_q(\mathscr{L})$ corresponding to a general element $e\in \mathbb{K}_{N+1}$.
Indeed, we can decompose $h$ to contractions of rational tails and bridges and we may assume that $h$ is a contraction of one rational tail or one rational bridge $D$.
In this case, since $-(K_{C/S}+\Delta+w\mathscr{L})$ is $h$-nef, $\mathrm{deg}_D(\Delta|_D+w\mathscr{L}|_D)\le1$ if $D$ is a rational tail and $\mathrm{deg}_D(\Delta|_D+w\mathscr{L}|_D)=0$ if $D$ is a rational bridge. 
This means that if you choose general $f$ for any $p\in C'$, $(C',h_*\Delta+wh_*\mathrm{div}(f))$ is slc around $p$.

Next, for general $S$, we show that $\varphi'_i\in \Gamma(C'_U,\mathscr{L}'_U)$, where $U$ is as in the statement.
By the first paragraph of this proof, we may assume that $h_*\mathrm{div}(\varphi)$ is a relative Mumford divisor.
By Proposition \ref{prop:push--linear-equivalence}, we see that $\mathrm{div}(\varphi'_i)=(h_*\mathrm{div}(\varphi_i)-h_*\mathrm{div}(\varphi))+h_*\mathrm{div}(\varphi)\ge0$ and this is linearly equivalent to $h_*\mathrm{div}(\varphi)$.
Note that $\mathrm{div}(\varphi'_i)$ is independent of the choice of $\varphi$.
Note that $\varphi_i$'s define a homomorphism $\mathcal{O}_{C'_U}^{\oplus N+1}\to \mathscr{L}'$.
This homomorphism is independent of the choice of $\varphi$ up to multiple of local sections of $\mathcal{O}_{U}^\times$.
Therefore, this homomorphism defines a structure of a family of quasimaps $q'_U\colon C'_U\to [\mathrm{Cone}(X)_U/\mathbb{G}_{m,U}]$ as the assertion locally over $U$ whose structure is independent of the choice of $\varphi$ because $\varphi'_i/\varphi'_j=(h_U)_\sharp(\varphi_i/\varphi_j)$.
Furthermore, by a similar discussion, it is not hard to check that the above construction of $q'_U$ is independent of the choice of $\mathrm{div}(\varphi_i)$.
Therefore, we can patch $q_U$ together over any open subset $U$ of $S$ and we can construct a family of quasimaps $q'\colon C'\to [\mathrm{Cone}(X)_S/\mathbb{G}_{m,S}]$ as the assertion. 
It is not hard to check that our $q'$ is the desired one.
\end{proof}

\subsection{CM line bundle}

We will collect the fundamental notions of the CM line bundles (cf.~\cite{HH2,CM26}).

\begin{defn}\label{defn:CM-line-for-qmaps}
 Let $q\colon (\mathcal{C},\Delta)\to [\mathrm{Cone}(X)_S/\mathbb{G}_{m,S}]$ be the family of quasimaps over $S$ of degree $d$.   Let $\mathscr{L}$ be the line bundle associated with $q$.
 Let $\mathcal{A}$ be a relatively very ample line bundle on $\mathcal{C}$ over $S$.
 Set $\pi\colon \mathcal{C}\to S$ as the canonical morphism.
Note that $\Delta=\sum a_i D_i$ for some $a_i\in(0,1]$, where $D_i$ is a relative Cartier divisor.
Since $\mathcal{C}$ and $D_i$ are flat over $S$, we can apply \cite[Theorem 4]{knudsen1976projectivity} and obtain the following: there exist line bundles $\mathscr{M}_0$, $\mathscr{M}_1$, and $\mathscr{M}_2$ over $S$ unique up to linear equivalence such that
\begin{align*}
    \mathrm{det}(\pi_*\mathcal{A}^{\otimes m})&\sim\bigotimes^{2}_{j=0}\mathscr{M}^{\binom{m}{j}}_j
\end{align*}
for any sufficiently large $m\in\mathbb{Z}_{>0}$.
Furthermore, the above decomposition is functorial: for any morphism $g\colon T\to S$ from a scheme, 
\begin{align*}
    \mathrm{det}(\pi_{T*}\mathcal{A}_T^{\otimes m})&\sim\bigotimes^{2}_{j=0}g^*\mathscr{M}^{\binom{m}{j}}_j.
\end{align*}

Let $w\in\mathbb{Q}_{>0}.$
If there exists $l\in\mathbb{Z}_{>0}$ such that $l(K_{\mathcal{C}/S}+\Delta+w\mathscr{L})=\mathcal{A}$ (resp.~$-l(K_{\mathcal{C}/S}+\Delta+w\mathscr{L})=\mathcal{A}$), then we set the CM line bundle  
\[
\lambda_{\mathrm{CM},q,\Delta,w}:=\frac{1}{l^2}\mathscr{M}_2. \quad \left(\text{resp.~}\frac{-1}{l^2}\mathscr{M}_2\right).
\]
This definition is independent of the choice of $l$.
By the functoriality of the Knudsen-Mumford expansion, we see that the above definition is also functorial, that is, for any morphism $g\colon T\to S$ from a scheme, we see that $g^*\lambda_{\mathrm{CM},q,\Delta,w}=\lambda_{\mathrm{CM},q_T,\Delta_T,w}$.

\end{defn}

\section{K-moduli wall crossing for log Fano quasimaps}\label{sec:Kssqmapswallcrossing}

We fix $\iota\colon X\hookrightarrow\mathbb{P}^N$ a closed immersion of a projective variety and $\mathrm{Cone}(X)\subset\mathbb{A}^{N+1}$.

\begin{defn}
Let $d\in\mathbb{Z}_{>0}$ and $w\in\R_{>0}$.
Let $\vec{a}\in (0,1]^{r}$ and $\vec{d}\in \mathbb{Z}_{>0}^{r}$.
    Let $\mathcal{M}^{\mathrm{Kssqmaps}}_{d,\vec{a},\vec{d},w,\iota}$ be a category fibered in groupoids such that the objects of  $\mathcal{M}^{\mathrm{Kssqmaps}}_{d,\vec{a},\vec{d},w,\iota}(S)$ for any scheme $S$ are families of weighted K-semistable log Fano quasimaps $q\colon (\mathcal{C},\vec{a}\cdot\vec{\mathcal{D}})\to[\mathrm{Cone}(X)_S/\mathbb{G}_{m,S}]$ over $S$ with weight $w$ of degree $d\in\mathbb{Z}_{>0}$ such that
    \begin{itemize}
        \item $\mathrm{deg}_{\mathcal{C}_{s}}((\mathscr{L}_q)_{s})=d$ and $\mathcal{C}_s$ is geometrically connected for any $s\in S$, where $\mathscr{L}_q$ is the associated line bundle with $q$,
        \item $\vec{\mathcal{D}}$ is a vector of $(D^{(1)},\ldots,D^{(r)})$ and $D^{(j)}$ is a relative Mumford divisor of degree $d^{(j)}$, where $\vec{d}=(d^{(1)},\ldots,d^{(r)})$.
        \end{itemize}
        Isomorphisms of $\mathcal{M}^{\mathrm{Kssqmaps}}_{d,\vec{a},\vec{d},w,\iota}(S)$ are defined as isomorphisms of families of weighted quasimaps.
        When $w=0$, we set $\mathcal{M}^{\mathrm{Kssqmaps}}_{d,\vec{a},\vec{d},w,\iota}$ as the K-moduli stack of log Fano pairs of dimension one with relative Mumford divisors of degree $d^{(1)}\ldots,d^{(r)}$ with coefficients $\vec{a}$, i.e.~forget the structure of quasimaps.
        In this case, we can omit $d$, $w$ and $\iota$ and will write it as $\mathcal{M}^{\mathrm{Kss}}_{\vec{a},\vec{d}}$ for simplicity.
        If $\vec{a}\in [0,1]^r$ and there exists $J\subset \{1,\ldots,r\}$ such that $i\in J$ if and only if $a_i=0$, then let $\vec{b},\vec{d}'\in \mathrm{Map}(\{1,\ldots,r\}\setminus J,(0,1])$ such that $b_i=a_i$ and $d_i=d'_i$ for $i\not\in J$ and regard $\mathcal{M}^{\mathrm{Kssqmaps}}_{d,\vec{a},\vec{d},w,\iota}$ as $\mathcal{M}^{\mathrm{Kssqmaps}}_{d,\vec{b},\vec{d}',w,\iota}$.  
        It is easy to check that $\mathcal{M}^{\mathrm{Kssqmaps}}_{d,\vec{a},\vec{d},w,\iota}$ is a stack.
\end{defn}

Hashizume and the first author dealt with $\mathcal{M}^{\mathrm{Kssqmaps}}_{d,\vec{a},\vec{d},w,\iota}$ and the existence of the projective good moduli space in \cite{HH2} in the case where $r=1$ and $w$ and $\vec{a}$ are rational.

First, in the case where $w=0$, note that $\mathcal{M}^{\mathrm{Kss}}_{\vec{a},\vec{d}}$ is an Artin stack of finite type with a proper good moduli space by \cite{zhou2026shape} for any $\vec{a}$ and $\vec{d}$.
We relate the K-moduli space to the VGIT (variation of GIT) moduli of $n$-points of $\mathbb{P}^1$ for $n\in\mathbb{Z}_{>0}$.
This is well-known to experts but we will explain it for the reader's convenience.
We can naturally consider the GIT stability of $(\mathbb{P}^1)^n$ with respect to the diagonal action of $\SL(2)$ and the polarization $\mathcal{O}(t_1,t_2,\ldots,t_n)$ for any $\vec{t}:=(t_1,t_2,\ldots,t_n)\in \mathbb{Z}^n_{>0}$ (cf.~\cite[Section 6]{thaddeus1996geometric} and \cite{GIT}).
By the general theory of VGIT \cite[Theorems 2.3 and 2.4]{thaddeus1996geometric} (see also \cite{DolgachevHu1998VGIT}), there is the decomposition of $\mathbb{R}_{\ge0}^n$ into finitely many polyhedral cones and these cones determine the GIT (semi)stable locus of $(\mathbb{P}^1)^n$ with respect to the diagonal action of $\SL(2)$ and the polarization $\mathcal{O}(t_1,t_2,\ldots,t_n)$.
It is well-known that the GIT (semi)stability of $(x_1,\ldots,x_n)\in (\mathbb{P}^1)^n$ with respect to $\mathcal{O}(t_1,t_2,\ldots,t_n)$ coincides with the condition that
\[
\mathrm{ord}_p\left(\sum_{j=1}^n t_j[x_j]\right)\le\frac{\sum_{j=1}^n t_j}{2} \qquad(\text{resp.}<)
\]
for any closed point $p\in \mathbb{P}^1$.
By \cite[Theorem 3]{li2015remarks} (cf.~Theorem \ref{thm:delta--log--pairs}), we see that the above condition is nothing but the K-(semi)stability of $(\P^1,\mu\sum_{j=1}^n t_j[x_j])$, where $\mu\in\mathbb{R}_{>0}$ is a number such that $\mu\sum_{j=1}^n t_j<2$.
This means that the K-moduli stack is nothing but the VGIT moduli stack.
To be precise, we obtain the following.

\begin{thm}\label{thm:VGIT-K-stability}
Let $\vec{t}:=(t_1,t_2,\ldots,t_n)\in\mathbb{Z}_{>0}^n$ and let $\mu\in\mathbb{R}_{>0}$ be a number such that $\mu\sum_{j=1}^n t_j<2$.
Then the GIT (semi/poly)stability of $(x_1,\ldots,x_n)\in (\mathbb{P}^1)^n(\mathbb{K})$ with respect to $\mathcal{O}(t_1,t_2,\ldots,t_n)$ coincides with the K-(semi/poly)stability of the log pair $(\P^1,\mu\sum_{j=1}^n t_j[x_j])$.
Moreover, letting $((\mathbb{P}^1)^n)^{\mathrm{ss}}$ be the corresponding GIT semistable locus, we obtain 
\[
\mathcal{M}^{\mathrm{Kss}}_{\mu\vec{t},\vec{\mathbf{1}}}\cong[((\mathbb{P}^1)^n)^{\mathrm{ss}}/\PGL(2)]
\]
as stacks, where $\vec{\mathbf{1}}=(1,1,\ldots,1)$.
\end{thm}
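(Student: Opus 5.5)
The plan has two parts: a pointwise comparison of stability notions, followed by an identification of stacks via their functors of points.

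\emph{Pointwise comparison.} First I match semistability and stability. For $x=(x_1,\ldots,x_n)$, put $\Delta=\mu\sum_j t_j[x_j]$. Then $\deg(-(K_{\P^1}+\Delta))=2-\mu\sum_j t_j>0$, so $(\P^1,\Delta)$ is log Fano as soon as it is klt, which means $\mathrm{ord}_p(\Delta)<1$ for every $p$.

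By Example~\ref{ex:one-dim-K-st}, K-semistability (resp.\ K-stability) is equivalent to $2(1-\mathrm{ord}_p\Delta)\ge 2-\mu\sum t_j$ (resp.\ $>$) for all $p$. This rewrites as $\mathrm{ord}_p(\sum t_j[x_j])\le \frac12\sum t_j$ (resp.\ $<$). The klt condition is then automatic, because $\mu\cdot\frac12\sum t_j<1$.

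The Hilbert--Mumford criterion for the diagonal $\SL(2)$-action on $(\P^1)^n$ with linearization $\mathcal{O}(\vec t)$ gives exactly the same inequalities. A one-parameter subgroup fixing $p$ has weight $\sum_{x_j=p}t_j-\sum_{x_j\ne p}t_j$ up to sign. So semistability and stability agree on the nose.

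For polystability, I use that a strictly semistable GIT-polystable configuration is one with closed orbit. Such a configuration is supported at two points $p\neq p'$ carrying weight $\frac12\sum t_j$ each. On the K-side, the polystable objects are the K-semistable pairs admitting no non-product special degeneration. Degenerating via the $\mathbb{G}_m$ fixing a point $p$ of weight exactly $\frac12\sum t_j$ collapses all other points to $p'$. The result is the same two-point configuration, and it is fixed by $\mathbb{G}_m$. Conversely, any two-point configuration with equal weights has a closed orbit on both sides. Hence the polystable loci coincide as well.

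\emph{Stack isomorphism.} Next I build a morphism $[((\P^1)^n)^{\mathrm{ss}}/\PGL(2)]\to\mathcal{M}^{\mathrm{Kss}}_{\mu\vec t,\vec{\mathbf 1}}$. An object of the quotient stack over $S$ is a $\PGL(2)$-torsor $P\to S$ with an equivariant map $P\to((\P^1)^n)^{\mathrm{ss}}$. From it I form the $\P^1$-bundle $\mathcal{C}=P\times^{\PGL(2)}\P^1$ together with $n$ sections, and I take $D^{(j)}$ to be the images of these sections. Each $D^{(j)}$ is a relative Cartier divisor of degree $1$ that is flat over $S$. By the pointwise comparison, every geometric fiber is K-semistable, and local stability follows because each fiber is klt on a smooth curve with divisorial boundary.

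Conversely, take a family in $\mathcal{M}^{\mathrm{Kss}}_{\mu\vec t,\vec{\mathbf 1}}(S)$. Its fibers are $\P^1$ (Fano, so genus $0$ with no nodes), hence $\mathcal{C}\to S$ is a smooth conic bundle, i.e.\ a Brauer--Severi scheme of relative dimension $1$. Its frame bundle $P=\mathrm{Isom}_S(\P^1_S,\mathcal{C})$ is a $\PGL(2)$-torsor. Degree-$1$ relative Mumford divisors on a smooth $\P^1$-bundle are sections, so the $D^{(j)}$ induce an equivariant map $P\to(\P^1)^n$, which lands in the semistable locus by the pointwise comparison. These two constructions are mutually inverse, and they are compatible with isomorphisms, which on both sides are isomorphisms of curves preserving the $D^{(j)}$.

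\emph{Expected obstacle.} The main subtlety is the degree-one divisor step over a nonreduced base. I need a relative Mumford divisor of degree $1$ on a smooth $\P^1$-bundle to be the image of a section. I would argue this via K-flatness plus the fact that on smooth curve families relative Mumford divisors are relative Cartier and flat of degree $1$, so they map isomorphically to $S$. The polystability matching is routine given the explicit two-point description above.
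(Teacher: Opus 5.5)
Your proposal is correct. For semistability/stability and for the stack isomorphism it follows the paper's route. The paper also matches the numerical criterion of Example~\ref{ex:one-dim-K-st} against the classical Hilbert--Mumford inequalities. It then asserts that a canonical isomorphism $\phi$ exists because every fiber is $\P^1$. Your torsor/Brauer--Severi construction, and your observation that a degree-one relative Cartier divisor is finite flat of rank one over $S$ (hence the image of a section), are exactly the details the paper leaves as ``not hard to check''.

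The genuine difference is polystability. The paper proves the stack isomorphism first. It then reads off polystability from the fact that $\phi$ matches closed points, using the identification of K-polystable pairs with closed points of the K-moduli stack. You instead argue pointwise: a K-polystable, strictly semistable configuration has a point $p$ of weight exactly $\frac12\sum t_j$. The degeneration along $\mathrm{ord}_p$ has vanishing Donaldson--Futaki invariant, so it must be of product type, and hence the configuration is already the two-point one. That direction is fine. Note that the condition you need is ``no non-product special degeneration \emph{with vanishing Donaldson--Futaki invariant}'', not ``no non-product special degeneration'' at all.

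In the converse direction, your phrase ``has a closed orbit on both sides'' is, on the K-side, precisely the paper's closed-point argument. So it is not independent of the stack isomorphism unless you check directly that $(\P^1,c[p]+c[p'])$ with $c=\frac{\mu}{2}\sum t_j$ is K-polystable. This check is easy. Nontrivial special test configurations of a one-dimensional log Fano pair are induced by valuations $\mathrm{ord}_q$. For $q\notin\{p,p'\}$ one gets $A-S=1-(1-c)=c>0$. For $q\in\{p,p'\}$, the $\mathbb{G}_m$-action fixes both points, so the induced test configuration is of product type. With that line added your argument is self-contained; the paper's is shorter but relies on the general identification of K-polystability with closed points.
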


\begin{proof}
By the above discussion, we note that K-(semi)stability of $(\P^1,\mu\sum_{j=1}^n t_j[x_j])$ coincides with the GIT (semi)stability of $(x_1,\ldots,x_n)\in (\mathbb{P}^1)^n$ with respect to $\mathcal{O}(t_1,t_2,\ldots,t_n)$.  
Furthermore, if $(X,\Delta)$ is an object of $ \mathcal{M}^{\mathrm{Kss}}_{\mu\vec{t},\vec{\mathbf{1}}}(\mathbb{K})$, then $(X,\Delta)$ is log Fano and hence $X\cong\mathbb{P}^1$.
Thus, it is not hard to check that there exists a canonical isomorphism
\[
\phi\colon\mathcal{M}^{\mathrm{Kss}}_{\mu\vec{t},\vec{\mathbf{1}}}\to[((\mathbb{P}^1)^n)^{\mathrm{ss}}/\PGL(2)].
\]
Under $\phi$, the closed points of two stacks correspond to each other.
Thus, the GIT polystability of $(x_1,\ldots,x_n)\in (\mathbb{P}^1)^n$ with respect to $\mathcal{O}(t_1,t_2,\ldots,t_n)$ coincides with the K-polystability of the log pair $(\P^1,\mu\sum_{j=1}^n t_j[x_j])$
\end{proof}

By \cite[e.g.~Theorem 1.6]{zhou2026shape}, there is a wall crossing phenomenon for $\mathcal{M}^{\mathrm{Kss}}_{\mu\vec{t},\vec{\mathbf{1}}}$ but we will describe it in a more general form (Theorem \ref{thm:wallcrossingmorphKss}).
Theorem \ref{thm:VGIT-K-stability} can be deduced from \cite[Theorem 1.1]{zhou2024log} when $\mu$ is sufficiently small.

Before considering $\mathcal{M}^{\mathrm{Kssqmaps}}_{d,\vec{a},\vec{d},w,\iota}$, we observe the variation of the stacks in $\vec{a}$ and $w$.

\begin{prop}\label{prop:chmber--Kss}
 Fix 
 $d\in\mathbb{Z}_{>0}$ and $\vec{d}\in \mathbb{Z}_{>0}^{r}$. 
Then there are finitely many locally closed disjoint rational polyhedra $P_k\subset [0,1]^{r}\times\R_{\ge0}$ for $k\in\{1,\ldots,m\}$ that satisfy the following:
\begin{itemize}
    \item[] if there exists a K-semistable log Fano quasimap $q\colon (\mathbb{P}^1,\vec{a}\cdot\vec{D})\to[\mathrm{Cone}(X)/\mathbb{G}_{m}]$ with weight $w$ of degree $d$, where $\vec{D}=(D_i)$ such that $D_i$ is an effective Cartier divisor of degree $d_i$, then $(\vec{a},w)\in P_{k}$ for some $k$ and for any $(\vec{c},w')\in \overline{P_k}$, $q\colon (\mathbb{P}^1,\vec{c}\cdot\vec{D})\to[\mathrm{Cone}(X)/\mathbb{G}_{m}]$ with weight $w'$ is K-semistable if $q\colon (\mathbb{P}^1,\vec{c}\cdot\vec{D})\to[\mathrm{Cone}(X)/\mathbb{G}_{m}]$ is log Fano.
\end{itemize}
Here, we regard $q\colon (\mathbb{P}^1,\vec{c}\cdot\vec{D})\to[\mathrm{Cone}(X)/\mathbb{G}_{m}]$ as a log pair $(\mathbb{P}^1,\vec{c}\cdot\vec{D})$ if $w'=0$.
\end{prop}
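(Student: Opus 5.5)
The criterion of Proposition~\ref{prop:chara--K-stability--qmaps}(3) (and Example~\ref{ex:one-dim-K-st} when $w=0$) makes K-semistability of a log Fano quasimap on $\mathbb{P}^1$ a finite system of linear inequalities in $(\vec a,w)$. The coefficients of these inequalities are determined by a bounded amount of combinatorial data. We therefore index the possible data by a finite set and let the polyhedra $P_k$ be the cells of the resulting hyperplane arrangement.

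Fix a quasimap $q$ with divisors $\vec D$ and set $B:=\mathrm{Bs}(q)$. For a point $p\in\mathbb{P}^1$ put $m_i(p):=\mathrm{ord}_p D_i\in\{0,\ldots,d_i\}$ and $b(p):=\mathrm{ord}_p B\in\{0,\ldots,d\}$. When $w>0$, the quasimap is klt and K-semistable exactly when
\[
\vec a\cdot \vec m(p)+w\,b(p)<1
\quad\text{and}\quad
2\bigl(1-\vec a\cdot\vec m(p)-w\,b(p)\bigr)\ge 2-\vec a\cdot\vec d-wd
\]
for every $p$, together with the log Fano condition $\vec a\cdot\vec d+wd<2$. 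Clearing terms, the second inequality reads
\[
\vec a\cdot(\vec d-2\vec m(p))+w\,(d-2b(p))\ge0 .
\]
When $w=0$ the same inequalities hold with $b$ dropped, by Example~\ref{ex:one-dim-K-st}.

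Let $\Sigma\subset\prod_i\{0,\ldots,d_i\}\times\{0,\ldots,d\}$ be the finite set of all possible multiplicity vectors $(\vec m,b)$. Consider the finite collection of affine hyperplanes
\[
\vec a\cdot(\vec d-2\vec m)+w(d-2b)=0,\qquad \vec a\cdot \vec m+wb=1,\qquad \vec a\cdot\vec d+wd=2,\qquad w=0,\qquad a_i=0,\ a_i=1,
\]
where $(\vec m,b)$ ranges over $\Sigma$. These hyperplanes cut $[0,1]^r\times\mathbb{R}_{\ge0}$ into finitely many relatively open rational polyhedra, namely the faces of the arrangement. Let $P_1,\ldots,P_m$ be these faces. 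They are locally closed, disjoint and cover the parameter space, so every $(\vec a,w)$ lies in some $P_k$.

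Now let $q$ be K-semistable at $(\vec a,w)\in P_k$, and take $(\vec c,w')\in\overline{P_k}$ such that $q$ is log Fano at $(\vec c,w')$. Only finitely many points $p$ have $(\vec m(p),b(p))\ne 0$, and each such point contributes a linear form $\ell_p$ from the arrangement. Because $P_k$ is a single face, each $\ell_p$ has a constant sign on $P_k$: positive, negative or zero throughout. The inequality $\ell_p\ge0$ holds at $(\vec a,w)$, so it holds on all of $P_k$, and by continuity it holds on $\overline{P_k}$. The log Fano hypothesis at $(\vec c,w')$ supplies the strict klt inequalities and ampleness. Hence Proposition~\ref{prop:chara--K-stability--qmaps}(3) gives K-semistability of $q$ at $(\vec c,w')$.

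The main obstacle is the boundary behaviour, where $w'=0$ or some $c_i=0$. There we must check that the reduced object satisfies the corresponding criterion. This pair is either $(\mathbb{P}^1,\vec c\cdot\vec D)$ with the quasimap forgotten, or a pair with some $D_i$ dropped. For it the relevant inequality is exactly the specialization of $\ell_p\ge0$ with the vanishing terms removed, so it is covered by Example~\ref{ex:one-dim-K-st} and the same continuity argument. A second point needs care: the inequalities at points $p$ with $(\vec m(p),b(p))=0$ read $\vec a\cdot\vec d+wd\ge0$, which is automatic. The criterion is therefore genuinely a finite system, and the infimum over all $p\in\mathbb{P}^1$ in Proposition~\ref{prop:chara--K-stability--qmaps}(3) causes no issue.
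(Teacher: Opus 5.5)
Your proposal is correct and follows essentially the same route as the paper. The paper also uses Proposition~\ref{prop:chara--K-stability--qmaps}(3) to reduce K-semistability to the finitely many linear inequalities $wd+\vec a\cdot\vec d<2$ and $2(\vec a\cdot\vec m(p)+w\,b(p))\le wd+\vec a\cdot\vec d$, indexed by the finite set of possible multiplicities, and takes the $P_k$ to be the resulting polyhedral decomposition. Your sign-constancy argument on faces and your treatment of the boundary cases $w'=0$ or $c_i=0$ make explicit what the paper only asserts (``we see that such $P_k$ satisfy the assertion''), and the extra klt hyperplanes you add are redundant but harmless.
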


\begin{proof}
    If $q\colon (\mathbb{P}^1,\vec{a}\cdot\vec{D})\to[\mathrm{Cone}(X)/\mathbb{G}_{m}]$ is K-semistable with the base locus $\mathrm{Bs}(q)$, then for any $p\in \mathbb{P}^1$, we obtain 
    \begin{align}
    wd+\sum_{i}a_id_i&<2 \label{eq:klt-domain}, \text{ and}\\
    2\left(\sum_ia_i\mathrm{mult}_p(D_i)+w\mathrm{mult}_p(\mathrm{Bs}(q))\right)&\le wd+\sum_{i}a_id_i.\label{eq:K-polyst-domain} 
    \end{align}
    If further $q$ is K-stable, then inequality \eqref{eq:K-polyst-domain} is strict (see \cite[Lemma 3.12]{HH2} and Proposition \ref{prop:chara--K-stability--qmaps}).
    Here, $\mathrm{mult}_p(D_i)$ and $\mathrm{mult}_p(\mathrm{Bs}(q))$ possibly take any number of $\{0,1,\ldots,d_i\}$ and $\{0,1,\ldots,d\}$, respectively.
    There are only finitely many inequalities \eqref{eq:klt-domain} and \eqref{eq:K-polyst-domain}.
    Therefore, there is the desired decomposition of $[0,1]^r\times\R_{\ge0}$ into finitely many rational polyhedra $P_k\subset [0,1]^{r}\times \R_{\ge0}$ and we see that such $P_k$ satisfy the assertion. 
\end{proof}

\begin{lem}\label{lem-wall-same}
In Proposition \ref{prop:chmber--Kss}, fix a face $F\subset P_k$ and let $F^\circ$ be the relative interior of $F$.
    For every two points $(\vec{a}_1,w_1),(\vec{a}_2,w_2)\in F^\circ$, $\mathcal{M}^{\mathrm{Kssqmaps}}_{d,\vec{a}_1,\vec{d},w_1,\iota}\cong \mathcal{M}^{\mathrm{Kssqmaps}}_{d,\vec{a}_2,\vec{d},w_2,\iota}$ as stacks.
\end{lem}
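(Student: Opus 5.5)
The plan is to build the isomorphism as the identity on underlying families: a family over $S$ in $\mathcal{M}^{\mathrm{Kssqmaps}}_{d,\vec{a}_1,\vec{d},w_1,\iota}(S)$ is sent to the same data $q\colon(\mathcal{C},\vec{a}_2\cdot\vec{\mathcal{D}})\to[\mathrm{Cone}(X)_S/\mathbb{G}_{m,S}]$, with only the coefficients and the weight changed. Isomorphisms of families of weighted quasimaps do not depend on the numerical weights, apart from which coefficients vanish. On $F^\circ$ the set of indices $i$ with $a_i=0$ is constant, and so is the condition $w=0$, because $F^\circ$ lies in the relative interior of a face of the cube $[0,1]^r\times\mathbb{R}_{\ge0}$ intersected with the arrangement. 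Hence on $F^\circ$ the same divisors are remembered or forgotten, and the assignment is fully faithful and compatible with pullback. It remains to check that the conditions defining an object are the same at $(\vec{a}_1,w_1)$ and $(\vec{a}_2,w_2)$. By symmetry, one inclusion suffices.

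Let $q$ be a family that is K-semistable at $(\vec{a}_1,w_1)$. First, each geometric fiber $q_{\bar s}\colon(\mathbb{P}^1,\vec{a}_1\cdot\vec{D}_{\bar s})$ is a K-semistable log Fano quasimap. By Proposition~\ref{prop:chmber--Kss}, the point $(\vec{a}_1,w_1)$ lies in some $P_k$, and hence in $P_k\cap\overline{F}$. Here I should make the decomposition precise. I would refine the $P_k$ of Proposition~\ref{prop:chmber--Kss} to be exactly the cells cut out by the finitely many hyperplanes
\[
wd+\vec{a}\cdot\vec{d}=2,\qquad 2\Bigl(\sum_i a_i m_i+w m\Bigr)=wd+\vec{a}\cdot\vec{d}
\]
with $m_i\in\{0,\dots,d_i\}$ and $m\in\{0,\dots,d\}$, together with the coordinate hyperplanes. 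A relative open cell is then characterized by the sign ($<,=,>$) of each such affine function. Since $(\vec{a}_2,w_2)$ lies in the same relative interior, every sign is identical at the two points.

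Next, I check that K-semistability at $(\vec{a}_1,w_1)$ is equivalent to a condition on these signs. By Proposition~\ref{prop:chara--K-stability--qmaps}(3) and Example~\ref{ex:one-dim-K-st}, K-semistability of a fiber amounts to three things: the log Fano inequality \eqref{eq:klt-domain}, the klt condition, and inequality \eqref{eq:K-polyst-domain} at every point $p$, with multiplicities $m_i=\mathrm{mult}_p(D_i)$ and $m=\mathrm{mult}_p\mathrm{Bs}(q)$. The klt condition is $a_im_i+\dots<1$. It is implied by \eqref{eq:K-polyst-domain} together with \eqref{eq:klt-domain}, because $\mathrm{ord}_p\le\tfrac12(\text{total})<1$. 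All of these are sign conditions on the listed functions, so they transfer to $(\vec{a}_2,w_2)$. Likewise $-(K_{\mathcal{C}/S}+\vec{a}_2\cdot\vec{\mathcal{D}}+w_2\mathscr{L})$ is relatively ample, since it has positive degree on $\mathbb{P}^1$-fibers. Local stability (slc of fibers) also follows from klt.

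The main obstacle I expect is definitional rather than geometric. Local stability asks that $K_{\mathcal{C}/S}+\vec{a}\cdot\vec{\mathcal{D}}$ be $\mathbb{R}$-Cartier, with relative Mumford divisors, and this could a priori depend on the coefficients. For families of nodal curves, however, every relative Mumford divisor is relative Cartier, as noted in the Remark after the definition of locally stable families. So $\mathbb{R}$-Cartierness holds for any coefficients, and the condition is coefficient-independent. With this, the functor and its inverse, defined symmetrically, are mutually inverse, which gives the isomorphism of stacks.
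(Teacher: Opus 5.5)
Your proposal is correct and follows the same route as the paper. Both arguments keep the underlying family, change only the coefficients and the weight, use that the vanishing pattern of $(\vec a,w)$ is constant on $F^\circ$, and build the inverse symmetrically. You also spell out what the paper dismisses as ``straightforward by Proposition~\ref{prop:chmber--Kss}'': you take the $P_k$ to be the sign-cells of the finitely many affine functions in \eqref{eq:klt-domain} and \eqref{eq:K-polyst-domain}, and you check via Proposition~\ref{prop:chara--K-stability--qmaps}(3) that K-semistability, together with klt and slc, is a sign condition on those functions. That check is exactly what the paper's argument needs to work.
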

\begin{proof}
    Straightforward by Proposition \ref{prop:chmber--Kss} but we will give a proof here for the reader's convenience.
    By the condition of $(\vec{a}_1,w_1),(\vec{a}_2,w_2)\in F^\circ$, note that $w_1=0$ if and only if $w_2=0$ and ${a}_1^{(i)}=0$ if and only if ${a}_2^{(i)}=0$ for any $1\le i\le r$, where $\vec{a}_j:=({a}_j^{(i)})_{i=1}^r$ for $j=1,2$.
    Take an arbitrary object $q\in \mathcal{M}^{\mathrm{Kssqmaps}}_{d,\vec{a}_1,\vec{d},w_1,\iota}(S)$ for any scheme $S$, then changing the coefficients and the weight appropriately, we may regard $q$ as an object of $\mathcal{M}^{\mathrm{Kssqmaps}}_{d,\vec{a}_2,\vec{d},w_2,\iota}(S)$.
    It is easy to check that this correspondence induces a canonical morphism $\mathcal{M}^{\mathrm{Kssqmaps}}_{d,\vec{a}_1,\vec{d},w_1,\iota}\to \mathcal{M}^{\mathrm{Kssqmaps}}_{d,\vec{a}_2,\vec{d},w_2,\iota}$.
    By the same argument, we can construct a quasi-inverse morphism to the above and complete the proof.
\end{proof}

\begin{rem}
 In Lemma \ref{lem-wall-same}, note that $\alpha(\vec{a},w)\in F^\circ$ for any $(\vec{a},w)\in F^\circ$ and $\alpha\in\mathbb{R}_{>0}$ such that $\alpha(wd+\sum_{i}a_id_i)<2$ by the proof of Proposition \ref{prop:chmber--Kss}.
 Thus, $\mathcal{M}^{\mathrm{Kssqmaps}}_{d,\vec{a},\vec{d},w,\iota}\cong \mathcal{M}^{\mathrm{Kssqmaps}}_{d,\alpha\vec{a},\vec{d},\alpha w,\iota}$ as stacks by Lemma \ref{lem-wall-same} in this case.
\end{rem}

\begin{thm}\label{thm:K-moduli--weighted--log-Fano-qmaps}
    $\mathcal{M}^{\mathrm{Kssqmaps}}_{d,\vec{a},\vec{d},w,\iota}$ is an Artin stack of finite type over $\mathbb{K}$ with a projective good moduli space $M^{\mathrm{Kpsqmaps}}_{d,\vec{a},\vec{d},w,\iota}$.

    Furthermore, if $\vec{a}\in\mathbb{Q}^{r}$ and $w\in\mathbb{Q}$, then $\lambda_{\mathrm{CM},\vec{a},w}$ descends to an ample $\mathbb{Q}$-line bundle $\Lambda_{\mathrm{CM},\vec{a},w}$ on $M^{\mathrm{Kpsqmaps}}_{d,\vec{a},\vec{d},w,\iota}$.
\end{thm}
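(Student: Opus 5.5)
The plan is to reduce to the rational, single-boundary case treated in \cite{HH2}, using the chamber structure of Proposition~\ref{prop:chmber--Kss} and Lemma~\ref{lem-wall-same}.

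\emph{Reduction to rational weights.} Let $F$ be the face of the decomposition of Proposition~\ref{prop:chmber--Kss} whose relative interior $F^\circ$ contains $(\vec a,w)$. Since $F^\circ$ is a relatively open rational polyhedron, it contains rational points. Lemma~\ref{lem-wall-same} then shows that $\mathcal M^{\mathrm{Kssqmaps}}_{d,\vec a,\vec d,w,\iota}$ is isomorphic to the stack at a rational point $(\vec a',w')\in F^\circ$. Hence for the first assertion we may assume that $\vec a$ and $w$ are rational. When $w=0$ the quasimap data are forgotten, and the statement is the K-moduli of one-dimensional log Fano pairs; in that case we cite \cite{zhou2026shape} together with Theorem~\ref{thm-kss}. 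So assume $w>0$.

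\emph{Artin stack of finite type.} Every geometric fiber is $\mathbb P^1$, and the degrees of $\mathscr L_q$ and of the $D_i$ are fixed. We therefore parametrize the data $(\mathbb P^1,\vec D,q)$ by a locally closed subscheme $Z$ of
\[
\prod_i \mathrm{KDiv}^{d_i}_{\mathbb P^1}\times \mathbb P\bigl(H^0(\mathcal O_{\mathbb P^1}(d))^{\oplus N+1}\bigr).
\]
Here the section tuple is required to land in $\mathrm{Cone}(X)$, which is a closed condition, and to be nonzero generically. Klt-ness and the numerical criterion of Proposition~\ref{prop:chara--K-stability--qmaps}(3) are conditions on multiplicities at points. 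These conditions are open: the inequality \eqref{eq:K-polyst-domain} fails on a closed locus by upper semicontinuity of $\mathrm{mult}_p$. The stack is then the quotient $[Z/\PGL(2)]$, up to the scaling of sections, which is absorbed by $\mathbb G_m$. This quotient is an Artin stack of finite type with affine diagonal. Alternatively, we use Lemma~\ref{lem:HH2-modified} to pass to the log Fano pair $(\mathbb P^1,\vec a\cdot\vec D+\tfrac wl\sum\mathrm{div}(f_i))$.

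\emph{Good moduli space.} We must verify $\mathsf S$-completeness, $\Theta$-reductivity and the existence part of the valuative criterion, and then apply \cite[Theorem A]{AHLH}. We expect this to be the main obstacle. The plan is to transfer these properties from the K-moduli of log Fano pairs. Via Lemma~\ref{lem:HH2-modified} and the log-twisted description of Lemma~\ref{lem;K-stqmaps and log-twisted}, this follows the argument of \cite{HH2} for $r=1$. Several boundary divisors cause no new difficulty, since the criterion only involves the total multiplicities $\sum_i a_i\mathrm{mult}_p(D_i)+w\,\mathrm{mult}_p(\mathrm{Bs}(q))$.

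Since we are in dimension one, a more concrete alternative is available: realize the stack as a GIT quotient. Linearize by $\mathcal O(t_i)$ on the $\mathrm{KDiv}$ factors and by a suitable weight on the section factor, as in Parker's construction \cite{parker2007elementary} and Theorem~\ref{thm:VGIT-K-stability}. We then check that the Hilbert--Mumford criterion matches \eqref{eq:K-polyst-domain}; the destabilizing one-parameter subgroups concentrate weight at a single point $p$. This gives a projective good moduli space directly, and we would try this route first.

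\emph{Ampleness of the CM line bundle.} For rational $(\vec a,w)$ we show that the CM line bundle descends and is ample. Descent follows from \cite[Theorem 10.3]{Alper}: stabilizers of polystable points act trivially on the fiber of $\lambda_{\mathrm{CM}}$, because the Futaki invariant vanishes on product test configurations. For ampleness, the Knudsen--Mumford leading term computes the DF invariant, so $\lambda_{\mathrm{CM}}$ is identified, up to a positive multiple, with the GIT linearization on the quotient above. Ampleness then follows. If the GIT identification fails, we fall back on the argument of \cite{HH2}, which combines \cite{codogni2021positivity,xu2020positivity} with the auxiliary pair of Lemma~\ref{lem:HH2-modified}.
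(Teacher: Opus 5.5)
Your proposal is correct in its primary (GIT) form, but it is not the route the paper takes.

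\textbf{The paper's route.} The paper makes the same reduction to rational weights via Proposition~\ref{prop:chmber--Kss} and Lemma~\ref{lem-wall-same}. It then reproves nothing for several boundary divisors. Instead it chooses $c\in\Q_{>0}$ and integers $d'^{(j)}$ with $cd'^{(j)}=a^{(j)}d^{(j)}$ and $d^{(j)}\mid d'^{(j)}$. It uses the morphism $\varpi$ that merges $\sum_j a^{(j)}D_j$ into the single boundary $c\sum_j (d'^{(j)}/d^{(j)})D_j$ of degree $d'=\sum_j d'^{(j)}$, which lands in the $r=1$ stack of \cite[Theorem~3.45]{HH2}.
\begin{itemize}
\item $\varpi$ is quasi-finite, proper and injective on stabilizers, hence finite and representable.
\item \cite[Propositions~3.21, 3.44 and Theorem~A]{AHLH} then give a good moduli space finite over the projective one of \cite{HH2}.
\item Ampleness of $\Lambda_{\mathrm{CM}}$ is pulled back along this finite map, using functoriality of the CM line bundle.
\end{itemize}

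\textbf{Your route.} You realize the stack as $[Z^{\mathrm{ss}}/\PGL(2)]$, where $Z\subset\prod_i\P^{d_i}\times\P(\mathrm{Sym}^dV^*\otimes\mathbb K^{N+1})$ is closed. You check by Hilbert--Mumford that semistability for $\mathcal O(a_1,\dots,a_r,w)$, scaled to integers, is exactly \eqref{eq:K-polyst-domain}. You then identify $\lambda_{\mathrm{CM}}$ with a positive multiple of this linearization. That identification follows from a direct computation on the universal $\P^1\times Z$. Let $h$ be the fibre point class, $M=\sum_i a_iH_i+wH$, and $v=\vec a\cdot\vec d+wd<2$. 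Then $K+\sum_i a_iD_i+w\mathscr L\equiv-(2-v)h+\pi^*M$, so $\lambda_{\mathrm{CM}}=-\pi_*\bigl((K+\sum_i a_iD_i+w\mathscr L)^2\bigr)=2(2-v)M$.

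\textbf{What each buys.} The paper's argument is shorter given \cite{HH2} and needs no stability computation, but it rests on the K-moduli machinery behind \cite{HH2} and on the transfer results of \cite{AHLH}. Yours is self-contained and elementary. It also proves more: an explicit identification of $M^{\mathrm{Kpsqmaps}}$ with a Parker-type GIT quotient polarized by the CM line bundle, which the paper only mentions in the introduction for projective-space targets and degree-one boundaries. The price is that it uses the one-dimensional fact that every K-semistable object lives on a fixed $\P^1$ with fixed degrees. If you fell back on your second option (rerunning \cite{HH2} with several divisors), the paper's merging morphism is the economical way to avoid redoing that argument.

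\textbf{Two small points.}
\begin{itemize}
\item Theorem~\ref{thm-kss} treats only a single boundary $cD$. For $w=0$ with several divisors, run your GIT argument without the section factor, i.e.\ Theorem~\ref{thm:VGIT-K-stability} with $\prod_i\P^{d_i}$ in place of $(\P^1)^n$.
\item Futaki vanishing only kills the identity components of stabilizers, so what descends is a multiple of $\lambda_{\mathrm{CM}}$. Once $\lambda_{\mathrm{CM}}$ is identified with the linearization, descent and ampleness both follow from standard GIT anyway.
\end{itemize}
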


\begin{proof}
In the case where $w=0$, all assertions follow from \cite[Theorem 1.3]{liu2022finite}, Proposition \ref{prop:chmber--Kss} and Lemma \ref{lem-wall-same}, and we may assume that $w>0$.
   First, we deal with the case where $\vec{a}\in\mathbb{Q}^r$ and $w\in\Q$.
   As the argument of the proof of \cite[Theorem 3.45]{HH2}, we see that $\mathcal{M}^{\mathrm{Kssqmaps}}_{d,\vec{a},\vec{d},w,\iota}$ is an Artin stack of finite type over $\mathbb{K}$ with affine diagonal in this case. 
   Take $c\in\mathbb{Q}_{>0}$ and $d'^{(j)}\in\mathbb{Z}_{>0}$ such that $cd'^{(j)}=a^{(j)}d^{(j)}$ and $d^{(j)}|d'^{(j)}$ for any $j$. Set $d':=\sum_{j=1}^r d'^{(j)}$.
   Then there exists a canonical morphism 
   $$\varpi\colon\mathcal{M}^{\mathrm{Kssqmaps}}_{d,\vec{a},\vec{d},w,\iota}\to \mathcal{M}^{\mathrm{Kssqmaps}}_{d,c,d',w,\iota}$$
   such that $\varpi$ maps an object $q\colon (\mathcal{C},\vec{a}\cdot\vec{\mathcal{D}})\to[\mathrm{Cone}(X)_S/\mathbb{G}_{m,S}]$ over $S$ to $q\colon (\mathcal{C},\sum_{j=1}^r a^{(j)}D_j)\to[\mathrm{Cone}(X)_S/\mathbb{G}_{m,S}]$.
   It is easy to see that $\varpi$ is quasi-finite and proper as the proof of \cite[Proposition 2.33]{BL24}.
   Furthermore, $\varpi$ induces an injective homomorphism of each stabilizer groups of geometric points of $\mathcal{M}^{\mathrm{Kssqmaps}}_{d,\vec{a},\vec{d},w,\iota}$ and $\mathcal{M}^{\mathrm{Kssqmaps}}_{d,c,d',w,\iota}$.
   Thus, $\varpi$ is representable by \cite[Lemma 4.4.3]{abramovich2002compactifying}.
   Therefore, we see that $\varpi$ is a finite morphism.
   By \cite[Theorem 3.45]{HH2}, $\mathcal{M}^{\mathrm{Kssqmaps}}_{d,c,d',w,\iota}$ has a projective good moduli space $M^{\mathrm{Kssqmaps}}_{d,c,d',w,\iota}$.
   By \cite[Propositions 3.21 and 3.44, and Theorem A]{AHLH}, we see that $\mathcal{M}^{\mathrm{Kssqmaps}}_{d,\vec{a},\vec{d},w,\iota}$ also has a good moduli space $M^{\mathrm{Kssqmaps}}_{d,\vec{a},\vec{d},w,\iota}$ finite over $M^{\mathrm{Kssqmaps}}_{d,c,d',w,\iota}$. Thus, $M^{\mathrm{Kssqmaps}}_{d,\vec{a},\vec{d},w,\iota}$ is a projective scheme by \cite[Theorem 7.2.10]{olsson2016algebraic}.

   Furthermore, we see that $\lambda_{\mathrm{CM},\vec{a}}$ descends to a $\mathbb{Q}$-line bundle $\Lambda_{\mathrm{CM},\vec{a}}$ on $M^{\mathrm{Kpsqmaps}}_{d,\vec{a},\vec{d},w,\iota}$ by the same argument as the proof of \cite[Lemma 3.36]{HH2} induced by \cite[Theorem 6.6]{Alper}.
   Let $\Lambda_{\mathrm{CM},c}$ be the CM line bundle on $M^{\mathrm{Kpsqmaps}}_{d,c,d',w,\iota}$, which is known to be ample by \cite[Theorem 3.45]{HH2}.
   Set $\pi\colon M^{\mathrm{Kpsqmaps}}_{d,\vec{a},\vec{d},w,\iota}\to M^{\mathrm{Kpsqmaps}}_{d,c,d',w,\iota}$ as the canonical morphism.
   By the functoriality of the CM line bundle, $\pi^*\Lambda_{\mathrm{CM},c}\sim_{\mathbb{Q}}\Lambda_{\mathrm{CM},\vec{a}}$. Since $\pi$ is finite, we see that $\Lambda_{\mathrm{CM},\vec{a}}$ is ample.

   Next, we deal with the general case.
   By Proposition \ref{prop:chmber--Kss} and Lemma \ref{lem-wall-same}, there are $\vec{a}'\in\mathbb{Q}^r$ and $w'\in\mathbb{Q}$ very close to $\vec{a}$ and $w$, respectively, such that $\mathcal{M}^{\mathrm{Kssqmaps}}_{d,\vec{a},\vec{d},w,\iota}\cong\mathcal{M}^{\mathrm{Kssqmaps}}_{d,\vec{a}',\vec{d},w',\iota}$ as stacks.
   We complete the proof by what we have shown in the above argument.
\end{proof}

\begin{thm}[Wall crossing for K-semistable log Fano quasimaps]\label{thm:wallcrossingmorphKss}
    Fix a face $F\subset P_k$ and let $F^\circ$ be the interior of $F$.
    Let $(w,\vec{a})\in F^\circ$ and $(w',\vec{a}')\in \overline{F}$. Suppose that $dw+\vec{a}\cdot \vec{d}< 2$ and $dw'+\vec{a}'\cdot \vec{d}< 2$.
    Then there exists a canonical morphism $\mathcal{M}^{\mathrm{Kssqmaps}}_{d,\vec{a},\vec{d},w,\iota}\to \mathcal{M}^{\mathrm{Kssqmaps}}_{d,\vec{a}',\vec{d},w',\iota}$.
    Furthermore, if $ww'\ne0$ and $\vec{a},\vec{a}'\in (0,1]^r$, then the canonical morphism is an open immersion. 
\end{thm}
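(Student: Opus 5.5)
The morphism will be built on $S$-points by keeping the underlying family and changing the weights, and forgetting data whose weight becomes zero; no contraction of the source is needed. Given $q\colon(\mathcal C,\vec a\cdot\vec{\mathcal D})\to[\mathrm{Cone}(X)_S/\mathbb G_{m,S}]$ in $\mathcal M^{\mathrm{Kssqmaps}}_{d,\vec a,\vec d,w,\iota}(S)$, we send it to the family with coefficients $\vec a'$ and weight $w'$. If $a'_i=0$ we drop $D^{(i)}$, and if $w'=0$ we forget the quasimap and keep only the weighted pair $(\mathcal C,\vec a'\cdot\vec{\mathcal D})$. This assignment obviously commutes with base change and sends isomorphisms to isomorphisms, so it defines a morphism of stacks once we know it lands in the target.

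To check this, first note that every geometric fiber has $\mathcal C_{\bar s}\cong\mathbb P^1$, since K-semistable log Fano quasimaps are klt by Proposition \ref{prop:chara--K-stability--qmaps}(3). The condition $dw'+\vec a'\cdot\vec d<2$ makes $-(K+\vec a'\cdot\vec D+w'\mathscr L)$ of positive degree, hence relatively ample. By Proposition \ref{prop:chmber--Kss}, since $(\vec a,w)\in F^\circ\subset P_k$ and $(\vec a',w')\in\overline F\subset\overline{P_k}$, each geometric fiber remains K-semistable for the new data as soon as it is log Fano. Concretely, the finitely many inequalities \eqref{eq:K-polyst-domain} that hold on $F^\circ$ persist on the closure by continuity. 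Since $\mathrm{ord}_p\le\frac12(\text{total degree})<1$, they also give klt-ness for the new data, using Example \ref{ex:one-dim-K-st} and Proposition \ref{prop:chara--K-stability--qmaps}(3). It remains to check local stability of the new family, i.e.\ that $K_{\mathcal C/S}+\vec a'\cdot\vec{\mathcal D}$ is $\mathbb R$-Cartier with slc fibers. This is automatic for smooth families of $\mathbb P^1$'s with relative Cartier divisors (Remark after the locally stable definition), given the fiberwise klt condition. Dropped divisors and the forgotten quasimap cause no problem, because the target stack is defined by the same data minus these pieces. When $w'=0$ the target is $\mathcal M^{\mathrm{Kss}}_{\vec a',\vec d}$, and K-semistability of $(\mathbb P^1,\vec a'\cdot\vec D)$ is exactly the $w'=0$ case of Proposition \ref{prop:chmber--Kss}.

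For the open immersion when $ww'\neq0$ and $\vec a,\vec a'\in(0,1]^r$, no data is forgotten, so the morphism is a monomorphism that is fully faithful on groupoids: isomorphisms of weighted quasimaps do not depend on the values of the weights. It is therefore an isomorphism onto the substack of objects of $\mathcal M^{\mathrm{Kssqmaps}}_{d,\vec a',\vec d,w',\iota}$ that are K-semistable for $(\vec a,w)$. I must show this substack is open. By Proposition \ref{prop:chara--K-stability--qmaps}(3), semistability for $(\vec a,w)$ is the finite list of inequalities \eqref{eq:K-polyst-domain}, applied to the multiplicities $\mathrm{mult}_p(D_i)$ and $\mathrm{mult}_p(\mathrm{Bs}(q))$. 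The condition $\mathrm{ord}_p(\cdots)\le\frac12(\cdots)$ at weights in $F^\circ$ is implied by the same condition at weights in $\overline F$ together with strictness of those inequalities that are strict on $F^\circ$. Each failure of such a condition means that some point carries multiplicity vectors at least a fixed threshold, and this locus is closed in families by upper semicontinuity of fiber multiplicities of relative Cartier divisors and of the base scheme $\mathrm{Bs}(q)$. Hence the complement is open. The main obstacle is this openness step. In particular, one must ensure that the locus of fibers with a point of prescribed multiplicities $(m_1,\dots,m_r,m)$ is closed. I would try to prove this by considering the relative Hilbert schemes of points of $\mathcal C/S$ and the closed incidence conditions $m_iP\subset D^{(i)}$ and $mP\subset\mathrm{Bs}(q)$, whose image in $S$ is closed by properness.
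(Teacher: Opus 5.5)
Your proposal is correct and follows the paper's route: the morphism is given by reweighting and forgetting, justified by Proposition \ref{prop:chmber--Kss}. This implicitly uses that faces respect the coordinate hyperplanes, so $w=0\Rightarrow w'=0$ and $a_i=0\Rightarrow a'_i=0$, and data is only ever forgotten. The open immersion then comes from the $(\vec a,w)$-semistable locus being open in families. The paper detects base-locus multiplicities via two general sections $f_1,f_2$ and gets representability from injectivity on stabilizers. You instead use upper semicontinuity of fiberwise multiplicities, properness, and full faithfulness directly, which is the same argument made slightly more explicit.
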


\begin{proof}
    By Lemma \ref{lem-wall-same}, we may assume that $w,w'\in\mathbb{Q}$ and $\vec{a}$ and $\vec{a}'$ are rational.
    In the case where $ww'\ne0$ and $\vec{a},\vec{a}'\in (0,1]^r$, the existence of the canonical morphism $\varphi\colon \mathcal{M}^{\mathrm{Kssqmaps}}_{d,\vec{a},\vec{d},w,\iota}\to \mathcal{M}^{\mathrm{Kssqmaps}}_{d,\vec{a}',\vec{d},w',\iota}$ follows from the proof of Theorem \ref{thm:K-moduli--weighted--log-Fano-qmaps} and Proposition \ref{prop:chmber--Kss}.
    We show that $\varphi$ is an open immersion.
    In fact, \cite[Lemma 4.4.3]{abramovich2002compactifying} shows that $\varphi$ is representable.
    Note that, for any object $q\colon (\mathcal{C},\vec{a}'\cdot\vec{\mathcal{D}})\to[\mathrm{Cone}(X)_S/\mathbb{G}_{m,S}]\in \mathcal{M}^{\mathrm{Kssqmaps}}_{d,\vec{a}',\vec{d},w',\iota}(S)$ for any scheme $S$ of finite type over $\mathbb{K}$, there exists an open subset $U\subset S$ such that if we regard $q$ as the family of log quasimaps $q\colon (\mathcal{C},\vec{a}\cdot\vec{\mathcal{D}})\to[\mathrm{Cone}(X)_S/\mathbb{G}_{m,S}]$ with weight $w$, then for any geometric point $\bar{s}\in S$, $q_{\bar{s}}$ is K-semistable if and only if $\bar{s}\in U$.
Indeed, the openness of $U$ follows from that if $s\in U$, then we can take an open neighborhood $V$ of $s$ and two sections $f_1,f_2\in \mathrm{Sec}_{q_V}(\mathscr{L})$ corresponding to elements of $\mathbb{K}^{N+1}$, where $\mathscr{L}$ is the line bundle associated to $q_V$, such that $\mathrm{ord}_{\bar{p}}(w\mathrm{div}(f_i)_s+\vec{a}\cdot\vec{\mathcal{D}}_s)\le\frac{dw+\vec{a}\cdot\vec{d}}{2}$ for any geometric point $\bar{p}\in \mathcal{C}_s$ and for some $i=1,2$.
If you shrink $V$, then for any $s'\in V$ we see that $\mathrm{ord}_{\bar{p}}(w\mathrm{div}(f_i)_{s'}+\vec{a}\cdot\vec{\mathcal{D}}_{s'})\le\frac{dw+\vec{a}\cdot\vec{d}}{2}$ for any geometric point $\bar{p}\in \mathcal{C}_{s'}$ and for some $i=1,2$ and this shows that $q_{\bar{s'}}$ is K-semistable. Thus, $V\subset U$ and hence $U$ is open.
This means that $\varphi$ is an open morphism.
Note that $\varphi(\mathcal{M}^{\mathrm{Kssqmaps}}_{d,\vec{a},\vec{d},w,\iota})$ is clearly isomorphic to $\mathcal{M}^{\mathrm{Kssqmaps}}_{d,\vec{a},\vec{d},w,\iota}$ by definition and hence $\varphi$ is an open immersion.

    In the general case, we can show the assertion using Proposition \ref{prop:chmber--Kss} because any object $q\colon (\mathcal{C},\vec{a}\cdot\vec{\mathcal{D}})\to[\mathrm{Cone}(X)_S/\mathbb{G}_{m,S}]\in \mathcal{M}^{\mathrm{Kssqmaps}}_{d,\vec{a},\vec{d},w,\iota}(S)$ for any scheme $S$ of finite type over $\mathbb{K}$ can be regarded as an object $q\colon (\mathcal{C},\vec{a}'\cdot\vec{\mathcal{D}})\to[\mathrm{Cone}(X)_S/\mathbb{G}_{m,S}]\in \mathcal{M}^{\mathrm{Kssqmaps}}_{d,\vec{a}',\vec{d},w',\iota}(S)$.
\end{proof}

\begin{rem}
    In the above case, if $w\ne0$ but $w'=0$, then the canonical morphism is obtained by forgetting the quasimap structure.
    This defines a morphism from the K-moduli space of weighted log Fano quasimaps to a finite quotient of the VGIT moduli space of weighted points in $\P^1$ (cf.~Theorem \ref{thm:VGIT-K-stability}).
\end{rem}

Finally, as well-known in the log Fano case, we note that for any object $q\in\mathcal{M}^{\mathrm{Kssqmaps}}_{d,\vec{a},\vec{d},w,\iota}(\mathbb{K})$ defining a $\mathbb{K}$-valued point $x$ of $\mathcal{M}^{\mathrm{Kssqmaps}}_{d,\vec{a},\vec{d},w,\iota}$, we see that $q$ is K-stable if and only if $x$ is closed and has a finite stabilizer. This can be checked as Proposition \ref{prop:bpcy-stack--points--characterizations} below.
Furthermore, we can define the following.

\begin{defn}
  For any object $q\in\mathcal{M}^{\mathrm{Kssqmaps}}_{d,\vec{a},\vec{d},w,\iota}(\mathbb{K})$ defining a $\mathbb{K}$-valued point $x$ of $\mathcal{M}^{\mathrm{Kssqmaps}}_{d,\vec{a},\vec{d},w,\iota}$, we say that $q$ is {\it K-polystable} if and only if $x$ is closed.  
\end{defn}

It follows from \cite[Lemma 3.24]{AHLH} that this definition coincides with \cite[Definition 3.21]{HH2}.

\section{Moduli of boundary polarized Calabi-Yau quasimaps}\label{sec:bpcyqmapswallcrossing}

As in the previous section, we fix $\iota\colon X\hookrightarrow \mathbb{P}^N$.

\begin{defn}
Let $d\in\mathbb{Z}_{>0}$ and $w_1,w_2\in\R_{\ge0}$.
Let $\vec{a_1}\in [0,1]^{r_1}$, $\vec{a_2}\in [0,1]^{r_2}$, $\vec{d_1}\in \mathbb{Z}_{>0}^{r_1}$, and $\vec{d_2}\in \mathbb{Z}_{>0}^{r_2}$ for $r_1,r_2\in\mathbb{Z}_{\ge0}$.
Suppose that $w_2\ne0$ or $\vec{a}_2\ne0$.
First, consider when $w_1+w_2>0$ and $\vec{a_1}\in (0,1]^{r_1}$, $\vec{a_2}\in (0,1]^{r_2}$.
    Let $\mathcal{M}^{\mathrm{bpcyqmaps}}_{d,\vec{a_1},\vec{d_1},\vec{a_2},\vec{d_2},w_1,w_2,\iota}$ be a fibered category in groupoids such that the objects of  $\mathcal{M}^{\mathrm{bpcyqmaps}}_{d,\vec{a_1},\vec{d_1},\vec{a_2},\vec{d_2},w_1,w_2,\iota}(S)$ for any scheme $S$ are families of weighted boundary polarized CY quasimaps $q\colon (\mathcal{C},\vec{a_1}\cdot\vec{\mathcal{D}_1}+\vec{a_2}\cdot\vec{\mathcal{D}_2})\to[\mathrm{Cone}(X)_S/\mathbb{G}_{m,S}]$ over $S$ with weight $(w_1,w_2)$ such that \begin{itemize}
        \item if $\mathscr{L}_q$ is the associated line bundle with $q$, then $\vec{a_2}\cdot\vec{\mathcal{D}_2}+w_2\mathscr{L}_q$ is relatively ample over $S$ and $(\vec{a_2}\cdot\vec{\mathcal{D}_2}+w_2\mathscr{L}_q)_{\bar{s}}$ is the polarization part for any geometric point $\bar{s}\in S$,
        \item $q_{\bar{s}}$ is a boundary polarized CY quasimap of weight $(w_1,w_2)$, $\mathcal{C}_{\bar{s}}$ is connected, and $d=\mathrm{deg}(q_{\bar{s}})$ for any geometric point $\bar{s}\in S$, and
        \item $\vec{\mathcal{D}_1}$ and $\vec{\mathcal{D}_2}$ are vectors of $(D_1^{(1)},\ldots,D_1^{(r_1)})$ and $(D_2^{(1)},\ldots,D_2^{(r_2)})$, respectively and $D_{j}^{(i)}$ is a relative Mumford divisor of degree $d_{j}^{(i)}$, where $\vec{d_1}=(d_1^{(1)},\ldots,d_1^{(r_1)})$ and $\vec{d_2}=(d_2^{(1)},\ldots,d_2^{(r_2)})$.
        \end{itemize}
        In the above notation, the {\it source of $q$} means $(\mathcal{C},\vec{a_1}\cdot\vec{\mathcal{D}_1}+\vec{a_2}\cdot\vec{\mathcal{D}_2})$.
        Furthermore, isomorphisms of $\mathcal{M}^{\mathrm{bpcyqmaps}}_{d,\vec{a_1},\vec{d_1},\vec{a_2},\vec{d_2},w_1,w_2,\iota}(S)$ are defined as isomorphisms of families of weighted quasimaps.

        If $\vec{a}_1\in [0,1]^{r_1}$, $\vec{a}_2\in [0,1]^{r_2}$,  and there exist $J_1\subset \{1,\ldots,r_1\}$ and $J_2\subset \{1,\ldots,r_2\}$ such that $i\in J_k$ if and only if $a_k^{(i)}=0$ for $k=1,2$, where $\vec{a_1}=(a_1^{(1)},\ldots,a_1^{(r_1)})$ and $\vec{a_2}=(a_2^{(1)},\ldots,a_2^{(r_2)})$,
        then let $\vec{b}_k=(b_k^{(i)}),\vec{d}'_k=((d'_k)^{(i)})\in \mathrm{Map}(\{1,\ldots,r_k\}\setminus J_k,(0,1])$ such that $b_k^{(i)}=a_k~{(i)}$ and $d^{(i)}_k=(d'_k)^{(i)}$ for $i\not\in J_k$ and regard $\mathcal{M}^{\mathrm{bpcyqmaps}}_{d,\vec{a_1},\vec{d_1},\vec{a_2},\vec{d_2},w_1,w_2,\iota}$ as $\mathcal{M}^{\mathrm{bpcyqmaps}}_{d,\vec{b_1},\vec{d'_1},\vec{b_2},\vec{d'_2},w_1,w_2,\iota}$.  
        Moreover, if $w_1+w_2=0$, then we set $\mathcal{M}^{\mathrm{bpcyqmaps}}_{d,\vec{a_1},\vec{d_1},\vec{a_2},\vec{d_2},w_1,w_2,\iota}$ as the category fibered in groupoids parameterizing boundary polarized CY pairs, that is, the objects of $\mathcal{M}^{\mathrm{bpcyqmaps}}_{d,\vec{a_1},\vec{d_1},\vec{a_2},\vec{d_2},w_1,w_2,\iota}(S)$ are families of weighted one-dimensional boundary polarized CY pairs with relative Mumford divisors with fixed degrees and fixed coefficients as above and the isomorphisms are defined as Definition \ref{defn:weightedpairs-isom}.
        This means that we will forget the quasimap structure in this case.
        In the case where $w_2=0$ and $\vec{a}_2=\vec0$, we set $\mathcal{M}^{\mathrm{bpcyqmaps}}_{d,\vec{a_1},\vec{d_1},\vec{a_2},\vec{d_2},w_1,w_2,\iota}=\emptyset$.
\end{defn}

To avoid introducing separate notation for the case $w_1=w_2=0$, which would unnecessarily complicate the notation, we treat both cases simultaneously throughout this section.
        It is also easy to check that $\mathcal{M}^{\mathrm{bpcyqmaps}}_{d,\vec{a_1},\vec{d_1},\vec{a_2},\vec{d_2},w_1,w_2,\iota}$ is a stack defined over $\mathbb{K}$.

We discuss the properties of $\mathcal{M}^{\mathrm{bpcyqmaps}}_{d,\vec{a_1},\vec{d_1},\vec{a_2},\vec{d_2},w_1,w_2,\iota}$ in the same strategy as Section \ref{sec:Kssqmapswallcrossing}.

\begin{lem}\label{lem:bpcyw>11}
Suppose that $\vec{a}_1\cdot\vec{d}_1+\vec{a}_2\cdot\vec{d}_2+d(w_1+w_2)=2$.
    If $w_1+w_2>1$, and $w_2\ne0$ or $\vec{a}_2\ne\vec{0}$, then $$\mathcal{M}^{\mathrm{bpcyqmaps}}_{d,\vec{a_1},\vec{d_1},\vec{a_2},\vec{d_2},w_1,w_2,\iota}\cong\mathcal{M}^{\mathrm{Kssqmaps}}_{d,((1-\epsilon)\vec{a_1},(1-\epsilon)\vec{a_2}),(\vec{d_1},\vec{d_2}),(1-\epsilon) (w_1+w_2),\iota}$$ for any sufficiently small $\epsilon>0$.
\end{lem}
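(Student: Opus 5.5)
The plan is to show that, when $w:=w_1+w_2>1$, both stacks parameterize exactly the same families. These are families of genuine maps $\mathbb P^1\to X$, with no base points, carrying the boundary divisors $\vec{\mathcal D}_1,\vec{\mathcal D}_2$. The isomorphism is then the identity on families, with only the coefficients rescaled by $1-\epsilon$. We fix $\epsilon$ so small that $(1-\epsilon)w>1$. Since $d,\vec d_1,\vec d_2$ are fixed, only finitely many local multiplicity patterns occur, so one $\epsilon$ will work uniformly.

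\emph{Step 1: geometric points of the bpCY stack are maps from $\mathbb P^1$.} Let $q\colon (C,\vec a_1\cdot\vec D_1+\vec a_2\cdot\vec D_2)\to[\mathrm{Cone}(X)/\mathbb G_m]$ be a boundary polarized CY quasimap. Slc-ness with weight $w>1$ forces $\mathrm{Bs}(q)=0$, since a base point would carry coefficient $\ge w>1$. Hence $q$ is a map.

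Next we show $C$ is irreducible. Suppose $C$ were not normal. By Lemma~\ref{lem--babwild--chain--two--comps}, applied to the boundary polarized CY pair obtained from general sections $f_i$ as after Definition~\ref{def:log_qmaps}, $C$ is a chain of at most two $\mathbb P^1$'s. Each component $E$ meets the conductor in one point and satisfies
\[
\deg_E(\Delta+w\mathscr L)=1 .
\]
Since $d>0$, some component $E$ has $\deg_E\mathscr L\ge1$, and then $w\deg_E\mathscr L>1$, a contradiction. So $C$ is normal. The degree condition $\vec a_1\cdot\vec d_1+\vec a_2\cdot\vec d_2+dw=2$ then forces $p_a=0$, so $C\cong\mathbb P^1$.

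Lc-ness gives $\mathrm{ord}_p(\vec a\cdot\vec D)\le1$ at every point. After multiplying all coefficients by $1-\epsilon$, the pair is klt and log Fano of degree $2(1-\epsilon)$. The inequality
\[
\mathrm{ord}_p\bigl((1-\epsilon)\vec a\cdot\vec D\bigr)\le 1-\epsilon=\tfrac12\deg
\]
holds, so Proposition~\ref{prop:chara--K-stability--qmaps}(3) gives K-semistability.

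\emph{Step 2: the converse.} Let $q$ be a K-semistable log Fano quasimap with data $(1-\epsilon)(\vec a_1,\vec a_2)$ and weight $(1-\epsilon)w$. Then $C=\mathbb P^1$, and $\mathrm{Bs}(q)=0$ because $(1-\epsilon)w>1$ together with klt-ness excludes base points. The criterion of Proposition~\ref{prop:chara--K-stability--qmaps}(3) reads $\mathrm{ord}_p((1-\epsilon)\vec a\cdot\vec D)\le1-\epsilon$, i.e.\ $\mathrm{ord}_p(\vec a\cdot\vec D)\le1$. Hence the unscaled pair is lc, and the degree identity makes $K_C+\Delta+w\mathscr L$ numerically trivial, hence $\mathbb R$-linearly trivial on $\mathbb P^1$. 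The polarization part $\vec a_2\cdot\vec D_2+w_2\mathscr L$ has positive degree on the irreducible curve, because $w_2\ne0$ or $\vec a_2\ne0$ and all degrees are positive. It is therefore ample.

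\emph{Step 3: families.} Over an arbitrary base $S$, both stacks consist of flat families of $\mathbb P^1$'s with relative Mumford divisors and a line bundle with $N+1$ sections, subject to fibrewise conditions; Steps 1 and 2 show these conditions coincide. The remaining points are family-level.

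Relative ampleness of the polarization part follows from fibrewise ampleness. Local stability on both sides is the same fibrewise slc condition, and these are curve families, so the remark after the definition of locally stable families applies.

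The main obstacle is the family condition $K_{\mathcal C/S}+\Delta+w\mathscr L\sim_{\mathbb R,S}0$ required on the bpCY side. For rational coefficients, I would deduce it from fibrewise triviality together with $H^1(\mathcal O_{\mathbb P^1})=0$ via the see-saw principle, or \cite[Corollary 2.65]{kollar-moduli} as in the discussion of \cite{babwild} families. For real coefficients, I would write $\Delta+w\mathscr L$ as a convex combination of rational data having the same supports and degree $2$, and apply the rational case to each. Finally, isomorphisms in both stacks are isomorphisms of families of weighted quasimaps preserving each $D_i$, so the rescaling functor is fully faithful and, by Steps 1 and 2, essentially surjective. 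This gives the stated isomorphism of stacks.
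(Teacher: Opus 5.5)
Your proposal is correct and follows essentially the same route as the paper. Both identify the geometric points on each side as base-point-free quasimaps from $\mathbb P^1$ whose unscaled pair is lc, equivalently whose $(1-\epsilon)$-rescaling is K-semistable, so that rescaling the coefficients gives the isomorphism of stacks. The paper rules out reducible $C$ more directly: a component with $\deg\mathscr L\ge1$ has $\deg\omega_C\ge-1$, which contradicts numerical triviality without needing Lemma~\ref{lem--babwild--chain--two--comps}. It also cites chamber finiteness for the quasi-inverse, where you use Proposition~\ref{prop:chara--K-stability--qmaps}(3) explicitly.

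One wording slip: after rescaling, the anticanonical degree is $2\epsilon$, and it is the boundary that has degree $2(1-\epsilon)$. The inequality $\mathrm{ord}_p\bigl((1-\epsilon)\vec a\cdot\vec D\bigr)\le1-\epsilon$ that you actually use is nevertheless the correct criterion.
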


\begin{proof}
First, note that the stack on the right hand side is independent of the choice of sufficiently small $\epsilon$ by Lemma \ref{lem-wall-same} and we claim that there exists a canonical morphism $$\varphi\colon\mathcal{M}^{\mathrm{bpcyqmaps}}_{d,\vec{a_1},\vec{d_1},\vec{a_2},\vec{d_2},w_1,w_2,\iota}\to\mathcal{M}^{\mathrm{Kssqmaps}}_{d,((1-\epsilon)\vec{a_1},(1-\epsilon)\vec{a_2}),(\vec{d_1},\vec{d_2}),(1-\epsilon) (w_1+w_2),\iota}.$$
  To show this, it suffices to check that for any object $q\colon (C,\vec{a_1}\cdot\vec{D_1}+\vec{a_2}\cdot\vec{D_2})\to[\mathrm{Cone}(X)/\mathbb{G}_{m}]\in \mathcal{M}^{\mathrm{bpcyqmaps}}_{d,\vec{a_1},\vec{d_1},\vec{a_2},\vec{d_2},w_1,w_2,\iota}(\mathbb{K})$, $q\colon (C,(1-\epsilon)(\vec{a_1}\cdot\vec{D_1}+\vec{a_2}\cdot\vec{D_2}))\to[\mathrm{Cone}(X)/\mathbb{G}_{m}]$ with weight $(1-\epsilon) (w_1+w_2)$ is K-semistable. 
  First, we note that $C\cong \mathbb{P}^1$. If not, since $p_a(C)=0$, there is an irreducible component $T\subset C$ such that $\mathrm{deg}_T(\mathscr{L}|_T)\ge1$, where $\mathscr{L}$ is the associated line bundle.
  Note that $\mathrm{deg}_T(\omega_C|_T)\ge-1$ and hence it obstructs the boundary polarized CY condition $\mathrm{deg}_T(K_C+(w_1+w_2)\mathscr{L}+\vec{a_1}\cdot\vec{D_1}+\vec{a_2}\cdot\vec{D_2}|_T)=0$.
  This is a contradiction.
  Therefore, $C\cong\mathbb{P}^1$.
  Then it is easy to check that $q\colon (C,(1-\epsilon)(\vec{a_1}\cdot\vec{D_1}+\vec{a_2}\cdot\vec{D_2}))\to[\mathrm{Cone}(X)/\mathbb{G}_{m}]$ with weight $(1-\epsilon) (w_1+w_2)$ is K-semistable by the boundary polarized CY condition (cf.~\cite[Propositions 2.19 and 2.25]{ascher2024wall}). 
  Thus, there exists a canonical morphism $\varphi$.
  It is easy to construct a quasi-inverse of $\varphi$ for any sufficiently small $\epsilon>0$.
  Indeed, by the finiteness of the chamber decomposition (Proposition \ref{prop:chmber--Kss}), for any sufficiently small $\epsilon>0$, we can regard an arbitrary object $q\in \mathcal{M}^{\mathrm{Kssqmaps}}_{d,((1-\epsilon)\vec{a_1},(1-\epsilon)\vec{a_2}),(\vec{d_1},\vec{d_2}),(1-\epsilon) (w_1+w_2),\iota}(S)$ as an object of $\mathcal{M}^{\mathrm{bpcyqmaps}}_{d,\vec{a_1},\vec{d_1},\vec{a_2},\vec{d_2},w_1,w_2,\iota}(S)$ changing the coefficients and the weights appropriately, where $S$ is a scheme.
\end{proof}

\begin{prop}\label{prop:chmber--bdcy}
 Fix 
 $d\in\mathbb{Z}_{>0}$, $\vec{d}_1\in \mathbb{Z}_{>0}^{r_1}$, and $\vec{d}_2\in \mathbb{Z}_{>0}^{r_2}$. Set $\vec{d}:=(d_1^{(1)},\ldots,d_1^{(r_1)},d_2^{(1)},\ldots,d_2^{(r_2)} )$ and $r:=r_1+r_2$.
Consider $$H:=\left\{(a_1,\ldots,a_r,w_1,w_2)\in[0,1]^{r}\times\R_{\ge0}^2\middle|w_1d+w_2d+\sum_{i=1}^ra_id_i=2\right\}.$$
Then there exists a subdivision $Q_l$ of $H$ by finitely many locally closed disjoint rational polyhedra, where $l\in \{1,\ldots,n\}$, that satisfies the following.

For any $(a_1,\ldots,a_r,w_1,w_2)\in H$, let $\vec{a}_1:=(a_1,\ldots,a_{r_1})$ and $\vec{a}_2:=(a_{r_1+1},\ldots,a_{r_1+r_2})$.
If there exists a boundary polarized CY quasimap $q\colon (C,\vec{a}_1\cdot\vec{D}_1+\vec{a}_2\cdot \vec{D}_2)\to[\mathrm{Cone}(X)/\mathbb{G}_{m}]$ with weight $(w_1,w_2)$ of degree $d$, where $\vec{D}_1=(D_1^{(i)})_{1\le i\le r_1}$ and $\vec{D}_2=(D_2^{(i)})_{1\le i\le r_2}$ such that $D^{(i)}_j$ is an effective Cartier divisor of degree $d_j^{(i)}$, then $(\vec{a}_1,\vec{a}_2,w_1,w_2)\in Q_{l}$ for some $l$, and for any $(\vec{c}_1,\vec{c}_2,w'_1,w'_2)\in H$ such that $(\vec{c}_1,\vec{c}_2,w'_1,w'_2)\in \overline{Q_l}$, $a_{r_1+i}\ne0$ implies $c^{(i)}_2\ne0$, $w_2\ne0$ implies $w'_2\ne0$, and $w_1\ne0$ implies $w'_1\ne0$, $q\colon (C,\vec{c}_1\cdot\vec{D}_1+\vec{c}_2\cdot\vec{D}_2)\to[\mathrm{Cone}(X)/\mathbb{G}_{m}]$ with weight $(w'_1,w'_2)$ is boundary polarized CY.
Here, $\vec{c}_{j}:=(c^{(i)}_j)$ for $j=1,2$.
\end{prop}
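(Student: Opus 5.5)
The plan is to mimic the proof of Proposition \ref{prop:chmber--Kss}: show that being a boundary polarized CY quasimap is governed by finitely many linear conditions in $(\vec a_1,\vec a_2,w_1,w_2)$, with coefficients from a finite list of combinatorial data. Then take $Q_l$ to be the cells of the common refinement of $H$ by these hyperplanes. Each cell is cut out by sign conditions on rational linear forms, so it is a locally closed rational polyhedron. We refine further so that every cell has a fixed vanishing pattern of the coordinates $a_i$, $w_1$, $w_2$, which is what makes the nonvanishing hypotheses in the statement usable.

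First we bound the combinatorial data. Since $p_a(C)=0$ and the total degree of $K_C+\vec a\cdot\vec D+(w_1+w_2)\mathscr L$ vanishes on every component, each component $T$ has $\deg_T(\vec a_1\cdot\vec D_1+\vec a_2\cdot\vec D_2+(w_1+w_2)\mathscr L)=2-\#(T\cap\overline{C\setminus T})$. Ampleness of the polarization part forces every component to carry some $D_2^{(i)}$ or positive $\mathscr L$-degree. Hence the number of components is at most $\sum d_2^{(i)}+d$, and $C$ ranges over finitely many dual trees. For each $C$, the data consist of:
\begin{itemize}
\item the multidegrees $\deg_T D_j^{(i)}$ and $\deg_T\mathscr L$;
\item the possible multiplicities $\mathrm{mult}_p D_j^{(i)}\in\{0,\dots,d_j^{(i)}\}$ and $\mathrm{mult}_p\mathrm{Bs}(q)\in\{0,\dots,d\}$.
\end{itemize}
Together these form a finite set $\Sigma$.

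For each datum in $\Sigma$, the boundary polarized CY condition at a parameter point is a conjunction of the following:
\begin{itemize}
\item the per-component CY equalities $\deg_T(K_C+\Delta+(w_1+w_2)\mathscr L)=0$, which are linear equalities;
\item the slc condition $\mathrm{ord}_p(\Delta+(w_1+w_2)\mathrm{Bs}(q))\le1$ at smooth points, together with base points and boundary avoiding the nodes, which are linear inequalities (nonstrict);
\item ampleness of the polarization part, i.e.\ $\deg_T(\vec a_2\cdot\vec D_2+w_2\mathscr L)>0$ for all $T$, which are strict inequalities.
\end{itemize}
The relevant linear forms, over all of $\Sigma$, are finite in number, and we let $\{Q_l\}$ be the induced cell decomposition of $H$. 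If $q$ is bpCY at a point of $Q_l$, then every equality and nonstrict inequality it satisfies there persists on $\overline{Q_l}$, since the sign vector is constant on $Q_l$ and closed conditions pass to the closure. The CY condition itself, meaning $\equiv0$, which implies $\sim_\R0$ on a rational tree, holds on $\overline{Q_l}\cap H$ componentwise by continuity.

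The main obstacle is the strict ampleness inequalities, which can fail on $\overline{Q_l}$. Here the hypotheses enter. On a component $T$ where $\deg_T(\vec a_2\cdot\vec D_2+w_2\mathscr L)>0$, some $D_2^{(i)}$ with $a_{r_1+i}>0$ or $\mathscr L$ with $w_2>0$ has positive degree on $T$. At a point of $\overline{Q_l}$ with $c_2^{(i)}\ne0$ (respectively $w_2'\ne0$), all terms are nonnegative and that term stays positive, so ampleness survives. The condition $w_1\ne0\Rightarrow w_1'\ne0$ ensures that the base-point structure of $q$ is not forgotten, so $q$ remains a quasimap object of the same type. The nodes are unaffected: the boundary and base points avoid them by hypothesis, and the conductor term is independent of the weights.
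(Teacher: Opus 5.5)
Your proposal is correct and follows essentially the paper's route: a finite list of rational affine forms (slc multiplicity bounds, per-component CY equalities, ampleness conditions) cuts $H$ into the cells $Q_l$. The differences are minor: you bound the number of components of $C$ via ampleness of the polarization part instead of citing Lemma~\ref{lem--babwild--chain--two--comps}, and you use the nonvanishing hypotheses to show explicitly that ampleness of $\vec c_2\cdot\vec D_2+w_2'\mathscr L$ survives on $\overline{Q_l}$, which the paper leaves implicit.
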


\begin{proof}
Consider the following inequality 
\begin{equation}
    \left(\sum_{1\le i\le r_1}a_i\mathrm{mult}_p(D_1^{(i)})+\sum_{1\le i\le r_2}a_{i+r_1}\mathrm{mult}_p(D_2^{(i)})+(w_1+w_2)\mathrm{mult}_p(\mathrm{Bs}(q))\right)\le 1,\label{eq:bpcydomain} 
\end{equation}
which comes from the slc condition.
We note that $\mathrm{mult}_p(D_1^{(i)})\in\{0,1,\ldots,d_1^{(i)}\}$, $\mathrm{mult}_p(D_2^{(i)})\in\{0,1,\ldots,d_2^{(i)}\}$ and $\mathrm{mult}_p(\mathrm{Bs}(q))\in \{0,1,\ldots,d\}$ for any $p\in C$.
First, we deal with the case where $C$ is irreducible.
In this case, we note that $C=\mathbb{P}^1$.
    By the proof of Proposition \ref{prop:chmber--Kss}, we see that \eqref{eq:bpcydomain} makes the polyhedral decomposition satisfies the assertion of this proposition for this $C$.

    Next, we deal with the case where $C$ is reducible.
    By the proof of Lemma \ref{lem:bpcyw>11}, it only occurs when $w_1+w_2\le1$.
    In this case, $C$ has at most two irreducible components that intersect transversally at one point with each other by Lemma \ref{lem--babwild--chain--two--comps}.
    We consider the following linear forms
    \[
    F_{b_1,b_2,e_1^{(j)},e_2^{(j)}}(\vec{a},w_1,w_2)=w_1b_1+w_2b_2+\sum_{1\le j\le r_1}e_1^{(j)}a_j+\sum_{1\le j\le r_2}e_2^{(j)}a_{j+r_1}
    \]
    for any $b_1,b_2\in\{0,\ldots,d\}$, $0\le e^{(j)}_1 \le d^{(j)}_1$ and $0\le e^{(j)}_2 \le d^{(j)}_2$.
    $w_2b_2+\sum_{j=1}^{r_2}e_2^{(j)}a_{r_1+j}>0$ comes from the ampleness condition of $w_2\mathscr{L}+\vec{a}_2\cdot\vec{D}_2$ and $F_{b_1,b_1,e_1^{(j)},e_2^{(j)}}(\vec{a},w_1,w_2)=1$ comes from the CY condition on each irreducible component of $C$.

    Using the above linear forms, we can take a subdivision $Q_l$ for $H$ by finitely many locally closed disjoint rational polyhedra.
    Because if $C$ is reducible, then $C$ consists of two smooth rational curves intersecting at one point transversally, we see that such $Q_l$ satisfy the assertion.
\end{proof}

\begin{lem}\label{lem-wall-same:bdcy}
In Proposition \ref{prop:chmber--bdcy}, fix a face $F\subset Q_l$ and let $F^\circ$ be the relative interior of $F$.
    For every two points $(\vec{a}_1^{(1)}, \vec{a}_2^{(1)},w^{(1)}_1,w_2^{(1)}),(\vec{a}_1^{(2)}, \vec{a}_2^{(2)},w^{(2)}_1,w^{(2)}_2)\in F^\circ$, $\mathcal{M}^{\mathrm{bpcyqmaps}}_{d,\vec{a_1}^{(1)},\vec{d_1},\vec{a_2}^{(1)},\vec{d_2},w^{(1)}_1,w_2^{(1)},\iota}\cong \mathcal{M}^{\mathrm{bpcyqmaps}}_{d,\vec{a_1}^{(2)},\vec{d_1},\vec{a_2}^{(2)},\vec{d_2},w^{(2)}_1,w_2^{(2)},\iota}$ as stacks
\end{lem}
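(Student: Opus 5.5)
The plan is to imitate the proof of Lemma \ref{lem-wall-same} and build mutually quasi-inverse morphisms that keep each family and only change the coefficients and weights.

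First, record what it means for both points to lie in the relative interior $F^\circ$. Membership in $F^\circ$ means that every linear form used to cut out the decomposition in Proposition \ref{prop:chmber--bdcy} has the same sign ($<0$, $=0$ or $>0$) at both points. These forms are the slc inequalities \eqref{eq:bpcydomain}, the forms $F_{b_1,b_2,e_1^{(j)},e_2^{(j)}}$ compared with $1$, and the coordinate functions. In particular, $a^{(i)}_{k,1}=0$ if and only if $a^{(i)}_{k,2}=0$, $w^{(1)}_1=0$ if and only if $w^{(2)}_1=0$, and $w^{(1)}_2=0$ if and only if $w^{(2)}_2=0$. So the two stacks forget exactly the same data: both are quasimap stacks, or both are stacks of pairs when $w_1+w_2=0$.

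Next, take an object $q\colon(\mathcal{C},\vec{a}_1^{(1)}\cdot\vec{\mathcal{D}}_1+\vec{a}_2^{(1)}\cdot\vec{\mathcal{D}}_2)\to[\mathrm{Cone}(X)_S/\mathbb{G}_{m,S}]$ over a scheme $S$. We want to show that the same family $(\mathcal{C},\vec{\mathcal{D}}_1,\vec{\mathcal{D}}_2,q)$, with coefficients $\vec a^{(2)}_1,\vec a^{(2)}_2$ and weights $(w^{(2)}_1,w^{(2)}_2)$, is an object of the second stack. Each condition in the definition of the stack is checked fibrewise or numerically.

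\begin{enumerate}
\item \emph{Geometric fibres.} For a geometric point $\bar s$, the fibre $q_{\bar s}$ is boundary polarized CY for the new weights by Proposition \ref{prop:chmber--bdcy}. This uses $(\vec a^{(2)}_1,\vec a^{(2)}_2,w^{(2)}_1,w^{(2)}_2)\in F^\circ\subset\overline{Q_l}$ and the nonvanishing implications just recorded.
\item \emph{Relative ampleness.} The divisor $\vec a_2^{(2)}\cdot\vec{\mathcal D}_2+w_2^{(2)}\mathscr L_q$ must be relatively ample over $S$. Since $\mathcal{C}\to S$ is proper and flat, relative ampleness can be checked on geometric fibres. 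On each fibre, ampleness of the polarization part is one of the conditions already provided by Proposition \ref{prop:chmber--bdcy}: it is the positivity of $w_2b_2+\sum e_2^{(j)}a_{r_1+j}$ on each component.
\item \emph{CY condition in families.} We also need $K_{\mathcal C/S}+\vec a^{(2)}\cdot\vec{\mathcal D}+(w^{(2)}_1+w^{(2)}_2)\mathscr L_q\sim_{\mathbb R,S}0$. On the fibres this holds numerically by the same proposition.
\end{enumerate}

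I expect the $\mathbb{R}$-linear triviality over $S$ in step 3 to be the main obstacle. I would handle it as follows. The fibres have arithmetic genus zero, so numerical triviality on fibres gives linear triviality on fibres. For $\mathbb{R}$-coefficients, I would decompose the divisor as a real combination of $\mathbb{Q}$-divisors each fibrewise numerically trivial; this is possible because the equalities defining $F$ are rational linear conditions. I would then argue, as in Definition \ref{defn:usual-ABB+} via \cite[Corollary 2.65]{kollar-moduli}, that each $\mathbb{Q}$-divisor is $\mathbb{Q}$-linearly trivial over $S$. Local stability of the family transfers in the same way as the slc condition from step 1.

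Finally, the reverse assignment, from the second stack to the first, is symmetric. Both assignments are compatible with pullbacks and send isomorphisms of families of weighted quasimaps to isomorphisms, since these depend only on $\mathcal{C}$, the divisors and $q$, and not on the coefficients. Hence they define mutually quasi-inverse morphisms of stacks, which gives the claimed isomorphism.
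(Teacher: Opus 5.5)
Your proposal is correct and follows the paper's route. The paper's proof just invokes Proposition \ref{prop:chmber--bdcy} and repeats the argument of Lemma \ref{lem-wall-same}: keep the same family, change the coefficients and weights, and use the symmetric assignment as quasi-inverse. It also notes that on the chamber $\vec a_2=\vec 0$, $w_2=0$ both stacks are empty by definition, which your zero-pattern observation covers implicitly. Your extra checks of relative ampleness and of $\mathbb{R}$-linear triviality over $S$ (via rational points of the affine hull of $F$ and genus-zero fibres) go beyond what the paper writes, but they are sound.
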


\begin{proof}
We note that $a_{r_1+1}=\ldots=a_{r_1+r_2}=w_2=0$ defines one chamber in the decomposition of $H$ in Proposition \ref{prop:chmber--bdcy} and on this chamber, we set $\mathcal{M}^{\mathrm{bpcyqmaps}}_{d,\vec{a_1},\vec{d_1},\vec{a_2},\vec{d_2},w_1,w_2,\iota}=\emptyset$.
Therefore, it is straightforward by Proposition \ref{prop:chmber--bdcy} and can be shown as Lemma \ref{lem-wall-same}.
\end{proof}

\begin{prop}\label{prop:bpcy--moduli--qmaps--const}
$\mathcal{M}^{\mathrm{bpcyqmaps}}_{d,\vec{a_1},\vec{d_1},\vec{a_2},\vec{d_2},w,w',\iota}$ is an Artin stack of finite type over $\mathbb{K}$ with affine diagonal.    
\end{prop}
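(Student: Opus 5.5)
We reduce to rational weights and then realize the stack as a locally closed substack of an explicit quotient stack. By Lemma \ref{lem-wall-same:bdcy} and the finiteness of the decomposition in Proposition \ref{prop:chmber--bdcy}, we may replace $(\vec a_1,\vec a_2,w,w')$ by a rational point in the same relative interior of a face without changing the stack. If $w+w'>1$, Lemma \ref{lem:bpcyw>11} identifies the stack with a K-moduli stack of log Fano quasimaps. That stack is of finite type with affine diagonal by Theorem \ref{thm:K-moduli--weighted--log-Fano-qmaps} (via \cite[Theorem 3.45]{HH2}). If $w+w'=0$, the stack parameterizes weighted one-dimensional boundary polarized CY pairs. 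We then treat it as in the argument below by forgetting the quasimap, or compare it with $\mathcal{M}^{\mathrm{bpCY}}_{r,s,v}$. So the essential case is $0<w+w'\le 1$ with rational data.

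\textbf{Boundedness.} Fix such rational data and choose $m$ divisible enough that $L:=m(\vec a_2\cdot\vec D_2+w'\mathscr L_q)$ is a Cartier line bundle on every fiber. Lemma \ref{lem--babwild--chain--two--comps} shows that every geometric fiber $C$ is either $\mathbb P^1$ or two $\mathbb P^1$'s meeting transversally at one point; the latter case uses that the CY condition with $p_a=0$ forces the regularity-zero situation. On each component the degree of $L$ is a positive rational number with bounded denominator and bounded total. Hence a fixed multiple $L^{\otimes k}$ is very ample with $H^1=0$, by Lemma \ref{lem--vanish} applied to a tree of rational curves. It therefore embeds $C$ into a fixed $\mathbb P^M$ with fixed Hilbert polynomial.

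\textbf{Parameter space.} We build a scheme $Z$ of finite type over $\mathrm{Hilb}_{\mathbb P^M}$. It records the curve, the divisors $D_j^{(i)}$ as points of $\mathrm{KDiv}^{d_j^{(i)}}$ of the universal curve, and a line bundle $\mathscr L$ of degree $d$, which is bounded since there are finitely many multidegree splittings. It also records the quasimap as a point of $\mathbf{Hom}(\mathcal O^{\oplus N+1},\mathscr L)$ factoring through $\mathrm{Cone}(X)$, which is a closed condition. Finally, it records the identification $\mathcal O(1)\cong L^{\otimes k}$, which is a locally closed condition on the Picard scheme.

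\textbf{Locally closed conditions.} On $Z$ we impose the following:
\begin{itemize}
\item fibers are connected nodal curves;
\item $q$ is deminormal and the divisors avoid the nodes (both open);
\item the slc condition on $(C,\Delta+(w+w')\mathrm{Bs}(q))$, which is constructible and closed under generization in families of nodal curves, hence open;
\item $K+\Delta+(w+w')\mathscr L\equiv 0$ on each component (locally closed via degrees and $\mathrm{Pic}$);
\item the relative ampleness of the polarization part (open).
\end{itemize}
Since all coefficients are rational, $\sim_{\R}$ reduces to $\sim_{\Q}$, and for genus zero trees numerical and $\Q$-linear triviality coincide. The stack is then $[Z^\circ/\PGL(M+1)]$, which is an Artin stack of finite type. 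Its diagonal is affine because automorphisms of the data embed into $\PGL(M+1)$ as a closed subgroup, so the isom functor is closed in a $\PGL$-torsor.

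\textbf{Main obstacle.} The delicate step is the locally closedness of the slc and CY conditions over nonreduced bases, together with the verification that the fibered category coincides with the quotient stack. That second point requires showing that $L^{\otimes k}$ commutes with base change; I would deduce this from cohomology and base change plus Lemma \ref{lem--vanish}. I would first mimic \cite[Theorem 3.7 and Proposition 3.12]{babwild} and the argument of \cite[Theorem 3.45]{HH2}.
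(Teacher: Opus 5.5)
Your proposal is essentially the paper's proof: reduce to rational weights with $w+w'\le 1$ via Lemmas \ref{lem-wall-same:bdcy} and \ref{lem:bpcyw>11}, embed all curves uniformly by a multiple of the polarization part, build a parameter scheme from the Hilbert scheme, $\mathrm{KDiv}$ and a $\mathbf{Hom}$ scheme, cut it down by open and locally closed conditions, and take a $\PGL$-quotient; your direct boundedness argument via the two-component structure replaces the paper's appeal to finite type of $\mathcal{M}^{\mathrm{bpCY}}_{r,s,v}$ and \cite[Proposition 3.3]{Hassett}, and that is fine. The one step to repair is how you parametrize the line bundle: a Picard scheme carries no canonical universal $\mathscr L$, and sections differing by $\mathrm{Aut}(\mathscr L)=\mathbb{G}_m$ give isomorphic quasimaps, so $Z^\circ\to\mathcal M$ is not a $\PGL(M+1)$-torsor as you claim, and you should either work over the Picard stack or, as the paper does, realize $\mathscr L$ as an $l_1$-regular quotient $\mathcal O(-l_1)^{\oplus n}\to\mathscr L$ in a Quot scheme and add a second group factor acting on the chosen basis of $H^0(\mathscr L(l_1))$.
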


\begin{proof}
First, we may assume that $w+w'\in\mathbb{Q}\cap [0,1]$, $w\in\mathbb{Q}$ and $\vec{a_1}$ and $\vec{a_2}$ are rational vectors by Lemmas \ref{lem-wall-same:bdcy} and \ref{lem:bpcyw>11}. 
We may also assume that $w'\ne0$ or $\vec{a}_2\ne\vec0$ by our assumption.
We mainly deal with the case where $w+w'>0$ because the $w=w'=0$ case follows from a similar argument (cf.~Theorem \ref{thm:bpcY-one-dimensional-moduli}). 
We know the boundedness of the boundary polarized CY pairs $(C,\vec{a_1}\cdot\vec{D_1}+\vec{a_2}\cdot\vec{D_2}+(w+w')\mathrm{div}(f))$, where $q\colon (C,\vec{a_1}\cdot\vec{D_1}+\vec{a_2}\cdot\vec{D_2})\to [\mathrm{Cone}(X)/\mathbb{G}_m]\in\mathcal{M}^{\mathrm{bpcyqmaps}}_{d,\vec{a_1},\vec{d_1},\vec{a_2},\vec{d_2},w,\iota}(\mathbb{K})$ and $f\in\mathrm{Sec}_q(\mathscr{L}_q)$ is general since $\mathcal{M}^{\mathrm{bpCY}}_{r,s,v}$ is of finite type if you choose $v:=w'd+\vec{a}_2\cdot\vec{d}_2$ and $r$ and $s$ as the lowest common multiples of all entries of $\vec{a}_i$ for $i=1,2$, $w$ and $w'$.
There exists $M\in\mathbb{Z}_{>0}$ that only depends on $d,\vec{a_1},\vec{d_1},\vec{a_2},\vec{d_2},w,w'$ such that $M(\vec{a_2}\cdot\vec{D_2}+w'\mathscr{L})$ is a very ample Cartier divisor such that $H^1(C,\mathcal{O}_{C}(kM(\vec{a_2}\cdot\vec{D_2}+w'\mathscr{L})))=0$ for any $k\in\mathbb{Z}_{>0}$ by \cite[Proposition 3.3]{Hassett}.
By the Riemann-Roch theorem, we can take $P(m):=mMv+1$ as the Hilbert polynomial $\mathbb{Z}\ni m\mapsto\chi(C,\mathcal{O}_{C}(mM(\vec{a_2}\cdot\vec{D_2}+w'\mathscr{L})))$.
Indeed, it is easy to check that $p_a(C)=0$ by Lemma \ref{lem--babwild--chain--two--comps}.
Moreover, since we bound the information of $\mathrm{div}(f)$ as above, we can take $l_1\in\mathbb{Z}_{>0}$ so that $\mathscr{L}$ is $l_1$-regular with respect to $\mathcal{O}_{C}(M(\vec{a_2}\cdot\vec{D_2}+w'\mathscr{L}))$.
Here, $l_1$-regular means the Castelnuovo--Mumford regularity (see \cite[Section 5.2]{fantechi2005fundamental}).

Consider the Hilbert scheme $H$ parameterizing closed subschemes of $\mathbb{P}^{P(1)-1}$ with Hilbert polynomial $P$.
By the standard argument, there exists the largest open subset $U_1\subset H$ that parameterizes only connected nodal curves with the arithmetic genus zero and at most two irreducible components.
Let $\pi\colon\mathcal{U}\to U_1$ be the universal family and $\mathcal{O}(k)$ denote the pullback of $\mathcal{O}_{\mathbb{P}^{P(1)-1}_{U_1}}(k)$ for any $k\in\mathbb{Z}$.
Consider $U_2:=\bigtimes_{U_1}^{1\le k\le2,1\le j\le r_j}\mathrm{KDiv}^{d^{(j)}_k}_{\mathcal{U}/U_1}$.
Moreover, consider $\mathrm{Quot}^{\mathcal{O}(-l_1)^{\oplus 1+d+l_1M(\sum \vec{a}_2\cdot \vec{d}_2+w'd)}}_{\mathcal{U}\times_{U_1}U_2/U_2}$ and let $\mathcal{F}$ be the universal quotient sheaf of $\mathcal{O}(-l_1)^{\oplus 1+d+l_1M(\sum \vec{a}_2\cdot \vec{d}_2+w'd)}$.
Take an open subset $U_3\subset \mathrm{Quot}_{\mathcal{U}\times_{U_1}U_2/U_2}$ that is the locus over which $\mathscr{F}_s$ is a line bundle of degree $d$ that is $l_1$-regular with respect to $\mathcal{O}(1)$ if and only if $s\in U_3$. Note that $U_3$ is quasi-projective.
By the same argument in \cite[Section 3.2]{HH2}, we take a locally closed subscheme $U_4\subset U_3$ such that for any morphism $g\colon T\to U_3$ from a scheme, $g$ factors through $U_4$ if and only if $g$ satisfies the following:
\begin{itemize}
    \item $\omega_{\mathcal{U}_T/T}^{\otimes N}(N(\vec{a}_1\cdot \vec{\mathcal{D}}_{1,T}+\vec{a}_2\cdot \vec{\mathcal{D}}_{2,T}))\otimes \mathscr{F}^{\otimes N(w+w')}_T\sim_{T}\mathcal{O}_{\mathcal{U}_T}$, where $\vec{\mathcal{D}}_1=(D_1^{(1)},\ldots, D_1^{(r_1)})$ (resp.~$\vec{\mathcal{D}}_2=(D_2^{(1)},\ldots, D_2^{(r_2)})$) and $D^{(k)}_1$ (resp.~$D^{(k)}_2$) is the pullback of the universal relative Mumford divisor on $\mathrm{KDiv}^{d^{(k)}_1}_{\mathcal{U}/U_1}\times_{U_1}\mathcal{U}$ (resp.~$\mathrm{KDiv}^{d^{(k)}_2}_{\mathcal{U}/U_1}\times_{U_1}\mathcal{U}$) on $\mathcal{U}\times_{U_1}U_3$ and $N\in\mathbb{Z}_{>0}$ that $Nw,Nw'\in\mathbb{Z}$, $N\vec{a}_1\in\mathbb{Z}^{r_1}$ and $N\vec{a}_2\in \mathbb{Z}^{r_2}$, 
    \item if $\mathcal{A}:=\mathcal{O}_{\mathcal{U}\times_{U_1}U_3}(M\vec{a}_2\cdot \vec{\mathcal{D}}_{2,T})\otimes \mathscr{F}_T^{\otimes Mw'}$, then $\mathcal{A}\sim_T\mathcal{O}(1)$ and 
    \item $\pi_{\mathcal{U},T*}\mathcal{A}\otimes\kappa(t)\to H^0(\mathcal{A}_{t})$ and the canonical morphism $\kappa(t)^{\oplus 1+d+l_1M(\sum \vec{a}_2\cdot \vec{d}_2+w'd)}\to H^0(\mathscr{F}_t(l_1))$ induced by the universal quotient $\mathcal{O}(-l_1)^{\oplus 1+d+l_1M(\sum \vec{a}_2\cdot \vec{d}_2+w'd)}\to \mathscr{F}$ are surjective for any $t\in T$.
\end{itemize} 
Next, we consider $\mathcal{H}:=\mathbf{Hom}_{U_{4}}(\mathcal{O}_{\mathcal{U}_{U_{4}}}^{N+1},\mathscr{F}_{U_4})$.
Let $g\in\mathfrak{Hom}(\mathcal{O}_{\mathcal{U}_{\mathcal{H}}}^{N+1},\mathscr{F}_{\mathcal{H}})(\mathcal{H})$ be the universal homomorphism.
We note that $g$ corresponds to a global section $(f_0,\ldots,f_N)\in H^0(\mathcal{H},\pi_{\mathcal{U}_{\mathcal{H}}*}\mathscr{F}_\mathcal{H})^{\oplus N+1}$.
Let $U_5\subset \mathcal{H}$ be an open subscheme such that for any $s\in \mathcal{H}$, $s$ is contained in $U_5$ if and only if $(\mathcal{U}_{\bar{s}},\vec{a}_1\cdot \vec{\mathcal{D}}_{1,\bar{s}}+\vec{a}_2\cdot \vec{\mathcal{D}}_{2,\bar{s}}+(w+w')\mathrm{div}(f_{\bar{s}}))$ is slc for some $(a_i)_{i=0}^N\in\overline{\kappa(s)}^{N+1}$, where $f_{\bar{s}}=\sum_{i=0}^N a_if_{i,\bar{s}}$.
Indeed, it is easy to check for any flat projective family of nodal curves, the semi-log canonicity is an open condition. 
Furthermore, as in the discussion of \cite[Section 3.2]{HH2}, there exists a closed subscheme $U_6\subset U_5$ such that for any morphism $\mu\colon T\to U_5$, $\mu$ factors through $U_6$ if and only if $g_{\mathcal{U}_T}$ defines a family of quasimaps that factors through $[\mathrm{Cone}(X)_T/\mathbb{G}_{m,T}]\subset [\mathbb{A}^{N+1}_{T}/\mathbb{G}_{m,T}]$.
Then it is easy to check that $$\mathcal{M}^{\mathrm{bpcyqmaps}}_{d,\vec{a_1},\vec{d_1},\vec{a_2},\vec{d_2},w,w',\iota}\cong [U_6/\PGL(P(1))\times \PGL(1+d+l_1M(\vec{a}_2\cdot \vec{d}_2+w'd))].$$
We complete the proof.
\end{proof}

\begin{thm}\label{thm:s-complete-bpcyqmaps}
    The moduli stack of boundary polarized CY quasimaps $\mathcal{M}^{\mathrm{bpcyqmaps}}_{d,\vec{a_1},\vec{d_1},\vec{a_2},\vec{d_2},w,w',\iota}$ is  $\mathsf{S}$-complete and $\Theta$-reductive.
    In particular, $\mathcal{M}^{\mathrm{bpcyqmaps}}_{d,\vec{a_1},\vec{d_1},\vec{a_2},\vec{d_2},w,w',\iota}$ admits a separated good moduli space $\pi\colon \mathcal{M}^{\mathrm{bpcyqmaps}}_{d,\vec{a_1},\vec{d_1},\vec{a_2},\vec{d_2},w,w',\iota}\to M^{\mathrm{bpcyqmaps}}_{d,\vec{a_1},\vec{d_1},\vec{a_2},\vec{d_2},w,w',\iota}$ of finite type over $\mathbb{K}$.
\end{thm}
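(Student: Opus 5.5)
The plan is to reduce both valuative properties to the corresponding results for one-dimensional boundary polarized CY pairs, namely \cite[Theorems 5.4 and 6.3]{babwild}, as used in Theorem \ref{thm:bpcY-one-dimensional-moduli}, and then to extend the quasimap data over the filled-in family. By Lemmas \ref{lem-wall-same:bdcy} and \ref{lem:bpcyw>11} we may assume all weights and coefficients are rational and $w+w'\le 1$. If $w+w'>1$, the stack is a K-moduli stack of log Fano quasimaps, which already has a good moduli space by Theorem \ref{thm:K-moduli--weighted--log-Fano-qmaps}. The case $w=w'=0$ is exactly Theorem \ref{thm:bpcY-one-dimensional-moduli}, after the finite forgetful morphism argument from the proof of Theorem \ref{thm:K-moduli--weighted--log-Fano-qmaps}. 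Since the stack has affine diagonal by Proposition \ref{prop:bpcy--moduli--qmaps--const}, \cite[Theorem A]{AHLH} gives the last sentence of the statement once $\mathsf S$-completeness and $\Theta$-reductivity are proved.

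For $\mathsf S$-completeness, let $R$ be a DVR and let two families $q^{(1)},q^{(2)}$ over $\mathrm{Spec}\,R$ agree over $K$. Choose $l\gg0$ and general $e_1,\dots,e_l\in\mathbb K^{N+1}$ with corresponding sections $f^{(k)}_i$. These produce families of boundary polarized CY pairs
\[
(\mathcal C^{(k)},\ \vec a_1\cdot\vec{\mathcal D}_1+\tfrac{w}{l}\textstyle\sum_i \mathrm{div}(f^{(k)}_i)+\vec a_2\cdot\vec{\mathcal D}_2+\tfrac{w'}{l}\sum_i\mathrm{div}(f^{(k)}_i)),
\]
whose polarization part is the $\vec a_2$, $w'$ part. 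The generic genericity of the $e_i$ keeps them slc on both special fibers, as remarked after Definition \ref{def:log_qmaps}. Recording the divisors $\mathrm{div}(f_i)$ as separate boundary components (finite forgetful map as before), \cite[Theorem 5.4]{babwild} extends the glued family over $\overline{\mathrm{ST}}(R)$ to a $\mathbb G_m$-equivariant family $(\mathcal C,\Delta)$ over $\mathrm{Spec}(R[s,t]/(st-\varpi))$.

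It remains to extend the quasimap. On the complement of the origin, the line bundle $\mathscr L$ is $\mathcal O(\mathrm{div} f_1)$ and the sections are $f_j=(f_j/f_1)f_1$. The extended divisors $\overline{\mathrm{div}(f_i)}$ are relative Cartier, since the fibers are nodal curves with boundary away from the nodes. So $\mathcal O(\overline{\mathrm{div} f_1})$ is a line bundle on the whole family. The rational functions $f_j/f_1$ are regular on the $S_2$ total space away from codimension two, hence extend, as in the argument of Proposition \ref{prop:push--linear-equivalence}. The resulting homomorphism $\mathcal O^{N+1}\to\mathscr L$ is nonzero at the generic points of the special fiber components, because $\overline{\mathrm{div}(f_1)}$ and $\overline{\mathrm{div}(f_j)}$ have no common vertical component. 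Factoring through $\mathrm{Cone}(X)$ is a closed condition, so it extends by density.

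Two further checks are needed. First, the base locus avoids the nodes and the slc condition holds; both follow from the slc property of the pair for general $e_i$. Second, uniqueness follows from separatedness of the relevant Hom and Quot data on an $S_2$ family. Polarization ampleness and the CY condition are closed or numerical conditions and persist.

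$\Theta$-reductivity follows the same template with \cite[Theorem 6.3]{babwild}. The main obstacle I expect is independence of the choice of the general sections $e_i$. The extension from \cite{babwild} must not depend on $l$ or on the $e_i$, and the limit sections must be general for the limit quasimap so that the slc and CY conditions match. I would handle this by showing that the filled family does not depend on the auxiliary divisors: compare two choices through a third, common one, using the uniqueness part of $\mathsf S$-completeness and $\Theta$-reductivity for pairs. Then I would check genericity on the special fiber using the open condition in the construction of $U_5$ in Proposition \ref{prop:bpcy--moduli--qmaps--const}.
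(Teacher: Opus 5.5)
Your proposal is correct and follows essentially the same route as the paper. You reduce to rational weights with $w+w'\le 1$, fill in the auxiliary boundary polarized CY pair obtained by adding general sections of $\mathscr{L}$ via \cite[Theorems 5.4 and 6.3]{babwild}, take $\mathcal{O}_{\mathcal{C}}(\overline{\mathrm{div}(f)})$ as the extension of $\mathscr{L}$, extend the remaining sections by the $S_2$ property, and conclude with \cite[Theorem A]{AHLH}; the differences are minor: the paper uses a single general section with coefficient $w+w'$ rather than $l$ of them, and the obstacle you anticipate does not arise, because any extension you construct is the unique one and slc of the limit quasimap follows directly from $\mathrm{Bs}(q_0)\le \mathrm{div}(f_i)|_{\mathcal{C}_0}$ without any genericity on the special fiber.
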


\begin{proof}
First, we may assume that $w+w'\in\mathbb{Q}\cap (0,1]$, $w\in\mathbb{Q}$ and $\vec{a_1}$ and $\vec{a_2}$ are rational vectors by Theorem \ref{thm:K-moduli--weighted--log-Fano-qmaps} and Lemmas \ref{lem-wall-same:bdcy} and \ref{lem:bpcyw>11}. 
We may also exclude the case where $w'=0$ and $\vec{a}_2=\vec0$.
We first deal with the $\mathsf{S}$-completeness.   
This follows from a similar argument to \cite[Theorem 3.28]{HH2} and \cite[Theorem 5.4]{babwild} but we give a proof for the reader's convenience.
Note that the $w+w'=0$ case follows from the proof of Theorem \ref{thm:bpcY-one-dimensional-moduli}.

The uniqueness part of $\mathsf{S}$-completeness follows from the same argument of \cite[Theorem 3.28]{HH2}.
Therefore, it suffices to check the existence part.
Let $R$ be a DVR essentially of finite type over $\mathbb{K}$ with a uniformizing parameter $\pi$.
Let $S$ be $\mathrm{Spec(}R[s,t]/(st-\pi))$ and $S^\circ$ the complement of $\mathbf{0}$ the closed point defined by the ideal generated by $s$ and $t$.
Let $q^\circ\colon (\mathcal{C}^\circ,\Delta^\circ+\mathcal{D}^\circ)\to [\mathrm{Cone}(X)_{S^\circ}/\mathbb{G}_{m,S^\circ}]$ be a $\mathbb{G}_m$-equivariant family of boundary polarized CY quasimaps with weight $w,w'\in\mathbb{Q}_{\ge0}$ and the associated line bundle $\mathscr{L}^\circ$ such that $w'\mathscr{L}^\circ+\mathcal{D}^\circ$ is relatively ample over $S^\circ$, which corresponds to a morphism $\overline{\mathrm{ST}}^\circ(R)\to \mathcal{M}^{\mathrm{bpcyqmaps}}_{d,\vec{a_1},\vec{d_1},\vec{a_2},\vec{d_2},w,w',\iota}$.
Let $\pi^\circ\colon \mathcal{C}^\circ\to S^\circ$ be the canonical morphism and $j\colon S^\circ\hookrightarrow S$ the canonical open immersion.
Since $\pi^\circ_*\mathscr{L}^\circ$ is a locally free sheaf on $S^\circ$ by Lemma \ref{lem--vanish} and hence $j_*\pi^\circ_*\mathscr{L}^\circ$ is a coherent sheaf.
Let $f\in H^0(S,j_*\pi^\circ_*\mathscr{L}^\circ)$ be a general element of $\mathrm{Sec}_{q^\circ}(\mathscr{L}^\circ)$ corresponding to an element of $\mathbb{K}^{N+1}$.
Then $(\mathcal{C}^\circ,(w+w')\mathrm{div}(f)+\Delta^\circ+\mathcal{D}^\circ)$ is a boundary polarized CY pairs over $S^\circ$.
By \cite[Theorem 5.4]{babwild}, there exists a unique family of boundary polarized CY pairs $(\mathcal{C},\Delta+\mathcal{D}+\mathcal{D}_1)$, where $\Delta$, $\mathcal{D}$ and $\mathcal{D}_1$ are the Zariski closure of $\Delta^\circ$, $\mathcal{D}^\circ$ and $(w+w')\mathrm{div}(f)$.
Since $\mathcal{D}_1$ is a relative Mumford divisor in the smooth locus over $S$, $\mathcal{D}_1$ is a relative Cartier divisor (cf.~\cite[Corollary 4.36]{kollar-moduli}). Note that $\mathcal{O}_{\mathcal{C}}(\frac{1}{(w+w')}\mathcal{D}_1)$ is the extension of $\mathscr{L}^\circ$.
Using this, we can endow the structure of family of quasimaps $q$ to $\mathcal{C}$ as \cite[Theorem 3.28]{HH2}.
Indeed, if $g\in \mathrm{Sec}_{q^\circ}(\mathscr{L}^\circ)$ is another section corresponding to an element of $\mathbb{K}^{N+1}$, then we can regard $g\in H^0(S,j_*\pi^\circ_*\mathscr{L}^\circ)$ and $g$ can be uniquely extended to a section of $\mathcal{O}_{\mathcal{C}}(\frac{1}{w+w'}\mathcal{D}_1)$.
By construction, it is easy to check that $q$ is a $\mathbb{G}_m$-equivariant family of boundary polarized CY quasimaps.
Therefore, $\mathcal{M}^{\mathrm{bpcyqmaps}}_{d,\vec{a_1},\vec{d_1},\vec{a_2},\vec{d_2},w,w',\iota}$ is $\mathsf{S}$-complete.

The assertion for $\Theta$-reductivity follows from the same argument replacing $\overline{\mathrm{ST}}(R)$ with $\Theta_R$.
The last assertion follows from \cite[Theorem A]{AHLH}.
\end{proof}

\begin{defn}[Test configuration for boundary polarized CY quasimaps]\label{de--ws--test--config--forbpcy}
Let $q\colon(C,\Delta)\to[\mathrm{Cone}(X)/\mathbb{G}_m]$ be a boundary polarized CY quasimap. 
We say that $Q\colon (\mathcal{C},\mathcal{D})\to[\mathrm{Cone}(X)_{\mathbb{A}^1}/\mathbb{G}_{m,\mathbb{A}^1}]$ is a {\it weakly special (ws) test configuration} for $q$ if $Q$ is a $\mathbb{G}_m$-equivariant family of boundary polarized CY quasimaps such that $Q_1\cong q$, where $\mathbb{G}_m$ acts on $\mathbb{A}^1$ by multiplication.   
We define {\it $S$-equivalence} for boundary polarized CY quasimaps as the equivalence relation  generated by $Q_0$ and $q$ if there exists a weakly special test configuration $Q$ for $q$ as above.
In this setting, we say that
\begin{enumerate}
    \item $Q$ is {\it trivial} if $Q$ is $\mathbb{G}_{m}$-equivariantly isomorphic to $q_{\mathbb{A}^1}$ with the trivial $\mathbb{G}_m$-action; 
    \item $q$ is {\it bpcy stable} if every ws test configuration $Q$ for $q$ is trivial;
    \item $q$ is {\it bpcy polystable} if $Q_0\cong q$ for every ws test configuration $Q$ for $q$.
\end{enumerate}
Here, ``bpcy'' stands for boundary polarized CY.
If there is no fear of confusion, we will write ``bpcy stable'' and ``bpcy polystable'' just as ``stable'' and ``polystable''.
\end{defn}

\begin{lem}\label{lem:trivial}
In the above situation, $Q$ is trivial if and only if $(\mathcal{C},\mathcal{D})$ is $\mathbb{G}_m$-equivariantly isomorphic to $(C\times\mathbb{A}^1,\Delta\times\mathbb{A}^1)$ with the trivial $\mathbb{G}_m$-action on the first component.   
\end{lem}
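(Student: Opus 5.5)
The forward direction is immediate. If $Q$ is trivial, i.e.\ $\mathbb{G}_m$-equivariantly isomorphic to $q_{\mathbb{A}^1}$ with the trivial action, then the source of $Q$ is $(C\times\mathbb{A}^1,\Delta\times\mathbb{A}^1)$ with the trivial action on $C$. This holds because an isomorphism of families of weighted quasimaps is in particular an isomorphism of the underlying sources preserving the boundary divisors.

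For the converse, suppose $\sigma\colon(\mathcal{C},\mathcal{D})\cong(C\times\mathbb{A}^1,\Delta\times\mathbb{A}^1)$ is $\mathbb{G}_m$-equivariant. I would transport $Q$ along $\sigma$ and so assume $(\mathcal{C},\mathcal{D})=(C\times\mathbb{A}^1,\Delta\times\mathbb{A}^1)$ with trivial action on $C$. Let $\mathscr{L}_Q$ be the associated line bundle and $p\colon C\times\mathbb{A}^1\to C$ the projection. The goal is to produce a $\mathbb{G}_m$-equivariant isomorphism $\mathscr{L}_Q\cong p^*\mathscr{L}_q$ (with suitable linearization on the right) carrying the sections $Q_{\mathrm{hom}}(e_i)$ to $p^*q_{\mathrm{hom}}(e_i)$, up to the ambiguity allowed by isomorphism of quasimaps.

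\textbf{Step 1: the line bundle.} Since $p_a(C)=0$ (by Lemma~\ref{lem--babwild--chain--two--comps} and the discussion in Proposition~\ref{prop:bpcy--moduli--qmaps--const}), $\mathrm{Pic}(C\times\mathbb{A}^1)=\mathrm{Pic}(C)$ is discrete, given by multidegrees on components. Hence $\mathscr{L}_Q\cong p^*\mathscr{L}_q$, and any two $\mathbb{G}_m$-linearizations differ by a character $\chi$. After twisting by the character, I may assume $\mathscr{L}_Q=p^*\mathscr{L}_q$ with the trivial linearization.

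\textbf{Step 2: the sections.} The sections $Q_{\mathrm{hom}}(e_i)$ are then $\mathbb{G}_m$-invariant elements of
\[
H^0(C\times\mathbb{A}^1,p^*\mathscr{L}_q)=H^0(C,\mathscr{L}_q)\otimes\mathbb{K}[t].
\]
The action of $\mathbb{G}_m$ on $\mathbb{A}^1$ has weight one on $t$, and the action on the first factor is trivial. The invariant elements are therefore exactly those in $H^0(C,\mathscr{L}_q)\otimes 1$, i.e.\ sections constant in $t$. Restricting to $t=1$ and using $Q_1\cong q$ identifies them with $q_{\mathrm{hom}}(e_i)$, possibly after composing with an automorphism of $(C,\Delta)$ realizing $Q_1\cong q$. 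That automorphism extends constantly over $\mathbb{A}^1$, so $Q\cong q_{\mathbb{A}^1}$ equivariantly.

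\textbf{Main obstacle.} The delicate point is the linearization twist in Step 1. The character $\chi$ is absorbed by the $\mathbb{G}_m$-factor in the quotient stack $[\mathrm{Cone}(X)/\mathbb{G}_m]$: changing the linearization by $\chi$ corresponds to composing the equivariant structure with the $\mathbb{G}_m$-scaling on the cone. This does not change the isomorphism class of the equivariant quasimap. I would check this carefully by writing the equivariant quasimap as a morphism from $[C\times\mathbb{A}^1/\mathbb{G}_m]$.

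If the sections instead carry a nonzero weight $k$, they take the form $t^k s_i$. Then the slc/deminormal condition at $t=0$, namely that $Q_0$ is not identically zero on any component, forces $k=0$ after the twist. So all weights vanish, and the argument in Step 2 applies.
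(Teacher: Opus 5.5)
Your argument is correct and follows the same idea as the paper. Because $\mathbb{G}_m$ acts trivially on the source, the linearization of $\mathscr{L}_Q$ differs from the pulled-back one only by a character; once that character is disposed of, the quasimap sections are invariant, hence constant in $t$, so $Q\cong q_{\mathbb{A}^1}$. The paper states this on $H^0(\mathcal{C}_0,\mathscr{L}_0)$ by ``normalizing'' the action, while you work globally in $H^0(C,\mathscr{L}_q)\otimes\mathbb{K}[t]$, which makes the final step explicit. One caution: your claim that the character is ``absorbed'' without changing the equivariant quasimap is not right as stated, since twisting the linearization changes which sections are invariant. Your last paragraph supplies the correct replacement: the sections are $t^k s_i$ with $k\ge 0$, and nondegeneracy of $Q_0$ forces $k=0$.
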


\begin{proof}
If $Q$ is trivial, then $(\mathcal{C},\mathcal{D})$ is $\mathbb{G}_m$-equivariantly isomorphic to $(C\times\mathbb{A}^1,\Delta\times\mathbb{A}^1)$ with the trivial $\mathbb{G}_m$-action by definition.    
Suppose that $(\mathcal{C},\mathcal{D})$ is $\mathbb{G}_m$-equivariantly isomorphic to $(C\times\mathbb{A}^1,\Delta\times\mathbb{A}^1)$ with the trivial $\mathbb{G}_m$-action by definition.  
Then note that $\mathbb{G}_m$ acts on $H^0(\mathcal{C},\mathscr{L})$, where $\mathscr{L}$ is the line bundle associated to $Q$, and hence on $H^0(\mathcal{C}_0,\mathscr{L}_0)$.
Since the $\mathbb{G}_m$-action on $\mathcal{C}_0$ is trivial and there exists a section $f\in H^0(\mathcal{C}_0,\mathscr{L}_0)$ such that $\mathrm{div}(f)$ does not contain any singularity but is $\mathbb{G}_m$-invariant, we can normalize and trivialize the $\mathbb{G}_m$-action on $H^0(\mathcal{C}_0,\mathscr{L}_0)$.
Therefore, $Q$ is trivial.
\end{proof}

Suppose that $q\colon (C,\Delta+D)\to [\mathrm{Cone}(X)/\mathbb{G}_m]$ is a constant boundary polarized CY quasimap of weight $(w,w')$ with $w+w'\le 1$.
It is easy to check that $q$ is bpcy (semi/poly)stable if and only if $(C,(\Delta+w\mathrm{Bs}(q))+(D+w'\mathrm{Bs}(q)))$ is bpcy (semi/poly)stable as a log pair.
Taking it into consideration, we obtain the following.

\begin{lem}\label{lem:constant-qmaps}
 Let $q\colon (C,\Delta+D)\to [\mathrm{Cone}(X)/\mathbb{G}_m]$ be a boundary polarized CY quasimap of weight $(w,w')$ with $w+w'\le 1$.
 Suppose that $q$ is not klt.
 Then $q$ is S-equivalent to a boundary polarized CY quasimap $q'$ of weight $(w,w')$ such that $q'$ is constant and bpcy polystable.
\end{lem}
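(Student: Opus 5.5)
We will build the degeneration explicitly from an lc center of $q$ and then iterate. Since $q$ is not klt, the pair $(C,\Delta+w\mathrm{Bs}(q)+D+w'\mathrm{Bs}(q))$ is slc but not klt. As $w+w'\le 1$, the polystable replacements considered in the proof of Theorem~\ref{thm:bpcY-one-dimensional-moduli} remain available. By Lemma~\ref{lem--babwild--chain--two--comps}, either $C\cong\mathbb P^1$ or $C$ is a chain of two $\mathbb P^1$'s meeting at one node. In either case each component of the normalization has a unique lc center.

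\textbf{Normal case.} Suppose $C\cong\mathbb P^1$ and let $p$ be a point with $\mathrm{ord}_p(\Delta+D+(w+w')\mathrm{Bs}(q))=1$. The CY condition forces the remaining boundary, counted together with the degree of $(w+w')\mathscr L$ away from $\mathrm{Bs}(q)$ at $p$, to have total degree $1$.
\begin{enumerate}
\item Choose coordinates with $p=0$ and take the $\mathbb G_m$-action $t\cdot x=tx$ on $\mathbb P^1\times\mathbb A^1$.
\item Form the test configuration as the closure of the $\mathbb G_m$-orbit of $(C,\Delta,D)$ together with the sections $f_0,\dots,f_N$. Rescale the sections by suitable powers of $t$ so that their limits are nonzero.
\item In the limit, all boundary points other than $p$ flow to $\infty$. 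The limit quasimap becomes constant: the flat limit of $[f_0:\dots:f_N]$ is a single point of $X$, up to base points at $0$ and $\infty$.
\item Check that this is a ws test configuration in the sense of Definition~\ref{de--ws--test--config--forbpcy}. Flatness of the divisors and of $\mathscr L$ is clear on $\mathbb P^1\times\mathbb A^1$, so it remains to verify slc-ness and ampleness of the polarization part on the central fiber.
\end{enumerate}
For slc-ness, the central fiber has coefficient exactly $1$ at $0$ and at $\infty$, which is allowed. For ampleness, the polarization degree $\deg(D+w'\mathscr L)$ is preserved by flatness. The central fiber $q_0$ is then constant, with boundary supported at $\{0,\infty\}$ and base locus contributing coefficient mass $1$ at each of these points. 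It is $\mathbb G_m$-invariant.

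\textbf{Nodal case.} Suppose $C=C_1\cup C_2$. Apply the same construction on each component simultaneously, using the unique lc center on each component of the normalization, namely the preimage of the node. Glue the two resulting test configurations along the node, as in the proof of Proposition~\ref{prop:chara--K-stability--qmaps} via \cite[Theorem 11.38]{kollar-moduli}. This yields a chain of two $\mathbb P^1$'s in which each component carries its remaining boundary at its far point. The quasimap becomes constant on each component. Because the two constants must agree at the node, the whole quasimap is constant.

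\textbf{Main obstacle and polystability.} The hard step is making the quasimap degenerate to a \emph{constant} one while keeping the family a genuine family of quasimaps. The sections $f_i$ must have flat limits that are not identically zero on any component, and the limit must lie in $\mathrm{Cone}(X)$. I would handle this by choosing weights: write $\mathscr L=\mathcal O(d)$ and act on sections through $x\mapsto tx$. The minimal vanishing order at $\infty$ of a general section is $d-\mathrm{ord}_\infty$. Normalizing by $t^{-m}$ with $m$ this minimal order gives a limit of the form $c\,x^m$ for a fixed vector $c\in\mathrm{Cone}(X)$, since $\mathrm{Cone}(X)$ is closed and $\mathbb G_m$-stable. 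Hence the limit is constant with value $[c]$. The closed-subscheme criterion from the proof of Proposition~\ref{prop:bpcy--moduli--qmaps--const} then gives the quasimap structure over $\mathbb A^1$.

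Polystability of the limit follows from the remark preceding the lemma: $q'$ is bpcy polystable exactly when the associated pair $(C,(\Delta+w\mathrm{Bs}(q'))+(D+w'\mathrm{Bs}(q')))$ is bpcy polystable. That pair is a copy, or a glued pair of copies, of $(\mathbb P^1,\{0\}+\{\infty\})$, which is polystable by the proof of Theorem~\ref{thm:bpcY-one-dimensional-moduli}. Finally, $q$ and $q'$ are S-equivalent by the definition of S-equivalence, since $q'$ is the central fiber of a ws test configuration for $q$.
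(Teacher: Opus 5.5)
Your first degeneration matches the paper's. In the irreducible case, the configuration fixing the lc center $p$ and pushing every other point to the opposite point is the test configuration induced by $\mathrm{ord}_p$. Orient the action so that points flow away from $p$: with $t\cdot x=tx$ and orbit closure they collide with $p$, and slc-ness fails. In the nodal case you glue two such configurations at the node, as the paper does. Your extraction of the extreme coefficient vector $c\in\mathrm{Cone}(X)$ is a fine substitute for the paper's remark that a $\mathbb G_m$-invariant quasimap on the central fiber is constant.

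The gap is the polystability paragraph. The proof of Theorem~\ref{thm:bpcY-one-dimensional-moduli} only gives the necessary direction: a strictly lc polystable pair is, or is glued from, $(\mathbb P^1,\{0\}+\{\infty\})$. It does not show such pairs are polystable, and in the normal case this is false in general. For a constant $q'$ on $\mathbb P^1$ with lc centers $p_1,p_2$, polystability depends on how the mass at each $p_i$ splits between $\Delta+w\mathrm{Bs}(q')$ and the polarizing part $D+w'\mathrm{Bs}(q')$. Suppose both $p_i$ carry positive polarizing mass. Blow up $C\times\mathbb A^1$ at $(p_1,0)$ and use the quasimap structure given by the strict transform of $\mathrm{Bs}(q')\times\mathbb A^1$. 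This is a nontrivial ws test configuration. Its central fiber is a chain of two $\mathbb P^1$'s, each carrying the node and one $p_i$, and the polarization has positive degree on both components. Hence $q'$ is not polystable.

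A concrete case is $X=\mathbb P^1$, $d=2$, weight $(0,1)$, no boundary, $q=[xy:x^2]$. Your limit is the constant quasimap with simple base points at $0$ and $\infty$, all of it polarizing. It degenerates further to the two-component chain; this is the phenomenon in the Remark after the Corollary in Section~\ref{sec:bpcyqmapswallcrossing}.

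To close the gap you need the paper's second step in the irreducible case. After reaching a constant quasimap on $\mathbb P^1$, the remaining non-polystable situation is two lc centers each with positive polarizing mass. One further blow-up degeneration at $(p_1,0)$ then lands on the two-component chain. Since S-equivalence is the relation generated by ws degenerations, $q$ is S-equivalent to that chain. Polystability of the resulting two-component constant quasimap, here and in your nodal case, must be justified separately rather than by appeal to Theorem~\ref{thm:bpcY-one-dimensional-moduli}. The paper uses \cite[Proposition 8.11]{babwild} for this.
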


\begin{proof}
First, we deal with the case where $C$ is irreducible.
By Lemma \ref{lem--babwild--chain--two--comps}, $C\cong\mathbb{P}^1$.
Indeed, for any general section $f\in \mathrm{Sec}_q(\mathscr{L})$ that corresponds to a general element of $\mathbb{K}^{N+1}$, $(C,(w+w')\mathrm{div}(f)+\Delta+D)$ can be regarded as a boundary polarized CY pair.
Letting $\nu\colon \tilde{C}\to C$ be the normalization, $(\tilde{C},\mathfrak{cond}+\nu^{-1}_*((w+w')\mathrm{div}(f)+\Delta+D))$ is boundary polarized CY again, where $\mathfrak{cond}$ is the conductor divisor.
If $C$ is not normal, then it is easy to see that $\mathfrak{cond}$ has two components and they are lc centers.
However, it never occurs by Lemma \ref{lem--babwild--chain--two--comps}. 
Thus, $C\cong\P^1$.
For any general $f\in\mathrm{Sec}_q(\mathscr{L})$ corresponding to an element of $\mathbb{K}^{N+1}$, then $(\mathbb{P}^1,(w+w')\mathrm{div}(f)+\Delta+D)$ is a boundary polarized CY pair.
Let $p\in C$ be an lc center of $(\mathbb{P}^1,(w+w')\mathrm{div}(f)+\Delta+D)$.
Take a test configuration $\mathcal{C}:=\mathrm{Proj}_{\mathbb{A}^1}(\oplus_{m,\lambda\in\mathbb{Z}_{>0}}\mathscr{F}^\lambda_{\mathrm{ord}_p}H^0(\mathbb{P}^1,\mathcal{O}(m)))$.
Here, we set 
\[
\mathscr{F}^\lambda_{\mathrm{ord}_p}H^0(\mathbb{P}^1,\mathcal{O}(m)):=\{f\in H^0(\mathbb{P}^1,\mathcal{O}(m))|\mathrm{ord}_p(f)\ge\lambda\}.
\]
We say that the above semiample normal test configuration $(\mathcal{C},\mathcal{O}(1))$ for $(\P^1,\mathcal{O}_{\P^1}(1))$ as the {\it test configuration induced by $p$}.
Note that $\mathcal{C}_0\cong\mathbb{P}^1$ and $\mathbb{G}_m$ acts on $\mathcal{C}_0$ in a non-trivial way.
Then we see that for any general $f\in\mathrm{Sec}_q(\mathscr{L})$, if we set $\mathcal{D}$ as the Zariski closure of $((w+w')\mathrm{div}(f)+\Delta+D)\times\mathbb{G}_{m}$, then $(\mathcal{C},\mathcal{D})\to\mathbb{A}^1$ is a $\mathbb{G}_m$-equivariant family of boundary polarized CY pairs.
As \cite[Definition 3.18]{HH2}, we can put $Q\colon \mathcal{C}\to[\mathrm{Cone}(X)_{\mathbb{A}^1}/\mathbb{G}_{m,\mathbb{A}^1}]$ the canonical quasimap structure.
Therefore, $Q|_{\mathcal{C}_0}$ admits a natural structure of a boundary polarized CY quasimap.
Since $Q|_{\mathcal{C}_0}$ admits a non-trivial $\mathbb{G}_{m}$-action, we see that $Q|_{\mathcal{C}_0}$ is constant.
This shows that $q$ is S-equivalent to a constant quasimap from $\mathbb{P}^1$ by the $\mathsf{S}$-completeness.

Now, we may assume that $q$ is constant and $C\cong\P^1$.
For any general $f\in \mathrm{Sec}_q(\mathscr{L})$ corresponding to an element of $\mathbb{K}^{N+1}$, if $(\mathbb{P}^1,(w+w')\mathrm{div}(f)+\Delta+D)$ is polystable as a boundary polarized CY pair, then $q$ is also polystable.
Otherwise, $(\mathbb{P}^1,(w+w')\mathrm{div}(f)+\Delta+D)$ has two lc centers $p_1$ and $p_2$ and $\mathrm{ord}_{p_i}(\Delta+w'\operatorname{Bs}(q))>0$ for each $i=1,2$ by a simple observation.
In this case, we can yield a ws test configuration $Q$ for $q$ as one point blow up of $C\times\mathbb{A}^1$ at $(p_1,0)$ by putting the constant quasimap structure induced by $q_{\mathbb{A}^1}$.
It is easy to see that $Q_0$ is constant.
By \cite[Proposition 8.11]{babwild}, it is not hard to check that $Q_0$ is bpcy polystable.

Next, we deal with the case where $C$ is reducible.
Then $C$ consists of two irreducible components $C_1$ and $C_2$ intersecting at a point $p$ by Lemma \ref{lem--babwild--chain--two--comps}.
Let $q_i:=q|_{C_i}$ for $i=1,2$ and we can regard $q_i$ as a boundary polarized CY quasimap. 
By the irreducible case, we can take the test configurations $Q_1\colon \mathcal{C}_1\to[\mathrm{Cone}(X)_{\mathbb{A}^1}/\mathbb{G}_{m,\mathbb{A}^1}]$ and $Q_2\colon \mathcal{C}_2\to[\mathrm{Cone}(X)_{\mathbb{A}^1}/\mathbb{G}_{m,\mathbb{A}^1}]$ using the inverse images of $p$ for boundary polarized CY quasimaps $q_1$ and $q_2$. 
For any general $f\in\mathrm{Sec}_q(\mathscr{L})$, set $\mathcal{D}_i$ as the Zariski closure of $(p+((w+w')\mathrm{div}(f)+\Delta+D)|_{C_i})\times\mathbb{G}_{m}$ for $i=1,2$.
Since $(\mathcal{C}_1,\mathcal{D}_1)$ and $(\mathcal{C}_2,\mathcal{D}_2)$ are families of boundary polarized CY pairs, we can glue $\mathcal{C}_1$ and $\mathcal{C}_2$ along $\overline{\{p\}\times\mathbb{G}_{m}}$ and construct $(\mathcal{C},\mathcal{D})$ as a test configuration for the boundary polarized CY pair $(C,(w+w')\mathrm{div}(f)+\Delta+D)$ by \cite[Proposition 2.12]{babwild}.
Since $f_1$ and $f_2$ take the same value along $\overline{\{p\}\times\mathbb{G}_{m}}\subset\mathcal{C}$ for any functions $f_1\in\mathrm{Sec}_{Q_1}(\mathscr{L}_{Q_1})$ and $f_2\in\mathrm{Sec}_{Q_2}(\mathscr{L}_{Q_2})$ corresponding to the same coordinate function of $\mathbb{A}^{N+1}$, we can also patch functions $f_1$ and $f_2$ on $\mathcal{C}$ by the universal property of the categorical quotient \cite[Definition 9.4]{kollar2013singularities} and hence $\mathcal{C}$ admits a natural structure $Q$ of family of quasimaps.
It is easy to check that $Q$ is a family of boundary polarized CY quasimaps and note that $Q|_{\mathcal{C}_0}$ admits a non-trivial $\mathbb{G}_m^2$-action.
Therefore, $Q|_{\mathcal{C}_0}$ is constant.
By \cite[Proposition 8.11]{babwild}, we see that $(\mathcal{C}_0,\Delta_0+\mathcal{D}_0+w\mathrm{div}(f))$ is polystable for any $f\in \mathrm{Sec}_{Q|_{\mathcal{C}_0}}(\mathscr{L}')$ corresponding to a general element $e\in\mathbb{K}^{N+1}$, where $\mathscr{L}'$ is the associated line bundle and $\Delta_0$ and $\mathcal{D}_0$ are the degenerations of $\Delta$ and $D$ via the test configuration $\mathcal{C}$.
Then it is not hard to check that $Q|_{\mathcal{C}_0}$ is bpcy polystable.
We complete the proof.
\end{proof}

\begin{lem}\label{lem:stable-is-klt-bpcy}
Let $q\colon(C,\Delta)\to[\mathrm{Cone}(X)/\mathbb{G}_m]$ be a boundary polarized CY quasimap with weight $(w,w')$ such that $w+w'\le 1$. 
Then $q$ is bpcy stable if and only if $q$ is klt.
\end{lem}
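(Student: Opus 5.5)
We prove the two implications separately, passing in both directions to the auxiliary boundary polarized CY pair $(C,\Delta+(w+w')\mathrm{div}(f))$ for a general $f\in\mathrm{Sec}_q(\mathscr{L})$. Recall that $q$ is klt if and only if $(C,\Delta+(w+w')\mathrm{Bs}(q))$ is klt. Since $w+w'\le 1$ and the moving part of a general $f$ has reduced support away from $\mathrm{Bs}(q)$, this is in turn equivalent to the pair $(C,\Delta+(w+w')\mathrm{div}(f))$ being klt.

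\emph{Non-klt implies not stable.} Suppose $q$ is not klt. Then Lemma~\ref{lem:constant-qmaps} gives that $q$ is S-equivalent to a constant bpcy polystable quasimap. I would rather use the explicit ws test configurations built in its proof. If $C$ is reducible, then by Lemma~\ref{lem--babwild--chain--two--comps} it consists of two components meeting at a node. Gluing the test configurations induced by the preimages of the node yields a ws test configuration $Q$ whose central fibre carries a nontrivial $\mathbb{G}_m$-action. If $C\cong\mathbb{P}^1$ and $q$ is not constant, take an lc centre $p$; the test configuration induced by $p$ has nonconstant total space with nontrivial action on $\mathcal{C}_0$. If $C\cong\mathbb{P}^1$ and $q$ is constant, use the ws test configuration of the pair (via $p$, or the blow-up at $(p_1,0)$ when there are two lc centres). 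In every case we must check that $Q$ is nontrivial. By Lemma~\ref{lem:trivial}, it suffices that $(\mathcal{C},\mathcal{D})$ is not $\mathbb{G}_m$-equivariantly $(C\times\mathbb{A}^1,\Delta\times\mathbb{A}^1)$ with trivial action. This holds because $\mathbb{G}_m$ acts nontrivially on $\mathcal{C}_0$, respectively because $\mathcal{C}$ is a nontrivial blow-up. Hence $q$ is not bpcy stable.

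\emph{Klt implies stable.} Suppose $q$ is klt and let $Q$ be a ws test configuration. For general $f$, the closure $\mathcal{D}_f$ of $((w+w')\mathrm{div}(f)+\Delta)\times\mathbb{G}_m$ makes $(\mathcal{C},\mathcal{D}_f)$ a $\mathbb{G}_m$-equivariant family of boundary polarized CY pairs. Its general fibre is klt, and the polarization part $w'\mathscr{L}+D$ is ample. I would then compute $\mathrm{DF}$ for the polarization $L=w'\mathscr{L}+D$. Since $K+\Delta+(w+w')\mathscr{L}\sim_{\mathbb{R}}0$ on the family and the central fibre is slc, we get $\mathrm{DF}_q(\mathcal{C},\mathcal{L})=0$. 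This should follow by the same intersection computation as for log CY pairs, since $\mathcal{C}_0$ reduced and slc forces the log discrepancy of $\mathcal{C}_0$ to vanish.

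On the other hand, Proposition~\ref{prop:chara--K-stability--qmaps}(2) says that $q$ is K-stable with respect to $L$ when $q$ is klt. So if $\mathcal{L}$ is ample, K-stability forces the existence of an equivariant isomorphism $(\mathcal{C},\mathcal{D})\cong(C\times\mathbb{A}^1,\Delta\times\mathbb{A}^1)$. Ampleness is available, because $\mathcal{L}:=w'\mathscr{L}_Q+\mathcal{D}_2$ is relatively ample by the definition of a family of bpcy quasimaps. Lemma~\ref{lem:trivial} then upgrades this isomorphism to triviality of $Q$. Along the way we reduce to rational coefficients by Lemma~\ref{lem-wall-same:bdcy}.

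The main obstacle is the vanishing $\mathrm{DF}_q=0$ for an arbitrary ws test configuration, which needs care when $\mathcal{C}$ is not normal (reducible $C$). I would first try passing to the normalization via \cite[Remark 3.19]{boucksom2017uniform}, following the reduction in the proof of Lemma~\ref{lem;K-stqmaps and log-twisted}. If that fails, I would fall back on the log twisted formula of Lemma~\ref{lem;K-stqmaps and log-twisted} together with Lemma~\ref{lem:J-functional--computation} applied to a general $\mathrm{div}(f)$. A secondary check is that the klt condition transfers to a general $f$ when $w+w'=1$, which uses that the moving part of the linear system is reduced.
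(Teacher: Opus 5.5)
Your argument is correct. The direction ``not klt $\Rightarrow$ not bpcy stable'' is the paper's own; the paper also just reuses the test configurations from the proof of Lemma~\ref{lem:constant-qmaps}. For ``klt $\Rightarrow$ bpcy stable'' you take a different route.

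The paper argues with lc places. For general $F$, $(\mathcal{C},\mathcal{D}+(w+w')\mathrm{div}(F))$ is a ws test configuration of a boundary polarized CY pair. Then \cite[Lemma~4.7]{babwild} forces it to be trivial, because two general such pairs have no common lc place, and Lemma~\ref{lem:trivial} concludes.

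You instead prove $\mathrm{DF}_q(\mathcal{C},\mathcal{L})=0$ on every ws test configuration and combine this with the K-stability of klt $q$ from Proposition~\ref{prop:chara--K-stability--qmaps}(2). The vanishing does hold as you suggest. On a common resolution, $\mu^*(K_{\mathcal{C}}+\mathcal{D}_f+\mathcal{C}_0)-\rho^*(K_{C\times\mathbb{A}^1}+\Delta_f\times\mathbb{A}^1+C\times 0)$ is $c$ times the central fibre. Compare coefficients at the strict transform of $C\times 0$ and at the components of the reduced $\mathcal{C}_0$. Lc-ness of $(\mathcal{C},\mathcal{D}_f+\mathcal{C}_0)$, by inversion of adjunction, gives $c\le 0$. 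Lc-ness of the product pair gives $c\ge 0$; this is where $w+w'\le 1$ is used.

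Your route reuses the paper's quasimap K-stability, whose base-locus formulation handles the case $w+w'=1$ automatically. The paper's route is shorter but needs the ``no common lc place'' device precisely for that case. Both rest on the same fact: a nontrivial ws test configuration produces an lc place.

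Two corrections.
\begin{enumerate}
\item Your opening claim that $q$ is klt iff $(C,\Delta+(w+w')\mathrm{div}(f))$ is klt is false when $w+w'=1$ and $q$ is nonconstant, and so is the later ``its general fibre is klt''. The reduced moving zeros of $f$ then carry coefficient $1$ and are lc centres, so reducedness is the obstruction, not the justification. Your klt direction never uses this, so just delete it. In the non-klt direction, however, $p$ must be an lc centre of $(C,\Delta+(w+w')\mathrm{Bs}(q))$, not of the auxiliary pair. The test configuration induced by a general moving zero degenerates $q$ to a constant quasimap whose collapsed point has coefficient $\deg\Delta+(w+w')d=2$, so it is not ws.
\item The non-normality you single out as the main obstacle does not occur. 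Klt forces $C\cong\mathbb{P}^1$, and then the total space of any ws test configuration is normal, being locally $xy=t^k$ at the nodes of $\mathcal{C}_0$.
\end{enumerate}
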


\begin{proof}
Let $Q$ be an arbitrary ws test configuration for $q$ as Definition \ref{de--ws--test--config--forbpcy}.    
Since $w+w'\le1$, for any general element $F\in\mathrm{Sec}_Q(\mathscr{L}_Q)$ corresponding to an element of $\mathbb{K}^{N+1}$, we have that $(\mathcal{C},\mathcal{D}+(w+w')\mathrm{div}(F))$ is a ws test configuration for the boundary polarized CY pair $(C,\Delta+(w+w')\mathrm{div}(F)|_C)$, where $\mathscr{L}_Q$ is the associated line bundle with $Q$.
If $q$ is klt, then $(C,\Delta+(w+w')\mathrm{div}(F)|_C)$ and $(C,\Delta+(w+w')\mathrm{div}(F')|_C)$ have no common lc place for any general $F\in\mathrm{Sec}_Q(\mathscr{L}_Q)$ corresponding to an element of $\mathbb{K}^{N+1}$.
Thus, \cite[Lemma 4.7]{babwild} shows that $(\mathcal{C},\mathcal{D}+(w+w')\mathrm{div}(F))$ is trivial for any general $F$.
Therefore, $Q$ is trivial by Lemma \ref{lem:trivial}.
On the other hand, if $q$ is not klt, then for any general element $f\in \mathrm{Sec}_q(\mathscr{L})$, we can take a nontrivial ws test configuration for $q$ as the proof of Lemma \ref{lem:constant-qmaps}.
We complete the proof.
\end{proof}

\begin{prop}\label{prop--polyst-is-const}
    Bpcy polystable but not stable boundary polarized CY quasimaps with weight $(w,w')$ such that $w+w'\le 1$ are constant and the identitiy component of the automorphism group is a subtorus of $\mathbb{G}_m^2$.
\end{prop}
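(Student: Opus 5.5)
The proof splits into two parts: first show that $q$ is constant, then identify the identity component of its automorphism group.

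\emph{Constancy.} Let $q\colon (C,\Delta+D)\to[\mathrm{Cone}(X)/\mathbb{G}_m]$ be bpcy polystable but not bpcy stable, of weight $(w,w')$ with $w+w'\le1$. By Lemma~\ref{lem:stable-is-klt-bpcy}, $q$ is not klt. Lemma~\ref{lem:constant-qmaps} then gives a ws test configuration, or a short chain of them, connecting $q$ to a constant bpcy polystable quasimap $q'$. Its construction degenerates $q$ to the central fiber of the test configuration induced by an lc center, or by the node when $C$ is reducible, and the central fibre carries a nontrivial torus action, which forces constancy. For these degenerations we have $Q_1\cong q$ and $Q_0$ constant. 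Since $q$ is polystable, every ws test configuration satisfies $Q_0\cong q$, so $q$ itself is isomorphic to a constant quasimap. The point to check carefully is that the first degeneration in Lemma~\ref{lem:constant-qmaps} is a genuine ws test configuration of $q$, not just an S-equivalence produced by $\mathsf S$-completeness. This is exactly what the construction via $\mathrm{Proj}$ of the filtration $\mathscr F^\lambda_{\mathrm{ord}_p}$, or the gluing via \cite[Proposition 2.12]{babwild}, provides. If one only had S-equivalence, I would argue instead through uniqueness of polystable degenerations, using $\mathsf S$-completeness from Theorem~\ref{thm:s-complete-bpcyqmaps} and \cite[Lemma 3.24]{AHLH}.

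\emph{Automorphism group.} The group $\mathrm{Aut}(q)$ acts on $C$, preserving $\Delta$, $D$ and the quasimap structure. Because $q$ is constant, the datum $q_{\hom}\colon\mathcal O^{N+1}\to\mathscr L$ factors as $\mathscr L\cong\mathcal O_C(\mathrm{Bs}(q))$ times a fixed point of $\mathrm{Cone}(X)\setminus 0$, up to the $\mathbb G_m$-scaling. Consequently $\mathrm{Aut}(q)$ is a closed subgroup of $\mathrm{Aut}(C,\Delta+D+w\,\mathrm{Bs}(q))$, extended by the stabilizer in $\mathbb{G}_m$ of $q_{\hom}$. That stabilizer is trivial up to the identification already built into isomorphisms of quasimaps, so it contributes no extra torus.

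The connected automorphism group of the pair is described by Lemma~\ref{lem--babwild--chain--two--comps} and its proof, since $p_a(C)=0$ and $C$ has at most two components.
\begin{itemize}
\item If $C\cong\mathbb P^1$, the boundary is supported on at most two points $\{0,\infty\}$. The boundary is nonzero because $w'\mathscr L+D$ is ample, and the polystable pair is $(\mathbb P^1,\{0\}+\{\infty\})$-like. Hence $\mathrm{Aut}^0\subseteq\mathbb G_m$.
\item If $C=C_1\cup C_2$ meets at a node, each component has its support at the node and at one other point. Then $\mathrm{Aut}^0\subseteq\mathbb G_m\times\mathbb G_m$, one scaling factor per component, fixing the node and the other marked point.
\end{itemize}
In both cases $\mathrm{Aut}^0(q)$ is a closed connected subgroup of a torus in $\mathbb G_m^2$, hence a subtorus.

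The main obstacle is justifying that the support of $\Delta+D+w\,\mathrm{Bs}(q)$ on each component is exactly two points, so that no unipotent or $\mathrm{PGL}_2$-part survives. I would deduce this from the polystability argument in Lemma~\ref{lem:constant-qmaps}. If a component's boundary had three or more points, it would be klt on that component, contradicting the unique-lc-center statement of Lemma~\ref{lem--babwild--chain--two--comps}. If it had fewer than two points, then either the CY condition fails or the degeneration at an lc center produces a non-isomorphic central fibre, contradicting polystability. This is the case analysis of \cite[Proposition 8.11]{babwild}.
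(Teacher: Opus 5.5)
Your proposal takes essentially the same route as the paper. Constancy comes from Lemma~\ref{lem:stable-is-klt-bpcy}, the explicit test configurations with constant central fibre from the proof of Lemma~\ref{lem:constant-qmaps}, and polystability. For the automorphism group, $\mathrm{Aut}^0(q)$ acts effectively on the boundary polarized CY pair $(C,\Delta+D+(w+w')\mathrm{Bs}(q))$, and \cite[Proposition 8.11]{babwild} applies to it; note the coefficient should be $w+w'$, not $w$.

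Your supplementary case analysis is inverted, although the citation covers it:
\begin{itemize}
\item A component whose boundary has three or more points is still not klt, since it keeps an lc center of coefficient one. It is excluded instead because the test configuration induced by that lc center collapses the remaining points, which contradicts polystability.
\item A one-point support is excluded by the CY degree condition together with the slc condition.
\end{itemize}
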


\begin{proof}
    Let $Q$ be a ws test configuration for a constant boundary polarized CY quasimap $q$.
    Then $Q_0$ is also constant.
    Combining this fact with the proof of Lemma \ref{lem:constant-qmaps}, we see that polystable but not stable boundary polarized CY quasimaps are constant.

    Next, we deal with such a bpcy polystable quasimap $q$ and its automorphism group.
    Note that $q$ is constant and hence $\mathrm{Aut}^0(q)$ acts on the boundary polarized CY pair $(C,\Delta+(w+w')\mathrm{div}(f))$ for any general $f\in\mathrm{Sec}_q(\mathscr{L})$ effectively.
    Since $\mathrm{dim}\,C=1$, \cite[Proposition 8.11]{babwild} shows that $\mathrm{Aut}^0(q)$ is a subgroup of $\mathbb{G}_m^2$.
\end{proof}

We will state the following for the reader's convenience, which is concerned with the structure of the moduli stack and is well-known by experts.

\begin{prop}\label{prop:bpcy-stack--points--characterizations}
Let $x$ and $x'$ are $\mathbb{K}$-valued points of $\mathcal{M}^{\mathrm{bpcyqmaps}}_{d,\vec{a_1},\vec{d_1},\vec{a_2},\vec{d_2},w,w',\iota}$ defined by boundary polarized CY quasimaps $q$ and $q'$ such that $w+w'\le1$. 
Then the following hold.
\begin{enumerate}
    \item $q$ and $q'$ are S-equivalent if and only if $\overline{\{x\}}\cap\overline{\{x'\}}\ne\emptyset$,
    \item if $x_1,x_2\in \overline{\{x\}}$ are $\mathbb{K}$-valued points, then $\overline{\{x_1\}}\cap \overline{\{x_2\}}\ne\emptyset$,
    \item $q$ is bpcy polystable if and only if $\{x\}$ is closed in $\mathcal{M}^{\mathrm{bpcyqmaps}}_{d,\vec{a_1},\vec{d_1},\vec{a_2},\vec{d_2},w,w',\iota}$,
    \item $\{x\}$ is closed in $\mathcal{M}^{\mathrm{bpcyqmaps}}_{d,\vec{a_1},\vec{d_1},\vec{a_2},\vec{d_2},w,w',\iota}$ and has a finite stabilizer if and only if $q$ is bpcy stable.
\end{enumerate}
\end{prop}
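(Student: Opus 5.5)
The plan is to reduce all four items to the general theory of Artin stacks with good moduli spaces, using Theorem \ref{thm:s-complete-bpcyqmaps}, Lemma \ref{lem:stable-is-klt-bpcy}, Lemma \ref{lem:constant-qmaps} and Proposition \ref{prop--polyst-is-const}. Write $\mathcal{M}$ for $\mathcal{M}^{\mathrm{bpcyqmaps}}_{d,\vec{a_1},\vec{d_1},\vec{a_2},\vec{d_2},w,w',\iota}$. By Proposition \ref{prop:bpcy--moduli--qmaps--const}, $\mathcal{M}$ is of finite type with affine diagonal. By Theorem \ref{thm:s-complete-bpcyqmaps}, it has a good moduli space $\pi\colon\mathcal{M}\to M$. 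Hence each fibre of $\pi$ over a $\mathbb{K}$-point contains a unique closed point \cite[Proposition 9.1]{Alper}, and two $\mathbb{K}$-points have intersecting closures if and only if they map to the same point of $M$.

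\textbf{Item (2)} is immediate from this: $x_1,x_2\in\overline{\{x\}}$ have the same image in $M$.

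\textbf{Item (1).} First I identify specializations of $x$ with ws test configurations.
\begin{enumerate}
\item A ws test configuration $Q$ for $q$ is a morphism $\Theta=[\mathbb{A}^1/\mathbb{G}_m]\to\mathcal{M}$ sending $1\mapsto x$. Hence $Q_0\in\overline{\{x\}}$, which gives the direction ``S-equivalent $\Rightarrow$ closures meet''.
\item Conversely, by the Hilbert--Mumford type result for stacks with affine diagonal (e.g.\ \cite[Lemma 3.24]{AHLH}, or the existence of a $\Theta$-degeneration to the closed point in a fibre of $\pi$), for any closed point $y\in\overline{\{x\}}$ there is a morphism $\Theta\to\mathcal{M}$ with $1\mapsto x$ and $0\mapsto y$. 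Such a morphism is exactly a $\mathbb{G}_m$-equivariant family over $\mathbb{A}^1$, that is, a ws test configuration.
\end{enumerate}
So if the closures of $x$ and $x'$ meet, both degenerate to the common closed point of their fibre, and $q$ and $q'$ are S-equivalent.

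\textbf{Item (3).} If $\{x\}$ is closed, every $Q_0$ lies in $\overline{\{x\}}=\{x\}$, so $Q_0\cong q$ and $q$ is polystable. Conversely, suppose $q$ is polystable and let $y$ be the closed point of $\overline{\{x\}}$. By item (1) there is a ws test configuration from $q$ to $y$, and polystability forces $y\cong q$, so $\{x\}$ is closed.

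\textbf{Item (4).} If $q$ is bpcy stable, then it is polystable, so $x$ is closed by item (3). If its stabilizer were infinite, its identity component, being reductive because $x$ is closed \cite{Alper}, would contain a $\mathbb{G}_m$. That one-parameter subgroup produces a product-type ws test configuration on $q_{\mathbb{A}^1}$ with nontrivial $\mathbb{G}_m$-action, contradicting stability. Conversely, suppose $x$ is closed with finite stabilizer.
\begin{enumerate}
\item By item (3), $q$ is polystable.
\item If $q$ were not stable, Proposition \ref{prop--polyst-is-const} shows that $q$ is constant with $\mathrm{Aut}^0(q)$ a subtorus of $\mathbb{G}_m^2$. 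I must check that this torus is nontrivial. By Lemma \ref{lem:stable-is-klt-bpcy}, $q$ is not klt, so it has an lc center. The construction in Lemma \ref{lem:constant-qmaps} then gives a nontrivial ws test configuration, whose central fibre is isomorphic to $q$ by polystability. That central fibre carries a nontrivial $\mathbb{G}_m$-action, contradicting finiteness of the stabilizer.
\end{enumerate}

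The main obstacle is the step in item (1) that realizes every specialization to the closed point by a single test configuration over $\mathbb{A}^1$. I would justify it through the local structure theorem for stacks with good moduli spaces, namely the étale-local quotient presentation $[\mathrm{Spec}\,A/G_y]$ with $G_y$ reductive, combined with the classical Hilbert--Mumford criterion there. The final step is to note that a $\mathbb{G}_m$-equivariant family over $\mathbb{A}^1$ is precisely a ws test configuration in the sense of Definition \ref{de--ws--test--config--forbpcy}.
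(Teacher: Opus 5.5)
Your proposal is correct and follows essentially the same route as the paper. The paper's proof just reruns the argument of \cite[Lemma 6.10 and Proposition 6.13]{babwild}: good moduli space theory together with $\Theta$-degenerations to the closed point of $\overline{\{x\}}$ (as in \cite[Lemma 3.24]{AHLH}), with Lemma \ref{lem:stable-is-klt-bpcy} and Proposition \ref{prop--polyst-is-const} as the case-specific input for item (4), exactly as you do.
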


\begin{proof}
It follows from Lemma \ref{lem:stable-is-klt-bpcy} and Proposition \ref{prop--polyst-is-const} as the proof of \cite[Lemma 6.10 and Proposition 6.13]{babwild}.   
\end{proof}

\begin{prop}\label{prop:properness-bpcy}
$\mathcal{M}^{\mathrm{bpcyqmaps}}_{d,\vec{a_1},\vec{d_1},\vec{a_2},\vec{d_2},w,w',\iota}$ satisfies the existence part of the valuative criterion.    
In particular, $M^{\mathrm{bpcyqmaps}}_{d,\vec{a_1},\vec{d_1},\vec{a_2},\vec{d_2},w,w',\iota}$ is proper.
\end{prop}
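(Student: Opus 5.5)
We reduce to the rational case with $w+w'\in(0,1]$, exactly as in the proofs of Proposition \ref{prop:bpcy--moduli--qmaps--const} and Theorem \ref{thm:s-complete-bpcyqmaps}. This uses Lemmas \ref{lem-wall-same:bdcy} and \ref{lem:bpcyw>11}. When $w+w'>1$ the stack is a K-moduli stack of log Fano quasimaps, and Theorem \ref{thm:K-moduli--weighted--log-Fano-qmaps} gives properness. When $w=w'=0$ the statement is \cite[Theorem 7.1]{babwild}, as used in Theorem \ref{thm:bpcY-one-dimensional-moduli}.

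Let $R$ be a DVR with fraction field $K$, and let $q_K\colon(C_K,\Delta_K+D_K)\to[\mathrm{Cone}(X)_K/\mathbb{G}_{m,K}]$ be a boundary polarized CY quasimap of weight $(w,w')$. After a finite extension of $R$, we may pick a general $f_K\in\mathrm{Sec}_{q_K}(\mathscr{L}_K)$ coming from $\mathbb{K}^{N+1}$. Then
\[
(C_K,\Delta_K+D_K+(w+w')\mathrm{div}(f_K))
\]
is a one-dimensional boundary polarized CY pair with rational coefficients. Its polarization part is $D_K+w'\mathrm{div}(f_K)$, which is ample, and its degree $v=w'd+\vec a_2\cdot\vec d_2$ is fixed. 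By the existence part of the valuative criterion for $\mathcal{M}^{\mathrm{bpCY}}_{r,s,v}$ \cite[Theorem 7.1]{babwild}, after a further finite extension we obtain a family $(\mathcal C,\Delta+\mathcal D+\mathcal D_1)\to\mathrm{Spec}\,R$ extending this pair. This family must be compatible with the splitting of the boundary into the weighted divisors $D_j^{(i)}$ and the quasimap part.

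To achieve this compatibility, we keep track of the individual components. We work with the weighted version whose components are $D_1^{(i)}$, $D_2^{(i)}$ and $\mathrm{div}(f_K)$, and take the Zariski closures in $\mathcal C$. Each closure is a relative Mumford divisor supported in the smooth locus, hence relative Cartier. The limit pair is slc, so $\mathcal D_1$ avoids the nodes of the central fibre.

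We then extend the quasimap structure as in the proof of Theorem \ref{thm:s-complete-bpcyqmaps}. Set $\mathscr L:=\mathcal O_{\mathcal C}(\frac{1}{w+w'}\mathcal D_1)$; this is a line bundle extending $\mathscr L_K$, and $f_K$ extends to a section $f$ of it. Each other coordinate section $g_K$ extends to a rational section of $\mathscr L$. Its only possible poles lie along components of the central fibre. We kill these poles by twisting $\mathscr L$ by components of $\mathcal C_0$, using the $\mathbb{G}_m$-weights as in \cite[Theorem 3.28]{HH2} or the standard quasimap properness argument.

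The main obstacle is that this twisting can destroy the CY condition on a component, or make $f$ vanish identically there. To handle it, I would use generality. We choose $f$ so that its divisor on the generic fibre avoids all the finitely many limit lc centres. Since the coordinate sections generate a base-point-free linear system away from $\mathrm{Bs}(q_K)$, the general limit divisor $\mathcal D_1$ agrees with the limit of $\mathrm{div}(g_K)$ for general $g$. Hence all $g$ extend as regular sections without any twist.

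It remains to verify the conditions on the special fibre. The slc condition for $(\mathcal C_0,\Delta_0+\mathcal D_0+(w+w')\mathrm{Bs}(q_0))$ follows because $\mathrm{Bs}(q_0)\le\mathrm{div}(f_0)$, so this boundary is dominated by the boundary of the limit pair, which is slc. The $\mathbb R$-linear triviality and the relative ampleness of $\vec a_2\cdot\vec{\mathcal D}_2+w'\mathscr L$ are inherited from the pair. The quasimap condition at generic points of $\mathcal C_0$ follows from the fact that $f_0$ is nonzero there. Closedness of $[\mathrm{Cone}(X)/\mathbb{G}_m]$ gives that the family factors through it. Properness of the good moduli space then follows from \cite[Theorem A]{AHLH} together with Theorem \ref{thm:s-complete-bpcyqmaps}.
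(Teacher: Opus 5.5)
Your proposal has a genuine gap at the step where the remaining coordinate sections extend ``without any twist''. That step borrows the mechanism of the $\mathsf{S}$-completeness proof (Theorem \ref{thm:s-complete-bpcyqmaps}). There, the sections extend because the missing locus over $\mathbf{0}\in\overline{\mathrm{ST}}(R)$ has codimension two in $\mathcal{C}$, so the $S_2$ property forces the extension. In the valuative criterion the missing locus is the central fibre, which is a divisor, and the rational functions $g_{i,K}/f_K$ can have poles along its components.

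Your genericity argument does not rule this out, because it is circular. The model $\mathcal{C}$ produced by the existence part for $\mathcal{M}^{\mathrm{bpCY}}_{r,s,v}$ depends on $f_K$. Moreover, \cite{babwild} gives no control over which of the many, merely $\mathsf{S}$-equivalent, limits you obtain. So ``the finitely many limit lc centres'' are not available before $f$ is chosen.

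Here is a concrete failure. Take $X=\mathbb{P}^1$, $d=1$, $q_K$ the identity of $\mathbb{P}^1_K$, one boundary point $p_1$ with coefficient $1$ in $\vec a_1$, and $(w,w')=(0,1)$. For any general $f_K$ with zero $x_f$, take a coordinate centred at $x_f$ and rescale it by $\pi^k$ with $k\ge1$. The resulting model of $\mathbb{P}^1_R$ is a legitimate bpCY limit of $(\mathbb{P}^1_K,p_1+\mathrm{div}(f_K))$, with special fibre $(\mathbb{P}^1,[0]+[\infty])$. On this model:
\begin{enumerate}
\item each $g_{i,K}/f_K$ has a pole of order $k$ along $\mathcal{C}_0$;
\item for another general $g$, the limit of $\mathrm{div}(g_K)$ is $[\infty]$, not the limit $[0]$ of $\mathrm{div}(f_K)$, contrary to your claim;
\item after the forced twist by $\pi^k$, the section $f$ vanishes on $\mathcal{C}_0$. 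The limit quasimap is then constant, with its base point at $[\infty]$ colliding with the limit of $p_1$, so it is not slc.
\end{enumerate}
The proposition still holds in this example, via the standard model. But ``extend the auxiliary pair, then reinstall the quasimap'' needs an extra argument that selects a limit on which $\mathrm{ord}_E(f_K)=\min_i\mathrm{ord}_E(g_{i,K})$ for every component $E$ of $\mathcal{C}_0$. That selection is essentially the content of the proposition.

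The paper avoids this by never extending the pair separately from the quasimap. When $\mathcal{C}_K$ is geometrically irreducible, it perturbs to coefficients $(1-\epsilon)(\vec a_1,\vec a_2)$ and weight $(1-\epsilon)(w+w')$. It checks K-semistability via Proposition \ref{prop:chara--K-stability--qmaps}(3) and then uses properness of the K-moduli of log Fano quasimaps (Theorem \ref{thm:K-moduli--weighted--log-Fano-qmaps}), whose limits already carry the quasimap structure. Since that limit has smooth fibres, Lemma \ref{lem-wall-same:bdcy} turns it back into a bpcy quasimap family of weight $(w,w')$.

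When $\mathcal{C}_K$ is reducible, the paper treats the two components of the normalization separately, with the conductor point included in the boundary. It extends each by the irreducible case and adds a section disjoint from the boundary so that each becomes a stable pair. It then glues by \cite[Corollary 11.41]{kollar-moduli} and descends the coordinate sections via the categorical quotient.
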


\begin{proof}
Since $\mathcal{M}^{\mathrm{bpcyqmaps}}_{d,\vec{a_1},\vec{d_1},\vec{a_2},\vec{d_2},w,w',\iota}$ is of finite type over $\mathbb{K}$, it suffices to check that $\mathcal{M}^{\mathrm{bpcyqmaps}}_{d,\vec{a_1},\vec{d_1},\vec{a_2},\vec{d_2},w,w',\iota}$ satisfies the existence part of the valuative criterion for any discrete valuation ring essentially of finite type over $\mathbb{K}$ (cf.~\cite[Lemma A.11]{AHLH}).
By \cite[Theorem A]{AHLH}, Theorem \ref{thm:K-moduli--weighted--log-Fano-qmaps}, Lemmas \ref{lem-wall-same:bdcy} and \ref{lem:bpcyw>11}, we may assume that $w+w'\in\mathbb{Q}\cap [0,1]$, $w\in\Q$ and $\vec{a}_1$ and $\vec{a}_2$ are rational.
We may also exclude the case where $w'=0$ and $\vec{a}_2=\vec0$.
Note that the $w+w'=0$ case follows from the proof of Theorem \ref{thm:bpcY-one-dimensional-moduli} and we may assume that $w+w'>0$.
Let $R$ be a DVR essentially of finite type over $\mathbb{K}$ with the fractional field $K$.
Let $q_K\colon (\mathcal{C}_K,\vec{\Delta}_{1,K}+\vec{\Delta}_{2,K})\to [\mathrm{Cone}(X)_K/\mathbb{G}_{m,K}]$ be a family of boundary polarized CY quasimaps with weight $(w,w')$.
First, we deal with the case where $\mathcal{C}_K$ is geometrically irreducible.
Consider $q_K\colon (\mathcal{C}_K,(1-\epsilon)(\vec{\Delta}_{1,K}+\vec{\Delta}_{2,K}))\to [\mathrm{Cone}(X)_K/\mathbb{G}_{m,K}]$ with weight $(1-\epsilon)(w+w')$ for any sufficiently small $\epsilon>0$.
Then it is easy to check that $q_K\colon (\mathcal{C}_K,(1-\epsilon)(\vec{\Delta}_{1,K}+\vec{\Delta}_{2,K}))\to [\mathrm{Cone}(X)_K/\mathbb{G}_{m,K}]$ with weight $(1-\epsilon)(w+w')$ is K-semistable by using Proposition \ref{prop:chara--K-stability--qmaps} (3).
By the properness of K-moduli space of log Fano quasimaps (cf.~Theorem \ref{thm:K-moduli--weighted--log-Fano-qmaps} and \cite[Theorem 1.1]{bejleri2025proper}), there exist a finite extension $R\subset R'$ of Dedekind domains and a family of K-semistable quasimaps $q\colon(\mathcal{C},(1-\epsilon)(\vec{\Delta}_{1}+\vec{\Delta}_{2}))\to [\mathrm{Cone}(X)_{R'}/\mathbb{G}_{m,R'}]$ with weight $(1-\epsilon)(w+w')$ for a fixed $\epsilon$ such that $q_{K'}\cong (q_{K})_{K'}$, where $K'$ is the fractional field of $R'$.
Since the fibers of $\mathcal{C}$ over closed points of $\mathrm{Spec}\,R'$ are smooth, we see that $q\colon(\mathcal{C},\vec{\Delta}_{1}+\vec{\Delta}_{2})\to [\mathrm{Cone}(X)_{R'}/\mathbb{G}_{m,R'}]$ with weight $(w,w')$ is a family of boundary polarized CY quasimaps if you choose $\epsilon$ sufficiently small by Lemma \ref{lem-wall-same:bdcy}.
Here, note that we can always find a section $\sigma\colon \mathrm{Spec}\,R'\to \mathcal{C}$ such that $\mathrm{Supp}(\vec{\Delta}_{1}+\vec{\Delta}_{2})$ and the image of $\sigma$ are disjoint.

For the case where $\mathcal{C}_K$ is not normal, replacing $R$ with its finite extension, we may assume that $\mathcal{C}_K$ is reducible and take the normalization of $\mathcal{C}_K$.
By replacing $R$ with its finite cover, we may assume that the normalization has two connected components $\mathcal{C}^{(1)}_{K}$ and $\mathcal{C}^{(2)}_{K}$, where $q|_{\mathcal{C}^{(1)}_{K}}$ and $q|_{\mathcal{C}^{(2)}_{K}}$ are boundary polarized CY.
By the argument above, we see that there exist a finite extension of Dedekind domains $R\subset R'$, families of boundary polarized CY quasimaps $q^{(1)}\colon(\mathcal{C}^{(1)},D^{(1)}+\vec{\Delta}_{1}^{(1)}+\vec{\Delta}_{2}^{(1)})\to [\mathrm{Cone}(X)_{R'}/\mathbb{G}_{m,R'}]$, $q^{(2)}\colon(\mathcal{C}^{(2)},D^{(2)}+\vec{\Delta}_{1}^{(2)}+\vec{\Delta}_{2}^{(2)})\to [\mathrm{Cone}(X)_{R'}/\mathbb{G}_{m,R'}]$ and sections $\sigma^{(1)}\colon \mathrm{Spec}\,R'\to \mathcal{C}^{(1)}$ and $\sigma^{(2)}\colon \mathrm{Spec}\,R'\to \mathcal{C}^{(2)}$ such that $q^{(1)}$ and $q^{(2)}$ coincide with the pullback of $q|_{\mathcal{C}^{(1)}_K}$ and $q|_{\mathcal{C}^{(2)}_K}$, respectively along the generic fiber over $\mathrm{Spec}\,R'$, $D^{(i)}$ is the closure of the conductor divisor,  and the image of $\sigma^{(i)}$ is disjoint from $\mathrm{Supp}(D^{(i)}+\vec{\Delta}_{1}^{(i)}+\vec{\Delta}_{2}^{(i)})$ for $i=1,2$.
Then, we see that for any general $e\in\mathbb{K}^{N+1}$, let $f^{(1)}$ and $f^{(2)}$ be the corresponding sections over $\mathcal{C}^{(1)}$ and $\mathcal{C}^{(2)}$, respectively.
We see that $(\mathcal{C}^{(1)},D^{(1)}+\vec{\Delta}_{1}^{(1)}+\vec{\Delta}_{2}^{(1)}+(w+w')\mathrm{div}(f^{(1)})+\sigma^{(1)}(\mathrm{Spec}\,R'))$ and $(\mathcal{C}^{(2)},D^{(2)}+\vec{\Delta}_{1}^{(2)}+\vec{\Delta}_{2}^{(2)}+(w+w')\mathrm{div}(f^{(2)})+\sigma^{(2)}(\mathrm{Spec}\,R'))$ are stable families over $\mathrm{Spec}\,R'$.
By \cite[Corollary 11.41]{kollar-moduli}, there exists a stable family $(\mathcal{C},\vec{\Delta}_{1}+\vec{\Delta}_{2}+(w+w')D+\Sigma^{(1)}+\Sigma^{(2)})$ over $\mathrm{Spec}\,R'$ whose normalization coincides with the disjoint union of $(\mathcal{C}^{(1)},D^{(1)}+\vec{\Delta}_{1}^{(1)}+\vec{\Delta}_{2}^{(1)}+(w+w')\mathrm{div}(f^{(1)})+\sigma^{(1)}(\mathrm{Spec}\,R'))$ and $(\mathcal{C}^{(2)},D^{(2)}+\vec{\Delta}_{1}^{(2)}+\vec{\Delta}_{2}^{(2)}+(w+w')\mathrm{div}(f^{(2)})+\sigma^{(2)}(\mathrm{Spec}\,R'))$, where $\Sigma^{(i)}$ is the image of ${\sigma}^{(i)}(\mathrm{Spec}\,R')$ for $i=1,2$ and the inverse image of $D$ coincides with $\mathrm{div}(f^{(1)})+\mathrm{div}(f^{(2)})$.
We see that $D$ is a Cartier divisor.
$f^{(1)}$ and $f^{(2)}$ can be glued along $D^{(1)}$ and $D^{(2)}$ to a unique global section $f\in H^0(\mathcal{C},\mathcal{O}_{\mathcal{C}}(D))$ by the universal property of the categorical quotient \cite[Definition 9.4]{kollar2013singularities}.
For any $e'\in\mathbb{A}^{N+1}$, let $g^{(1)}$ and $g^{(2)}$ be the corresponding sections over $\mathcal{C}^{(1)}$ and $\mathcal{C}^{(2)}$.
The restrictions of $g^{(1)}$ and $g^{(2)}$ to $D^{(1)}$ and $D^{(2)}$, respectively, coincide with each other generically, and hence $g^{(1)}$ and $g^{(2)}$ descend to $g\in H^0(\mathcal{C},\mathcal{O}(D))$ by the universal property of the categorical quotient \cite[Definition 9.4]{kollar2013singularities} and the $S_2$-condition of $\mathcal{C}$.
Such data of $g$ define a structure of a family of quasimaps $q\colon (\mathcal{C},\vec{\Delta}_{1}+\vec{\Delta}_{2})\to[\mathrm{Cone}(X)_S/\mathbb{G}_{m,S}]$ and it is easy to see that $q$ is a family of boundary polarized CY quasimaps with weight $(w,w')$.
Then the last assertion follows from \cite[Theorem A]{AHLH}.
We complete the proof.
\end{proof}

\subsection{Positivity of the Hodge line bundle}

\begin{defn}
    Let $q\colon (\mathcal{C},\Delta_1+\Delta_2)\to [\mathrm{Cone}(X)_S/\mathbb{G}_{m,S}]$ be a family of boundary polarized CY quasimaps over $S$ of finite type over $\mathbb{K}$ with weight $(w,w')$.
    Let $\mathscr{L}$ be the associated line bundle of $q$.
    If $\Delta_1$ and $\Delta_2$ are relative Mumford $\Q$-divisors, $w'\in\Q$ and $w+w'\in\mathbb{Q}\cap [0,1]$, then we set the {\it Hodge line bundle} $\lambda_{\mathrm{Hodge},q}$ as the $\mathbb{Q}$-line bundle
    \[
    \frac{1}{\mu}(\pi_{\mathcal{C}})_*(\mathcal{O}_{\mathcal{C}}(\mu(K_{\mathcal{C}/S}+\Delta_1+\Delta_2+(w+w')\mathscr{L})))
    \]
    for any sufficiently divisible $\mu\in\mathbb{Z}_
    {>0}$.
    Note that $\lambda_{\mathrm{Hodge},q}$ is affine linear in the coefficients of $\Delta_1$ and $\Delta_2$, and $w$ and $w'$ in $\mathrm{Pic}_{\mathbb{Q}}(S)$.
    This $\mathbb{Q}$-line bundle is determined up to $\mathbb{Q}$-linear equivalence by \cite[Lemma 2.21]{CM26} and for any $g\colon T\to S$ morphism from a scheme, $g^*\lambda_{\mathrm{Hodge},q}\sim_{\mathbb{Q}}\lambda_{\mathrm{Hodge},q_T}$.
    By the proof of Proposition \ref{prop:bpcy--moduli--qmaps--const}, we see that $\mathcal{M}^{\mathrm{bpcyqmaps}}_{d,\vec{a_1},\vec{d_1},\vec{a_2},\vec{d_2},w,w',\iota}$ admits a unique $\mathbb{Q}$-line bundle $\lambda_{\mathrm{Hodge},\vec{a}_1,\vec{a}_2,w,w'}$ such that for any $f\colon S\to \mathcal{M}^{\mathrm{bpcyqmaps}}_{d,\vec{a_1},\vec{d_1},\vec{a_2},\vec{d_2},w,w',\iota}$, $f^*\lambda_{\mathrm{Hodge},\vec{a}_1,\vec{a}_2,w,w'}$ is $\Q$-linearly equivalent to $\lambda_{\mathrm{Hodge},q}$, where $q$ is the family of quasimaps corresponding to $f$, and $w'\ne0$ or $\vec{a}_2\ne\vec0$.
   If there is no fear of confusion, we will write it $\lambda_{\mathrm{Hodge}}$ for simplicity.
\end{defn}

\begin{lem}\label{lem:CM-descent}
Suppose that $w,w',\vec{a}_1$ and $\vec{a}_2$ are rational such that $w+w'\le1$. 
Suppose further that $w'\ne0$ or $\vec{a}_2\ne\vec0$.
    On $M^{\mathrm{bpcyqmaps}}_{d,\vec{a_1},\vec{d_1},\vec{a_2},\vec{d_2},w,w',\iota}$, there exists a $\Q$-line bundle $\Lambda_{\mathrm{Hodge},\vec{a}_1,\vec{a}_2,w,w'}$ such that $\pi^*\Lambda_{\mathrm{Hodge},\vec{a}_1,\vec{a}_2,w,w'}\sim_{\Q}\lambda_{\mathrm{Hodge},\vec{a}_1,\vec{a}_2,w,w'}$ unique up to $\Q$-linear equivalence. 
\end{lem}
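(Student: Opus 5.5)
We will descend $\lambda_{\mathrm{Hodge}}$ along $\pi$ with Alper's descent criterion \cite[Theorem 10.3]{Alper}. A line bundle on a stack with good moduli space descends if and only if, for every closed $\mathbb{K}$-point $x$, the stabilizer $G_x$ acts trivially on its fiber at $x$. Since $\Q$-line bundles are involved, we first choose $\mu$ sufficiently divisible so that $\mu\lambda_{\mathrm{Hodge}}$ is an honest line bundle, given by
\[
(\pi_{\mathcal{C}})_*\mathcal{O}_{\mathcal{C}}(\mu(K_{\mathcal{C}/S}+\Delta_1+\Delta_2+(w+w')\mathscr{L})).
\]
We may then replace $\mu$ by a further multiple, so it suffices to show that $G_x$ acts on the fiber through a finite group. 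Uniqueness up to $\Q$-linear equivalence is formal: $\pi^*$ is injective on $\mathrm{Pic}$, because $\pi_*\mathcal{O}=\mathcal{O}$ gives $\pi_*\pi^*M\cong M$.

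By Proposition \ref{prop:bpcy-stack--points--characterizations}, closed points correspond to bpcy polystable quasimaps, and we split into two cases.

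\emph{Stable case.} If $q$ is bpcy stable, the stabilizer is finite by Proposition \ref{prop:bpcy-stack--points--characterizations}(4). The action on the one-dimensional fiber is therefore by roots of unity, and it becomes trivial after raising $\mu$ to a multiple of $|G_x|$.

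\emph{Polystable but not stable case.} Here Proposition \ref{prop--polyst-is-const} says $q$ is constant and $\mathrm{Aut}^0(q)\subset\mathbb{G}_m^2$. The key step is to compute the weight of this torus on
\[
H^0\bigl(C,\mathcal{O}_C(\mu(K_C+\Delta_1+\Delta_2+(w+w')\mathscr{L}))\bigr).
\]
Since $q$ is constant, $\mathscr{L}\cong\mathcal{O}_C(\mathrm{Bs}(q))$ equivariantly, up to a character. Take a general section $f$ corresponding to a constant direction $e\in\mathbb{K}^{N+1}$; for a constant quasimap, $\mathrm{div}(f)=\mathrm{Bs}(q)$. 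Then the fiber is identified, equivariantly, with the fiber of the Hodge line bundle of the boundary polarized CY pair
\[
(C,\Delta_1+\Delta_2+(w+w')\mathrm{Bs}(q)),
\]
twisted by a power of the character of $\mathrm{Aut}^0(q)$ on $\mathscr{L}$ restricted to the section $f$.

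We then use the pair case. For the pair $(C,\Delta_1+\Delta_2+(w+w')\mathrm{Bs}(q))$ the torus acts trivially on the Hodge fiber. This is \cite[Proposition 14.9]{babwild}, as used in Theorem \ref{thm:bpcY-one-dimensional-moduli}: the canonical $\mathbb{G}_m$-invariant $\mu$-pluri-log form $\bigl(\frac{dz}{z}\bigr)^{\mu}$ on each $\mathbb{P}^1$ component with lc centers at $0,\infty$ spans the fiber. It remains to show the extra twist coming from $\mathscr{L}$ is trivial. Because the quasimap is constant, $\mathrm{Aut}^0(q)$ fixes the point $e$ of the cone up to scaling. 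We normalize the linearization of $\mathscr{L}$, as in the proof of Lemma \ref{lem:trivial}, so that the sections $f_i$ coming from $\mathbb{K}^{N+1}$ are invariant; this is possible since automorphisms of quasimaps are compatible with $q_{\mathrm{hom}}$. With this normalization $\mathcal{O}(\mathrm{div}f)$ carries the trivial character.

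Combining the two cases, the character on the fiber is trivial (after passing to a multiple of $\mu$ in the finite case), and \cite[Theorem 10.3]{Alper} yields $\Lambda_{\mathrm{Hodge}}$. The main obstacle is the character computation in the strictly polystable case. In particular, we must check that the identification $\mathscr{L}^{\otimes\mu(w+w')}\cong\mathcal{O}(\mu(w+w')\mathrm{Bs}(q))$ is $\mathrm{Aut}(q)$-equivariant. This is precisely where the definition of an isomorphism of quasimaps (compatibility with $q_{\mathrm{hom}}$) is used.
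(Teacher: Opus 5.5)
Your proposal is correct and is essentially the paper's argument. The paper simply cites the finite-type quotient presentation from the proof of Proposition \ref{prop:bpcy--moduli--qmaps--const} together with the descent argument of \cite[14.9 and 14.10.1]{babwild}, namely Alper's criterion plus triviality of the stabilizer action on the Hodge fiber; you have spelled this out for constant quasimaps, including the point that the stabilizer's action on $\mathscr{L}$ fixes the sections $f_i$.

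One detail to make explicit: a single sufficiently divisible $\mu$ has to kill the finite part of every closed-point stabilizer at once. Two cases need this: the component group of the stabilizer at strictly polystable points, and the whole stabilizer at stable points. Such a $\mu$ exists because the orders of these component groups are bounded on the finite-type stack $[U_6/G]$.
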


\begin{proof}
    The assertion follows from the proof of Proposition \ref{prop:bpcy--moduli--qmaps--const} and the same argument as \cite[14.9 and 14.10.1]{babwild}.
\end{proof}

We also write $\Lambda_{\mathrm{Hodge},\vec{a}_1,\vec{a}_2,w,w'}$ as $\Lambda_{\mathrm{Hodge}}$ for simplicity if there is no fear of confusion.

\begin{thm}\label{thm:proj--CM--bpCY}
    In the same situation as in Lemma \ref{lem:CM-descent},  $\Lambda_{\mathrm{Hodge}}$ is ample. 
\end{thm}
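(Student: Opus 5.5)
We will mirror the proof of Theorem~\ref{thm:bpcY-one-dimensional-moduli}, which follows \cite[Theorem 14.13]{babwild} together with Koll\'ar's ampleness lemma \cite[Theorem 3.11]{kollar-proj}. First we reduce to a situation where the Hodge line bundle of the quasimap stack is a pullback from a moduli stack of one-dimensional boundary polarized CY pairs. Fix $l:=\lceil 2(w+w')\rceil$ or any sufficiently large integer, and for general $e_1,\dots,e_l\in\mathbb{K}^{N+1}$ attach to $q$ the pair $(C,\Delta_1+\Delta_2+\frac{w+w'}{l}\sum_i\mathrm{div}(f_i))$. This is the boundary polarized CY pair observed after Definition~\ref{def:log_qmaps}. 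Its Hodge bundle $f_*\omega^{[k]}(k(\cdots))$ agrees with $\lambda_{\mathrm{Hodge},q}$, because $\sum\mathrm{div}(f_i)\sim l\mathscr{L}$.

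The sections depend on choices, so instead of a morphism of stacks we will work with the frame bundle. Concretely, we use the presentation $[U_6/G]$ from Proposition~\ref{prop:bpcy--moduli--qmaps--const}. The open subset $W\subset U_6\times(\mathbb{A}^{N+1})^l$ on which the resulting pair is slc carries a family of boundary polarized CY pairs, and therefore a morphism to $\mathcal{M}^{\mathrm{bpCY}}_{r,s,v}$. Along this morphism $\Lambda_{\mathrm{Hodge}}$ on $M^{\mathrm{bpCY}}_{r,s,v}$ pulls back to the Hodge bundle on $W$. By Theorem~\ref{thm:bpcY-one-dimensional-moduli} that bundle is ample, so the pulled-back Hodge bundle is nef (semiample) on every proper curve mapping to $W$. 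Taking $G$-invariants, and using that $\pi^*\Lambda_{\mathrm{Hodge}}\sim_\Q\lambda_{\mathrm{Hodge}}$ (Lemma~\ref{lem:CM-descent}), we conclude that $\Lambda_{\mathrm{Hodge}}$ is nef on $M^{\mathrm{bpcyqmaps}}$. A more direct route to nefness is to apply \cite[Theorem 14.13]{babwild} to families over curves after choosing sections generically.

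For ampleness we follow the paper's template. By \cite[Theorem 14.13]{babwild} and \cite[Theorem 3.11]{kollar-proj}, combined with the separatedness and properness of the good moduli space (Theorem~\ref{thm:s-complete-bpcyqmaps}, Proposition~\ref{prop:properness-bpcy}), it suffices to check two things. The first is that $\Lambda_{\mathrm{Hodge}}$ is positive on every curve $B$ in the klt locus along which the polystable objects vary nontrivially. The second is that there are only finitely many points of $M^{\mathrm{bpcyqmaps}}(\mathbb{K})$ represented by non-klt polystable objects. The second point follows from Lemma~\ref{lem:constant-qmaps} and Proposition~\ref{prop--polyst-is-const}. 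Polystable non-stable objects are constant quasimaps, and the underlying polystable pair is $(\mathbb{P}^1,\{0\}+\{\infty\})$ or a union of two such, glued at one point. The discrete data are the multiplicities of $\vec D_1,\vec D_2,\mathrm{Bs}(q)$ at $0,\infty$, which take finitely many values. The remaining datum is the constant value of $q$ in $X$, which is not finite. We handle this point by observing that the Hodge bundle only sees the pair. On the positive-dimensional family of constant quasimaps indexed by $X$ the Hodge bundle is trivial, but these points are identified in the good moduli space only if we are careful. This is the step we expect to be the main obstacle. We will try to show that all constant polystable quasimaps with the same pair data are S-equivalent. This should hold because a constant quasimap on a $\mathbb{G}_m$-fixed pair has its sections of weight forcing the base locus to carry the full degree at $0$ and $\infty$, making $q_{\mathrm{hom}}$ determined by a point of $\mathrm{Cone}(X)$ that degenerates to the vertex. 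If that fails, we instead compare $\Lambda_{\mathrm{Hodge}}$ with the descended CM line bundle of the perturbed K-moduli stack via the open immersion of Theorem~\ref{mainthm:CY-wall-crossing}. The CM bundle is ample by Theorem~\ref{thm:K-moduli--weighted--log-Fano-qmaps} and is a limit of $\epsilon^{-2}$-rescaled Hodge classes.

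For the first point, on the klt locus a nonisotrivial family of klt quasimaps over a proper curve yields, for general sections, a family of klt boundary polarized CY pairs with nonisotrivial variation. This holds because if the pairs were isotrivial for general choices of $e_i$, then the base points and the divisors $\mathrm{div}(f_i)$ would be isotrivial. That would make $q$ isotrivial, since $q_{\mathrm{hom}}$ is determined by the ratios $f_i/f_j$ up to an automorphism. Positivity of the Hodge bundle on such families then follows from \cite[Theorem 14.13]{babwild}, that is, from Ambro's moduli b-divisor being big for maximally varying klt-trivial fibrations.
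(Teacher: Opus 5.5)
Your argument breaks at the non-klt locus. You correctly note that bpcy polystable, non-stable objects are constant quasimaps whose value ranges over $X$. Their image $M^{\mathrm{const}}\subset M^{\mathrm{bpcyqmaps}}$ is therefore positive-dimensional, and the template of Theorem~\ref{thm:bpcY-one-dimensional-moduli}, which needs only finitely many non-klt points, is unavailable.

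Both claims you propose in order to get around this are false.
\begin{itemize}
\item \emph{The Hodge bundle does see the value.} Take a family of constant quasimaps with value $x\colon S\to X$ and horizontal base locus $\mathcal{D}_3$. Then $\mathscr{L}\cong\mathcal{O}_{\mathcal{C}}(\mathcal{D}_3)\otimes\pi_{\mathcal{C}}^*x^*\mathcal{O}_X(1)$, hence $\lambda_{\mathrm{Hodge},q}\sim_{\mathbb{Q}}\lambda^{\mathrm{bpCY}}_{\mathrm{Hodge}}+(w+w')\,x^*\mathcal{O}_X(1)$. If it were trivial along a positive-dimensional family of points of $M^{\mathrm{bpcyqmaps}}$, the theorem would be false.
\item \emph{Constant quasimaps with different values are never S-equivalent.} In a ws test configuration, the rational map $\mathcal{C}\dashrightarrow X$ is the constant $x$ over $\mathbb{G}_m$, so $Q_0$ also has value $x$. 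A degeneration of $q_{\mathrm{hom}}$ to the vertex is excluded, because a quasimap must send generic points into $X$.
\end{itemize}

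The missing step is the paper's. Let $h\colon M^{\mathrm{const}}\to X\times M^{\mathrm{bpCY}}$ record the value and the pair $(\mathcal{C},\vec a_1\cdot\vec{\mathcal{D}}_1+\vec a_2\cdot\vec{\mathcal{D}}_2+(w+w')\mathcal{D}_3)$. This $h$ is finite, and $\Lambda_{\mathrm{Hodge}}|_{M^{\mathrm{const}}}\equiv(w+w')h^*\mathrm{pr}_1^*\mathcal{O}_X(1)+h^*\mathrm{pr}_2^*\Lambda^{\mathrm{bpCY}}_{\mathrm{Hodge}}$, which is ample. One then applies Koll\'ar's criterion in the form ``$\Lambda_{\mathrm{Hodge}}|_V$ is nef and $(\Lambda_{\mathrm{Hodge}}|_V)^{\dim V}>0$ for every irreducible closed $V$''.
\begin{itemize}
\item If $V\subset M^{\mathrm{const}}$, this follows from the ampleness just established.
\item If $V\not\subset M^{\mathrm{const}}$, lift a generically finite cover of $V$ to the stack \cite{bejleri2025proper}. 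The generic fibre is klt by Lemma~\ref{lem:stable-is-klt-bpcy}, and you need \emph{bigness} from the maximally varying klt-trivial fibration. Positivity on curves alone, as in your ``first point'', would only give strict nefness.
\end{itemize}

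There are two smaller problems.
\begin{itemize}
\item Proper curves in $U_6$ lift to your $W\subset U_6\times(\mathbb{A}^{N+1})^l$ only with constant $e_i$, since the fibres are open in an affine space. A fixed choice of $e_i$ can fail to give slc fibres along a whole curve: if the image of a node moves, every hyperplane $\{e\cdot x=0\}$ meets it. So nefness must be obtained as in the paper. Fix general sections only at the generic point, and use that the added boundary is $\mathbb{Q}$-linearly equivalent to $(w+w')\mathscr{L}$.
\item Equal weights on the $\mathrm{div}(f_i)$ forget the scalars of the $f_i$. Your isotriviality argument therefore needs at least $N+2$ sections in general position; the paper's extra terms $\mathrm{div}(f_j+f_{N+1})$ serve exactly this purpose.
\end{itemize}

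Your fallback is a viable alternative, but only in exact form.
\begin{itemize}
\item The last proposition of Section~\ref{sec:bpcyqmapswallcrossing} gives $\lambda_{\mathrm{CM}}\sim_{\mathbb{Q}}4\epsilon(1-\epsilon)\,\phi^*\lambda_{\mathrm{Hodge}}$ on the perturbed K-moduli stack, not a limit of rescaled classes. This holds for families over normal bases, which suffices for numerical checks.
\item The morphism $\varphi$ of Theorem~\ref{thm:wall-crossingbpcytoK-moduli} is a homeomorphism of proper spaces, hence finite and surjective.
\item Ampleness of $\Lambda_{\mathrm{CM}}$ (Theorem~\ref{thm:K-moduli--weighted--log-Fano-qmaps}) then yields nefness and $(\Lambda_{\mathrm{Hodge}}|_V)^{\dim V}>0$ for every $V$.
\end{itemize}
Phrased as a limit, it would give at most nefness. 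This route outsources the constant-locus difficulty to \cite{HH2}, whereas the paper handles it directly.
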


\begin{proof} 
We mainly deal with the case where $w+w'>0$ but the same argument also works for the case where $w+w'=0$ as in Theorem \ref{thm:bpcY-one-dimensional-moduli}.
   Consider the largest open substack $\mathcal{M}^{\mathrm{nconst}}\subset\mathcal{M}^{\mathrm{bpcyqmaps}}_{d,\vec{a_1},\vec{d_1},\vec{a_2},\vec{d_2},w,w',\iota}$ that consists of non-constant quasimaps (cf.~\cite[Lemma 3.26]{HH2}).
   Let $\mathcal{M}^{\mathrm{const}}$ be the reduced structure of the complement of $\mathcal{M}^{\mathrm{nconst}}$ and $M^{\mathrm{const}}$ its good moduli space (cf.~\cite[Lemma 4.14]{Alper}).
   Let $q|_{\mathcal{M}^{\mathrm{const}}}$ be the restriction of the universal family.
   If $(\mathcal{C},\vec{a_1}\cdot\vec{\mathcal{D}_1}+\vec{a_2}\cdot\vec{\mathcal{D}_2})$ is the source of $q$, then we define $\mathcal{D}_3$ as the horizontal part of the base locus of $q$.
   Using the information of the target and the family of boundary polarized CY pairs $(\mathcal{C},\vec{a_1}\cdot\vec{\mathcal{D}_1}+\vec{a_2}\cdot\vec{\mathcal{D}_2}+(w+w')\mathcal{D}_3)$ over $\mathcal{M}^{\mathrm{const}}$, we obtain a morphism $\mathcal{M}^{\mathrm{const}}\to X\times \mathcal{M}^{\mathrm{bpCY}}$, where $\mathcal{M}^{\mathrm{bpCY}}:=\mathcal{M}^{\mathrm{bpCY}}_{r,s,v}$, $r,s$ are the lowest common multiples of the entries of $\vec{a}_1$ and $\vec{a}_2$, $w$ and $w'$, and $v:=dw'+\vec{a}_2\cdot\vec{d}_2$.
   Then, we obtain a morphism $h\colon {M}^{\mathrm{const}}\to X\times {M}^{\mathrm{bpCY}}$.
   By the same argument of the proof of \cite[Lemma 3.42]{HH2}, we see that $h$ is finite and  $\Lambda_{\mathrm{Hodge}}|_{{M}^{\mathrm{const}}}\equiv (w+w')h^*p^*\mathcal{O}_X(1)+h^*q^*\Lambda_{\mathrm{Hodge}}^{\mathrm{bpCY}}$, where $p\colon X\times {M}^{\mathrm{bpCY}}\to X$ and $q\colon X\times {M}^{\mathrm{bpCY}}\to {M}^{\mathrm{bpCY}}$ are the projections, and $\Lambda_{\mathrm{Hodge}}^{\mathrm{bpCY}}$ is the Hodge line bundle on ${M}^{\mathrm{bpCY}}$.
   Therefore, $\Lambda_{\mathrm{Hodge}}|_{{M}^{\mathrm{const}}}$ is ample since $\Lambda_{\mathrm{Hodge}}^{\mathrm{bpCY}}$ is ample by Theorem \ref{thm:bpcY-one-dimensional-moduli}.

   Next, take a morphism $g\colon S\to M^{\mathrm{bpcyqmaps}}_{d,\vec{a_1},\vec{d_1},\vec{a_2},\vec{d_2},w,w',\iota}$ generically finite to the image from a projective normal variety.
   We claim that $g^*\Lambda_{\mathrm{Hodge}}$ is big and nef.
   If $S$ is zero dimensional, this is trivial and we may assume that $\mathrm{dim}\,S>0$.
   If the image of $g$ is contained in ${M}^{\mathrm{const}}$, the assertion follows from the fact that $\Lambda_{\mathrm{Hodge}}|_{{M}^{\mathrm{const}}}$ is ample.
   Therefore, to show the claim, we may assume that the image of $g$ is not contained in ${M}^{\mathrm{const}}$. 
   By \cite[Theorem 1.1]{bejleri2025proper}, replacing $S$ with its generically finite cover, we may assume that $g$ factors through $\mathcal{M}^{\mathrm{bpcyqmaps}}_{d,\vec{a_1},\vec{d_1},\vec{a_2},\vec{d_2},w,w',\iota}$.
   Let $q\colon (\mathcal{C},\vec{a_1}\cdot\vec{\mathcal{D}_1}+\vec{a_2}\cdot\vec{\mathcal{D}_2})\to [\mathrm{Cone}(X)_S/{\mathbb{G}_{m,S}}]$ be the corresponding family over $S$.
   Let $\{e_i\}_{i=0}^{N}$ be a general basis of $\mathbb{K}^{N+1}$ and $f_{j}$ denote the section of $|\mathscr{L}|_q$ corresponding to $e_j$.
Take also a general element $e_{N+1}\in \mathbb{K}^{N+1}$ and set $f_{N+1}$ the corresponding section.   
   Then, the family $$f\colon \left(\mathcal{C},\vec{a_1}\cdot\vec{\mathcal{D}_1}+\vec{a_2}\cdot\vec{\mathcal{D}_2}+\frac{w+w'}{2(N+2)}\sum^{N+1}_{j=0}\mathrm{div}(f_{j})+\frac{w+w'}{2(N+1)}\sum^{N}_{j=0}(\mathrm{div}(f_{j}+f_{N+1}))\right)\to S$$ has maximal variation and the generic fiber is a boundary polarized CY pair by the same argument as \cite[Lemmas 3.35 and 3.43]{CM26}.
   Here, the generic point of $g$ is not mapped to $M^{\mathrm{const}}$ and hence the generic fiber is klt by Lemma \ref{lem:stable-is-klt-bpcy}.
   We note that $f$ is a klt-trivial fibration and by \cite[Theorem 14.4 and Proposition 14.7]{babwild}, we see that the line bundle
   \begin{align*}
   &f_*\mathcal{O}_{\mathcal{C}}\left(M\left(K_{\mathcal{C}/S}+\vec{a_1}\cdot\vec{\mathcal{D}_1}+\vec{a_2}\cdot\vec{\mathcal{D}_2}+\frac{w+w'}{2(N+2)}\sum^{N+1}_{j=0}\mathrm{div}(f_{j})+\frac{w+w'}{2(N+1)}\sum^{N}_{j=0}(\mathrm{div}(f_{j}+f_{N+1}))\right)\right)\\
   &\sim_{\mathbb{Q}}f_*\mathcal{O}_{\mathcal{C}}(M(K_{\mathcal{C}/S}+\vec{a_1}\cdot\vec{\mathcal{D}_1}+\vec{a_2}\cdot\vec{\mathcal{D}_2}+(w+w')\mathscr{L})))\sim_{\mathbb{Q}}M\lambda_{\mathrm{Hodge},q}\sim_{\Q}Mg^*\Lambda_{\mathrm{Hodge}}
   \end{align*}
   is big for any sufficiently divisible $M\in\mathbb{Z}_{>0}$.
   Applying the above argument to the case where $S$ is a smooth curve, we also see that $\Lambda_{\mathrm{Hodge}}$ is nef over $M^{\mathrm{bpcyqmaps}}_{d,\vec{a_1},\vec{d_1},\vec{a_2},\vec{d_2},w,w',\iota}$ by the fact that $\Lambda_{\mathrm{Hodge}}|_{{M}^{\mathrm{const}}}$ is ample. 
   By what we have shown, for any $g$ as above, $(\Lambda_{\mathrm{Hodge}}|_{g(S)})^{\mathrm{dim}\,S}=(\lambda_{\mathrm{Hodge},q})^{\mathrm{dim}\,S}>0$.

   Finally, we see that $\Lambda_{\mathrm{Hodge}}$ is ample by Proposition \ref{prop:properness-bpcy}, \cite[Theorem 3.11]{kollar-proj}, and Chow's lemma for algebraic spaces  (cf.~\cite[Theorem 7.4.1]{olsson2016algebraic}) since for any closed irreducible subspace $V$ of $M^{\mathrm{bpcyqmaps}}_{d,\vec{a_1},\vec{d_1},\vec{a_2},\vec{d_2},w,\iota}$, $\Lambda_{\mathrm{Hodge}}|_V$ is nef and $(\Lambda_{\mathrm{Hodge}}|_V)^{\mathrm{dim}\,V}>0$ (cf.~\cite[Proof of Corollary 4.6]{CM26}).
\end{proof}

\begin{cor}\label{cor:proj:bpcy}
    $M^{\mathrm{bpcyqmaps}}_{d,\vec{a_1},\vec{d_1},\vec{a_2},\vec{d_2},w,w',\iota}$ is projective.
\end{cor}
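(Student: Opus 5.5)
The corollary should follow by combining the results already proved for $\mathcal{M}^{\mathrm{bpcyqmaps}}_{d,\vec{a_1},\vec{d_1},\vec{a_2},\vec{d_2},w,w',\iota}$. The first step is to reduce to rational data with $w+w'\le1$. By Lemma \ref{lem-wall-same:bdcy}, the stack depends only on the relative interior of the face of the polyhedral decomposition in Proposition \ref{prop:chmber--bdcy} that contains the weights. Since these faces are rational polyhedra, we can replace $(\vec{a}_1,\vec{a}_2,w,w')$ by rational weights in the same face without changing the stack. This replacement also preserves the conditions $w'\ne0$ or $\vec{a}_2\ne\vec0$, because those vanishing patterns are constant on the relative interior of a face.

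If $w+w'>1$, Lemma \ref{lem:bpcyw>11} identifies the stack with a K-moduli stack of log Fano quasimaps. That stack has a projective good moduli space by Theorem \ref{thm:K-moduli--weighted--log-Fano-qmaps}. Since good moduli spaces are unique, the claim follows in this case. The excluded case $w'=0,\vec{a}_2=\vec0$ is the empty stack and is trivial.

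In the remaining case, where the weights are rational and $w+w'\le1$, the argument has three steps. By Proposition \ref{prop:bpcy--moduli--qmaps--const} and Theorem \ref{thm:s-complete-bpcyqmaps}, the stack is of finite type with affine diagonal and has a separated good moduli space $M$ of finite type. By Proposition \ref{prop:properness-bpcy}, $M$ is proper. By Lemma \ref{lem:CM-descent} and Theorem \ref{thm:proj--CM--bpCY}, the Hodge $\mathbb{Q}$-line bundle descends to a $\mathbb{Q}$-line bundle $\Lambda_{\mathrm{Hodge}}$ on $M$, and this bundle is ample.

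A proper algebraic space carrying an ample line bundle is a projective scheme (cf.~\cite[Tag 0D32]{stacksproject}). The only point needing care is that $M$ is a priori only an algebraic space; this is handled by the cited Tag, exactly as in the proof of Theorem \ref{thm:bpcY-one-dimensional-moduli}. So $M$ is projective. There is no real obstacle left, since the ampleness of $\Lambda_{\mathrm{Hodge}}$ has already been established.
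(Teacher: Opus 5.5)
Your proposal is correct and follows essentially the paper's own proof. The paper uses the same three cases: $w+w'>1$ via Lemma \ref{lem:bpcyw>11} and Theorem \ref{thm:K-moduli--weighted--log-Fano-qmaps}; rational weights with $w+w'\le 1$ via the ampleness of $\Lambda_{\mathrm{Hodge}}$ in Theorem \ref{thm:proj--CM--bpCY}; and the general case reduced to these by Lemma \ref{lem-wall-same:bdcy}. The only difference is that you spell out ingredients the paper leaves implicit in Theorem \ref{thm:proj--CM--bpCY}: existence and properness of the good moduli space, descent of the Hodge bundle, and the passage from an ample line bundle on a proper algebraic space to projectivity.
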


\begin{proof}
When $w+w'>1$, we see the assertion by Lemma \ref{lem:bpcyw>11} and Theorem \ref{thm:K-moduli--weighted--log-Fano-qmaps}.
    When $w+w'\in\mathbb{Q}\cap [0,1]$, $w'\in\Q$ and $\vec{a}_1$ and $\vec{a}_2$ are rational, then the assertion follows from Theorem \ref{thm:proj--CM--bpCY}.
    In the general case, it follows from Theorem \ref{thm:K-moduli--weighted--log-Fano-qmaps}, Lemmas \ref{lem-wall-same:bdcy} and \ref{lem:bpcyw>11}.
\end{proof}

We remark that the Hodge line bundle $\Lambda_{\mathrm{Hodge},\vec{a}_1,\vec{a}_2,w,w'}$ is affine linear on $w,w'$ and $\vec{a}_1,\vec{a}_2$.

\subsection{Wall crossing from the moduli of boundary polarized CY quasimaps}

\begin{thm}\label{thm--wall-crossing-bpcy}
Let $F$ be a face of $Q_k$ in Proposition \ref{prop:chmber--bdcy}. Let $(\vec{a}_1,\vec{a}_2,w_1,w_2)\in F^\circ$ and $(\vec{a}'_1,\vec{a}'_2,w'_1,w'_2)\in \overline{F}$ such that $\vec{d}_1\cdot \vec{a}_1+\vec{d}_2\cdot \vec{a}_2+d(w_1+w_2)=2$ and $\vec{d}_1\cdot \vec{a}'_1+\vec{d}_2\cdot \vec{a}'_2+d(w_1'+w_2')=2$ with $\vec{a}_2$ or $w_2$ not being zero and $\vec{a}'_2$ or $w'_2$ not being zero. 
Then there exists a canonical morphism $\mathcal{M}^{\mathrm{bpcyqmaps}}_{d,\vec{a_1},\vec{d_1},\vec{a_2},\vec{d_2},w_1,w_2,\iota}\to \mathcal{M}^{\mathrm{bpcyqmaps}}_{d,\vec{a_1}',\vec{d_1},\vec{a_2}',\vec{d_2},w'_1,w'_2,\iota}$.
Furthermore, if $\vec{a}^{(j)}_i\ne0$ if and only if $\vec{a}'^{(j)}_i\ne0$ for any $i=1,2$ and $1\le j\le r_i$, $w_1\ne0$ if and only if $w'_1\ne0$, and $w_2\ne0$ if and only if $w'_2\ne0$, then the above canonical morphism is an open immersion of Artin stacks.
\end{thm}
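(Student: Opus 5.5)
We would follow the pattern of Theorem \ref{thm:wallcrossingmorphKss}, combined with the contraction machinery of Propositions \ref{prop:pushfoward}, \ref{prop:push--linear-equivalence} and \ref{prop:contraction-thm}. By Lemma \ref{lem-wall-same:bdcy} we may perturb $(\vec a_1,\vec a_2,w_1,w_2)$ inside $F^\circ$. By Lemma \ref{lem:bpcyw>11}, if $w_1+w_2>1$ or $w_1'+w_2'>1$, the statement reduces to Theorem \ref{thm:wallcrossingmorphKss}, since then the sources are $\mathbb P^1$ and both stacks are K-moduli stacks of log Fano quasimaps. So we assume $w_1+w_2\le 1$ and $w_1'+w_2'\le1$, and treat the supports of the zero coefficients separately.

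\emph{Open immersion case (same zero pattern).} Given a family $q$ over $S$ in the source stack, regard it with the new coefficients $(\vec a_1',\vec a_2',w_1',w_2')$. Proposition \ref{prop:chmber--bdcy} applies fibrewise, because the nonvanishing hypotheses are exactly those in that proposition. It shows each geometric fibre is boundary polarized CY for the new data. Relative ampleness of $\vec a_2'\cdot\vec{\mathcal D}_2+w_2'\mathscr L_q$ is checked fibrewise: on each of the at most two components, positivity of the polarization degree is one of the finitely many linear conditions cutting out $Q_k$, so it persists to $\overline F$ when the zero pattern is preserved. The relative $\R$-linear triviality $K+\dots\sim_{\R,S}0$ follows as in the definition of a family of one-dimensional bpCY pairs. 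This gives a functor $\varphi$, and it is representable by \cite[Lemma 4.4.3]{abramovich2002compactifying}. Openness: for a family over the target, the locus of $S$ where the fibres are bpCY for the old weights is open. This is shown as in the proof of Theorem \ref{thm:wallcrossingmorphKss}, by local general sections $f_i$ and openness of slc-ness and of the degree conditions on components, which are locally constant in flat families of nodal curves with at most two components. Since $\varphi$ is identity on objects over this locus, it is an open immersion.

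\emph{General case.} Some coefficients or weights may become zero. If $a_i'=0$ we simply forget $D_i$. If $w_1'+w_2'=0$, we forget the quasimap and keep the family of pairs. The real issue is that $\vec a_2'\cdot\vec{\mathcal D}_2+w_2'\mathscr L$ may be only relatively nef, with degree $0$ on one of the two components of a reducible fibre. By Lemma \ref{lem--babwild--chain--two--comps}, such a component is a rational tail. We then contract it. After reducing to rational weights, a large multiple $m(\vec a_2'\cdot\vec{\mathcal D}_2+w_2'\mathscr L)$ is relatively semiample and commutes with base change, by Lemma \ref{lem--vanish} and \cite[Proposition 3.3]{Hassett}. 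The relative Proj then gives $h\colon\mathcal C\to\mathcal C'$. We push the divisors forward via Proposition \ref{prop:pushfoward}. If $w_1'+w_2'>0$, we construct $q'$ on $\mathcal C'$ via Proposition \ref{prop:contraction-thm}. Its hypothesis that $-(K+\Delta'+(w_1'+w_2')\mathscr L)$ is $h$-nef holds because this divisor is numerically trivial on the new CY data. We then verify that the result is a family of bpCY quasimaps with the new weights: the CY condition descends because the contracted component has degree $0$, and the polarization becomes ample on $\mathcal C'$. Compatibility with base change comes from the base-change statements in Propositions \ref{prop:pushfoward}, \ref{prop:push--linear-equivalence} and \ref{prop:contraction-thm}, which gives a morphism of stacks.

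The main obstacle is this last step, and in particular the slc condition at the image point of the contracted tail. The tail carries boundary and base-point degree $\le 1$ with respect to the new weights. We expect to argue as in the first paragraph of the proof of Proposition \ref{prop:contraction-thm}. On the tail $T$, the CY condition gives $\deg_T(\Delta'+(w_1'+w_2')\mathscr L)=1$ (note $\deg K_C|_T=-1$). So the pushed-forward divisor has multiplicity exactly $1$ at a smooth point, which is lc. We also need that the contraction is independent of choices and that $\mathcal C'$ remains flat with nodal fibres over nonreduced bases. For this we would rely on Hassett's reduction functor, as in the proof of Proposition \ref{prop:push--linear-equivalence}.
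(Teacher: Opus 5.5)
Outside one case, your proposal follows the paper. The open immersion comes from reweighting the same families and checking openness on the target, and the general case uses the contraction given by \cite[Proposition 3.3]{Hassett} together with Propositions \ref{prop:pushfoward} and \ref{prop:contraction-thm}. The gap is in your initial reduction.

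\textbf{The mixed case is not covered.} You claim that if $w_1+w_2>1$ or $w_1'+w_2'>1$, then both stacks are K-moduli stacks of log Fano quasimaps with source $\P^1$. You then assume both sums are at most $1$. But Lemma \ref{lem:bpcyw>11} identifies only a stack whose own weight sum exceeds $1$ with a K-moduli stack. The mixed case $w_1+w_2>1\ge w_1'+w_2'$ genuinely occurs, even with an unchanged zero pattern:
\begin{itemize}
\item $w_1+w_2=1$ is one of the walls, coming from a simple base point in \eqref{eq:bpcydomain}. So a face $F\subset\{w_1+w_2>1\}$ can have points of $\overline F$ on that wall.
\item Concretely, take $d=1$, $r_1=0$, and a single degree-$2$ divisor in $\vec D_2$ with coefficient $a$, so that $w_1+w_2+2a=2$. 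Then $(a,w_1,w_2)=(0.45,0.32,0.78)$ lies in an open face, and $(0.5,0.3,0.7)$ lies on its boundary wall $2a=1$.
\item At the second point the target contains reducible sources: a component with $\deg\mathscr L=1$ glued to a component carrying $D$. It also contains non-klt points, for instance $D=2p$ on $\P^1$.
\end{itemize}
So the target is not a K-moduli stack, and Theorem \ref{thm:wallcrossingmorphKss} cannot be invoked. Your later assumption simply drops this case, including the open-immersion assertion there.

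\textbf{The openness argument needs a different criterion there.} With old weight sum $w_1+w_2>1$, the pair $(C,\Delta+(w_1+w_2)\mathrm{div}(f))$ for a general section $f$ is never slc. The criterion modelled on Theorem \ref{thm:wallcrossingmorphKss} therefore fails. The locus of target fibres that are bpCY for the old weights is instead cut out by three open conditions:
\begin{itemize}
\item the fibre is irreducible;
\item the quasimap has no base points, which slc-ness with weight $>1$ forces;
\item $(C,\vec a_1\cdot\vec D_1+\vec a_2\cdot\vec D_2)$ is slc for the old coefficients.
\end{itemize}
The paper treats this case separately. In its open-immersion step it passes to a base-point-free locus $U_7$ inside the presentation $[U_6/G]$ of the target from Proposition \ref{prop:bpcy--moduli--qmaps--const}.

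\textbf{Alternative fix.} You could instead compose three maps: the isomorphism of Lemma \ref{lem:bpcyw>11}, the wall crossing of Theorem \ref{thm:wallcrossingmorphKss}, and the open immersion of Proposition \ref{prop:standard-wall-crossing}. You would then have to check that the rescaled weights satisfy the required face relation, and that the composite agrees with the reweighting functor.

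Existence of the morphism in the mixed case is harmless, since all source curves are $\P^1$ and no contraction occurs. The case still has to be treated explicitly.
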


\begin{proof}
Suppose first that  $\vec{a}^{(j)}_i\ne0$ if and only if $\vec{a}'^{(j)}_i\ne0$ for any $i=1,2$ and $j$ and $w_2\ne0$ if and only if $w'_2\ne0$.
By Proposition \ref{prop:chmber--bdcy}, if $w'_1+w'_2>1$, then we may assume that $w_1+w_2>1$.
Then Lemma \ref{lem:bpcyw>11} reduces this case to Theorem \ref{thm:wallcrossingmorphKss}.
Indeed, we can check by the proofs of Propositions \ref{prop:chmber--Kss} and \ref{prop:chmber--bdcy} that there exists a face $F'$ of $P_l$ in Proposition \ref{prop:chmber--Kss} such that $((1-\epsilon)\vec{a_1},(1-\epsilon)\vec{a_2},(1-\epsilon) (w_1+w_2))\in F'^\circ$ and $((1-\epsilon)\vec{a'_1},(1-\epsilon)\vec{a'_2},(1-\epsilon) (w'_1+w'_2))\in \overline{F'}$ for any sufficiently small $\epsilon>0$.

Therefore, we may assume that $w'_1+w'_2\le1$.
Then the assertion follows from Proposition \ref{prop:chmber--bdcy}.
Indeed, we may first assume that $\vec{a}_i$ and $\vec{a}'_i$ are rational and $w_2,w'_2\in\mathbb{Q}$ by Lemma \ref{lem-wall-same:bdcy}.
It is easy to check that there exists a canonical morphism $\mathcal{M}^{\mathrm{bpcyqmaps}}_{d,\vec{a_1},\vec{d_1},\vec{a_2},\vec{d_2},w_1,w_2,\iota}\to \mathcal{M}^{\mathrm{bpcyqmaps}}_{d,\vec{a_1}',\vec{d_1},\vec{a_2}',\vec{d_2},w'_1,w'_2,\iota}$ by Proposition \ref{prop:chmber--bdcy}.
Then $\mathcal{M}^{\mathrm{bpcyqmaps}}_{d,\vec{a_1}',\vec{d_1},\vec{a_2}',\vec{d_2},w'_1,w'_2,\iota}\cong [U_6/G]$, where $U_6$ is as in the proof of Proposition \ref{prop:bpcy--moduli--qmaps--const} and $G=\PGL(P(1))\times \PGL(1+d+l_1M(\vec{a}'_2\cdot \vec{d}_2+w'_2d))$. 
If $w_1+w_2\le1$, then the argument we took $U_5$ from $\mathcal{H}$ in the proof of Proposition \ref{prop:bpcy--moduli--qmaps--const}, there exists an open subset $U_7\subset U_6$ such that a geometric point $\bar{s}\in U_7$ if and only if regarding the universal family of boundary polarized CY quasimaps $Q\colon (\mathcal{P},\vec{a}_1'\cdot \vec{\mathcal{D}}_1+\vec{a}_2'\cdot \vec{\mathcal{D}}_2)\to [\mathrm{Cone}(X)_{U_6}/\mathbb{G}_{m,U_6}]$ over $U_6$ with weight $(w'_1,w'_2)$ as a family of quasimaps $Q\colon (\mathcal{P},\vec{a}_1\cdot \vec{\mathcal{D}}_1+\vec{a}_2\cdot \vec{\mathcal{D}}_2)\to [\mathrm{Cone}(X)_{U_6}/\mathbb{G}_{m,U_6}]$ with weight $(w_1,w_2)$, $Q_{\bar{s}}$ is boundary polarized CY.
If $w_1+w_2>1$, we take an open subset $U_7\subset U_6$ such that a geometric point $\bar{s}\in U_7$ if and only if $Q_{\bar{s}}$ has no base locus.
In both cases, by Proposition \ref{prop:chmber--bdcy}, we see that $[U_7/G]\cong \mathcal{M}^{\mathrm{bpcyqmaps}}_{d,\vec{a_1},\vec{d_1},\vec{a_2},\vec{d_2},w_1,w_2,\iota}$.  
Therefore, the canonical morphism is an open immersion.

Next, we deal with the general case.
We may first assume that $\vec{a}_i$ and $\vec{a}'_i$ are rational and $w_1,w_2,w'_1,w'_2\in\mathbb{Q}$ by Lemma \ref{lem-wall-same:bdcy}.
For any object $q\colon (\mathcal{C},\vec{a}_1\cdot \vec{\mathcal{D}}_1+\vec{a}_2\cdot \vec{\mathcal{D}}_2)\to [\mathrm{Cone}(X)_{S}/\mathbb{G}_{m,S}] \in \mathcal{M}^{\mathrm{bpcyqmaps}}_{d,\vec{a_1},\vec{d_1},\vec{a_2},\vec{d_2},w_1,w_2,\iota}(S)$ for any scheme $S$ of finite type over $\mathbb{K}$, we aim to yield a certain object of $\mathcal{M}^{\mathrm{bpcyqmaps}}_{d,\vec{a_1}',\vec{d_1},\vec{a_2}',\vec{d_2},w'_1,w'_2,\iota}(S)$. Let $\mathscr{L}_q$ be the associated line bundle with $q$. Consider $w'_2\mathscr{L}_q+\vec{a_2}'\cdot\vec {\mathcal{D}_2}$.
$w'_2\mathscr{L}_q+\vec{a_2}'\cdot\vec {\mathcal{D}_2}\sim_{\mathbb{Q}}0$ never happens by assumption and $w'_2\mathscr{L}_q+\vec{a_2}'\cdot\vec {\mathcal{D}_2}$ is relatively nef.
By \cite[Proposition 3.3]{Hassett}, there exists $M\in\mathbb{Z}_{>0}$ such that for any $s\in S$, $H^1(\mathcal{C}_{s},\mathcal{O}_{\mathcal{C}_s}(M(w'_2\mathscr{L}_{q,s}+\vec{a_2}'\cdot \mathcal{D}_{2,s})))=0$ and $|M(w'_2\mathscr{L}_{q,s}+\vec{a_2}'\cdot \mathcal{D}_{2,s})|$ defines a contraction to nodal curves with the same arithmetic genus contracting irreducible components on which $M(w'_2\mathscr{L}_{q,s}+\vec{a_2}'\cdot \mathcal{D}_{2,s})$ has degree zero.
Therefore, let $h\colon \mathcal{C}\to \mathcal{C}'$ be the morphism, which is defined by the relative complete linear system of $M(w'_2\mathscr{L}_q+\vec{a_2}'\cdot\vec {\mathcal{D}_2})$ over $S$.
Since $H^1(\mathcal{C}_{s},\mathcal{O}_{\mathcal{C}_s}(M(w'_2\mathscr{L}_{q,s}+\vec{a_2}'\cdot \mathcal{D}_{2,s})))=0$, $h_s$ coincides with the morphism defined by $|M(w'_2\mathscr{L}_{q,s}+\vec{a_2}'\cdot \mathcal{D}_{2,s})|$.
If you choose $M$ sufficiently divisible, we see that then $h_*\mathcal{O}_{\mathcal{C}}\cong \mathcal{O}_{\mathcal{C}'}$ and $\mathcal{C}'$ is independent of the choice of $M$.
Let $\mathcal{A}$ be the relatively ample line bundle on $\mathcal{C}'$ corresponding to $M(w'_2\mathscr{L}_{q,s}+\vec{a_2}'\cdot \mathcal{D}_{2,s})$.
Then, $\pi'_*\mathcal{A}\cong\pi_*\mathcal{O}_{\mathcal{C}}(M(w'_2\mathscr{L}_{q,s}+\vec{a_2}'\cdot \mathcal{D}_{2,s}))$ is locally free, where $\pi\colon \mathcal{C}\to S$ and $\pi'\colon \mathcal{C}'\to S$ are the canonical morphisms, and hence $\pi'$ is flat (cf.~\cite[the proof of Theorem 3.6]{Hassett}).
By Lemma \ref{lem--babwild--chain--two--comps}, we see that this $h$ satisfies the condition of Proposition \ref{prop:pushfoward}.
Therefore, we can define $h_*(\vec{a}_1'\cdot\vec{\mathcal{D}_1})$ and $h_*(\vec{a}_2'\cdot\vec{\mathcal{D}_2})$.
Furthermore, we can put the structure of a locally stable family of weighted quasimaps $q'\colon (\mathcal{C}',h_*(\vec{a}_1'\cdot\vec{\mathcal{D}_1})+h_*(\vec{a}_2'\cdot\vec{\mathcal{D}_2}))\to [\mathbb{A}^{N+1}_S/\mathbb{G}_{m,S}]$ with weight $w'_1+w'_2$
defined by Proposition \ref{prop:contraction-thm}.
Since this construction is functorial by \cite[Proposition 3.3]{Hassett}, it suffices to check that $q'$ factors through $[\mathrm{Cone}(X)_S/\mathbb{G}_{m,S}]$ and if you put the weight $(w'_1,w'_2)$, then $q'$ becomes a family of boundary polarized CY quasimaps.
For any closed point $s\in S$, take a general basis $e_0,\ldots,e_N\in\mathbb{K}^{N+1}$ and let $f_0,\ldots,f_N\in \mathrm{Sec}_{q}(\mathscr{L}_q)$ the sections corresponding to $e_i$ such that $(f_i)_s$ does not take zero at any singular point.  
Take an open neighborhood $U\subset S$ of $s$ such that $\mathrm{div}(f_i)|_{\mathcal{C}_U}$ is a relative Mumford divisor for any $i=0,1,\ldots,N$.
Let $\mathscr{L}'$ be the line bundle associated with $q'$.
Then, $\mathscr{L}'_U\sim_U\mathcal{O}_{\mathcal{C}'_U}(\mathrm{div}(f_0)|_{\mathcal{C}'_U})$ by Proposition \ref{prop:contraction-thm} and hence $K_{\mathcal{C}'_U}+h_*(\vec{a}_1'\cdot\vec{\mathcal{D}_1})_U+h_*(\vec{a}_2'\cdot\vec{\mathcal{D}_2})+(w'_1+w'_2)\mathscr{L}'_U\sim_{\Q,U}0$.
It is easy to check that $h_*(\vec{a}_2'\cdot\vec{\mathcal{D}_2})+w'_2\mathscr{L}'_U$ is relatively ample over $U$.
Furthermore, if we set $\sigma\in \Gamma(\mathscr{L}'_U)$ as a section defining $h_*(\mathrm{div}(f_0)_U)$, then $f'_i:=(h_U)_\sharp(\frac{f_i}{f_0})\cdot\sigma\in \Gamma(\mathscr{L}'_U)$ for $i=0,\ldots,N$ define $q'_U$.
For any homogeneous polynomial $\varphi$ of the ideal defining $\mathrm{Cone}(X)\subset\mathbb{A}^{N+1}$, it is easy to see that $\varphi(f'_0,\ldots,f'_N)=0$.
This shows that $q'_U$ factors through $[\mathrm{Cone}(X)_U/\mathbb{G}_{m,U}]$.
Therefore, we can conclude that $q'$ is the desired family in $\mathcal{M}^{\mathrm{bpcyqmaps}}_{d,\vec{a_1}',\vec{d_1},\vec{a_2}',\vec{d_2},w'_1,w'_2,\iota}(S)$.

Note that $\mathcal{M}^{\mathrm{bpcyqmaps}}_{d,\vec{a_1},\vec{d_1},\vec{a_2},\vec{d_2},w_1,w_2,\iota}\cong[U_6/G]$, where $U_6$ and $G$ are as in Proposition \ref{prop:bpcy--moduli--qmaps--const} and $G$ acts on $U_6$ with the universal family of quasimaps over $U_6$.
By what we have shown in the previous paragraph, we see that there exists a canonical morphism $U_6\to \mathcal{M}^{\mathrm{bpcyqmaps}}_{d,\vec{a_1}',\vec{d_1},\vec{a_2}',\vec{d_2},w'_1,w'_2,\iota}$, which is $G$-invariant.
Therefore, we see that there exists a natural morphism $\mathcal{M}^{\mathrm{bpcyqmaps}}_{d,\vec{a_1},\vec{d_1},\vec{a_2},\vec{d_2},w_1,w_2,\iota}\to \mathcal{M}^{\mathrm{bpcyqmaps}}_{d,\vec{a_1}',\vec{d_1},\vec{a_2}',\vec{d_2},w'_1,w'_2,\iota}$. 
\end{proof}

Next, we consider the moduli wall crossing from the moduli of boundary polarized CY quasimaps to the K-moduli of log Fano quasimaps.
Fix $\mathcal{M}^{\mathrm{bpcyqmaps}}_{d,\vec{a_1},\vec{d_1},\vec{a_2},\vec{d_2},w,w',\iota}$ for $w+w'\le 1$.
Assume that $w'>0$ or $\vec{a}_2$ is nonzero.
Consider for any sufficiently small $\epsilon>0$.
Then, it is easy to see that there is a natural morphism $\mathcal{M}^{\mathrm{Kssqmaps}}_{d,(\vec{a_1},(1-\epsilon)\vec{a_2}),(\vec{d_1},\vec{d_2}),w+(1-\epsilon)w',\iota}\to \mathcal{M}^{\mathrm{bpcyqmaps}}_{d,\vec{a_1},\vec{d_1},\vec{a_2},\vec{d_2},w,w',\iota}$ by the proof of Propositions \ref{prop:chmber--Kss} and \ref{prop:chmber--bdcy}.
\begin{prop}\label{prop:standard-wall-crossing}
 The canonical morphism $\phi'\colon \mathcal{M}^{\mathrm{Kssqmaps}}_{d,(\vec{a_1},(1-\epsilon)\vec{a_2}),(\vec{d_1},\vec{d_2}),w+(1-\epsilon)w',\iota}\to \mathcal{M}^{\mathrm{bpcyqmaps}}_{d,\vec{a_1},\vec{d_1},\vec{a_2},\vec{d_2},w,w',\iota}$ is an open immersion.  
\end{prop}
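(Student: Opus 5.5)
The map $\phi'$ only changes weights on the same family; the source is never contracted. I would therefore follow the proof of Theorem~\ref{thm:wallcrossingmorphKss}. The argument has three steps: show $\phi'$ is well defined, show it is representable and monomorphic, and show its image is open.

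\emph{Step 1 (well-definedness).} Take a geometric point $q\colon(\mathbb P^1,\vec a_1\cdot\vec D_1+(1-\epsilon)\vec a_2\cdot\vec D_2)$ with weight $w+(1-\epsilon)w'$ that is K-semistable log Fano. By Proposition~\ref{prop:chara--K-stability--qmaps}(3) and inequality~\eqref{eq:K-polyst-domain}, every point $p$ satisfies
\[
\mathrm{ord}_p\bigl(\vec a_1\cdot\vec D_1+(1-\epsilon)\vec a_2\cdot\vec D_2+(w+(1-\epsilon)w')\mathrm{Bs}(q)\bigr)\le 1-\tfrac{\epsilon}{2}(\vec a_2\cdot\vec d_2+w'd).
\]
There are only finitely many possible multiplicity patterns. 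So for $\epsilon$ small, letting $\epsilon\to0$ gives $\mathrm{ord}_p\le1$ for the unperturbed coefficients, i.e. the slc condition. The CY condition holds by the degree count, and the polarization part $\vec a_2\cdot\vec D_2+w'\mathscr L$ has positive degree on $\mathbb P^1$, hence is ample. Because the same families are reused, this gives the morphism on families over any base $S$.

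\emph{Step 2 (representable monomorphism).} Isomorphisms on both sides are isomorphisms of families of weighted quasimaps with the same underlying data. Hence $\phi'$ is fully faithful on fibre categories, and by \cite[Lemma 4.4.3]{abramovich2002compactifying} it is a representable monomorphism. A point of the target lies in the image exactly when its curve is $\mathbb P^1$ and the perturbed quasimap is klt and K-semistable.

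\emph{Step 3 (openness).} Fix a family $q\colon(\mathcal C,\vec a_1\cdot\vec{\mathcal D}_1+\vec a_2\cdot\vec{\mathcal D}_2)\to S$ of boundary polarized CY quasimaps over $S$ of finite type. The locus of smooth fibres is open, so I may assume $\mathcal C\to S$ is a $\mathbb P^1$-bundle. On it I would show that the set $U\subset S$ of points with K-semistable perturbed fibre is open. As in the proof of Theorem~\ref{thm:wallcrossingmorphKss}, I would choose locally two general sections $f_1,f_2$ of $\mathscr L_q$ coming from $\mathbb K^{N+1}$. K-semistability of a fibre then becomes the pointwise bound $\mathrm{ord}_{\bar p}\le\frac12(\text{total degree})$ for the divisor $(w+(1-\epsilon)w')\mathrm{div}(f_i)+\vec a_1\cdot\vec{\mathcal D}_1+(1-\epsilon)\vec a_2\cdot\vec{\mathcal D}_2$, for some $i$. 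Multiplicities of relative Cartier divisors are upper semicontinuous, so this condition is open.

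The main obstacle is Step 3, specifically controlling $\epsilon$ uniformly. The threshold for $\epsilon$ must not depend on the fibre, and one must rule out K-semistable points of the target whose perturbation fails klt, as happens at lc centres with $\mathrm{ord}_p=1$. I would handle this using the finiteness of multiplicity patterns from Propositions~\ref{prop:chmber--Kss} and~\ref{prop:chmber--bdcy}. That finiteness gives a single $\epsilon_0$ such that, for every fibre on $\mathbb P^1$, the perturbed quasimap is K-semistable exactly when $2\,\mathrm{ord}_p(\Delta_\epsilon+w_\epsilon\mathrm{Bs})\le\deg(\Delta_\epsilon+w_\epsilon\mathscr L)$ for all $p$. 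Since this is an open condition in families, and $\phi'$ is a monomorphism onto it, $\phi'$ is an open immersion.
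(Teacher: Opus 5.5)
Your proposal is correct, but its key openness step takes a different route from the paper's.

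\textbf{The paper's route.} It works on the atlas $U_6$ of the presentation $\mathcal M^{\mathrm{bpcyqmaps}}\cong[U_6/G]$ from Proposition~\ref{prop:bpcy--moduli--qmaps--const}. After an \'etale base change making $\mathcal C\cong\mathbb P^1_{U_6}$, it uses Lemma~\ref{lem:HH2-modified} to replace K-semistability of the perturbed quasimap by K-semistability of the log Fano pair $(\mathbb P^1,\vec\Delta_1+(1-\epsilon)\vec\Delta_2+\frac{w+(1-\epsilon)w'}{l}\sum_{i=1}^l\mathrm{div}(f_i))$, for $l$ large and general $e_i$. It then invokes openness of K-semistability for log Fano pairs \cite[Corollary~1.2]{blum2022openness}, returns through Lemma~\ref{lem:HH2-modified}, and concludes $\mathcal M^{\mathrm{Kssqmaps}}\cong[U_6'/G]$.

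\textbf{Your route.} You use the explicit one-dimensional criterion of Proposition~\ref{prop:chara--K-stability--qmaps}(3) together with upper semicontinuity of local multiplicities. The two general sections realize the base-locus multiplicities pointwise, provided $i$ may depend on $\bar p$. Indeed, for general $e_1,e_2$ the moving parts of $\mathrm{div}(f_1)$ and $\mathrm{div}(f_2)$ are disjoint, so $\min_i\mathrm{ord}_p(\Delta+w\,\mathrm{div}(f_i))=\mathrm{ord}_p(\Delta+w\,\mathrm{Bs}(q))$. Conversely, at nearby fibres $\mathrm{div}(f_i)\ge\mathrm{Bs}(q)$ gives the needed inequality.

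\textbf{What each buys.} Your argument is more elementary: it avoids the deep openness theorem and the auxiliary $l$ sections, and it handles real coefficients without first rationalizing. The paper's argument is more conceptual and would survive where no closed-form K-semistability criterion is available. Your Step~1 also proves well-definedness of $\phi'$, via a uniform $\epsilon$ from the finitely many multiplicity patterns; the paper only asserts this before the proposition.

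\textbf{Two minor points.} First, that uniformity is needed only in Step~1. In Step~3 the criterion of Proposition~\ref{prop:chara--K-stability--qmaps}(3) holds for each fixed $\epsilon$. Target points whose perturbation is not klt simply lie outside the image; they are not an obstacle. Second, to pass from ``monomorphism with open image'' to an open immersion, you need essential surjectivity onto the open substack on $S$-families, not only on points. This holds because both moduli problems impose fibrewise conditions on the same families, which is exactly what the paper's identification with $[U_6'/G]$ uses.
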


\begin{proof}
Using the notations of the proof of Proposition \ref{prop:bpcy--moduli--qmaps--const}, $\mathcal{M}^{\mathrm{bpcyqmaps}}_{d,\vec{a_1},\vec{d_1},\vec{a_2},\vec{d_2},w,w',\iota}\cong[U_6/G]$, where $U_6$ is as in the proof of Proposition \ref{prop:bpcy--moduli--qmaps--const} and $G=\PGL(P(1))\times \PGL(1+d+l_1M(\vec{a}'_2\cdot \vec{d}_2+w'd))$.
Let $q\colon (\mathcal{C},\vec{\Delta}_1+\vec{\Delta}_2)\to [\mathrm{Cone}(X)_{U_6}/\mathbb{G}_{m,U_6}]\in (\mathcal{M}^{\mathrm{bpcyqmaps}}_{d,\vec{a_1},\vec{d_1},\vec{a_2},\vec{d_2},w,w',\iota})(U_6)$ be the universal family of quasimaps.
Regarding $q$ as a family of log quasimaps with coefficients $(\vec{a_1},(1-\epsilon)\vec{a_2})$ and the weight $w+(1-\epsilon)w'$, then there exists an open subset $U'_6$ such that for any geometric point $\bar{u}\in U_6$,  $q_{\bar{u}}$ is normal and $q_{\bar{u}}$ is K-semistable.
Indeed, let $\mathscr{L}$ be the associated line bundle. 
Suppose that a point $u\in U_6$ satisfies that $q_{\bar{u}}$ is K-semistable.
Replacing $U_6$ around $\bar{u}$ by its suitable \'etale cover, we may assume that $\mathcal{C}\cong \mathbb{P}^1_{U_6}$ and $\mathscr{L}\sim_{U_6}\mathcal{O}_{\mathbb{P}^1_{U_6}}(d)$.
Then, there exist a sufficiently large and divisible $l\in\mathbb{Z}_{>0}$ and general elements $e_1,\ldots,e_l\in \mathbb{K}^{N+1}$ such that the corresponding sections $f_i\in  \mathrm{Sec}_q(\mathscr{L})$ satisfy that $(\mathcal{C}_{\bar{u}},(\vec{\Delta}_{1}+(1-\epsilon)\vec{\Delta}_2+\frac{w+(1-\epsilon)w'}{l}\sum_{i=1}^l\mathrm{div}(f_i))_{\bar{u}})$ is a K-semistable log Fano pair by Lemma \ref{lem:HH2-modified}.
By \cite[Corollary 1.2]{blum2022openness}, we obtain that there exists an open neighborhood $V$ of $u$ such that $(\mathcal{C}_{\bar{v}},(\vec{\Delta}_{1}+(1-\epsilon)\vec{\Delta}_2+\frac{w+(1-\epsilon)w'}{l}\sum_{i=1}^l\mathrm{div}(f_i))_{\bar{v}})$ is K-semistable for any geometric point $\bar{v}\in V$.
This means that $V\subset U'_6$ and hence $U'_6$ is open by Lemma \ref{lem:HH2-modified}.
Thus, $\mathcal{M}^{\mathrm{Kssqmaps}}_{d,(\vec{a_1},(1-\epsilon)\vec{a_2}),(\vec{d_1},\vec{d_2}),w+(1-\epsilon)w',\iota}\cong [U'_6/G]$ and $\phi$ is an open immersion.
\end{proof}
As above, we suppose that $w+w'\le 1$
and that $w'>0$ or $\vec{a}_2$ is nonzero.
Furthermore, there is a natural morphism $\mathcal{M}^{\mathrm{Kssqmaps}}_{d,(1-\epsilon)(\vec{a_1},\vec{a_2}),(\vec{d_1},\vec{d_2}),(1-\epsilon)(w+w'),\iota}\to \mathcal{M}^{\mathrm{bpcyqmaps}}_{d,\vec{a_1},\vec{d_1},\vec{a_2},\vec{d_2},w,w',\iota}$ by the proof of Propositions \ref{prop:chmber--Kss} and \ref{prop:chmber--bdcy} by the same discussion.
We will write it simply $\mathcal{M}^{\mathrm{Kssqmaps}}_{\epsilon,\iota}\to \mathcal{M}^{\mathrm{bpcyqmaps}}_{\iota}$.

\begin{thm}\label{thm:wall-crossingbpcytoK-moduli}
The morphism $\phi\colon\mathcal{M}^{\mathrm{Kssqmaps}}_{\epsilon,\iota}\to \mathcal{M}^{\mathrm{bpcyqmaps}}_{\iota}$ is an open immersion and induces a homeomorphism $\varphi\colon M^{\mathrm{Kpsqmaps}}_{\epsilon,\iota}\to M^{\mathrm{bpcyqmaps}}_{\iota}$ of the good moduli spaces for any sufficiently small $\epsilon>0$. 
Furthermore, if $\iota=\mathrm{id}_{\mathbb{P}^N}$, then $\varphi$ is an isomorphism.
\end{thm}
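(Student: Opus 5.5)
The plan has three steps: show $\phi$ is an open immersion, show the induced map $\varphi$ is a homeomorphism, and upgrade to an isomorphism when $\iota=\mathrm{id}_{\mathbb{P}^N}$.

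\textbf{Open immersion.} I would proceed exactly as in Proposition \ref{prop:standard-wall-crossing}. If $w+w'>1$, Lemma \ref{lem:bpcyw>11} already identifies the two stacks. So assume $w+w'\le 1$, and by Lemmas \ref{lem-wall-same} and \ref{lem-wall-same:bdcy} that all coefficients are rational. Write $\mathcal{M}^{\mathrm{bpcyqmaps}}_{\iota}\cong[U_6/G]$ as in Proposition \ref{prop:bpcy--moduli--qmaps--const}, and regard the universal family with coefficients $(1-\epsilon)(\vec a_1,\vec a_2)$ and weight $(1-\epsilon)(w+w')$. The locus $U_6'$ where the fiber is normal and K-semistable is open: normality is open, and K-semistability can be tested on the log Fano pair obtained by adding $\frac{(1-\epsilon)(w+w')}{l}\sum\mathrm{div}(f_i)$ (Lemma \ref{lem:HH2-modified}), where openness holds by \cite[Corollary 1.2]{blum2022openness}. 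Since $\epsilon$ is small, Proposition \ref{prop:chmber--Kss} and Proposition \ref{prop:chmber--bdcy} identify $[U_6'/G]$ with $\mathcal{M}^{\mathrm{Kssqmaps}}_{\epsilon,\iota}$. Representability of $\phi$ follows from \cite[Lemma 4.4.3]{abramovich2002compactifying}.

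\textbf{Homeomorphism.} Since $\phi$ is an open immersion, it sends closed points of the source to points of the target, and by \cite[Propositions 3.21 and 3.44]{AHLH} it induces a morphism $\varphi$ of good moduli spaces once we show that $\phi$ preserves closed points; the mechanism is the same as the one that will give bijectivity, so I would establish both together. Bijectivity on closed points is the key step.
\begin{itemize}
\item A closed point of $\mathcal{M}^{\mathrm{bpcyqmaps}}_\iota$ is a bpcy polystable quasimap (Proposition \ref{prop:bpcy-stack--points--characterizations}).
\item If it is klt, its source is $\mathbb{P}^1$ and it is K-stable after the $(1-\epsilon)$ perturbation, by Proposition \ref{prop:chara--K-stability--qmaps}(3) together with the fact that every point has multiplicity $<1$. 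Hence it lies in the image of $\phi$, and it is K-polystable there.
\item If it is not klt, Lemma \ref{lem:constant-qmaps} and Proposition \ref{prop--polyst-is-const} show it is constant with source $\mathbb{P}^1$ or a two-component chain, with a torus action. In the irreducible case it has exactly two lc centers $p_1,p_2$, each of multiplicity $1$, so after scaling by $(1-\epsilon)$ one gets $\mathrm{ord}_{p_i}=(1-\epsilon)=\frac{1}{2}\deg$. It is therefore strictly K-semistable, and in fact K-polystable because the $\mathbb{G}_m$ orbit is closed.
\item In the nodal case, the point is S-equivalent to an irreducible constant polystable one, via the test configuration of Lemma \ref{lem:constant-qmaps} run backwards, i.e.\ by smoothing the node.
\end{itemize}
Carrying this out gives that each fiber of $M^{\mathrm{Kpsqmaps}}_{\epsilon,\iota}\to M^{\mathrm{bpcyqmaps}}_\iota$ is one point and that $\varphi$ is surjective. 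The map $\varphi$ is proper because both spaces are proper (Theorem \ref{thm:K-moduli--weighted--log-Fano-qmaps}, Proposition \ref{prop:properness-bpcy}), and a bijective proper morphism is a homeomorphism. I expect the main obstacle to be showing that $\phi$ carries K-polystable points to bpcy polystable points, equivalently that S-equivalence classes coincide. The K-semistable ws degenerations in the source may not exhaust the bpcy ones, which are allowed to become nodal. I would handle this with \cite[Proposition 8.11]{babwild}, which classifies polystable slc pairs as the torus-invariant ones.

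\textbf{Isomorphism for $\iota=\mathrm{id}_{\mathbb{P}^N}$.} A homeomorphism that is finite and birational onto a normal target is an isomorphism, so it suffices to show that both stacks are normal. For $\iota=\mathrm{id}_{\mathbb{P}^N}$, Proposition \ref{prop:smoothness-of-stacks} makes both stacks smooth in genus zero, so their good moduli spaces are normal. The morphism $\varphi$ is an isomorphism over the dense stable locus, hence birational, and Zariski's main theorem then gives that $\varphi$ is an isomorphism.
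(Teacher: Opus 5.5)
\textbf{Verdict: genuine gap.} The proposal does not establish injectivity of $\varphi$ over the strictly semistable locus, and the key lemma it proposes for this is false.

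\textbf{What is fine.} Your open-immersion step matches the paper. Your surjectivity argument is also fine: klt points, irreducible non-klt polystable points, and nodal polystable points reached by contracting a component.

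\textbf{The false step.} You single out as the key step that $\phi$ sends K-polystable points to bpcy polystable points. This is false, and it is not equivalent to the S-equivalence classes agreeing. Take $w=w'=0$ and two degree-one boundary divisors with coefficient one, both in the polarization part.
\begin{itemize}
\item $(\mathbb{P}^1,(1-\epsilon)(\{0\}+\{\infty\}))$ is K-polystable.
\item The blow-up of $\mathbb{P}^1\times\mathbb{A}^1$ at $(0,0)$ is a ws test configuration degenerating $(\mathbb{P}^1,\{0\}+\{\infty\})$ to two copies of $(\mathbb{P}^1,\{0\}+\{\infty\})$ glued at $0$.
\item That chain is the bpcy polystable point, as the remark after the corollary in this section notes.
\end{itemize}
The same happens with a base point in place of one boundary point whenever $w'>0$, since $\mathscr{L}$ then has positive degree on the exceptional curve. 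Your own nodal bullet already exhibits this: the irreducible point you find there is K-polystable, yet its image is not closed. So $\phi$ is not saturated. The existence of $\varphi$ comes from the universal property \cite[Theorem 6.6]{Alper}, not from closed-point preservation. Torus-invariance via \cite[Proposition 8.11]{babwild} does not repair this.

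\textbf{The missing argument.} You need to show directly that each closed point $p\in M^{\mathrm{ss}}$ has at most one K-polystable preimage.
\begin{itemize}
\item Suppose $p$ is represented by a normal bpcy polystable $q$. Any K-polystable $q''$ over $p$ specializes to $q$ in the bpcy stack. Since $q$ lies in the open image of $\phi$, this specialization already takes place in $\mathcal{M}^{\mathrm{Kssqmaps}}_{\epsilon,\iota}$, so $q''\cong q$.
\item Suppose $p$ is represented by a nodal constant $q'=C_1\cup C_2$.
  \begin{itemize}
  \item A K-polystable $q$ over $p$ is strictly K-semistable with a $\mathbb{G}_m$-action, so its boundary plus base locus sits at two points.
  \item Blowing up one of them realizes its bpcy polystable degeneration, which must therefore be $q'$.
  \item Contracting the exceptional component via Proposition~\ref{prop:contraction-thm} recovers $q$.
  \item Contracting $C_1$ or $C_2$ gives isomorphic results, via $z\mapsto 1/z$.
  \end{itemize}
  Hence $q'$ determines $q$. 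This is the paper's argument.
\end{itemize}

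\textbf{A smaller point in the last step.} The klt locus can be empty, for example when some boundary coefficient equals one. So birationality cannot come from a dense stable locus. Instead, a finite morphism from a normal source that is bijective onto a normal target has degree one in characteristic zero. Zariski's main theorem then applies, as the paper uses it.
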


\begin{proof}
First, note that the same argument as the proof of Proposition \ref{prop:standard-wall-crossing} shows that $\phi$ is an open immersion.
Note that $\varphi$ is induced by \cite[Theorem 6.6]{Alper}.

In the case where $\iota=\mathrm{id}_{\mathbb{P}^N}$, we note that the moduli stacks are smooth by Proposition \ref{prop:smoothness-of-stacks}.
Therefore, the good moduli spaces are normal.
Therefore, since the good moduli spaces are proper, it suffices to show that the morphism $\varphi$ is bijective by the Zariski main theorem.

Let $\mathcal{U}\subset \mathcal{M}^{\mathrm{bpcyqmaps}}_{\iota}$ be the open locus parameterizing all klt quasimaps and $\mathcal{M}^{\mathrm{ss}}$ the complement of $\mathcal{U}$ with reduced structure.
Let $M^{\mathrm{ss}}$ be the good moduli space and is a closed subscheme of $M^{\mathrm{bpcyqmaps}}_{\iota}$ by \cite[Lemma 4.14]{Alper}.
For any closed point $p\in M^{\mathrm{ss}}$, we claim that $\varphi^{-1}(p)$ consists of only one closed point. 
First, we show that $\varphi^{-1}(p)\ne\emptyset$.
Let $q_1\colon (C,\vec{\Delta}_1+\vec{\Delta}_2)\to [\mathrm{Cone}(X)/\mathbb{G}_{m}]$ be the bpcy polystable quasimap representing $p$ (cf.~Proposition \ref{prop:bpcy-stack--points--characterizations}).
If $q_1$ is normal, then it is easy to check that $\varphi^{-1}(p)\ne\emptyset$ by the discussion in the first paragraph of this proof.
Otherwise, note that $q_1$ is constant by Lemma \ref{lem:constant-qmaps}.
Let $C_1\subset C$ be an irreducible component. 
We can construct $h\colon C\to C_1$ the contraction morphism of $C\setminus C_1$ and we can put $q'_1\colon (C_1,h_*(\vec{\Delta}_1+\vec{\Delta}_2))$ the structure of slc quasimaps by Proposition \ref{prop:contraction-thm}. 
Let $x\in C_1$ be the unique singular point of $C$ on $C_1$.
As the proof of Lemma \ref{lem:constant-qmaps}, the test configuration induced by $x$ shows that $q'_1$ is S-equivalent to $q_1$ as boundary polarized CY quasimaps. 
Changing the weight and coefficients of $q'_1$ appropriately, $q_1'$ becomes a K-semistable weighted Fano quasimap in $\mathcal{M}^{\mathrm{Kssqmaps}}_{\epsilon,\iota}$ and we can check that $\varphi^{-1}(p)\ne\emptyset$.

Next, we show that $\varphi^{-1}(p)$ contains at most one point.
Let $q\colon (C,(1-\epsilon)(\vec{\Delta}_1+\vec{\Delta}_2))\to [\mathrm{Cone}(X)/\mathbb{G}_{m}]$ be the K-polystable log Fano quasimap corresponding to a closed point $p_1\in\varphi^{-1}(p)$.
Since $q\colon (C,\vec{\Delta}_1+\vec{\Delta}_2)\to [\mathrm{Cone}(X)/\mathbb{G}_{m}]$ is strictly lc, we see that $q\colon (C,(1-\epsilon)(\vec{\Delta}_1+\vec{\Delta}_2))\to [\mathrm{Cone}(X)/\mathbb{G}_{m}]$ with weight $(1-\epsilon)(w+w')$ is not strictly K-stable and admits a nontrivial $\mathbb{G}_m$-action (cf.~Proposition \ref{prop:chara--K-stability--qmaps} (3)).
Therefore, $\mathrm{Supp}(\vec{\Delta}_1+\vec{\Delta}_2)+(w+w')\mathrm{Bs}(q)$ consists of two closed points. 
If $\mathrm{Supp}(\vec{\Delta}_1)$ consists of one closed point, then $q\colon (C,\vec{\Delta}_1+\vec{\Delta}_2)\to [\mathrm{Cone}(X)/\mathbb{G}_{m}]$ is bpcy polystable and hence $\varphi^{-1}(p)$ consists of a unique closed point.
Otherwise, $q$ degenerates to a polystable boundary polarized CY quasimap $q'\colon (C',\vec{\Delta}'_1+\vec{\Delta}'_2)\to [\mathrm{Cone}(X)/\mathbb{G}_{m}]$ such that $\mathrm{Supp}(\vec{\Delta}'_1+\vec{\Delta}'_2)+(w+w')\mathrm{Bs}(q')$ consists of two closed points.
Note that $\vec{\Delta}'_1+\vec{\Delta}'_2$ is a degeneration of $\vec{\Delta}_1+\vec{\Delta}_2$ via a ws test configuration of $q$.
Therefore, $q'$ completely determines the isomorphism class of $q$ and hence a unique point of $\varphi^{-1}(p)$.

Let $\pi\colon \mathcal{M}^{\mathrm{bpcyqmaps}}_{\iota}\to {M}^{\mathrm{bpcyqmaps}}_{\iota}$ be the morphism of the good moduli space.
Note that $\pi^{-1}({M}^{\mathrm{bpcyqmaps}}_{\iota}\setminus M^{\mathrm{ss}})= \mathcal{U}$ by Lemmas \ref{lem:constant-qmaps} and \ref{lem:stable-is-klt-bpcy}.
If $\mathcal{U}=\emptyset$, then the assertion follows from the above discussion.
We may assume that  $\mathcal{U}$ is not empty.
We see by a straightforward argument that $\varphi|_{\varphi^{-1}(M^{\mathrm{bpcyqmaps}}_{\iota}\setminus M^{\mathrm{ss}})}$ is bijective to the image because the coarse moduli space of $\mathcal{U}$ coincides with $M^{\mathrm{bpcyqmaps}}_{\iota}\setminus M^{\mathrm{ss}}$ and $\mathcal{U}$ is contained in the image of $\phi$ by the discussion of the first paragraph of this proof. 
Indeed, we can check that $\phi^{-1}(\mathcal{U})$ coincides with the open locus, which parameterizes all K-stable log Fano quasimaps by Proposition \ref{prop:chara--K-stability--qmaps} (3).
Therefore, $\varphi$ is bijective.
We complete the proof.
\end{proof}

By Theorem \ref{thm:wall-crossingbpcytoK-moduli}, the homeomorphism class of ${M}^{\mathrm{bpcyqmaps}}_{d,\vec{a_1},\vec{d_1},\vec{a_2},\vec{d_2},w,w',\iota}$ is independent of the polarization in the following sense.
For the definition of the seminormalization, see \cite[Definition 4.3]{HH2}. For any Artin stack $\mathfrak{X}$, let $\mathfrak{X}^{\mathrm{sn}}$ denote the seminormalization of $\mathfrak{X}$.
Note that if $\mathfrak{X}$ admits a separated good moduli space $X$, then $\mathfrak{X}^{\mathrm{sn}}$ admits a good moduli space $X^{\mathrm{sn}}$, where $X^{\mathrm{sn}}$ is the seminormalization of $X$ by \cite[Lemma 13.7]{babwild}.  

\begin{cor}
Let $d,r\in\mathbb{Z}_{>0}$, $w\in[0,1]$, $\vec{d}\in \mathbb{Z}_{>0}^r$, and $\vec{a}=(a_1,\ldots,a_r)\in\mathbb{R}^r_{>0}$.    
Take $0\le k<k'\le r$ and $0\le\beta<\alpha\le1$.
Suppose that $\alpha=1$ or $w=0$ implies that $k'\ne r$.
Set $\vec{a}_1:=(a_1,\ldots,a_k)$, $\vec{a}_2:=(a_{k+1},\ldots,a_r)$, $\vec{a}'_1:=(a_1,\ldots,a_{k'})$, $\vec{a}'_2:=(a_{k'+1},\ldots,a_r)$, $\vec{d}_1:=(d_1,\ldots,d_k)$, $\vec{d}_2:=(d_{k+1},\ldots,d_r)$, $\vec{d}'_1:=(d_1,\ldots,d_{k'})$, and $\vec{d}'_2:=(d_{k'+1},\ldots,d_r)$.
Suppose further that $\vec{d}\cdot\vec{a}+dw=2$.

Then there exists a canonical isomorphism $({M}^{\mathrm{bpcyqmaps}}_{d,\vec{a_1},\vec{d_1},\vec{a_2},\vec{d_2},\alpha w,(1-\alpha)w,\iota})^{\mathrm{sn}}\cong ({M}^{\mathrm{bpcyqmaps}}_{d,\vec{a'_1},\vec{d'_1},\vec{a'_2},\vec{d'_2},\beta w,(1-\beta)w,\iota})^{\mathrm{sn}}$, where $({M}^{\mathrm{bpcyqmaps}}_{d,\vec{a_1},\vec{d_1},\vec{a_2},\vec{d_2},\alpha w,(1-\alpha)w,\iota})^{\mathrm{sn}}$ means the seminormalization of ${M}^{\mathrm{bpcyqmaps}}_{d,\vec{a_1},\vec{d_1},\vec{a_2},\vec{d_2},\alpha w,(1-\alpha)w,\iota}$.
If further $\iota=\mathrm{id}_{\mathbb{P}^N}$, then ${M}^{\mathrm{bpcyqmaps}}_{d,\vec{a_1},\vec{d_1},\vec{a_2},\vec{d_2},\alpha w,(1-\alpha)w,\iota}\cong {M}^{\mathrm{bpcyqmaps}}_{d,\vec{a'_1},\vec{d'_1},\vec{a'_2},\vec{d'_2},\beta w,(1-\beta)w,\iota}$.
\end{cor}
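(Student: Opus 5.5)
The plan is to compare both sides with one and the same log Fano K-moduli space, using Theorem \ref{thm:wall-crossingbpcytoK-moduli}. The two bpcy moduli problems share the total data: coefficient vector $\vec a=(\vec a_1,\vec a_2)=(\vec a'_1,\vec a'_2)$, total weight $\alpha w+(1-\alpha)w=\beta w+(1-\beta)w=w$, and the CY condition $\vec d\cdot\vec a+dw=2$. They differ only in how the boundary and weight are split between the ``$\Delta$'' part and the polarization part. The hypothesis ``$\alpha=1$ or $w=0$ implies $k'\ne r$'' is what guarantees that the polarization part $(\vec a'_2,(1-\beta)w)$ is nonzero. The hypotheses $k<k'$ and $\beta<\alpha$ are not needed for nonemptiness of the problem on the left; they only order the two splittings.

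First I reduce to the case $w\le 1$; this is automatic since $w\in[0,1]$. If I also needed the degenerate case where the total weight exceeds $1$, Lemma \ref{lem:bpcyw>11} would handle it directly, because both stacks are then the same K-moduli stack. For sufficiently small $\epsilon>0$, Theorem \ref{thm:wall-crossingbpcytoK-moduli} is applied to each splitting. It gives open immersions
\[\mathcal M^{\mathrm{Kssqmaps}}_{d,(1-\epsilon)\vec a,\vec d,(1-\epsilon)w,\iota}\hookrightarrow \mathcal M^{\mathrm{bpcyqmaps}}_{d,\vec a_1,\vec d_1,\vec a_2,\vec d_2,\alpha w,(1-\alpha)w,\iota}\]
and similarly into the primed stack, each inducing a homeomorphism of good moduli spaces. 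Crucially, the source K-moduli stack is literally the same stack in both cases, since it depends only on $(1-\epsilon)\vec a$ and $(1-\epsilon)w$ and not on the splitting. Choosing $\epsilon$ smaller than both thresholds, I obtain homeomorphisms
\[M^{\mathrm{bpcyqmaps}}_{\alpha}\xleftarrow{\varphi_\alpha} M^{\mathrm{Kpsqmaps}}_{\epsilon}\xrightarrow{\varphi_\beta} M^{\mathrm{bpcyqmaps}}_{\beta}.\]

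Next I upgrade these to isomorphisms after seminormalization. Each $\varphi$ is a finite morphism: it is proper because the source is projective by Theorem \ref{thm:K-moduli--weighted--log-Fano-qmaps} and the target is separated by Theorem \ref{thm:s-complete-bpcyqmaps}, and it is quasi-finite since it is bijective. It is also birational on every irreducible component and a homeomorphism, and it induces isomorphisms of residue fields at closed points. I expect this to be the main obstacle: a finite universal homeomorphism inducing trivial residue field extensions becomes an isomorphism after seminormalization (characteristic zero, so there are no inseparability issues), and this requires checking the residue field and universality hypotheses. I would try to verify universality by base change to geometric points, using that $\phi$ is an open immersion of stacks whose image meets every closed orbit, together with \cite[Lemma 13.7]{babwild} to identify seminormalizations of good moduli spaces with good moduli spaces of seminormalized stacks. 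Composing $\varphi_\beta^{\mathrm{sn}}\circ(\varphi_\alpha^{\mathrm{sn}})^{-1}$ then gives the canonical isomorphism. Canonicity holds because everything is defined by changing weights on the same families, and the result is independent of $\epsilon$ by Lemma \ref{lem-wall-same}.

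Finally, when $\iota=\mathrm{id}_{\mathbb P^N}$, the last part of Theorem \ref{thm:wall-crossingbpcytoK-moduli} says that $\varphi_\alpha$ and $\varphi_\beta$ are already isomorphisms, by smoothness (Proposition \ref{prop:smoothness-of-stacks}) and Zariski's main theorem. So composing them gives the isomorphism without seminormalization.
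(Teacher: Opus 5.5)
Your proposal is correct and follows the paper's argument: Theorem~\ref{thm:wall-crossingbpcytoK-moduli} gives homeomorphisms from the same space $M^{\mathrm{Kpsqmaps}}_{d,(1-\epsilon)\vec a,\vec d,(1-\epsilon)w,\iota}$ onto both bpcy good moduli spaces; these become isomorphisms after seminormalization, and for $\iota=\mathrm{id}_{\mathbb P^N}$ normality makes the seminormalization unnecessary. The step you flag as the main obstacle is automatic — in characteristic zero a finite bijective morphism of finite-type $\mathbb K$-schemes has trivial residue field extensions at every point, so it is a universal homeomorphism and induces an isomorphism of seminormalizations — so the detour through \cite[Lemma 13.7]{babwild} is not needed.
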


\begin{proof}
By Theorem \ref{thm:wall-crossingbpcytoK-moduli} and the definition of the seminormalization, $$({M}^{\mathrm{bpcyqmaps}}_{d,\vec{a_1},\vec{d_1},\vec{a_2},\vec{d_2},\alpha w,(1-\alpha)w,\iota})^{\mathrm{sn}}\cong ({M}^{\mathrm{Kpsqmaps}}_{d,(1-\epsilon)\vec{a},\vec{d},(1-\epsilon)w,\iota})^{\mathrm{sn}}\cong ({M}^{\mathrm{bpcyqmaps}}_{d,\vec{a'_1},\vec{d'_1},\vec{a'_2},\vec{d'_2},\beta w,(1-\beta)w,\iota})^{\mathrm{sn}}.$$  
If further $\iota=\mathrm{id}_{\mathbb{P}^N}$, since ${M}^{\mathrm{bpcyqmaps}}_{d,\vec{a_1},\vec{d_1},\vec{a_2},\vec{d_2},\alpha w,(1-\alpha)w,\iota}$ and $ {M}^{\mathrm{bpcyqmaps}}_{d,\vec{a'_1},\vec{d'_1},\vec{a'_2},\vec{d'_2},\beta w,(1-\beta)w,\iota}$ are normal and hence seminormal, then we see that ${M}^{\mathrm{bpcyqmaps}}_{d,\vec{a_1},\vec{d_1},\vec{a_2},\vec{d_2},\alpha w,(1-\alpha)w,\iota}\cong {M}^{\mathrm{bpcyqmaps}}_{d,\vec{a'_1},\vec{d'_1},\vec{a'_2},\vec{d'_2},\beta w,(1-\beta)w,\iota}$.
\end{proof}

\begin{rem}
    We cannot conclude that $(\mathcal{M}^{\mathrm{bpcyqmaps}}_{d,\vec{a_1},\vec{d_1},\vec{a_2},\vec{d_2},\alpha w,(1-\alpha)w,\iota})^{\mathrm{sn}}\cong (\mathcal{M}^{\mathrm{bpcyqmaps}}_{d,\vec{a'_1},\vec{d'_1},\vec{a'_2},\vec{d'_2},\beta w,(1-\beta)w,\iota})^{\mathrm{sn}}$ in general because in some case there possibly exist closed points $p_1\in (\mathcal{M}^{\mathrm{bpcyqmaps}}_{d,\vec{a_1},\vec{d_1},\vec{a_2},\vec{d_2},\alpha w,(1-\alpha)w,\iota})^{\mathrm{sn}}$ and $p_2\in (\mathcal{M}^{\mathrm{bpcyqmaps}}_{d,\vec{a'_1},\vec{d'_1},\vec{a'_2},\vec{d'_2},\beta w,(1-\beta)w,\iota})^{\mathrm{sn}}$ whose stabilizers are of different dimensions.
    Indeed, suppose that $w=0$, $r=2$, $k=0$, $k'=1$, $\vec{a}=(1,1)$, and $\vec{d}=(1,1)$.
    Then $\mathcal{M}^{\mathrm{bpcyqmaps}}_{d,\vec{a'_1},\vec{d'_1},\vec{a'_2},\vec{d'_2},\beta w,(1-\beta)w,\iota}$ consists of only one bpcy polystable log pair $(\mathbb{P}^1,\{0\}+\{\infty\})$ such that its automorphism group is of dimension one but $\mathcal{M}^{\mathrm{bpcyqmaps}}_{d,\vec{a_1},\vec{d_1},\vec{a_2},\vec{d_2},\alpha w,(1-\alpha)w,\iota}$ contains a log pair that is reducible and bpcy polystable whose automorphism group contains $\mathbb{G}_{m}^2$.
    Indeed, the log pair consists of two copies of $(\mathbb{P}^1,\{0\}+\{\infty\})$ glued at $\{0\}$.
\end{rem}

We finally deal with the interpolation of the Hodge line bundle and the CM line bundle.

\begin{prop}
    Let $q\colon (\mathcal{P},\Delta)\to [\mathrm{Cone}(X)_S/\mathbb{G}_{m,S}]$ be a family of connected K-semistable log Fano quasimaps over a normal variety $S$ with weight $w\ge0$ of degree $d$.
    Suppose that that $\alpha:=\mathrm{deg}_{\mathcal{P}_{s}}(\Delta_s)$ is independent of any choice of $s\in S$ and $\alpha\ne0$ or $w\ne0$.
    Note that $q\colon (\mathcal{P},\frac{2}{\alpha+wd}\Delta)\to [\mathrm{Cone}(X)_S/\mathbb{G}_{m,S}]$ is a family of boundary polarized CY quasimaps with weight $\frac{2}{\alpha+wd}w$.

    If $w\in\mathbb{Q}\cap[0,1]$ and $\Delta$ is a $\Q$-divisor, then
    \[
    \lambda_{\mathrm{CM},q}\sim_{\mathbb{Q}}(2-\alpha-wd)(\alpha+wd)\lambda_{\mathrm{Hodge},q}
    \]
\end{prop}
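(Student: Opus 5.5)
The plan is a direct computation with Deligne pairings. It rests on two facts: the leading Knudsen--Mumford coefficient of a relative curve is the Deligne self-pairing, and $\langle K,K\rangle$ vanishes for a smooth genus-zero family.

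\textbf{Setup.} Every geometric fibre $q_{\bar s}$ is a K-semistable log Fano quasimap, so it is klt by Proposition~\ref{prop:chara--K-stability--qmaps}~(3). Hence $\mathcal{P}_{\bar s}\cong\mathbb{P}^1$, and $\pi\colon\mathcal{P}\to S$ is a smooth $\mathbb{P}^1$-fibration. Put $c:=\frac{2}{\alpha+wd}$ and $B:=K_{\mathcal{P}/S}+\Delta+w\mathscr{L}$.

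\textbf{Step 1: the CM line bundle as a Deligne pairing.} For a flat family of curves and a relatively ample line bundle $\mathcal{A}$, the Knudsen--Mumford expansion of $\det\pi_*\mathcal{A}^{\otimes m}$ has quadratic coefficient $\mathscr{M}_2\cong\langle\mathcal{A},\mathcal{A}\rangle$. This follows from Deligne--Riemann--Roch, or from the functoriality of the expansion checked on a local trivialization. Applying it with $\mathcal{A}=-lB$ in Definition~\ref{defn:CM-line-for-qmaps} and using bilinearity over $\mathbb{Q}$, I obtain
\[
\lambda_{\mathrm{CM},q}\sim_{\mathbb{Q}}-\langle B,B\rangle .
\]

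\textbf{Step 2: the Hodge line bundle.} For the rescaled boundary polarized CY family, $K_{\mathcal{P}/S}+c\Delta+cw\mathscr{L}$ is $\mathbb{Q}$-linearly trivial on each fibre. Since $S$ is normal and $\pi_*\mathcal{O}_{\mathcal{P}}=\mathcal{O}_S$, it is the pullback of its pushforward. By the definition of $\lambda_{\mathrm{Hodge},q}$ this gives
\[
K_{\mathcal{P}/S}+c\Delta+cw\mathscr{L}\sim_{\mathbb{Q}}\pi^*\lambda_{\mathrm{Hodge},q}.
\]
Consequently $B\sim_{\mathbb{Q}}(1-c^{-1})K_{\mathcal{P}/S}+c^{-1}\pi^*\lambda_{\mathrm{Hodge},q}$.

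\textbf{Step 3: expand the pairing.} I expand $\langle B,B\rangle$ bilinearly and use three facts:
\begin{itemize}
\item $\langle\pi^*H,\pi^*H\rangle\sim 0$;
\item $\langle K,\pi^*H\rangle\sim H^{\otimes\deg K_{\mathcal{P}_s}}=H^{-2}$;
\item $\langle K_{\mathcal{P}/S},K_{\mathcal{P}/S}\rangle\sim_{\mathbb{Q}}0$.
\end{itemize}
The third fact comes from Mumford's isomorphism $\langle K,K\rangle\cong 12\lambda_1-\delta$. Here $\lambda_1=\det\pi_*\omega_{\mathcal{P}/S}$ is trivial because the fibres have genus zero, and $\delta=0$ because the family is smooth. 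Alternatively, write $\mathcal{P}=\mathbb{P}(E)$ étale locally and use $c_1^2-4c_2$ of a rank-two bundle, which is $\mathbb{Q}$-trivial after descent.

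The expansion then gives
\[
\lambda_{\mathrm{CM},q}\sim_{\mathbb{Q}}-2(1-c^{-1})c^{-1}(-2)\lambda_{\mathrm{Hodge},q}=4(1-c^{-1})c^{-1}\lambda_{\mathrm{Hodge},q}.
\]
Substituting $c^{-1}=\frac{\alpha+wd}{2}$ yields $(2-\alpha-wd)(\alpha+wd)\lambda_{\mathrm{Hodge},q}$, as claimed.

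\textbf{Main obstacle.} The hard part is justifying Steps 1 and 3 over a normal but possibly singular base $S$ with $\mathbb{Q}$-coefficients. Both Deligne pairings and Mumford's isomorphism are functorial for smooth proper curve families over arbitrary bases, so the claims are stable under base change. I would therefore first verify everything on an étale cover where $\mathcal{P}\cong\mathbb{P}^1_S$ and $\mathscr{L}$ and $\Delta$ are explicit, and then descend using normality of $S$. One must also check that the $\mathbb{Q}$-linear equivalence defining $\lambda_{\mathrm{Hodge},q}$ (via \cite[Lemma 2.21]{CM26}) is compatible with these identifications; this bookkeeping is the most delicate point.
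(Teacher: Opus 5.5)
Your argument is correct and is essentially the paper's. Both proofs identify $\lambda_{\mathrm{CM},q}$ with $-\pi_*(K_{\mathcal{P}/S}+\Delta+w\mathscr{L})^2$, rewrite this using $K_{\mathcal{P}/S}+\frac{2}{\alpha+wd}(\Delta+w\mathscr{L})\sim_{\mathbb{Q}}\pi^*\lambda_{\mathrm{Hodge},q}$, and discard the $\pi_*(K_{\mathcal{P}/S}^2)$ term. The paper does this with intersection theory on a resolution of $S$ via \cite[Lemma A.2]{codogni2021positivity}, and justifies $\pi_*(K_{\mathcal{P}/S}^2)\sim_{\mathbb{Q}}0$ by the K-moduli of $\mathbb{P}^1$ being a point. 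You instead use Deligne pairings and Mumford's isomorphism; these are canonical over any base, so the étale-descent step you flag as delicate is not actually needed.
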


\begin{proof}
We note that it is easy to check that $q\colon (\mathcal{P},\frac{2}{\alpha+wd}\Delta)\to [\mathrm{Cone}(X)_S/\mathbb{G}_{m,S}]$ is a family of boundary polarized CY quasimaps with weight $\frac{2}{\alpha+wd}w$ by Proposition \ref{prop:chara--K-stability--qmaps} (3).
    We show this proposition as \cite[Proposition 2.25]{ascher2024wall}.
    Both of the $\mathbb{Q}$-line bundles are functorial and we may replace $S$ with its resolution of singularities.
    Therefore, we may assume that $S$ is quasi-projective and smooth.
    Let $\pi\colon \mathcal{P}\to S$ be the canonical morphism.
    We can apply \cite[Lemma A.2]{codogni2021positivity} to $\pi$ (we note that this lemma is applicable to the case for the projective morphism of quasi-projective normal varieties, cf.~the proof of \cite[Proposition 2.25]{ascher2024wall}).
    In this case, we see that 
\begin{align*}
&\lambda_{\mathrm{CM},q}\sim_{\mathbb{Q}}-\pi_*(K_{\mathcal{P}/S}+\Delta+w\mathscr{L})^2\\
&\sim_{\mathbb{Q}}-\frac{\alpha+wd}{2}\pi_*\Bigg(\left(K_{\mathcal{P}/S}+\frac{2}{\alpha+wd}(\Delta+w\mathscr{L})\right)\\
&\cdot \left(\frac{4-\alpha-wd}{2}K_{\mathcal{P}/S}+\Delta+w\mathscr{L}\right)\Bigg)\\
&+\frac{(4-\alpha-wd)(\alpha+wd)-4}{4}\pi_*((-K_{\mathcal{P}/S})^2)
\end{align*}
Since the K-moduli space of $\mathbb{P}^1$ consists of one point, $\pi_*((-K_{\mathcal{P}/S})^2)\sim_{\mathbb{Q}}0$ (cf.~\cite[Definition 3.3]{codogni2021positivity}). 
On the other hand, $$\pi_*\left(\left(K_{\mathcal{P}/S}+\frac{2}{\alpha+wd}(\Delta+w\mathscr{L})\right)\cdot \left(\frac{4-\alpha-wd}{2}K_{\mathcal{P}/S}+\Delta+w\mathscr{L}\right)\right)\sim_{\Q}2(\alpha+wd-2)\lambda_{\mathrm{Hodge},q}.$$
Therefore, we have the assertion.
\end{proof}

By this proposition, we see that the CM line bundle and the Hodge line bundle are $\mathbb{Q}$-linearly equivalent via an isomorphism of the normalizations $(M^{\mathrm{Kpsqmaps}}_{\epsilon,\iota})^\nu\to (M^{\mathrm{bpcyqmaps}}_{\iota})^\nu$, which is obtained in Theorem \ref{thm:wall-crossingbpcytoK-moduli} up to multiple of positive constant.

\section{Moduli of weighted stable quasimaps and wall crossing}\label{sec:KSBAqmapswallcrossing}

We fix $\iota\colon X\hookrightarrow\mathbb{P}^N$ as Section \ref{sec:Kssqmapswallcrossing} throughout this section.

\begin{defn}
Let $d\in\mathbb{Z}_{>0}$ and $w>0$.
Let $\vec{a}\in (0,1]^{r}$ and $\vec{d}\in \mathbb{Z}_{>0}^{r}$.
    Let $\mathcal{M}^{\mathrm{stqmaps}}_{p_a,d,\vec{a},\vec{d},w,\iota}$ be a category fibered in groupoids such that the objects of  $\mathcal{M}^{\mathrm{stqmaps}}_{p_a,d,\vec{a},\vec{d},w,\iota}(S)$ for any scheme $S$ are families of weighted stable quasimaps $q\colon (\mathcal{C},\vec{a}\cdot\vec{\mathcal{D}})\to[\mathrm{Cone}(X)_S/\mathbb{G}_{m,S}]$ over $S$ with weight $w$ of degree $d\in\mathbb{Z}_{>0}$ such that \begin{itemize}
    \item $\mathcal C_{\bar{s}}$ is connected for every
geometric point $\bar{s}\to S$.
        \item $\mathrm{deg}_{\mathcal{C}_{s}}((\mathscr{L}_q)_{s})=d$ and $p_a(\mathcal{C}_{s})=p_a$ for any $s\in S$, where $\mathscr{L}_q$ is the associated line bundle with $q$,
        \item $\vec{\mathcal{D}}$ is a vector of $(D^{(1)},\ldots,D^{(r)})$ and $D^{(j)}$ is a relative Mumford divisor of degree $d^{(j)}$, where $\vec{d}=(d^{(1)},\ldots,d^{(r)})$.
        \end{itemize}
        Isomorphisms of $\mathcal{M}^{\mathrm{stqmaps}}_{p_a,d,\vec{a},\vec{d},w,\iota}(S)$ are defined as isomorphisms of families of weighted quasimaps.
        When $w=0$, we set $\mathcal{M}^{\mathrm{stqmaps}}_{p_a,d,\vec{a},\vec{d},w,\iota}$ as the corresponding moduli stack of weighted stable pairs with relative Mumford divisors of degree $d^{(1)}\ldots,d^{(r)}$ with coefficients $\vec{a}$.
        If $\vec{a}\in [0,1]^r$ and there exists $J\subset \{1,\ldots,r\}$ such that $i\in J$ if and only if $a_i=0$, then let $\vec{b},\vec{d}'\in \mathrm{Map}(\{1,\ldots,r\}\setminus J,(0,1])$ such that $b_i=a_i$ and $d_i=d'_i$ for $i\not\in J$ and regard $\mathcal{M}^{\mathrm{stqmaps}}_{p_a,d,\vec{a},\vec{d},w,\iota}$ as $\mathcal{M}^{\mathrm{stqmaps}}_{p_a,d,\vec{b},\vec{d}',w,\iota}$.  

        It is easy to check that $\mathcal{M}^{\mathrm{stqmaps}}_{p_a,d,\vec{a},\vec{d},w,\iota}$ is a stack. We regard this stack as empty if any boundary coefficient
exceeds one.
\end{defn}

\begin{rem}
Marian-Oprea-Pandharipande \cite{marian2011moduli} considered the moduli space of quasimaps not only to $[\mathbb{A}^{N+1}/\mathbb{G}_m]$ but also for quasimaps to the Grassmannian stack for any sufficiently small weight. 
    Toda \cite{Toda} generalized their framework to an arbitrary weight in the case where $\vec{d}=(1,1,\ldots,1)$. Moreover, he also considered the moduli wall crossing for the case where the target is $[\mathbb{A}^{N+1}/\mathbb{G}_m]$.

    For the weighted stable maps, Bayer-Manin \cite{bayer2009stability} and Alexeev-Guy \cite{Al-Guy} discussed the moduli wall crossing, which will be shown by Proposition \ref{prop:when:quasimap-is-map} later that this is contained in our situation as the $w>1$ case.
\end{rem}

\begin{prop}\label{prop:chmber--st}
 Fix 
 $d\in\mathbb{Z}_{>0}$ and $\vec{d}\in \mathbb{Z}_{>0}^{r}$. 
Then there are finitely many locally closed disjoint rational polyhedra $R_k\subset [0,1]^{r}\times\R_{\ge0}$ for $k\in\{1,\ldots,m\}$ that satisfy the following: if there exists a stable quasimap $q\colon (C,\vec{a}\cdot\vec{D})\to[\mathrm{Cone}(X)/\mathbb{G}_{m}]$ with weight $w$ of degree $d$, where $\vec{D}=(D_i)$ such that $D_i$ is an effective Cartier divisor of degree $d_i$, then $(\vec{a},w)\in R_{k}^\circ$ for some $k$ and for any $(\vec{c},w')\in R_k^\circ$ (resp.~$(\vec{c},w')\in \overline{R_k}$), $q\colon (C,\vec{c}\cdot\vec{D})\to[\mathrm{Cone}(X)/\mathbb{G}_{m}]$ with weight $w'$ is stable (resp.~$\omega_C+w'\mathscr L+\vec c\cdot\vec D$ is nef and $q\colon (C,\vec{c}\cdot\vec{D})\to[\mathrm{Cone}(X)/\mathbb{G}_{m}]$ with weight $w'$ is slc).
\end{prop}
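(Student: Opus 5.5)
The plan follows the proofs of Propositions \ref{prop:chmber--Kss} and \ref{prop:chmber--bdcy}. I write down finitely many affine linear forms in $(\vec a,w)$ with integer coefficients, coming from the discrete combinatorial data of a stable quasimap. I then take the common refinement of $[0,1]^r\times\R_{\ge0}$ by the hyperplanes where these forms equal their critical values. The refinement is a finite decomposition into locally closed rational polyhedra, the relatively open cells $R_k$. On each cell every form has constant sign relative to its critical value, so stability, which is a conjunction of such sign conditions, is constant on $R_k^\circ$. On $\overline{R_k}$ the strict inequalities can only degenerate to non-strict ones.

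The first and main step is to bound the combinatorial types of the stable quasimaps, so that only finitely many forms occur. Fix $p_a$ as well; the statement implicitly fixes it through the stack. The dual graph of $C$ then has bounded size. A component $T$ with $\deg_T\mathscr L=0$ and no boundary is a rational tail or bridge with $\deg_T(K_C)|_T\le 0$, so ampleness of $K_C+\vec a\cdot\vec D+w\mathscr L$ forbids it. Every rational tail or bridge must therefore carry positive quasimap degree or boundary degree. Since the total degrees $d$ and $\vec d$ are fixed, there are at most $d+\sum_i d_i$ such components. The remaining components have genus $\ge1$ or $\ge3$ special points, and their number is bounded in terms of $p_a$. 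Hence there are finitely many dual graphs. On each component $T$ the integers $\deg_T\mathscr L\in\{0,\dots,d\}$, $\deg_T D_i\in\{0,\dots,d_i\}$, $\deg_T\omega_C\in\{-2,\dots,2p_a-2+\#\text{nodes}\}$, and the local multiplicities $\mathrm{mult}_p(D_i)$ and $\mathrm{mult}_p(\mathrm{Bs}(q))$ all range over finite sets.

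Next I list the forms. For the slc condition I use the inequalities \eqref{eq:bpcydomain} at smooth points,
\[
\sum_i a_i\,\mathrm{mult}_p(D_i)+w\,\mathrm{mult}_p(\mathrm{Bs}(q))\le1,
\]
together with the condition that the support of $\mathrm{Bs}(q)$ and of the $D_i$ with $a_i>0$ avoids the nodes, which is a combinatorial condition independent of $(\vec a,w)$. For ampleness I use the degrees
\[
\deg_T\omega_C+\sum_i a_i\deg_T D_i+w\deg_T\mathscr L>0
\]
on each component $T$. Together with the coordinate hyperplanes $a_i=0$ and $w=0$, these finitely many forms determine the decomposition. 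Suppose a stable $q$ exists at $(\vec a,w)$, and $(\vec a,w)$ lies in the cell $R_k$. The type of $q$ is one of the finitely many enumerated, and all its strict inequalities remain strict throughout $R_k$, because $R_k$ is a cell of the arrangement containing every hyperplane of that type. This gives stability for every $(\vec c,w')\in R_k^\circ$. Passing to the limit in $\overline{R_k}$, the ampleness inequalities become $\ge0$, which is nefness. The slc inequalities are non-strict and closed conditions, so they persist. I have to be careful that the points with $c_i=0$ or $w'=0$ on the boundary are handled by the convention of forgetting the corresponding data, which only weakens the conditions.

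The expected obstacle is the finiteness in the first step. It needs the bound on the number of components, and the argument is cleanest in the rational case $p_a=0$, where the curve is a tree of $\P^1$'s. For general $p_a$ I would invoke the standard argument via stability of the stabilized graph.
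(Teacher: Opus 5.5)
Your overall structure matches the paper's proof. You build a finite arrangement of rational hyperplanes from the slc inequalities $\sum_i a_i\,\mathrm{mult}_p(D_i)+w\,\mathrm{mult}_p(\mathrm{Bs}(q))\le1$ and from the componentwise degree conditions for ampleness. Stability is then constant on the open cells and degenerates to nefness plus slc on their closures. Your point about the coordinate hyperplanes $a_i=0$, $w=0$ is also what makes the closure statement work.

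You diverge in the finiteness step, and there your route is longer and weaker than needed. You bound the dual graph, which forces you to fix $p_a$. The proposition fixes only $d$ and $\vec d$ and is uniform over all curves $C$, so as written your argument proves it only genus by genus.

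The paper avoids this with one observation. Since $\vec a,w\ge0$ and $\deg_T D_i,\deg_T\mathscr L\ge0$, the form $\deg_T\omega_C+\sum_i a_i\deg_T D_i+w\deg_T\mathscr L$ is automatically $\ge1$ on the whole parameter region whenever $\deg_T\omega_C\ge1$. Its hyperplane therefore never meets $[0,1]^r\times\R_{\ge0}$. Only components with $\deg_T\omega_C\in\{-2,-1,0\}$ impose conditions. These give the finitely many forms $we+\sum_i a_ie_i>s$ with $s\in\{0,1,2\}$, $e\in\{0,\dots,d\}$, $e_i\in\{0,\dots,d_i\}$, independent of $p_a$ and of the combinatorial type of $C$. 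So the step you flag as the main obstacle disappears, and no bound on the dual graph is needed.

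If you keep your bound anyway, note one correction. Genus-zero components with at least three nodes are bounded in terms of $p_a$ together with $d+\sum_i d_i$ (which bounds the components with $\deg_T\omega_C\le0$), not in terms of $p_a$ alone. This follows from $\sum_T\deg_T\omega_C=2p_a-2$.
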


\begin{proof}
    Let $q\colon (C,\vec{a}\cdot\vec{D})\to[\mathrm{Cone}(X)/\mathbb{G}_{m}]$ be a stable quasimap with weight $w$ of degree $d$.
Let $T$ be an irreducible component of $C$ such that $\omega_C$ is not ample along $T$.    
We note that $\mathrm{deg}_T\omega_C|_T=-2,=-1$ or $=0$.
    To achieve the stable condition for $q$, $\mathrm{deg}_T(w\mathscr{L}+\vec{a}\cdot \vec{D})|_T>2$, $>1$ or $>0$, respectively. 
    Furthermore, for the slc condition for $q$, we need $\mathrm{mult}_p(w\mathrm{Bs}(q)+\vec{a}\cdot \vec{D})\le 1$ for any smooth point $p\in C$. Therefore, 
    \begin{align}
    we+\sum_{i}a_ie_i&>s \label{eq:st-domain}, \text{ and}\\
    \sum_ia_ie_i+we&\le1, \label{eq:st-domainlc} 
    \end{align}
    where $s\in\{0,1,2\}$, $e\in \{0,1,\ldots, d\}$ and $e_i\in\{0,\ldots,d_i\}$.
    Therefore, by using \eqref{eq:st-domain} and \eqref{eq:st-domainlc}, there is the decomposition of $[0,1]^r\times\R_{>0}$ into finitely many rational polyhedra $R_k\subset [0,1]^{r}\times\R_{>0}$ and we see that such $R_k$ satisfy the assertion as the argument of \cite[Section 2]{Al-Guy}.
\end{proof}

\begin{prop}\label{prop:when:quasimap-is-map}
If $w>1$, then a family of weighted stable quasimaps $q\colon (\mathcal{C},\vec{a}\cdot\vec{\mathcal{D}})\to[\mathrm{Cone}(X)_S/\mathbb{G}_{m,S}]$ over $S$ with weight $w$ of degree $d\in\mathbb{Z}_{>0}$ is a family of weighted stable maps. 
In particular, if $2p_a-2+wd+\vec{a}\cdot\vec{d}>0$, then for any $w'>w$, $\mathcal{M}^{\mathrm{stqmaps}}_{p_a,d,\vec{a},\vec{d},w,\iota}\cong \mathcal{M}^{\mathrm{stqmaps}}_{p_a,d,\vec{a},\vec{d},w',\iota}$.  
\end{prop}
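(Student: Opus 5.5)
The argument is pointwise: the slc condition forces the base locus to be empty once $w>1$, and then changing the weight upward does not alter any of the defining conditions of the stack.

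\textbf{Step 1: no base points.} Let $\bar s\in S$ be a geometric point and $p\in\mathcal{C}_{\bar s}$ a closed point. Since $q_{\bar s}$ is deminormal, $\mathrm{Bs}(q_{\bar s})$ is supported in the smooth locus. If $p\in\mathrm{Supp}\,\mathrm{Bs}(q_{\bar s})$, then $\mathrm{mult}_p(\mathrm{Bs}(q_{\bar s}))\ge1$. Hence
\[
\mathrm{ord}_p\bigl(w\mathrm{Bs}(q_{\bar s})+\vec a\cdot\vec{\mathcal{D}}_{\bar s}\bigr)\ge w>1,
\]
which contradicts the slc condition for $(\mathcal{C}_{\bar s},w\mathrm{Bs}(q_{\bar s})+\vec a\cdot\vec{\mathcal{D}}_{\bar s})$ at a smooth point. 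Therefore $\mathrm{Bs}(q_{\bar s})=0$, so $q_{\mathrm{hom}}\colon\mathcal{O}^{\oplus N+1}\to\mathscr{L}$ is surjective on every geometric fiber.

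\textbf{Step 2: $q$ is a family of maps.} By Nakayama's lemma, the cokernel of $q_{\mathrm{hom}}$, which is coherent and has zero fibers, vanishes. Thus $q_{\mathrm{hom}}$ is surjective on $\mathcal{C}$, and $q$ factors through the open substack $X_S\subset[\mathrm{Cone}(X)_S/\mathbb{G}_{m,S}]$, giving a morphism $\mathcal{C}\to X_S$ with $\mathscr{L}$ the pullback of $\mathcal{O}(1)$. With $\mathrm{Bs}=0$, the slc condition reduces to $(\mathcal{C},\vec a\cdot\vec{\mathcal{D}})$ being locally stable, and stability says $K+\vec a\cdot\vec{\mathcal D}+w f^*\mathcal{O}(1)$ is ample. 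This is exactly a family of weighted stable maps in the sense of Alexeev--Guy and Bayer--Manin.

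\textbf{Step 3: comparing weights $w'>w$.} Take $w'>w>1$. By Step 1, objects for either weight have no base points, so the slc condition is independent of the weight. It remains to check ampleness on each component $T$. Since $f^*\mathcal{O}(1)$ is nef, we have
\[
\deg_T(K+\vec a\cdot\vec D+w'\mathscr L)\ge\deg_T(K+\vec a\cdot\vec D+w\mathscr L)>0,
\]
so stability for $w$ implies stability for $w'$.

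The converse direction is the main point. Suppose $\deg_T(K+\vec a\cdot\vec D+w'\mathscr L)>0$. If $\deg_T\mathscr L\ge1$, then $\deg_T(w\mathscr L)\ge w>1$. Since $\deg_T K\ge -2$ on a component, with equality only for $T=C\cong\mathbb{P}^1$, we get $-1<\deg_T(K+w\mathscr L)$ only in general; to obtain strict positivity I would split into cases:
\begin{itemize}
\item $T$ meets the rest of the curve, so $\deg_T\omega_C\ge-1$, and then $-1+w>0$.
\item $T=C=\mathbb{P}^1$, where the total degree $-2+wd+\vec a\cdot\vec d>0$ is exactly the hypothesis $2p_a-2+wd+\vec a\cdot\vec d>0$.
\end{itemize}
If $\deg_T\mathscr L=0$, the degree does not depend on the weight at all. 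Hence the two sets of objects coincide over every $S$, isomorphisms are identical, and the identity on families gives $\mathcal M^{\mathrm{stqmaps}}_{p_a,d,\vec a,\vec d,w,\iota}\cong\mathcal M^{\mathrm{stqmaps}}_{p_a,d,\vec a,\vec d,w',\iota}$.

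The only delicate point is the $T=C\cong\mathbb{P}^1$ case in Step 3, which is handled by the numerical hypothesis.
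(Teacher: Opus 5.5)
Your proposal is correct and follows the paper's proof essentially step for step. The slc condition at smooth points rules out base points when $w>1$, Nakayama's lemma makes $q_{\mathrm{hom}}$ surjective so that $q$ factors through $X_S$, and the weight comparison is the same componentwise degree check: $\deg_T\omega\ge -1$ or $\deg_T\mathscr{L}=0$ handles every component except $T=C\cong\mathbb{P}^1$, where the hypothesis $2p_a-2+wd+\vec a\cdot\vec d>0$ is used (your bullets skip an irreducible $C$ of positive arithmetic genus, but your remark that $\deg_T K=-2$ only when $T=C\cong\mathbb{P}^1$ already covers it).
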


\begin{proof}
Note that for any $w>1$, by the slc condition, $q_{\bar{s}}$ has no fixed part for any geometric point $\bar{s}\in S$.    
By Nakayama's lemma, we see that the associated morphism $\mathcal{O}_{\mathcal{C}}^{\oplus N+1}\to\mathscr{L}_q$ is surjective.
Therefore, $q$ factors through $X_S\subset [\mathrm{Cone}(X)_S/\mathbb{G}_{m,S}]$ and $q$ is a weighted stable map by the slc condition.
Moreover, we see that  there exists a canonical morphism $\mathcal{M}^{\mathrm{stqmaps}}_{p_a,d,\vec{a},\vec{d},w,\iota}\to \mathcal{M}^{\mathrm{stqmaps}}_{p_a,d,\vec{a},\vec{d},w',\iota}$ for any $w'>w$.  

Conversely, for any $q\in \mathcal{M}^{\mathrm{stqmaps}}_{p_a,d,\vec{a},\vec{d},w',\iota}(S)$, if you replace the weight $w'$ with $w$, then $q$ remains to be stable.
Indeed, for any geometric point $\bar{s}\in S$, let $T\subset \mathcal{C}_{\bar{s}}$ be an irreducible component.
If $T\ne \mathcal{C}_{\bar{s}}$ or $p_a(\mathcal{C}_{\bar{s}})>0$ for any $\bar{s}\in S$, then $\mathrm{deg}_T(\omega_{\mathcal{C}_{\bar{s}}}|_T)\ge-1$.
If $\mathrm{deg}_{T}(\mathscr{L})\ge1$, then $\mathrm{deg}_T(\omega_{\mathcal{C}_{\bar{s}}}+w''\mathscr{L}|_T)>0$ for any $w''>1$.
Otherwise, $\mathrm{deg}_T(\omega_{\mathcal{C}_{\bar{s}}}+\vec{a}\cdot\vec{\mathcal{D}}_{\bar{s}}+w'\mathscr{L}|_T)=\mathrm{deg}_T(\omega_{\mathcal{C}_{\bar{s}}}+\vec{a}\cdot\vec{\mathcal{D}}_{\bar{s}}|_T)>0$.
Next, we deal with the case where $T=\mathcal{C}_{\bar{s}}$ and $p_a(\mathcal{C}_{\bar{s}})=0$.
By assumption, we see that $\vec{a}\cdot\vec{d}+wd>2$.
This shows that $\mathrm{deg}_{\mathcal{C}_{\bar{s}}}(\omega_{\mathcal{C}_{\bar{s}}}+\vec{a}\cdot\vec{\mathcal{D}}_{\bar{s}}+w'\mathscr{L})>0$.
Therefore, if  $2p_a-2+wd+\vec{a}\cdot\vec{d}>0$ and you replace the weight $w'$ with $w$, then $q$ remains to be stable since the slc condition is unchanged.
As a result, if $2p_a-2+wd+\vec{a}\cdot\vec{d}>0$, then we see that $\mathcal{M}^{\mathrm{stqmaps}}_{p_a,d,\vec{a},\vec{d},w,\iota}\cong \mathcal{M}^{\mathrm{stqmaps}}_{p_a,d,\vec{a},\vec{d},w',\iota}$.
\end{proof}

    \begin{lem}\label{lem-wall-same:st}
Fix a face $F\subset R_k$ and let $F^\circ$ be the interior of $F$ in Proposition \ref{prop:chmber--st}.
    For every two points $(\vec{a}_1,w_1),(\vec{a}_2,w_2)\in F^\circ$, $\mathcal{M}^{\mathrm{stqmaps}}_{p_a,d,\vec{a}_1,\vec{d},w_1,\iota}\cong \mathcal{M}^{\mathrm{stqmaps}}_{p_a,d,\vec{a_2},\vec{d},w_2,\iota}$ as stacks.
\end{lem}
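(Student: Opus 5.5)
The plan is to follow the proofs of Lemmas \ref{lem-wall-same} and \ref{lem-wall-same:bdcy}: build a morphism $\mathcal{M}^{\mathrm{stqmaps}}_{p_a,d,\vec{a}_1,\vec{d},w_1,\iota}\to \mathcal{M}^{\mathrm{stqmaps}}_{p_a,d,\vec{a}_2,\vec{d},w_2,\iota}$ that keeps the same family and only changes the numbers, and do the same in the reverse direction.

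\textbf{Step 1 (zero pattern).} Since $F^\circ$ is the relative interior of a face of a polyhedron cut out by the finitely many linear forms \eqref{eq:st-domain} and \eqref{eq:st-domainlc}, I first check the vanishing pattern. Each coordinate hyperplane $a_i=0$ and $w=0$ can be taken among the defining hyperplanes by refining the decomposition if necessary. Hence $a_1^{(i)}=0$ if and only if $a_2^{(i)}=0$, and $w_1=0$ if and only if $w_2=0$. So the same boundary components are forgotten at both points, and the quasimap structure is retained at both or forgotten at both.

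\textbf{Step 2 (objects).} Let $q\colon(\mathcal{C},\vec{a}_1\cdot\vec{\mathcal{D}})\to[\mathrm{Cone}(X)_S/\mathbb{G}_{m,S}]$ be an object over $S$. I regard it as a family with coefficients $\vec{a}_2$ and weight $w_2$. Stability is checked on geometric fibers $q_{\bar{s}}$. By Proposition \ref{prop:chmber--st}, the stable quasimap $q_{\bar{s}}$ satisfies $(\vec{a}_1,w_1)\in R_{k'}^\circ$ for the unique chamber containing that point, which is $R_k$. Because $(\vec{a}_2,w_2)\in F^\circ\subset R_k$, the same proposition gives that $q_{\bar{s}}$ is stable with respect to $(\vec{a}_2,w_2)$.

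Local stability of the family needs the remaining conditions:
\begin{enumerate}
\item $K_{\mathcal{C}/S}+\vec{a}_2\cdot\vec{\mathcal{D}}$ is $\mathbb{R}$-Cartier;
\item relative ampleness of $K_{\mathcal{C}/S}+\vec{a}_2\cdot\vec{\mathcal{D}}+w_2\mathscr{L}_q$.
\end{enumerate}
For (1), the fibers are nodal and the $D^{(j)}$ are relative Cartier, so this is automatic. For (2), ampleness on every fiber implies relative ampleness since the family is proper. Isomorphisms of families do not involve the numbers beyond the support data, so this assignment is a morphism of fibered categories. By symmetry we get a quasi-inverse.

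\textbf{Main obstacle.} The delicate point is Step 2 when $F$ is a lower-dimensional face. The proof of Proposition \ref{prop:chmber--st} must really guarantee constant sign of every form \eqref{eq:st-domain} and \eqref{eq:st-domainlc} on $F^\circ$. In particular, equality in an slc inequality $\sum a_ie_i+we=1$ can hold on a face, so I must verify that the slc condition (not only its strict version) is constant along $F^\circ$.

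I would handle this by taking the $R_k$ to be the cells of the hyperplane arrangement of all these forms together with the coordinate hyperplanes. Each form then has a constant sign ($<,=,>$) on each cell and its faces' relative interiors. The stability and slc conditions at a geometric point depend only on these signs for the finitely many relevant data $(s,e,e_i)$, so the conditions agree at $(\vec{a}_1,w_1)$ and $(\vec{a}_2,w_2)$.
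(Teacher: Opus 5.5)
Your proposal is correct and follows the paper's route. The paper just says the lemma is straightforward from Proposition \ref{prop:chmber--st} and is proved like Lemma \ref{lem-wall-same}: the zero pattern of $(\vec a,w)$ is constant on $F^\circ$, an object is sent to the same family with the numbers changed, and the reverse assignment gives a quasi-inverse. Your arrangement of the forms \eqref{eq:st-domain} and \eqref{eq:st-domainlc}, each of constant sign on $F^\circ$, is what actually makes the fiberwise stability and slc check rigorous. It is cleaner than the appeal to $R_k^\circ$ in your Step 2, which only works when the $R_k$ are the relatively open cells.
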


\begin{proof}
    It is straightforward by Proposition \ref{prop:chmber--st} and can be shown as Lemma \ref{lem-wall-same}.
\end{proof}
\begin{thm}
\label{thm:constksbaqmaps}
$\mathcal{M}^{\mathrm{stqmaps}}_{p_a,d,\vec{a},\vec{d},w,\iota}$ is a proper Deligne-Mumford stack that has a proper coarse moduli space $M^{\mathrm{stqmaps}}_{p_a,d,\vec{a},\vec{d},w,\iota}$.
\end{thm}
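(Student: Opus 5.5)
First I reduce to rational data. By Proposition \ref{prop:chmber--st} and Lemma \ref{lem-wall-same:st}, the stack $\mathcal{M}^{\mathrm{stqmaps}}_{p_a,d,\vec{a},\vec{d},w,\iota}$ only depends on the chamber containing $(\vec a,w)$. Hence I may assume $\vec a\in\mathbb{Q}^r$ and $w\in\mathbb{Q}_{>0}$. If $w>1$, Proposition \ref{prop:when:quasimap-is-map} also lets me take $w$ slightly bigger than $1$, rational. The case $w=0$ is the classical moduli of weighted stable curves with divisorial boundary, i.e.\ Theorem \ref{thm-KSBA} in dimension one, so I assume $w>0$.

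For algebraicity and finite type I would follow the construction in Proposition \ref{prop:bpcy--moduli--qmaps--const}, using the polarization $A:=K_C+\vec a\cdot\vec D+w\mathscr{L}$ in place of the bpcy polarization. Since $\deg A=2p_a-2+\vec a\cdot\vec d+wd$ is fixed and positive, and the slc condition bounds the multiplicities of $\vec D$ and $\mathrm{Bs}(q)$, the pairs $(C,\vec a\cdot\vec D+w\,\mathrm{div}(f))$ for general $f\in\mathrm{Sec}_q(\mathscr{L})$ form a bounded family of stable curves with rational coefficients. Boundedness then comes from Theorem \ref{thm-KSBA}, or directly from \cite[Proposition 3.3]{Hassett}, which gives a uniform $M$ with $MA$ very ample and $H^1(C,\mathcal{O}(kMA))=0$ for $k>0$. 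Embedding by $|MA|$ and working in a Hilbert scheme of curves, I take the product with the $\mathrm{KDiv}$ schemes for the boundary divisors and a Quot/Hom scheme for $(\mathscr{L},q_{\mathrm{hom}})$. I then impose the following conditions:
\begin{itemize}
\item the locally closed condition $\mathcal{O}(1)\sim_S MA$;
\item the open slc condition;
\item the closed condition that $q$ factors through $\mathrm{Cone}(X)$.
\end{itemize}
This yields a quasi-projective scheme $U$ with $\mathcal{M}\cong[U/G]$ for a product of $\PGL$'s.

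To show the stack is Deligne--Mumford, I use Proposition \ref{prop:chara--K-stability--qmaps}(1): each object is K-stable with respect to its ample adjoint polarization. I expect finiteness of automorphisms to follow directly. Any infinitesimal automorphism would act on $(C,\vec a\cdot\vec D+w\,\mathrm{div}(f))$ for general $f$ (Lemma \ref{lem;K-stqmaps and log-twisted}), and that pair is a one-dimensional stable slc pair, whose automorphism group is finite. Thus the stabilizers are finite and reduced in characteristic zero, so the stack is DM. Separatedness also follows from K-stability: two families over a DVR agreeing generically would, via the $\mathsf{S}$-completeness argument of Theorem \ref{thm:s-complete-bpcyqmaps} (extending sections of $\mathscr{L}$ across codimension two), give a test configuration with nonpositive $\mathrm{DF}$. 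Alternatively, and more simply, uniqueness of the ample (lc) model of the stable pair over a DVR gives uniqueness of the extension of $(C,\vec a\cdot\vec D+w\,\mathrm{div}(f))$, and the quasimap sections extend uniquely because $C$ is $S_2$.

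The main obstacle is the existence part of the valuative criterion. Given $q_K$ over $\mathrm{Spec}\,K$, I pass to a finite extension and pick general sections $f_0,\dots,f_N,f_{N+1}$ as in the proof of Theorem \ref{thm:proj--CM--bpCY}. I then consider the stable pair $(C_K,\vec a\cdot\vec D_K+\frac{w}{N+2}\sum_j\mathrm{div}(f_j))$, taking the weight per section small enough that it is stable, and extend it by properness of the one-dimensional stable pair moduli (Theorem \ref{thm-KSBA}) to a stable family over $R'$. The limit divisors $\mathrm{div}(f_j)$ can collide, so the central fiber may carry components that are stable for the split divisor but not for $w\mathscr{L}$. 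I would fix this by contracting, using the relative linear system of $M(K+\vec a\cdot\vec D+w\mathscr{L})$ as in the proof of Theorem \ref{thm--wall-crossing-bpcy}: Proposition \ref{prop:pushfoward} pushes down the boundary, and Proposition \ref{prop:contraction-thm} reconstructs the quasimap with its base points. The hardest checks are that the resulting central fiber is slc, meaning that base point multiplicity $\le 1/w$ at the image of a contracted tail follows from nefness on the contracted locus, and that $A$ is ample on it. After that, the Keel--Mori theorem gives the proper coarse moduli space.
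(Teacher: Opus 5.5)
Your proposal has a genuine gap in the existence half of the valuative criterion, at the step where the quasimap is reconstructed on the limit.

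\textbf{The gap.} After you extend the split pair $(C_K,\vec a\cdot\vec D_K+\frac{w}{N+2}\sum_j\mathrm{div}(f_j))$ to a stable family $\mathcal C\to\mathrm{Spec}\,R'$, you have divisors but no line bundle. The closures $\overline{\mathrm{div}(f_j)}$ are linearly equivalent on the generic fibre. On $\mathcal C$, however, you only get $\mathrm{div}_{\mathcal C}(f_i/f_j)=\overline{\mathrm{div}(f_i)}-\overline{\mathrm{div}(f_j)}+V_{ij}$, where $V_{ij}$ is supported on components of the central fibre. Nothing in your argument forces $V_{ij}$ to be a multiple of the whole fibre.

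Consequently:
\begin{itemize}
\item $M(K+\vec a\cdot\vec D+w\mathscr L)$ is not even defined on $\mathcal C$.
\item Proposition~\ref{prop:contraction-thm} cannot be invoked. It takes a locally stable family of quasimaps on the source of $h$ as \emph{input}, and only transports it to $C'$ via $h_\sharp$ of ratios of sections that are already regular upstairs.
\end{itemize}
Your worry about components ``stable for the split divisor but not for $w\mathscr L$'' is a symptom of exactly this. Suppose you knew that all $\overline{\mathrm{div}(f_j)}$ have the same degree on every central component, equivalently (by Zariski's lemma) that $V_{ij}\sim 0$. Then the split-pair limit would already carry the limiting quasimap, and no contraction would be needed. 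Without that, there is no quasimap to contract.

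\textbf{How the paper closes it.} The paper gets the missing input from an existence theorem for quasimaps rather than for pairs.
\begin{itemize}
\item It removes $\mathrm{Bs}(q^\circ)$ and regards the resulting map with the boundary and base points as coefficient-one markings.
\item It extends this over $R'$ by \cite[Theorem~2.12]{Toda}, so an honest line bundle $\overline{\mathscr L}$ and its sections exist on $\overline{\mathcal C}$.
\item It lowers the weights with Hassett's reduction $\psi\colon\overline{\mathcal C}\to\mathcal C'$ \cite[Theorem~4.1]{Hassett}.
\item On $\mathcal C'$ it sets $\mathscr L'=\mathcal O_{\mathcal C'}(D_f+\psi_*\overline{\mathrm{Bs}(q^\circ)})$ and $g'=\frac{g}{f}f'$. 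These have no poles in codimension one, so $g'$ is regular because $\mathcal C'$ is $S_2$.
\end{itemize}
The slc and ampleness checks you flag as hardest then follow from stability of the reduced pair. You need such a dominating model to close your argument. The alternative is an independent proof that the $V_{ij}$ are trivial, together with control of the base locus at nodes.

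\textbf{Two smaller points.}
\begin{itemize}
\item Proposition~\ref{prop:when:quasimap-is-map} only lets you move $w>1$ to weights at which $2p_a-2+wd+\vec a\cdot\vec d>0$ still holds. So ``slightly bigger than $1$'' is not allowed in general. For $\iota=\mathrm{id}_{\mathbb P^N}$, $p_a=0$, $d=1$ and $\vec a\cdot\vec d=\tfrac12$, the stack is nonempty at $w=2$ but empty at $w=1.1$. The paper instead moves up to $w=3$, where positivity is automatic.
\item For $w>1$ the pair $(C,\vec a\cdot\vec D+w\,\mathrm{div}(f))$ has a coefficient exceeding $1$. Your boundedness, finite-automorphism and uniqueness arguments must then use the split divisor throughout, or be replaced by the stable-map theory of Alexeev--Guy as in the paper. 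With a single $f$ the automorphism claim is false: a degree-one rational tail gives $(\mathbb P^1,p+w\,x)$, which has a $\mathbb G_m$ of automorphisms.
\end{itemize}
For $w\le1$ your Deligne--Mumford and separatedness arguments are fine. The separatedness argument is the paper's uniqueness argument.
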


\begin{proof}
  By Lemma \ref{lem-wall-same:st}, we may assume that $\vec{a}$ is rational and $w\in\mathbb{Q}$.
  Note that if $2p_a-2+wd+\vec{a}\cdot\vec{d}\le0$, then $\mathcal{M}^{\mathrm{stqmaps}}_{p_a,d,\vec{a},\vec{d},w,\iota}=\emptyset$. Thus, we may assume that $2p_a-2+wd+\vec{a}\cdot\vec{d}>0$.
By Proposition \ref{prop:when:quasimap-is-map}, if $w>1$, we may assume that $w=3$ because if there exists $w>1$ such that $\mathcal{M}^{\mathrm{stqmaps}}_{p_a,d,\vec{a},\vec{d},w,\iota}\ne\emptyset$ then $\mathcal{M}^{\mathrm{stqmaps}}_{p_a,d,\vec{a},\vec{d},3,\iota}\cong \mathcal{M}^{\mathrm{stqmaps}}_{p_a,d,\vec{a},\vec{d},w,\iota}$. Note that if $\mathcal{M}^{\mathrm{stqmaps}}_{p_a,d,\vec{a},\vec{d},3,\iota}=\emptyset$, there is nothing to prove.
For any object $q\colon (C,\vec{a}\cdot \vec{D})\to  [\mathrm{Cone}(X)/\mathbb{G}_m]$ with the associated line bundle $\mathscr{L}$, if $w=3$, then there exists $M\in \mathbb{Z}_{>0}$ depending only on $p_a,d,\vec{a},\vec{d},w$ such that  $M(K_C+\vec{a}\cdot \vec{D}+w\mathscr{L})$ is a very ample line bundle with $H^1(C,\mathcal{O}_C(mM(K_C+\vec{a}\cdot \vec{D}+w\mathscr{L})))=0$ for any $m\in\mathbb{Z}_{>0}$ by the boundedness of the stable maps (cf.~\cite[Theorem 1.9]{Al-Guy}).
When $w\le1$, the log pair $(C,\vec{a}\cdot \vec{D}+w\mathrm{div}(f))$ for any general $f\in \mathrm{Sec}_q(\mathscr{L})$ is bounded by \cite{hacon2018boundedness}.
Therefore, there also exists $M\in \mathbb{Z}_{>0}$ depending only on $p_a,d,\vec{a},\vec{d},w$ such that  $M(K_C+\vec{a}\cdot \vec{D}+w\mathscr{L})$ is a very ample line bundle with $H^1(C,\mathcal{O}_C(mM(K_C+\vec{a}\cdot \vec{D}+w\mathscr{L})))=0$ for any $m\in\mathbb{Z}_{>0}$ in this case.
Furthermore, in both cases, there exists $l_1\in\mathbb{Z}_{>0}$ depending only $M$ such that $\mathscr{L}$ is $l_1$-regular with respect to $M(K_C+\vec{a}\cdot \vec{D}+w\mathscr{L})$.
Note that $P(m):=\chi(mM(K_C+\vec{a}\cdot \vec{D}+w\mathscr{L}))=1-p_a+mM(2p_a-2)+mM\vec{a}\cdot\vec{d}+mMwd$.

Consider the Hilbert scheme $H$ parameterizing closed subschemes of $\mathbb{P}^{P(1)-1}$ with Hilbert polynomial $P$.
By the standard argument, there exists the largest open subset $U_1\subset H$ that parameterizes only connected nodal curves with the arithmetic genus $p_a$.
Let $\pi\colon\mathcal{U}\to U_1$ be the universal family and $\mathcal{O}(k)$ denote the pullback of $\mathcal{O}_{\mathbb{P}^{P(1)-1}_{U_1}}(k)$ for any $k\in\mathbb{Z}$.
Consider $U_2:=\bigtimes_{U_1}^{1\le j\le r}\mathrm{KDiv}^{d_j}_{\mathcal{U}/U_1}$.
Moreover, consider $\mathrm{Quot}^{\mathcal{O}(-l_1)^{\oplus 1-p_a+d+l_1M(2p_a-2+\vec{a}\cdot\vec{d}+wd)}}_{\mathcal{U}\times_{U_1}U_2/U_2}$ and let $\mathscr{F}$ be the universal quotient sheaf of $\mathcal{O}(-l_1)^{\oplus 1-p_a+d+l_1M(2p_a-2+\vec{a}\cdot\vec{d}+wd)}$.
Take an open subset $U_3\subset \mathrm{Quot}_{\mathcal{U}\times_{U_1}U_2/U_2}$ that is the locus over which $\mathscr{F}_s$ is a line bundle of degree $d$ that is $l_1$-regular with respect to $\mathcal{O}(1)$ if and only if $s\in U_3$. Note that $U_3$ is of finite type.
By the same argument in \cite[Section 3.2]{HH2}, we take a locally closed subscheme $U_4\subset U_3$ such that for any morphism $g\colon T\to U_3$ from a scheme, $g$ factors through $U_4$ if and only if $g$ satisfies the following:
\begin{itemize}
\item if $\mathcal{A}:=\mathcal{O}_{\mathcal{U}_{U_3}}(M(K_{\mathcal{U}_{U_3}/U_3}+\vec{a}\cdot \vec{\mathcal{D}}+w\mathscr{F}))$, then $\mathcal{A}_T\sim_T\mathcal{O}_{\mathcal{U}_T}(1)$, where $\vec{\mathcal{D}}=(D_{1},\ldots, D_{r})$ and $D_k$ is the pullback of the universal relative Mumford divisor on $\mathrm{KDiv}^{d_k}_{\mathcal{U}/U_1}\times_{U_1}\mathcal{U}$, and 
    \item $\pi_{\mathcal{U},T*}\mathcal{A}\otimes\kappa(t)\to H^0(\mathcal{A}_{t})$ and the canonical morphism $\kappa(t)^{\oplus 1-p_a+d+l_1M(2p_a-2+\vec a\cdot\vec d+wd)}\to H^0(\mathscr{F}_t(l_1))$ induced by $\mathcal{O}(-l_1)^{\oplus 1-p_a+d+l_1M(2p_a-2+ \vec{a}\cdot \vec{d}+wd)}\to \mathscr{F}$ are surjective for any $t\in T$.
\end{itemize} 
Next, we consider $\mathcal{H}:=\mathbf{Hom}_{U_{4}}(\mathcal{O}_{\mathcal{U}_{U_{4}}}^{N+1},\mathscr{F}_{U_4})$.
Let $g\in\mathfrak{Hom}(\mathcal{O}_{\mathcal{U}_{\mathcal{H}}}^{N+1},\mathscr{F}_{\mathcal{H}})(\mathcal{H})$ be the universal homomorphism.
We note that $g$ corresponds to a global section $(f_0,\ldots,f_N)\in H^0(\mathcal{H},\pi_{\mathcal{U}_{\mathcal{H}}*}\mathscr{F}_\mathcal{H})^{\oplus N+1}$.
Let $U_5\subset \mathcal{H}$ be an open subscheme such that for any $s\in \mathcal{H}$, $s$ is contained in $U_5$ if and only if $(\mathcal{U}_{\bar{s}},\vec{a}\cdot \vec{\mathcal{D}}_{\bar{s}}+w\mathrm{div}(f_{\bar{s}}))$ is slc for some $(b_i)_{i=0}^N\in\overline{\kappa(s)}^{N+1}$, where $f_{\bar{s}}=\sum_{i=0}^N b_if_{i,\bar{s}}$, or $(\mathcal{U}_{\bar{s}},\vec{a}\cdot \vec{\mathcal{D}}_{\bar{s}})$ is slc and $g_{\bar{s}}$ is surjective.
Furthermore, as in the discussion of \cite[Section 3.2]{HH2}, there exists a closed subscheme $U_6\subset U_5$ such that for any morphism $\mu\colon T\to U_5$, $\mu$ factors through $U_6$ if and only if $g_{\mathcal{U}_T}$ factors through $\mathrm{Cone}(X)\times T\subset \mathbb{A}^{N+1}_{T}$.
Then it is easy to check that $$\mathcal{M}^{\mathrm{stqmaps}}_{p_a,d,\vec{a},\vec{d},w,\iota}\cong [U_6/\PGL(P(1))\times \PGL(1-p_a+d+l_1M(2p_a-2+\vec{a}\cdot\vec{d}+wd))].$$
Therefore, we see that $\mathcal{M}^{\mathrm{stqmaps}}_{p_a,d,\vec{a},\vec{d},w,\iota}$ is an Artin stack of finite type with affine diagonal.

Next, we show that $\mathcal{M}^{\mathrm{stqmaps}}_{p_a,d,\vec{a},\vec{d},w,\iota}$ is a proper Deligne-Mumford stack. Since $\mathcal{M}^{\mathrm{stqmaps}}_{p_a,d,\vec{a},\vec{d},w,\iota}$ is a closed substack of $\mathcal{M}^{\mathrm{stqmaps}}_{p_a,d,\vec{a},\vec{d},w,\mathrm{id}_{\P^N}}$ as the above argument, we may assume that $\iota=\mathrm{id}_{\P^N}$. To see that $\mathcal{M}^{\mathrm{stqmaps}}_{p_a,d,\vec{a},\vec{d},w,\iota}$ is Deligne-Mumford, it suffices to show that $\mathcal{M}^{\mathrm{stqmaps}}_{p_a,d,\vec{a},\vec{d},w,\iota}$ has the proper diagonal because $\mathcal{M}^{\mathrm{stqmaps}}_{p_a,d,\vec{a},\vec{d},w,\iota}$ has the affine diagonal.
First, we deal with the case where $w=3$.
In this case, the same argument as \cite[Theorem 1.12]{Al-Guy} shows that $\mathcal{M}^{\mathrm{stqmaps}}_{p_a,d,\vec{a},\vec{d},w,\iota}$ is proper and has the proper diagonal.
Next, we deal with the case where $w\le1$.
In this case, we show the following claim.
\begin{claim}
When $w\le1$ and $X=\mathbb{P}^N$, let $R$ be a DVR essentially of finite type over $\mathbb{K}$ with the fractional field $K$.   
Let $q^\circ\colon (\mathcal{C}^\circ,\Delta^\circ)\to [\mathrm{Cone}(X)_K/\mathbb{G}_{m,K}]$ be an object of $\mathcal{M}^{\mathrm{stqmaps}}_{p_a,d,\vec{a},\vec{d},w,\iota}(K)$.
Then there exists a finite extension $R\subset R'$, where $R'$ is a Dedekind domain with the fractional field $K'$ such that there exists a unique extension $q_{R'}\in \mathcal{M}^{\mathrm{stqmaps}}_{p_a,d,\vec{a},\vec{d},w,\iota}(R')$ of $q^\circ_{K'}$.
\end{claim}

\begin{proof}[Proof of Claim]
Suppose that there are two extensions $q_{R'}\colon (\mathcal{C},\Delta)\to [\mathrm{Cone}(X)_{R'}/\mathbb{G}_{m,R'}]$ and $q'_{R'}\colon (\mathcal{C}',\Delta')\to [\mathrm{Cone}(X)_{R'}/\mathbb{G}_{m,R'}]$ as the assertion.
Then for any general element of $e\in\mathbb{A}^{N+1}_{\mathbb{K}}$, consider the corresponding sections $f\in\mathrm{Sec}_{q_{R'}}(\mathscr{L})$ and $f'\in\mathrm{Sec}_{q'_{R'}}(\mathscr{L}')$, where $\mathscr{L}$ and $\mathscr{L}'$ are the line bundles associated with $q_{R'}$ and $q'_{R'}$, respectively.
By \cite[Proposition 2.50]{kollar-moduli}, we see that there exists an isomorphism $\varphi\colon(\mathcal{C},\Delta+w\mathrm{div}(f))\to (\mathcal{C}',\Delta'+w\mathrm{div}(f'))$ over $R'$.
For any $e'\in\mathbb{A}^{N+1}_{\mathbb{K}}$, consider the corresponding sections $g\in\mathrm{Sec}_{q_{R'}}(\mathscr{L})$ and $g'\in\mathrm{Sec}_{q'_{R'}}(\mathscr{L}')$.
Via $\varphi$, we see that $\frac{g}{f}$ the rational function can be identified with $\frac{g'}{f'}$ because they are determined by the restrictions to $\mathcal{C}^\circ_{K'}$.
Therefore, $q_{R'}\cong q'_{R'}$ by $\varphi$.

We next deal with the existence part.
When $\vec{a}=(1,1,\ldots,1)$, this is shown in \cite[Theorem 2.12]{Toda} (this only deals with the case where $\mathbb{K}=\mathbb{C}$ but the same argument works for any algebraically closed field of characteristic zero).
For the general case, considering the irreducible decomposition $\sum b_i D'^\circ_i$ of $w\mathrm{Bs}(q^\circ)+\Delta^\circ$. By the slc condition, $0\le b_i\le 1$.
Let $\varphi^\circ\colon \mathcal{C}^\circ\to X$ be the morphism defined by $q^\circ$ subtracting $\mathrm{Bs}(q^\circ)$.
Note that $\varphi^\circ\colon (\mathcal{C}^\circ,\sum D'^\circ_i)\to [\mathrm{Cone}(X)_K/\mathbb{G}_{m,K}]$ is a stable map.
There exist a finite extension $R\subset R'$ of Dedekind domains and a family of stable quasimaps $\overline{\varphi}\colon (\overline{\mathcal{C}},\sum \overline{D}'_i)\to [\mathbb{A}^{N+1}_{R'}/\mathbb{G}_{m,R'}]$, which is the extension of $\varphi^\circ_{K'}$ with the boundary divisor $\sum (D'^\circ_i)_{K'}$ by \cite[Theorem 2.12]{Toda}.
For any general $f\in \mathrm{Sec}_{\overline{\varphi}}(\overline{\mathscr{L}})$ that corresponds to $e\in\mathbb{A}^{N+1}$, where $\overline{\mathscr{L}}$ is the associated line bundle with $\overline{\varphi}$, $(\overline{\mathcal{C}},w\mathrm{div}(f)+\sum b_i\overline{(D'^\circ_i)_{K'}})$ is a stable family over $R'$, where $\overline{(D'^\circ_i)_{K'}}$ is the Zariski closure.
Replacing $R'$ with its finite cover, we may assume that $\overline{D}'_i$'s are sections over $\mathrm{Spec}\,R'$.
By \cite[Theorem 4.1]{Hassett}, there exist a stable family $(\mathcal{C}',wD_f+\sum b_i D'_i)$ over $R'$ that is an extension of $(\mathcal{C}_{K'},\sum b_i (D'^\circ_i)_{K'}+w\mathrm{div}(f)_{K'})$ and a projective dominant morphism $\psi\colon \overline{\mathcal{C}}\to \mathcal{C}'$ over $R'$ such that $\psi_*\mathrm{div}(f)=D_f$ and $D'_i=\psi_*\overline{(D'^\circ_i)_{K'}}$.
Set $\mathscr{L}':=\mathcal{O}_{\mathcal{C}'}(D_f+\psi_*\overline{(\mathrm{Bs}(q^\circ)_{K'})})$ and take $f'\in \Gamma(\mathcal{C}',\mathscr{L}')$ as a section corresponding to $D_f+\psi_*\overline{(\mathrm{Bs}(q^\circ)_{K'})}$. 
For any $g\in \mathrm{Sec}_{\overline{\varphi}}(\overline{\mathscr{L}})$ that corresponds to $e'\in\mathbb{A}^{N+1}$, we set $g':=\frac{g}{f}f'$.
Note that $\frac{g}{f}$ is a rational function on $\overline{\mathcal{C}}$ but we can regard this as a rational function on $\mathcal{C}'$ and thus we can regard $g$ as a rational section of $\mathscr{L}'$.
Since $g'$ has no pole in codimension one in $\mathcal{C}'$ and $\mathcal{C}'$ is $S_2$, we see that $g'\in \Gamma(\mathcal{C}',\mathscr{L}')$.
By using such $g'$, we can yield a structure of a family of quasimaps $q'\colon (\mathcal{C}',\psi_*(\overline{\Delta^\circ_{K'}}))\to[\mathbb{A}^{N+1}_{R'}/\mathbb{G}_{m,R'}]$ and it is easy to see that $q'$ is stable and an extension of $q^\circ_{K'}$ by the stability of $(\mathcal{C}',wD_f+\sum b_i D'_i)$ over $R'$.
Moreover, since $q'_{K'}$ factors through $[\mathrm{Cone}(X)_{K'}/\mathbb{G}_{m,K'}]$, we see that $q'$ factors through $[\mathrm{Cone}(X)_{R'}/\mathbb{G}_{m,R'}]$.
\end{proof}

Therefore, $\mathcal{M}^{\mathrm{stqmaps}}_{p_a,d,\vec{a},\vec{d},w,\iota}$ is a proper Deligne-Mumford stack (cf.~\cite[Lemma A.11]{AHLH}) and has the proper coarse moduli space ${M}^{\mathrm{stqmaps}}_{p_a,d,\vec{a},\vec{d},w,\iota}$ by \cite{keel1997quotients}.
\end{proof}

\begin{defn}
Let $q\colon(\mathcal{C},\Delta)\to[\mathrm{Cone}(X)_S/\mathbb{G}_{m,S}] \in \mathcal{M}^{\mathrm{stqmaps}}_{p_a,d,\vec{a},\vec{d},w,\iota}(S)$ and suppose that $\vec{a}$ is rational and $w\in\mathbb{Q}$.   
Let $\pi\colon\mathcal{C}\to S$ be the canonical morphism and $\mathscr{L}$ the associated line bundle with $q$.
By the Knudsen-Mumford expansion, recall that we set $\lambda_{\mathrm{CM},q}$ the CM line bundle of the family of stable quasimaps $q$.
As \cite[Definition 6.9]{CM26}, we see that the CM line bundle can be defined as $\lambda_{\mathrm{CM}}^{\mathrm{KSBA}}$ over the moduli stack $\mathcal{M}^{\mathrm{stqmaps}}_{p_a,d,\vec{a},\vec{d},w,\iota}$ in a functorial way.
Furthermore, $\lambda_{\mathrm{CM}}^{\mathrm{KSBA}}$ can be descended to  ${M}^{\mathrm{stqmaps}}_{p_a,d,\vec{a},\vec{d},w,\iota}$ as a $\mathbb{Q}$-line bundle $\Lambda_{\mathrm{CM}}^{\mathrm{KSBA}}$ by \cite[Theorem 10.3]{Alper}.
\end{defn}

\begin{thm}
\label{thm:projectivity-stqmaps}
    In the above situation, $\Lambda_{\mathrm{CM}}^{\mathrm{KSBA}}$ is ample. In particular, ${M}^{\mathrm{stqmaps}}_{p_a,d,\vec{a},\vec{d},w,\iota}$ is projective in any case.
\end{thm}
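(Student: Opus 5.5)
The plan is to follow the strategy used for the Hodge line bundle in Theorem \ref{thm:proj--CM--bpCY}, combined with Koll\'ar's ampleness lemma \cite[Theorem 3.11]{kollar-proj}. By Lemma \ref{lem-wall-same:st}, $\vec a$ and $w$ may be taken rational. By Proposition \ref{prop:when:quasimap-is-map} we may also assume $w\le1$ or $w=3$. Since $M:=M^{\mathrm{stqmaps}}_{p_a,d,\vec a,\vec d,w,\iota}$ is proper (Theorem \ref{thm:constksbaqmaps}), ampleness of $\Lambda^{\mathrm{KSBA}}_{\mathrm{CM}}$ reduces to two statements. First, it is nef. Second, for every closed irreducible $V\subset M$, the top self-intersection $(\Lambda^{\mathrm{KSBA}}_{\mathrm{CM}}|_V)^{\dim V}$ is positive. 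Chow's lemma for algebraic spaces then lets us work on projective normal varieties $S$ mapping generically finitely onto $V$.

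Given such a $g\colon S\to M$, I would apply \cite[Theorem 1.1]{bejleri2025proper}, as in Theorem \ref{thm:proj--CM--bpCY}, to pass to a generically finite cover over which $g$ lifts to a family $q\colon(\mathcal C,\vec a\cdot\vec{\mathcal D})\to[\mathrm{Cone}(X)_S/\mathbb G_{m,S}]$. By functoriality, $g^*\Lambda^{\mathrm{KSBA}}_{\mathrm{CM}}\sim_{\Q}\lambda_{\mathrm{CM},q}=\pi_*\bigl((K_{\mathcal C/S}+\vec a\cdot\vec{\mathcal D}+w\mathscr L)^2\bigr)$. Next I would choose a general basis $e_0,\dots,e_N$ of $\mathbb K^{N+1}$ and an extra general vector $e_{N+1}$, with sections $f_j$. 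I then replace $w\mathscr L$ by the $\Q$-divisor
\[
B:=\frac{w}{2(N+2)}\sum_{j=0}^{N+1}\mathrm{div}(f_j)+\frac{w}{2(N+1)}\sum_{j=0}^{N}\mathrm{div}(f_j+f_{N+1}),
\]
which is $\Q$-linearly equivalent to $w\mathscr L$ over $S$. Because the $f_j$ are general, the base locus contributes only $w\mathrm{Bs}(q)$, so $(\mathcal C,\vec a\cdot\vec{\mathcal D}+B)\to S$ is a stable family of pairs. When $w=3$ there are no base points, and I would scale by a small rational factor as needed to keep the coefficients at most one.

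The quotients $f_j/f_0$ and the cross-ratios $(f_j+f_{N+1})/f_0$ recover $q$ up to isomorphism, following \cite[Lemmas 3.35 and 3.43]{CM26}. Hence this auxiliary family has maximal variation whenever $g$ is generically finite. Its CM line bundle equals $\pi_*\bigl((K_{\mathcal C/S}+\vec a\cdot\vec{\mathcal D}+B)^2\bigr)\sim_\Q\lambda_{\mathrm{CM},q}$, since intersection numbers depend only on numerical classes. Positivity of the CM line bundle for stable families of maximal variation \cite{PX} then shows that $\lambda_{\mathrm{CM},q}$ is nef and big. The same argument applied with $S$ a curve gives nefness on all of $M$.

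The main obstacle is the maximal-variation step. One has to show that the pairs $(\mathcal C_s,\vec a\cdot\vec{\mathcal D}_s+B_s)$ determine $q_s$, so that no positive-dimensional subvariety of $S$ is mapped to a point of the KSBA moduli of pairs. It also has to hold uniformly, so that a single choice of general $e_j$ works over all of $S$. I would try first to adapt \cite[Lemma 3.43]{CM26}: the divisors $\mathrm{div}(f_j)$ and $\mathrm{div}(f_j+f_{N+1})$ recover the linear system spanned by the $f_j$, and hence the quasimap. Genericity over $S$ would be handled by shrinking $S$ to a dense open set, which suffices for bigness.
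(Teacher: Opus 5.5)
\textbf{Verdict.} For $w\le 1$ your argument follows the paper's route: the paper invokes \cite[Proposition 6.15]{CM26} there, and the auxiliary-boundary construction is the one used in Theorem~\ref{thm:proj--CM--bpCY}. The gap is in the case $w>1$.

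\textbf{The gap for $w>1$.} Your reduction ``we may assume $w\le1$ or $w=3$'' via Proposition~\ref{prop:when:quasimap-is-map} is valid for the \emph{projectivity} of $M^{\mathrm{stqmaps}}_{p_a,d,\vec a,\vec d,w,\iota}$, since the stack does not change for $w>1$. It is not valid for the first assertion of the theorem. The CM line bundle depends on $w$ even when the stack does not:
\[
\lambda_{\mathrm{CM},q}=\pi_*\bigl((K_{\mathcal C/S}+\vec a\cdot\vec{\mathcal D}+w\mathscr L)^2\bigr)
\]
is quadratic in $w$. So ampleness of the weight-$3$ bundle does not give ampleness of $\Lambda^{\mathrm{KSBA}}_{\mathrm{CM}}$ for a rational $w\in(1,\infty)\setminus\{3\}$.

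Your remedy of scaling by a small rational factor is also unsound. Scaling $B$ destroys the relation $B\sim_{\Q}w\mathscr L$, which is exactly what identifies the auxiliary CM line bundle with $\lambda_{\mathrm{CM},q}$, and in general it also destroys relative ampleness of the adjoint divisor. At $w=3$ it is not even needed, since for $N\ge1$ the coefficients are at most $3/4$.

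\textbf{How to repair it.} The paper handles $w>1$ with a Veronese trick.
\begin{enumerate}
\item Reduce to $X=\mathbb P^N$, using that $M_\iota\to M_{\mathrm{id}}$ is a closed immersion.
\item Put $k=\lceil w\rceil$ and compose $q$ with the $k$-th Veronese embedding $\nu$. Then $\nu\circ q$ has associated line bundle $\mathscr L^{\otimes k}$.
\item With weight $w/k\le1$, the adjoint divisor $K+\vec a\cdot\vec{\mathcal D}+\frac{w}{k}\mathscr L^{\otimes k}$ equals the original one, so the CM line bundle is unchanged.
\item Since $\nu\circ q$ is base point free, it is still a stable quasimap, and $S\to M^{\mathrm{stqmaps}}_{p_a,kd,\vec a,\vec d,w/k,\nu}$ is still generically finite. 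The $w\le1$ case then applies.
\end{enumerate}
Inside your own framework you could instead keep $w$ and use that $q$ is base point free. Spread $w\mathscr L$ over several independent general tuples of sections, so that every coefficient is at most one while $B\sim_{\Q}w\mathscr L$ is preserved.

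\textbf{A further caution on nefness.} Shrinking $S$ only helps with bigness. For nefness over a complete curve $T$, fixed general sections can genuinely fail. For example, take a degree-zero component whose constant image traces a curve in $X$. That curve meets every fixed hyperplane, so some $f_j$ vanishes on the whole component over some $t_0\in T$, and $\mathrm{div}(f_j)$ is no longer a relative Mumford divisor there. This step cannot rest on genericity. It is the part the paper takes from \cite[Proposition 6.15]{CM26}.
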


\begin{proof}
As the proof of Theorem \ref{thm:proj--CM--bpCY}, for any irreducible closed subspace $V\subset {M}^{\mathrm{stqmaps}}_{p_a,d,\vec{a},\vec{d},w,\iota}$, it suffices to show that $\Lambda_{\mathrm{CM}}^{\mathrm{KSBA}}|_V$ is nef and $(\Lambda_{\mathrm{CM}}^{\mathrm{KSBA}}|_V)^{\mathrm{dim}\,V}>0$ by \cite[Theorem 3.11]{kollar-proj} and Theorem \ref{thm:constksbaqmaps}.   
By \cite[Theorem 7.4.1]{olsson2016algebraic}, for any generically finite morphism $g\colon S\to V$ from a projective smooth variety, it suffices to show that $g^*\Lambda_{\mathrm{CM}}^{\mathrm{KSBA}}$ is big and nef.
By \cite{bejleri2025proper}, we may assume that $g$ factors through $\mathcal{M}^{\mathrm{stqmaps}}_{p_a,d,\vec{a},\vec{d},w,\iota}$.
Let $q\colon (\mathcal{C},\Delta)\to [\mathrm{Cone}(X)_S/\mathbb{G}_{m,S}]$ be the corresponding object.

If $w\le 1$, the same argument as \cite[Proposition 6.15]{CM26} shows the claim (in the argument, we assumed that $\Delta=0$ in \cite{CM26} but the argument also works for this case).
Therefore, it suffices to check the case where $w>1$.
Since the canonical morphism ${M}^{\mathrm{stqmaps}}_{p_a,d,\vec{a},\vec{d},w,\iota}\to {M}^{\mathrm{stqmaps}}_{p_a,d,\vec{a},\vec{d},w,\mathrm{id}_{\P^N}}$ is a closed immersion, it suffices to check the case where $X=\P^N$.
Since $w>1$, we see that $q$ factors through $\P_S^N$. 
Let $\nu\colon \mathbb{P}^N\hookrightarrow \P^W$ be the $\lceil w\rceil$-th Veronese embedding and note that $\nu_S\circ q$ corresponds to a morphism $S\to \mathcal{M}^{\mathrm{stqmaps}}_{p_a,\lceil w\rceil d,\vec{a},\vec{d},\frac{w}{\lceil w\rceil},\nu}$.
Note also that the corresponding morphism $g'\colon S\to {M}^{\mathrm{stqmaps}}_{p_a,\lceil w\rceil d,\vec{a},\vec{d},\frac{w}{\lceil w\rceil},\nu}$ is generically finite to the image.
Let $\Lambda_{\mathrm{CM}}^{\mathrm{KSBA}'}$ be the corresponding CM line bundle on ${M}^{\mathrm{stqmaps}}_{p_a,\lceil w\rceil d,\vec{a},\vec{d},\frac{w}{\lceil w\rceil},\nu}$ and by what we have shown in the case where $w\le 1$, $g'^*\Lambda_{\mathrm{CM}}^{\mathrm{KSBA}'}$ is big and nef.
It is easy to see that $g'^*\Lambda_{\mathrm{CM}}^{\mathrm{KSBA}'}\equiv g^*\Lambda_{\mathrm{CM}}^{\mathrm{KSBA}}$ by definition and we obtain that $g^*\Lambda_{\mathrm{CM}}^{\mathrm{KSBA}}$ is big and nef.

Finally, the last assertion follows from Lemma \ref{lem-wall-same:st} and Propositions \ref{prop:when:quasimap-is-map} and \ref{prop:chmber--st}. 
\end{proof}

Next, we state the following wall crossing phenomena for stable quasimaps, which is a generalization of \cite[Theorems 3.1 and 3.2]{Al-Guy}.

\begin{thm}\label{thm:st-qmaps-wall-crossing}
For any $w\le w'$ and $a_i\le b_i$, there exists a morphism $\mathcal{M}^{\mathrm{stqmaps}}_{p_a,d,\vec{b},\vec{d},w',\iota}\to\mathcal{M}^{\mathrm{stqmaps}}_{p_a,d,\vec{a},\vec{d},w,\iota}$ if $2p_a-2+dw+\vec{a}\cdot\vec{d}>0$.    
\end{thm}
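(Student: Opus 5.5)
We build the morphism by moving the weights $(\vec b,w')$ to $(\vec a,w)$ along a path and crossing finitely many walls, as in \cite[Theorems 3.1 and 3.2]{Al-Guy}. The segment from $(\vec b,w')$ to $(\vec a,w)$ keeps $2p_a-2+d w''+\vec a''\cdot\vec d>0$ along the way, since this quantity is affine and positive at both ends. By Proposition~\ref{prop:chmber--st}, the segment meets only finitely many of the polyhedra $R_k$. By Lemma~\ref{lem-wall-same:st}, the stacks are canonically isomorphic on the relative interior of each face. We may also assume all weights are rational. Hence it suffices to treat one elementary step: from a point $(\vec b,w')$ in the relative interior of a face to a point $(\vec a,w)$ in its closure, with $\vec a\le\vec b$ and $w\le w'$. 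When $w,w'>1$, Proposition~\ref{prop:when:quasimap-is-map} lets us first reduce $w'$ to just above $\max\{1,w\}$ at no cost.

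For such a step, take a family $q\colon(\mathcal C,\vec b\cdot\vec{\mathcal D})\to[\mathrm{Cone}(X)_S/\mathbb G_{m,S}]$ in $\mathcal M^{\mathrm{stqmaps}}_{p_a,d,\vec b,\vec d,w',\iota}(S)$. By Proposition~\ref{prop:chmber--st}, with the new weights the divisor $A:=K_{\mathcal C/S}+\vec a\cdot\vec{\mathcal D}+w\mathscr L_q$ is $\pi$-nef and every fiber is slc. Moreover, $A$ has degree zero exactly on rational tails and bridges. For a rational tail of degree zero, $\deg(\vec a\cdot\vec D+w\mathscr L)=1$ there. For a bridge of degree zero, $\deg(\vec a\cdot\vec D+w\mathscr L)=0$, so the bridge meets neither the boundary nor base points, and it has quasimap degree $0$ when $w>0$. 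Using \cite[Proposition 3.3]{Hassett}, as in the proof of Theorem~\ref{thm--wall-crossing-bpcy}, we pick $M$ sufficiently divisible and uniform in the bounded family. Then $MA$ is relatively globally generated with $H^1=0$ on fibers, and $h\colon\mathcal C\to\mathcal C':=\mathbf{Proj}_S\bigoplus_m\pi_*\mathcal O(mMA)$ is a flat family of nodal curves. Fiberwise, $h$ is the successive contraction of these tails and bridges, and it commutes with base change.

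Proposition~\ref{prop:pushfoward} then gives the relative Mumford divisors $h_*\mathcal D_i$; the bridge hypothesis there holds by the observation above. Since $-A$ is $h$-nef, Proposition~\ref{prop:contraction-thm} gives a locally stable quasimap $q'$ of degree $d$ with weight $w$ on $(\mathcal C',\vec a\cdot h_*\vec{\mathcal D})$. When $w=0$, we simply forget the quasimap. Then $K_{\mathcal C'/S}+\vec a\cdot h_*\vec{\mathcal D}+w\mathscr L_{q'}$ is the pushforward of $A$, which is relatively ample on $\mathcal C'$, so $q'$ is stable. The map $q'$ factors through $\mathrm{Cone}(X)$ by the same argument as at the end of the proof of Theorem~\ref{thm--wall-crossing-bpcy}: the generators $(h_U)_\sharp(f_i/f_0)\cdot\sigma$ satisfy the homogeneous equations of $X$. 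Both constructions are independent of auxiliary choices and compatible with base change, including over nonreduced bases, by Propositions~\ref{prop:push--linear-equivalence} and \ref{prop:contraction-thm}. Therefore $q\mapsto q'$ defines a morphism of stacks, which we check first on the atlas $U_6$ from the proof of Theorem~\ref{thm:constksbaqmaps} and then descend by $G$-equivariance.

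The main obstacle is the step where $w'>1\ge w$, or more generally where contracted tails carry positive quasimap degree. Here the maps become quasimaps, and the degree of $\mathscr L$ on a contracted tail must become a base point of that multiplicity on $\mathcal C'$, uniformly over nonreduced $S$. This is exactly what Proposition~\ref{prop:contraction-thm} is designed to deliver. The point to verify carefully is that the new base points do not violate the slc condition, i.e.\ that $\mathrm{ord}_p(\vec a\cdot h_*\vec D+w\,\mathrm{Bs}(q'))\le1$. This is where the wall inequality $\deg(\vec a\cdot\vec D+w\mathscr L)|_T\le1$ on contracted tails is used.
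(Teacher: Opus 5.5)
Your proposal follows the paper's proof essentially step for step: reduce along the segment to rational weights in the closure of a common face, contract by a multiple of $K_{\mathcal C/S}+\vec a\cdot\vec{\mathcal D}+w\mathscr L$ using \cite[Proposition 3.3]{Hassett}, push the boundary forward with Proposition~\ref{prop:pushfoward}, rebuild the quasimap with Proposition~\ref{prop:contraction-thm} (or forget it when $w=0$), and check that it factors through $\mathrm{Cone}(X)$ exactly as in Theorem~\ref{thm--wall-crossing-bpcy}. One small point is that splitting the segment also produces steps that leave a wall and enter a lower face, which are not of your form ``relative interior to closure''; your construction still handles these verbatim, with $h$ an isomorphism, because the strict degree inequalities that hold at the wall point persist on the adjacent face and the slc inequalities only improve as the weights decrease, and this is the case the paper's phrasing ``both points lie in the closure of the same face'' is meant to cover.
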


\begin{proof}
    As the proof of \cite[Theorems 3.1 and 3.2]{Al-Guy}, we may assume that $(\vec{b},w')$ and $(\vec{a},w)$ belong to the closure of the same face $F$ of some $R_l$ considering the segment connecting $(\vec{b},w')$ and $(\vec{a},w)$.
    Moreover, we may assume that $(\vec{b},w')$ and $(\vec{a},w)$ are both rational (cf.~Propositions \ref{prop:when:quasimap-is-map} and \ref{prop:chmber--st}).
    For any object of $q\colon (\mathcal{C},\vec{b}\cdot\vec{\mathcal{D}})\to [\mathrm{Cone}(X)_S/\mathbb{G}_{m,S}]$ of $\mathcal{M}^{\mathrm{stqmaps}}_{p_a,d,\vec{b},\vec{d},w',\iota}(S)$ and scheme $S$ of finite type over $\mathbb{K}$, we see that $q\colon (\mathcal{C},\vec{a}\cdot\vec{\mathcal{D}})\to [\mathrm{Cone}(X)_S/\mathbb{G}_{m,S}]$ with weight $w$ satisfies that $\omega_{\mathcal{C}/S}(\vec{a}\cdot\vec{\mathcal{D}}+w\mathscr{L})$ is relatively nef over $S$ but for any $s\in S$, there exists an irreducible component $F\subset \mathcal{C}_s$ such that $\omega_{\mathcal{C}/S}(\vec{a}\cdot\vec{\mathcal{D}}+w\mathscr{L})|_F$ has a positive degree.
    By \cite[Proposition 3.3]{Hassett}, we see that there exists $M\in\mathbb{Z}_{>0}$ such that $\omega^{\otimes M}_{\mathcal{C}/S}(M(\vec{a}\cdot\vec{\mathcal{D}}+w\mathscr{L}))$ is a relatively globally generated line bundle with $H^1(\mathcal{C}_s,\omega^{\otimes M}_{\mathcal{C}_s}(M(\vec{a}\cdot\vec{\mathcal{D}}_s+w\mathscr{L}_s)))=0$ such that $|\omega^{\otimes M}_{\mathcal{C}_s}(M(\vec{a}\cdot\vec{\mathcal{D}}_s+w\mathscr{L}_s))|$ defines a contraction of nodal curves for any $s\in S$.
    Set $\mathcal{C}'$ as $\mathbf{Proj}_S(\oplus_{m\ge0}\pi_*(\omega^{\otimes mM}_{\mathcal{C}/S}(mM(\vec{a}\cdot\vec{\mathcal{D}}+w\mathscr{L}))))$, where $\pi\colon \mathcal{C}\to S$ is the canonical morphism.
    Let $h\colon \mathcal{C}\to \mathcal{C}'$ be the canonical morphism and note that $\mathcal{C}'$ is flat over $S$.
    For any geometric point $\bar{s}\in S$, we see that $h_{\bar{s}}$ is a contraction.
    By Proposition \ref{prop:pushfoward}, we set $\vec{\mathcal{D}}':=h_*\vec{\mathcal{D}}$.

    If $w=0$, we obtain a canonical morphism $\mathcal{M}^{\mathrm{stqmaps}}_{p_a,d,\vec{b},\vec{d},w',\iota}\to\mathcal{M}^{\mathrm{stqmaps}}_{p_a,d,\vec{a},\vec{d},w,\iota}$, forgetting the quasimap structure unless $w'=0$.
In this case, we may only deal with the case where $w'<1$ and the open segment whose boundaries are $(\vec{b},w')$ and $(\vec{a},0)$ lies in the same face because we will deal with the general case later.
For any $q\in \mathcal{M}^{\mathrm{stqmaps}}_{p_a,d,\vec{b},\vec{d},w',\iota}(S)$, shrinking $S$ if necessary, we can take a general section $f\in \mathrm{Sec}_q(\mathscr{L})$ corresponding to an element of $\mathbb{K}^{N+1}$ such that $(\mathcal{C},(t\vec{b}+(1-t)\vec{a})\cdot\vec{\mathcal{D}}+tw'\mathrm{div}(f))$ is a stable family for any $t\in (0,1]$.
Then, we can check that $(\mathcal{C}',h_*(\vec{a}\cdot\vec{\mathcal{D}}))$ is stable over $S$ and belongs to $\mathcal{M}^{\mathrm{stqmaps}}_{p_a,d,\vec{a},\vec{d},0,\iota}(S)$. This shows the existence of the canonical morphism of the moduli stacks in this case.

    We will show that $(\mathcal{C}',\vec{a}\cdot\vec{\mathcal{D}}')$ has a canonical structure of a stable family of weighted log quasimaps induced by $q$ under the assumption that $w\ne0$.
 To show this, we first put the structure of a locally stable family of weighted quasimaps $q'\colon(\mathcal{C}',\vec{a}\cdot\vec{\mathcal{D}}')\to [\mathbb{A}^{N+1}_S/\mathbb{G}_{m,S}]$ by Proposition \ref{prop:contraction-thm}.
     As the proof of Theorem \ref{thm--wall-crossing-bpcy}, it is not hard to check that $q'$ factors through $[\mathrm{Cone}(X)_S/\mathbb{G}_{m,S}]$.
     Moreover, we know that $h_*\mathscr{L}$ can be regarded as the line bundle associated with $q'$ and $K_{\mathcal{C}'/S}+h_*(\vec{a}\cdot\vec{\mathcal{D}'})+wh_*\mathscr{L}$ is relatively ample over $S$. 
     Therefore, $q'\in \mathcal{M}^{\mathrm{stqmaps}}_{p_a,d,\vec{a},\vec{d},w,\iota}(S)$.
     Since this construction is functorial, we can check that this induces a morphism $\mathcal{M}^{\mathrm{stqmaps}}_{p_a,d,\vec{b},\vec{d},w',\iota}\to\mathcal{M}^{\mathrm{stqmaps}}_{p_a,d,\vec{a},\vec{d},w,\iota}$.
\end{proof}

Finally, we state the wall crossing phenomena when $p_a=0$ to the moduli of boundary polarized CY quasimaps.

\begin{thm}\label{thm:final-wall-crossing-st-kst}
 Let $d,d^{(j)}_i\in\mathbb{Z}_{>0}$, $w,w'\in\mathbb{R}_{\ge0}$ and $a^{(j)}_i\ge0$ for $i=1,2$ and $1\le j\le r_i$ with $(w+w')d+\vec{a}_1\cdot\vec{d}_1+\vec{a}_2\cdot\vec{d}_2=2$, where we set $\vec{d}_i:=(d^{(j)}_i)_{j=1}^{r_i}$, $\vec{a}_i:=(a^{(j)}_i)_{j=1}^{r_i}$ and  $r_1,r_2\in\mathbb{Z}_{\ge0}$.
 Suppose that $w'\ne0$ or $\vec{a}_2\ne\vec0$.
 Let $\vec{d}=(\vec{d}_1,\vec{d}_2)$, $\vec{a}=(\vec{a}_1,\vec{a}_2)$ and $\vec{b}_{t}=(\vec{a}_1,(1+t)\vec{a}_2)$ for any $t\in\mathbb{R}$.

 Then, there exists a canonical morphism $\mathcal{M}^{\mathrm{stqmaps}}_{0,d,\vec{b}_{\epsilon},\vec{d},w+(1+\epsilon)w',\iota}\to\mathcal M^{\mathrm{bpcyqmaps}}_{
  d,\vec a_1,\vec d_1,\vec a_2,\vec d_2,w,w',\iota}$ for any sufficiently small $\epsilon>0$ and this is an open immersion.  
 Furthermore, there exists a canonical morphism $({M}^{\mathrm{stqmaps}}_{0,d,\vec{b}_{\epsilon},\vec{d},w+(1+\epsilon)w',\iota})^{\mathrm{sn}}\to ({M}^{\mathrm{Kpsqmaps}}_{d,(1-\epsilon)(\vec{a}_1,\vec{a}_2),\vec{d},(1-\epsilon)(w+w'),\iota})^{\mathrm{sn}}$. 
\end{thm}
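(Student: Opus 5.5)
The plan is to construct the morphism on families by simply changing the weights, with no contraction of the source. Take an object $q\colon(\mathcal{C},\vec{a}_1\cdot\vec{\mathcal{D}}_1+(1+\epsilon)\vec{a}_2\cdot\vec{\mathcal{D}}_2)\to[\mathrm{Cone}(X)_S/\mathbb{G}_{m,S}]$ of $\mathcal{M}^{\mathrm{stqmaps}}_{0,d,\vec{b}_\epsilon,\vec{d},w+(1+\epsilon)w',\iota}(S)$ and regard it as a family with coefficients $(\vec{a}_1,\vec{a}_2)$ and weight $(w,w')$. As in the proof of Theorem~\ref{thm:wall-crossingbpcytoK-moduli}, we may first reduce to rational data using Lemma~\ref{lem-wall-same:st}, Lemma~\ref{lem-wall-same:bdcy} and the finiteness of the chamber decompositions in Propositions~\ref{prop:chmber--st} and~\ref{prop:chmber--bdcy}. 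We must then check fiberwise that the result is a boundary polarized CY quasimap with polarization part $\vec{a}_2\cdot\vec{D}_2+w'\mathscr{L}$.

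The key fiberwise argument runs as follows. Write $A_\epsilon:=K_C+\vec{a}_1\cdot\vec{D}_1+(1+\epsilon)\vec{a}_2\cdot\vec{D}_2+(w+(1+\epsilon)w')\mathscr{L}=A_0+\epsilon P$, where $A_0:=K_C+\vec a\cdot\vec D+(w+w')\mathscr{L}$ and $P:=\vec{a}_2\cdot\vec{D}_2+w'\mathscr{L}$. The total degree of $A_0$ is $0$. On each irreducible component $T$ of the genus-zero curve $C$, the degrees $\deg_T A_0$ and $\deg_T P$ take only finitely many values, and these are determined by $d,\vec d,\vec a,w,w'$. Hence for $\epsilon$ sufficiently small, the condition $\deg_T A_\epsilon>0$ for all $T$ forces $\deg_T A_0\ge0$ for all $T$. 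Since the total degree is zero, this gives $\deg_T A_0=0$ for every $T$, so $A_0\equiv0$, and therefore $A_0\sim_{\R}0$ because $p_a=0$. It also gives $\deg_T P>0$, so $P$ is ample. Lowering coefficients preserves slc, so the fiber is a boundary polarized CY quasimap. In families, $K_{\mathcal C/S}+\vec a\cdot\vec{\mathcal D}+(w+w')\mathscr L\sim_{\R,S}0$ because it is fiberwise trivial on a family of trees of rational curves (cf.~\cite[Proposition 3.3]{Hassett}), and $P$ is relatively ample because it is fiberwise ample. This construction is clearly compatible with base change and isomorphisms, which yields the canonical morphism $\psi$.

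For the open immersion, we argue as in Proposition~\ref{prop:standard-wall-crossing}. Write $\mathcal M^{\mathrm{bpcyqmaps}}\cong[U_6/G]$ as in Proposition~\ref{prop:bpcy--moduli--qmaps--const}, and reinterpret the universal family with the perturbed coefficients $\vec b_\epsilon$ and weight $w+(1+\epsilon)w'$. Since $A_\epsilon\equiv\epsilon P$ fiberwise, ampleness is automatic. The locus where the reinterpreted family is slc is open, because slc is an open condition for families of nodal curves, as used in constructing $U_5$. Call this open set $U_7$. Then $[U_7/G]$ is identified with the stable stack, using that stability is a pointwise condition and that both stacks parametrize the same families. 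Representability and the matching of automorphisms follow as in Theorem~\ref{thm:wallcrossingmorphKss}. I expect the main obstacle to be this step: verifying that the slc condition with the enlarged coefficients $(1+\epsilon)\vec a_2$ and $w+(1+\epsilon)w'$ is uniform in $\epsilon$, so that the open locus is independent of small $\epsilon$. I would handle it via the finitely many multiplicity inequalities \eqref{eq:bpcydomain} and \eqref{eq:st-domainlc}. In particular, points with multiplicity exactly $1$ carrying $\vec a_2$ or $w'$ weight drop out of the stable locus, and one must check that this is still open.

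Finally, the morphism $\psi$ induces a morphism of good moduli spaces $M^{\mathrm{stqmaps}}\to M^{\mathrm{bpcyqmaps}}$ by the universal property \cite[Theorem 6.6]{Alper}; here the coarse space is the good moduli space, since the stable stack is Deligne--Mumford with linearly reductive stabilizers. Passing to seminormalizations, this induces a morphism of the seminormalized spaces. We then compose with the inverse of the isomorphism $(M^{\mathrm{Kpsqmaps}}_{\epsilon,\iota})^{\mathrm{sn}}\cong(M^{\mathrm{bpcyqmaps}}_\iota)^{\mathrm{sn}}$. That isomorphism comes from Theorem~\ref{thm:wall-crossingbpcytoK-moduli}: a finite bijective morphism between seminormal proper spaces is a homeomorphism, and it becomes an isomorphism after seminormalization by \cite[Lemma 13.7]{babwild}. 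The composite gives the asserted canonical morphism.
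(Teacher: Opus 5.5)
Your proposal is correct and follows essentially the same route as the paper. The paper also reinterprets the stable family with the unperturbed coefficients and weights, without contracting the source. It gets $\deg_T A_0=0$ on every component and relative ampleness of $P$ from nefness of $A_0$ together with total degree zero; that nefness comes from the $\epsilon$-independence of the stable stack, which is the same chamber finiteness you invoke. It deduces the open immersion from openness of the slc condition, and gets the last morphism on good moduli spaces via the seminormal identification of Theorem~\ref{thm:wall-crossingbpcytoK-moduli}. The paper additionally cites Theorem~\ref{thm:st-qmaps-wall-crossing} in that final step, but your direct argument through the universal property of good moduli spaces does not need it.
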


\begin{proof}
     We may assume that $\vec{b}_{\epsilon},\vec{a},w,w'$ are all rational and  $\mathcal{M}^{\mathrm{stqmaps}}_{0,d,\vec{b}_{\epsilon},\vec{d},w+(1+\epsilon)w',\iota}$ is independent of the choice of any sufficiently small $\epsilon>0$ by Lemmas \ref{lem-wall-same:bdcy} and \ref{lem-wall-same:st}.
    For any object of $q\colon (\mathcal{C},\vec{a}_1\cdot\vec{\mathcal{D}}_1+(1+\epsilon)\vec{a}_2\cdot\vec{\mathcal{D}_2})\to [\mathrm{Cone}(X)_S/\mathbb{G}_{m,S}]$ of $\mathcal{M}^{\mathrm{stqmaps}}_{0,d,\vec{b}_{\epsilon},\vec{d},w+(1+\epsilon)w',\iota}(S)$ and scheme $S$ of finite type over $\mathbb{K}$, we see that $q\colon (\mathcal{C},\vec{a}_1\cdot\vec{\mathcal{D}}_1+\vec{a}_2\cdot\vec{\mathcal{D}_2})\to [\mathrm{Cone}(X)_S/\mathbb{G}_{m,S}]$ with weight $w+w'$ satisfies that  for any geometric point $\bar{s}\in S$ and irreducible component $F\subset \mathcal{C}_{\bar{s}}$, $\mathrm{deg}_F(\omega_{\mathcal{C}/S}(\vec{a}_1\cdot\vec{\mathcal{D}}_1+\vec{a}_2\cdot\vec{\mathcal{D}_2}+(w+w')\mathscr{L})|_F)=0$ by nefness.
    Therefore, $\vec{a}_2\cdot\vec{\mathcal{D}_2}+w'\mathscr{L}$ is relatively ample over $S$.
    This means that we can regard $q$ as a boundary polarized CY quasimaps with $\vec{a}_2\cdot\vec{\mathcal{D}_2}+w'\mathscr{L}$ the polarization part.
By the proof of Theorem \ref{thm:constksbaqmaps}, we see that the slc condition for $q_{s}\colon (\mathcal{C}_s,(\vec{a}_1\cdot\vec{\mathcal{D}}_1)_s+(1+\epsilon)(\vec{a}_2\cdot\vec{\mathcal{D}_2})_s)\to [\mathrm{Cone}(X)_s/\mathbb{G}_{m,s}]$ with the weight $w+(1+\epsilon)w'$ is open. Therefore, we see that the morphism $\mathcal{M}^{\mathrm{stqmaps}}_{0,d,\vec{b}_{\epsilon},\vec{d},w+(1+\epsilon)w',\iota}\to\mathcal M^{\mathrm{bpcyqmaps}}_{
  d,\vec a_1,\vec d_1,\vec a_2,\vec d_2,w,w',\iota}$ is an open immersion.
    
    Finally, we deal with the last assertion.
    This follows from Theorems \ref{thm:st-qmaps-wall-crossing} and \ref{thm:wall-crossingbpcytoK-moduli} and the property of the seminormalization. 
\end{proof}

\begin{rem}\label{rem:final}
In Theorem \ref{thm:final-wall-crossing-st-kst}, we note that if $\iota=\mathrm{id}_{\P^N}$, then $\mathcal{M}^{\mathrm{stqmaps}}_{0,d,\vec{b}_{\epsilon},\vec{d},w+(1+\epsilon)w',\iota}$ is smooth by Proposition \ref{prop:smoothness-of-stacks}.    
This can be regarded as an analogue of \cite[Theorem 2]{FP}.
Therefore, without taking the seminormalization, there exists a canonical morphism ${M}^{\mathrm{stqmaps}}_{0,d,\vec{b}_{\epsilon},\vec{d},w+(1+\epsilon)w',\mathrm{id}_{\P^N}}\to {M}^{\mathrm{Kpsqmaps}}_{d,(1-\epsilon)(\vec{a}_1,\vec{a}_2),\vec{d},(1-\epsilon)(w+w'),\mathrm{id}_{\P^N}}$.
\end{rem}

\bibliographystyle{alpha}
\bibliography{main}

\end{document}